\documentclass{amsart}
\usepackage[utf8]{inputenc}
\usepackage{fullpage}
\usepackage{parskip}
\usepackage{amsmath}
\usepackage{amsfonts}
\usepackage{amssymb}
\usepackage{mathrsfs}
\usepackage{tikz}
\usepackage{tikz-cd}
\usepackage{tikz-3dplot}
\usepackage{graphicx}
\usepackage{comment}
\usepackage{todonotes}
\usepackage{hyperref}
\usetikzlibrary{calc}
\usetikzlibrary{decorations.markings}

\newtheorem{theorem}{Theorem}[section]
\newtheorem{prop}[theorem]{Proposition}
\newtheorem{definition}[theorem]{Definition}

\newtheorem{lemma}[theorem]{Lemma}
\newtheorem{remark}[theorem]{Remark}

\newtheorem{conv}[theorem]{Convention}
\newtheorem{algorithm}[theorem]{Algorithm}

\newcommand{\N}{\mathbb{N}}
\newcommand{\Z}{\mathbb{Z}}

\newcommand{\R}{\mathbb{R}}
\newcommand{\C}{\mathbb{C}}
\newcommand{\G}{\mathbb{G}}
\newcommand{\CD}{\mathit{CD}}
\newcommand{\CDP}{\mathit{CDP}}
\newcommand{\Id}{\mathrm{Id}}
\newcommand{\SO}{\mathit{SO}}
\newcommand{\Pin}{\mathit{Pin}}

\tikzset{->-/.style={decoration={
  markings,
  mark=at position .5 with {\arrow{>}}},postaction={decorate}}}

\title{A Steenrod Square for Link Floer Homology}
\author{Yan Tao}
\date{}

\begin{document}

\begin{abstract}
Recently, Manolescu-Sarkar constructed a stable homotopy type for link Floer homology, which uses grid homology and accounts for all domains that do not pass through a specific square. We explicitly give the framings of the lower-dimensional moduli spaces of the Manolescu-Sarkar construction as well as the more general moduli spaces corresponding to the full grid. Though in the latter case the stable homotopy type is not known, the explicit framings are enough to construct a framed $1$-flow category, a construction by Lobb-Orson-Schütz which contains enough information to find the second Steenrod square. Finally, we find an algorithm for computing the second Steenrod square for all versions of grid homology coming from the full grid.\end{abstract}

\maketitle

\section{Introduction}

Link Floer homology, developed by \cite{OS3}, \cite{RA}, and \cite{OS4}, is an invariant of oriented links in three-manifolds which comes from Heegaard Floer homology, from \cite{OS1} and \cite{OS2}. There are two commonly studied versions of the link Floer chain complex: the ``hat" version $\widehat{\mathit{CFK}}$ and the ``minus" version $\mathit{CFK}^-$, as well as a ``plus" version which is the quotient of a chain complex $\mathit{CFK}^{\infty}$ by $\mathit{CFK}^-$, which is better suited for our purposes. Their homologies $\widehat{\mathit{HFK}}$, $\mathit{HFK}^-$, and $\mathit{HFK}^+$ are the versions of link Floer homology. Among their many applications are the categorification of the Alexander polynomial (see \cite{OS4}), detection of the Thurston norm of the knot complement (and in particular unknot detection, see \cite{OS6} and \cite{Ni}), detection of fibered knots (see \cite{Ghi} and \cite{Ni2}), and construction of a $\Z^{\infty}$ summand of the topologically slice subgroup of the knot concordance group (see \cite{DHST} See \cite{Man}, \cite{Hom}, \cite{OS8} for surveys of further applications.

\cite{MOS}, \cite{MOST}, and \cite{OSS} gave a combinatorial description of the link Floer chain complex for a link in $S^3$ using grid diagrams, known as \textit{grid homology}. A toroidal grid diagram is a $n \times n$ grid of squares, with the left and right edges identified and the top and bottom edges identified, together with markings $X$ and $O$, such that each row and column contains exactly one $X$ and one $O$. Given a grid diagram $\G$, drawing vertical segments from the $X$ to the $O$ in each column and horizontal segments---going under the vertical segments whenever they cross---from the $O$ to the $X$ in each row gives the diagram of an oriented link $L$; we say that $\G$ is a \textit{grid diagram for} $L$. Figure $\ref{figgrid}$ shows a $5 \times 5$ grid diagram for the trefoil.

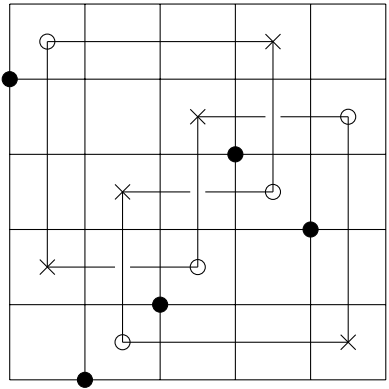
\begin{figure}\begin{tikzpicture}

\draw (0, 0) grid (5, 5);
\draw (0.5, 4.5) -- (3.5, 4.5);
\draw (0.5, 1.5) -- (0.5, 4.5);
\draw (1.5, 0.5) -- (1.5, 2.5);
\draw (1.5, 0.5) -- (4.5, 0.5);
\draw (4.5, 0.5) -- (4.5, 3.5);
\draw (3.5, 2.5) -- (3.5, 4.5);
\draw (2.5, 1.5) -- (2.5, 3.5);
\draw (0.5, 1.5) -- (1.4, 1.5);
\draw (2.5, 1.5) -- (1.6, 1.5);
\draw (1.5, 2.5) -- (2.4, 2.5);
\draw (3.5, 2.5) -- (2.6, 2.5);
\draw (2.5, 3.5) -- (3.4, 3.5);
\draw (4.5, 3.5) -- (3.6, 3.5);

\foreach \i\j in {0.5/1.5, 1.5/2.5, 2.5/3.5, 3.5/4.5, 4.5/0.5}{

\draw (\i - 0.1, \j + 0.1) -- (\i + 0.1, \j - 0.1);
\draw (\i - 0.1, \j - 0.1) -- (\i + 0.1, \j + 0.1);

}

\foreach \i\j in {0.5/4.5, 1.5/0.5, 2.5/1.5, 3.5/2.5, 4.5/3.5}{

\draw (\i, \j) circle (0.1);

}

\foreach \i\j in {0/4, 1/0, 2/1, 3/3, 4/2}{

\filldraw[black] (\i, \j) circle (0.1);

}

\end{tikzpicture}\caption{A $5 \times 5$ grid diagram for the trefoil, along with a generator of the grid chain complex drawn with \textbullet. Note that the generator is independent of the $X$ and $O$ markings.}\label{figgrid}\end{figure}

The full version $\mathit{GC}^+$ of the grid chain complex (corresponding to the plus version $\mathit{CFK}^+$) is a filtered chain complex over $\mathbb{F}[U_1, \dots, U_n]$, with one variable corresponding to each $O$ marking that domains of differentials may contain, and a filtration grading coming from the $X$ markings. For the hat version $\widehat{\mathit{GC}}$, we disallow those differentials whose domains contain a distinguished marking $O_{i_1}, \dots,  O_{i_l}$, where we distinguish one marking for each component of the link $L_1, \dots, L_l$; consequently $\widehat{\mathit{GC}}$ is a filtered chain complex over $\mathbb{F}[U_1, \dots, U_n]/(U_{i_1}, \dots, U_{i_l})$. As the terminology suggests, the homologies of their associated gradeds, $\mathit{GH}$ and $\widehat{\mathit{GH}}$, are isomorphic to $\mathit{HFK}^+$ and $\widehat{\mathit{HFK}}$, respectively---the reader is advised to check \cite{OSS} for full details.
%Grid homology also comes with another version, the ``tilde" version $\widetilde{\mathit{GC}}$, in which we disallow differentials containing any $O$ marking (and is therefore a filtered chain complex over $\mathbb{F}$). While the tilde version is not a link invariant, as the homology of its associated graded depends on the grid size, said homology is given by
%\begin{align*}\widetilde{\mathit{GH}} \cong \widehat{\mathit{GH}} \otimes W^{\otimes (n - l)}\end{align*}
%(where $W$ is a certain two-dimensional bigraded vector space and $l$ is the number of components of the link $L$) which is simple enough that the simpler nature of $\widetilde{\mathit{GC}}$ differentials is often more convenient computationally.

Grid homology has been useful in a variety of applications in Heegaard Floer homology. \cite{MOT} and \cite{MO} obtain the Heegaard Floer invariants of 3- and 4-manifolds using grid diagrams, which give algorithmically computable descriptions. \cite{Sar} uses grid homology to give another proof of Milnor's conjecture on the slice genus of torus knots (originally due to Kronheimer-Mrowka via gauge theory, see \cite{KM}). \cite{OST}, \cite{NOT}, \cite{CN}, and \cite{KN} use a version of grid homology to prove results about Legendrian knots.

Cohen-Jones-Segal (\cite{CJS}) proposed the problem of lifting Floer homology to a spectrum. Since then, stable homotopy refinements of several homologies have been constructed: in Seiberg-Witten theory by \cite{ManSW}, \cite{SaSt}, \cite{KLS}; in symplectic geometry and Lagrangian Floer homology by \cite{AB1}, \cite{AB}, \cite{PS}; and in Khovanov homology by \cite{LS1}, \cite{SSS}. These stable homotopy refinements tend to have useful extra data---see for example \cite{LS}, \cite{Sch}, \cite{Adv}, and \cite{MMSW}.

Recently, Manolescu-Sarkar (\cite{MS}) constructed a stable homotopy refinement of grid homology with one $O$ marking disallowed. (For knots, this is $\widehat{\mathit{GC}}$.) The Manolescu-Sarkar construction relies on the Cohen-Jones-Segal construction, in which a link Floer framed flow category is first constructed. The moduli spaces of this framed flow category are framed by an inductive argument using obstruction theory. Unfortunately, the same construction fails for the full grid due to its obstructions being significantly more complicated (see \cite{YT} for a description of them), so its stable homotopy type is not known.

However, it is still possible to extract enough data to compute stable homotopy invariants such as Steenrod squares without constructing the full framed flow category. The Lobb-Orson-Schütz construction (\cite{LOS}) uses a \textit{framed 1-flow category} to find the second Steenrod square $Sq^2$. We build a framed $1$-flow category from the lower-dimensional moduli spaces of the link Floer framed flow category, whose obstructions to framing are well-understood (even for the full grid---see \cite{YT}).

In order to construct a framed $1$-flow category and compute $Sq^2$, we need the combinatorial data of the \textit{sign} and \textit{frame assignments}. The sign (respectively frame) assignment records the framing of the $0$- (respectively $1$-) dimensional moduli spaces. In both cases, there are two ways to frame the moduli space (up to homotopy), so we choose one of them as \textit{preferred}, and the sign and frame assignments will be $0$ for a moduli space that has the preferred frame and $1$ for a moduli space that does not. In order to specify a preferred framing, we detail the framings of the lower-dimensional moduli spaces more explicitly in this paper.

Sign assignments will coincide with sign assignments for domains with partitions. \cite{MS} introduce the complex of domains with partitions $\CDP_*$ (which we refer to as $\CDP_*^0$ to distinguish it from our $\CDP_*$ which allows domains passing through $O_1$---see Section 3 for the definitions of these complexes) to track bubbling in the boundaries of certain moduli spaces. Sign assignments for domains with partitions are known to exist---see \cite{MS} and \cite{YT} for details. Our first results concern the existence of frame assignments, which paves the way for computing $Sq^2$.

\begin{theorem}\label{mainthm}A frame assignment $f$ on $\CDP_*$ exists.\end{theorem}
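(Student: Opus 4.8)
The plan is to phrase the existence of a frame assignment as an obstruction-theoretic problem over $\mathbb{F}=\Z/2\Z$, relative to a chosen sign assignment. First I would unwind the definition: a frame assignment $f$ attaches to each index-$2$ element $D$ of $\CDP_*$ (so, to each $1$-dimensional moduli space) a value $f(D)\in\mathbb{F}$, subject to one linear relation for each index-$3$ element $E$, which expresses compatibility of the preferred framings chosen on the $1$-dimensional moduli spaces with the boundary structure that a sign assignment $s$ imposes on the index-$2$ faces of $E$. By \cite{MS} and \cite{YT} such an $s$ exists on $\CDP_*$ (the version allowing domains through $O_1$), so these relations become \emph{inhomogeneous} linear equations in the unknowns $f(D)$, with inhomogeneous term an explicit $\mathbb{F}$-valued function $w = w(s)$ on index-$3$ elements assembled from $s$ and from the combinatorial framing data of the elementary $1$-dimensional pieces. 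Existence of $f$ is then precisely the assertion that $w$, viewed as a cochain on $\CDP_*$, is a coboundary, i.e. that a class $[w]$ in the appropriate group $H^{\bullet}(\CDP_*;\mathbb{F})$ vanishes.

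Second, I would verify that $w$ is a cocycle, so that $[w]$ is defined. This is the coherence statement one index higher: for each index-$4$ element the two ways of iterating the index-$3$ relations must agree, and this follows from the defining identity for $s$ together with associativity of composition of domains with partitions (refining a partition in two stages is well defined). This step is mostly bookkeeping, but it is where the conventions must be pinned down — in particular the choice of a preferred framing for each elementary moduli space — so that $w$ faithfully records the framing obstruction.

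Third, and this is the crux, I would show $[w]=0$. Rather than computing $H^{\bullet}(\CDP_*;\mathbb{F})$ directly, I expect the cleanest route is to solve for $f$ inductively along a filtration of $\CDP_*$ — by complexity of the underlying domains, or by a partial order on them, mirroring the inductive existence proofs for sign assignments in \cite{MS} and \cite{YT}. At each stage the residual obstruction is supported on domains with few decompositions, reducing matters to a finite local check: for domains not passing through $O_1$ this runs parallel to the sign-assignment case, and for domains that do pass through $O_1$ one invokes the explicit description of the framing obstructions in \cite{YT}. The main obstacle I anticipate is exactly this last case, since $O_1$-passing domains are what obstructs the full Manolescu--Sarkar construction; the argument must use essentially that a frame assignment on $\CDP_*$ only involves moduli spaces of dimension at most $2$, and that in that range the obstructions of \cite{YT}, intractable for the full grid in higher index, remain small enough to be killed. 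A secondary difficulty is the reduction itself: making the dictionary between the combinatorial frame assignment on $\CDP_*$ and actual framings of moduli spaces (including the choice of preferred framings) precise enough that ``a frame assignment exists'' is faithfully ``$[w]=0$''.
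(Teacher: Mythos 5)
Your skeleton is the same as the paper's: you encode the framing compatibility as an obstruction cochain $\mathfrak{o}_2\in \CDP^3(\G;\Z/2)$ (your $w$), observe that frame assignments are exactly the solutions of $\delta f=\mathfrak{o}_2$, check that $\mathfrak{o}_2$ is a cocycle (the paper imports this from \cite{MS}, whose argument does not use the blocked marking), and reduce existence to $[\mathfrak{o}_2]=0$. The gap is at your third step, which is where all the actual content of the theorem sits. You propose to avoid computing cohomology and instead run an induction along a filtration with ``finite local checks,'' by analogy with the sign-assignment existence proofs. But $H_3(\CDP_*;\Z/2)\cong(\Z/2)^{\binom{n}{3}+n}$ is nonzero (Theorem \ref{thm2}), so the obstruction class has genuine room to be nonzero, and a filtration induction cannot decide this: the filtration arguments in \cite{YT} (and in Lemma \ref{lemsubcplx}) are exactly what show that the classes $(c_{\Id},\vec{e}_j+\vec{e}_k+\vec{e}_l,((1),(1),(1)))$ and the classes $U_j'$ survive to $H_3$, so the induction bottoms out precisely at the question ``what is $\mathfrak{o}_2$ on these generators?'', which your outline does not answer. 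Note also that your model case is misremembered: the sign-assignment proof is likewise an evaluation of an explicit $2$-cocycle on the generators of $H_2(\CDP_*)$, not a purely inductive construction, so the analogy leads back to the homology evaluation you were trying to sidestep.

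Concretely, the paper needs two nontrivial inputs that your plan neither supplies nor localizes correctly. First, vanishing of $\mathfrak{o}_2$ on the triples $(c_{\Id},\vec{e}_j+\vec{e}_k+\vec{e}_l,((1),(1),(1)))$ is the Type 3.0c case of Lemma \ref{framecond}; proving it requires specifying preferred framings of all $1$-dimensional moduli spaces, decomposing the framed boundary circle of each relevant $2$-dimensional moduli space into short and long preferred paths, and evaluating the resulting loop in $\pi_1(\SO(A))$ via its lift to $\Pin(A)$ (Algorithm \ref{multalgo}). This is the geometric heart of Sections 5--7, not bookkeeping absorbed into the cocycle check. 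Second, on the generators $U_j'$ (unit enlargements of $U'$), one must argue separately: only the Type 3.1a constituents $(R_{jk},2\vec{e}_j,(1,1))$ contribute, each contributing $s(R_{jk})$ by Lemma \ref{framecond}, and these contributions cancel in pairs because the rectangles concatenate into index-$2$ domains, giving $\mathfrak{o}_2(U_j')=0$. These $U'$-type classes are exactly the new feature of the full grid invisible in $\CDP_*^0$, so your suggestion that the $O_1$-passing case is handled by citing ``the explicit description of the framing obstructions in \cite{YT}'' points at the wrong difficulty: \cite{YT} computes the homology of $\CDP_*$, but it does not evaluate the framing obstruction on it, and without that evaluation the proof is incomplete.
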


Unfortunately, $\CDP_*$ (and even $\CDP_*^0$) are infinite-dimensional at each positive homological grading, as there is no limit to bubbling. It is currently unknown how to compute $f$ for these complexes. To compute $f$ (and by extension, the framed $1$-flow category and $Sq^2$), we pass to a finite subcomplex. In particular, we will compute $f$ for $\CD_*$, which is a different complex generated by domains with no bubbles (see Section 3), as their moduli spaces are the ones needed for $Sq^2$. Unfortunately, $\CD_*$ is not a subcomplex of $\CDP_*$, so we must first find a finite subcomplex of $\CDP_*$ containing $\CD_*$.

\begin{theorem}\label{mainthm2}There is an algorithm to determine $f$ for domains in $\CD_*$.\end{theorem}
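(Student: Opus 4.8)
The plan is to reduce the computation of $f$ on $\CD_*$ to a finite linear-algebra problem over $\mathbb{F}_2$, in three stages. First, I would make precise the sense in which $\CD_*$ fails to be a subcomplex of $\CDP_*$: a domain in $\CD_*$ (one with no bubbles) may have in its boundary, after the appropriate degeneration, a term that \emph{does} involve a bubble, so the differential in $\CDP_*$ lands outside the image of $\CD_*$. The first step is therefore to build an explicit finite subcomplex $\CD_*' \subseteq \CDP_*$ with $\CD_* \subseteq \CD_*'$: starting from the (finitely many, in each relevant grading) generators of $\CD_*$, close up under the $\CDP_*$-differential. The key point to establish here is that this closure process terminates, i.e. that only finitely many bubbled domains arise. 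This should follow from the fact that we only need moduli spaces up to dimension $1$ (equivalently, homological grading up to $2$) for $Sq^2$, together with an index/area bound: a bubble carries positive area and index, so in bounded total dimension only boundedly many bubbles, each from a bounded list, can appear. I would formalize this with an index count analogous to the ones in \cite{MS} and \cite{YT}.

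Second, with $\CD_*'$ a finite complex, I would invoke Theorem~\ref{mainthm}: a frame assignment $f$ exists on $\CDP_*$, hence restricts to one on $\CD_*'$. The defining properties of a frame assignment (compatibility with the sign assignment, and the coboundary-type relation governing how frames of $1$-dimensional moduli spaces match signs of their boundary $0$-dimensional strata) are all \emph{local}, involving only a domain and its codimension-$1$ and codimension-$2$ degenerations; since $\CD_*'$ is closed under these, the restriction is again a frame assignment. Concretely, $f$ is a $1$-cochain on $\CD_*'$ whose coboundary equals a fixed $2$-cochain built from the sign assignment, so the set of valid $f$ is a coset of $Z^1$; any solution will do, since the resulting $Sq^2$ is independent of the choice (this independence should be recorded as a lemma, paralleling the analogous statement in \cite{LOS}).

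Third, I would turn this into an algorithm. Enumerate the generators of $\CD_*'$ in homological gradings $0,1,2$ (finite by Step 1); write down the $\mathbb{F}_2$-matrix of the coboundary $\delta\colon C^1 \to C^2$ on $\CD_*'$ and the target vector coming from the known sign assignment on $\CDP_*^0$ extended to $\CDP_*$ (existence per \cite{MS}, \cite{YT}); solve $\delta f = (\text{sign data})$ by Gaussian elimination over $\mathbb{F}_2$. Theorem~\ref{mainthm} guarantees consistency of this system. Finally, restrict the resulting $f$ to the sub-object $\CD_* \subseteq \CD_*'$ — note this restriction need not be a frame assignment on $\CD_*$ in isolation, but it is exactly the data the Lobb-Orson-Schütz machine ingests for the moduli spaces coming from bubble-free domains, which are the ones relevant to $Sq^2$. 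Output $f|_{\CD_*}$.

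I expect the main obstacle to be Step 1: proving that the closure of $\CD_*$ under the $\CDP_*$-differential stays finite in the gradings we need, and making the bound effective enough to actually run. This requires a careful analysis of which bubbled configurations can appear on the boundary of a bubble-free domain's moduli space of dimension $\le 1$ — i.e. understanding the stratification of these low-dimensional moduli spaces well enough to see that the relevant bubbles are drawn from an explicit finite list with controlled multiplicity. Everything downstream (restricting the abstract $f$, reducing to linear algebra, solving) is then routine, modulo bookkeeping of signs and of the $\CDP_*$-vs-$\CDP_*^0$ distinction.
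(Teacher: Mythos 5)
Your overall strategy is the same as the paper's: pass to a finite subcomplex of $\CDP_*$ containing the bubble-free domains, pose $\delta f=\mathfrak{o}_2$ as a finite linear system over $\Z/2$, and use Theorem~\ref{mainthm} for consistency. Your Step 1 termination worry is also easily dispatched (the differential strictly lowers the homological grading, and the paper simply truncates by the filtration $\omega=\mu(D)+\lvert\vec N\rvert$, taking $\CDP_*^{\omega\le 3}$). The genuine gap is in your claim that ``any solution will do, since the resulting $Sq^2$ is independent of the choice.'' This is false on two levels. First, by Proposition~\ref{unique} a frame assignment is only unique up to coboundaries, the values $f_{jk}$, and $f(U')$, and the value of $f(U')$ genuinely changes $Sq^2$: in the paper's $2\times 2$ unknot example the two admissible choices give $Sq^2$ equal to the identity and to zero, respectively. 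Second, and more seriously, the grading-3 generators of your closure complex are exactly the $\CD_3$ generators, so your system imposes only those coherence equations; a solution of this truncated system need not be the restriction of any frame assignment on $\CDP_*$, i.e.\ need not come from an actual framing of the Manolescu--Sarkar moduli spaces, and then the LOS output has no claim to being the Steenrod square of anything. The missing idea is a homological control of the finite subcomplex: the paper proves (Lemma~\ref{lemsubcplx}, via auxiliary filtrations) that $H_2(\CDP_*^{\omega\le 3})$ is generated by the same classes as $H_2(\CDP_*)$, which is exactly what guarantees that every solution of the finite system is the restriction of a genuine frame assignment, with precisely the known ambiguity. Your proposal has no analogue of this step.

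Two further points. First, a degree bookkeeping error: $f$ is a $2$-cochain (it labels grading-$2$ generators, whose moduli spaces are the $1$-dimensional ones), and the constraints $\delta f=\mathfrak{o}_2$ are indexed by grading-$3$ generators; so the subcomplex must be enumerated through grading $3$, not $2$, and the matrix to invert is that of $\delta\colon C^2\to C^3$, not $C^1\to C^2$. Second, the right-hand side is not ``built from the sign assignment'' in any formal way: $\mathfrak{o}_2(D,\vec N,\vec\lambda)$ is the framed cobordism class of the boundary of a $2$-dimensional moduli space, and computing it algorithmically is where much of the work lies (the preferred-path decompositions and $\Pin$-algebra computations of Sections 5--7, Lemma~\ref{framecond}, Algorithm~\ref{multalgo}, and an induction using the cocycle property of $\mathfrak{o}_2$ for Type 3.2c triples). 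Lemma~\ref{framecond} does express the values on $\CD_3$ generators through $s$, but once the subcomplex is enlarged as required by the fix above, you must also evaluate $\mathfrak{o}_2$ on the bubbled grading-$3$ generators, which your proposal does not address.
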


Following some modifications to our moduli spaces, we are then able to use the Lobb-Orson-Schütz construction:

\begin{theorem}\label{mainthm3}There is a framed $1$-flow category for the full grid, coming from $\CDP_*$. Additionally, there is an algorithmically computable framed $1$-flow category for $\CD_*$, and hence an algorithmically computable $Sq^2$ on the grid chain complex (which uses only these domains).\end{theorem}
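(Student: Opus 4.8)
The plan is to package the combinatorial data produced by Theorems~\ref{mainthm} and~\ref{mainthm2} into the axioms of a Lobb--Orson--Sch\"utz framed $1$-flow category. Recall from \cite{LOS} that such a category consists of a flow category whose moduli spaces through dimension $2$ are $\langle k\rangle$-manifolds with the expected fibre-product boundary strata, together with framings (relative to the Cohen--Jones--Segal embeddings) of those of dimension $\le 1$; the framing data is recorded combinatorially by a sign assignment on the $0$-dimensional moduli spaces, required to be compatible with $\partial^2=0$, and a frame assignment on the $1$-dimensional moduli spaces, required to be compatible across the codimension-one strata of the $2$-dimensional moduli spaces. For the link Floer flow category of the full grid, the $0$-dimensional data is exactly a sign assignment on $\CDP_*$, which exists by \cite{MS}, \cite{YT}, and the $1$-dimensional data is exactly a frame assignment on $\CDP_*$, which exists by Theorem~\ref{mainthm}. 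So the remaining work is (i) to give the full-grid moduli spaces in dimensions $\le 2$ the structure of compact $\langle k\rangle$-manifolds, in spite of the bubbling obstructions of \cite{YT}, and (ii) to check that these two assignments satisfy the LOS coherence axioms for the resulting spaces.

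For (i), I would carry out the modifications alluded to in the introduction as follows. In positive index the full-grid moduli spaces acquire extra boundary degenerations coming from bubbles, which is precisely what distinguishes $\CDP_*$ from $\CD_*$; I would cap these strata off with standard collars. Since a bubble component is a periodic domain that does not change the pair of generators, and since $\CDP_*$ was built to record exactly this combinatorics, in dimensions $\le 2$ the capped spaces should be compact $\langle k\rangle$-manifolds whose codimension-one boundary consists of the honest fibre products $\mathcal{M}(x,z)\times\mathcal{M}(z,y)$. The cap-off is compatible with the chosen sign and frame assignments because those were defined on $\CDP_*$ and therefore already see the partitioned boundary strata being filled in, so (ii) amounts to transcribing the LOS axioms from the boundary structure recorded by $\CDP_*$. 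This establishes the first sentence of the theorem.

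For the second sentence, recall (as in the discussion before Theorem~\ref{mainthm}) that $\CD_*$ is not a subcomplex of $\CDP_*$, so I would first pass to a finite subcomplex of $\CDP_*$ containing the generators of $\CD_*$; Theorem~\ref{mainthm2} then determines the frame assignment on every domain of $\CD_*$, while the sign assignment is computable by the algorithms of \cite{MS}, \cite{YT}. All moduli spaces in play are finite CW complexes specified combinatorially by their domains, and the cap-off is explicit, so the restriction of the framed $1$-flow category to the domains of $\CD_*$ is algorithmically computable. Finally, \cite{LOS} give an explicit formula, in terms of the sign and frame assignments, for the action of $Sq^2$ on the chain complex associated to a framed $1$-flow category; applying it here yields an algorithmically computable $Sq^2$ on the grid chain complex, and since the formula involves only moduli data of $\CD_*$ domains the same computation works verbatim for $\widetilde{\mathit{GC}}$, $\widehat{\mathit{GC}}$, and $\mathit{GC}^+$, which differ only in which domains are allowed.

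I expect step (i) to be the main obstacle: showing that capping the bubble strata genuinely produces $\langle 2\rangle$-manifolds with the correct corner structure, and that truncating the infinite bubbling tower recorded by $\CDP_*$ at the level needed for a $1$-flow category does not disturb the coherence relations tying together the sign and frame assignments. A secondary technical point is to bound explicitly the finite subcomplex of $\CDP_*$ that contains $\CD_*$ and to confirm that the algorithm of Theorem~\ref{mainthm2} terminates on it.
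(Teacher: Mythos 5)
The crux of this theorem is precisely the step you flag as "the main obstacle," and your proposed resolution of it --- capping the bubble strata off with collars --- is not what makes the construction work, and as described it would fail. The problem with an annulus domain $D = H_j$ or $V_j$ is that its one-dimensional moduli space has one endpoint which is a broken trajectory (a product of two rectangle moduli spaces, as Definition \ref{def8} requires) and one endpoint which is the bubbled configuration $Q = \mathcal{M}(c_x, \vec{e}_j, (1)_j)$, which is \emph{not} a product of zero-dimensional moduli spaces between generators. Attaching a collar to that endpoint does not change this: the interval still has a boundary point there, and no amount of collaring turns a single bubbled configuration into a fibre product $\mathcal{M}(c,b)\times\mathcal{M}(a,c)$. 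The paper's solution is different in kind: it pairs each horizontal annulus $H_j$ with the vertical annulus $V_j$ through the same marking $O_j$ and the same generator $x$, observes that both moduli spaces degenerate to the \emph{same} lower stratum $Q$ but approach it in opposite internal directions ($+f_1$ versus $-f_1$), and glues $\mathcal{M}(H_j,0,0)$ to $\mathcal{M}(V_j,0,0)$ along $Q$. The bubbled point thereby becomes an interior point of a longer interval whose two endpoints are honest products of points, which is what puts the data into the Lobb--Orson--Sch\"utz framework. Your write-up never identifies this $H_j$/$V_j$ pairing, and without it there is no way to eliminate the non-product boundary points in dimension one (and the corresponding arcs through the Whitney-umbrella strata in dimension two).

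A second, related gap: you assert that compatibility of the sign and frame assignments with your modification is automatic "because those were defined on $\CDP_*$." In the paper this is exactly the content of Lemma \ref{gluedframe}, which is a genuine check, not a formality: one needs the Annuli rule of the sign assignment ($s(R_1)+s(R_2)+s(S_1)+s(S_2)=1$ for the pair $H_j = R_1*S_1$, $V_j = R_2*S_2$) to see that the two non-bubble endpoints of the glued interval are oppositely framed, and one needs to verify that the preferred framed paths of the two Type 2.2b pieces both limit to the positive framing at $Q$, so that the glued path is homotopic to a preferred Type 2.2a-style path and the frame assignment of the glued interval is the sum of the two pieces; coherence on two-dimensional boundaries then follows from Lemma \ref{framecond} and Theorem \ref{mainthm}. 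The remainder of your outline (sign assignments via Lemma \ref{lemsame}, the frame assignment on $\CD_*$ via Theorem \ref{mainthm2} and the finite subcomplex $\CDP_*^{\omega\le 3}$, and the LOS formula for $Sq^2$) does track the paper, but without the gluing mechanism and the accompanying coherence lemma the central claim --- that the modified moduli spaces form a framed $1$-flow category --- is not established.
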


\begin{remark}In fact, there is a $\binom{n}{2} + 1$ parameter family of such frame assignments (and hence framed $1$-flow categories), given by the generators of $H_2(\CDP_*)$. Of these, $\binom{n}{2}$ parameters correspond to choices of certain preferred paths, so we will only consider the $1$-parameter family (that is, the two frame assignments) given by the final generator. Each frame assignment we obtain in this way is similarly algorithmic. See Propositions $\ref{unique}$ and $\ref{unique2}$ for the precise statements.\end{remark}

In the case of knots, our algorithm for computing $Sq^2$ specializes to computing $Sq^2$ for $\widehat{\mathit{CFK}}$, which is necessarily unique as the Manolescu-Sarkar construction is a stable homotopy refinement of $\widehat{\mathit{CFK}}$ in that case, so its $Sq^2$ comes from a space. \cite{MS} showed that every knot with 14 or fewer crossings except possibly $14n26580$ cannot have a nontrivial $Sq^2$, and conjectured that $14n26580$ cannot either. We show this conjecture, as well as some examples of $15$-crossing knots which have the possibility for $Sq^2$.

\begin{theorem}\label{14cross}No knots with $14$ crossings or fewer can have a nontrivial $Sq_2$. In particular, the potential example $14n26580$ of \cite{MS} cannot have a nontrivial $Sq_2$. \end{theorem}

\textbf{Organization of the paper.} In Section 2, we fix notation for grid homology and review the embeddings of the moduli spaces from the Manolescu-Sarkar construction. \\ In Section 3, we describe the boundaries of the lower-dimensional moduli spaces and introduce $\CDP_*$, the complex of positive domains with partitions. \\ In Section 4, we describe the embedded framed cobordism group and how to show whether a moduli space is frameable given the framing of its boundary. \\ In Section 5, we describe the most basic preferred paths, as well as how to compute some algebraic topology in the special orthogonal groups $\SO(A)$. \\ In Section 6, we show that sign assignments (for $\CDP_*$) give coherent framings for the $0$-dimensional moduli spaces (in other words, a sign assignment in the Lobb-Orson-Schütz sense). We also give the preferred framing of each $1$-dimensional moduli space. \\ In Section 7, we find the conditions for coherent framing of the $1$-dimensional moduli spaces, and prove Theorem $\ref{mainthm}$. \\ In Section 8, we specialize to domains without bubbles, where we can compute the frame assignment $f$ for domains in $\CD_*$ by solving a system of equations given by $\delta f$, proving Theorem $\ref{mainthm2}$ \\ In Section 9, we construct a framed $1$-flow category from the Manolescu-Sarkar moduli spaces for domains in $\CD_*$, proving Theorem $\ref{mainthm3}$. \\ In Section 10, we give an example computation for the $2 \times 2$ grid diagram for the unknot. \\ Finally, in Section 11, we prove Theorem $\ref{14cross}$ and show a potential example of a $15$-crossing knot with nontrivial $Sq^2$.

\textbf{Acknowledgements.} The author would like to thank Sucharit Sarkar for many helpful conversations throughout the writing of this paper. The author would also like to thank Dirk Schütz for helpful comments on a preprint version of this paper; Mike Hill for inspiring the ``preferred path" terminology; Geva Yashfe who suggested a key idea for evaluating the homotopy classes of loops in $\SO$; Robert Lipshitz, Ciprian Manolescu, and Zoltán Szabó for useful comments; and Marc Culler, Nathan Dunfield, Matthias Görner, Jeffrey Weeks, et al for developing SnapPy which enabled the computations in the final section. This work was supported by an NSF grant DMS-2136090.

\section{Grid Moduli Spaces}

Definitions related to grid diagrams are summarized below. For details, see \cite{MOS, MOST, OSS}.

\begin{itemize}

\item An index $n$ grid diagram $\G$ is a torus together with $n$ $\alpha$-circles (drawn horizontally) and $n$ $\beta$-circles (drawn vertically). The complements of the $\alpha$ (respectively, $\beta$) circles are called the horizontal (respectively, vertical) annuli---the complements of the $\alpha$ and $\beta$ circles are called the square regions.

\item Each vertical and horizontal annulus contains exactly one $X$ and $O$ marking, which are labelled $X_1, \dots, X_n$ and $O_1, \dots, O_n$.

\item The horizontal (respectively, vertical) annuli can be labeled by which $O$-marking they pass through---write $H_i$ (respectively, $V_i$) for the horizontal (respectively, vertical) annulus passing through $O_i$.

\item Given a fixed planar drawing of the grid, we can also label the the $\alpha$ circles $\alpha_1, \dots, \alpha_n$ from bottom to top, and the $\beta$ circles $\beta_1, \dots, \beta_n$ from left to right. The annuli can also be labelled by which sets of $\alpha$ or $\beta$ circles they lie between---write $H_{(i)}$ (respectively, $V_{(i)}$) for the horizontal annulus between $\alpha_i$ and $\alpha_{i + 1}$ (respectively, vertical annulus between $\beta_i$ and $\beta_{i + 1}$). Note that $H_{(n)}$ and $V_{(n)}$ lie between $\alpha_n$ and $\alpha_1$, and $\beta_n$ and $\beta_1$, respectively.

\item A generator is an unordered $n$-tuples of points such that each $\alpha$ and $\beta$ circle contains exactly one. Generators can equivalently be viewed a $\Z$-linear combination of $n$ points, or alternatively as permutations---for a permutation $\sigma \in S_n$ the generator $x^{\sigma}$ is the unique generator with a point at each $\alpha_{\sigma(i)} \cap \beta_{i}$. For instance, Figure $\ref{figgrid}$ shows a generator, consisting of the bulleted points \textbullet.

\item A domain is a $\Z$-linear combination of square regions with the property that $\partial D \cap \alpha = y - x$ for some generators $x, y$. We say that $D$ is a domain from $x$ to $y$, and write $D \in \mathscr{D}(x, y)$. $D$ is said to be positive if none of the coefficients are negative, in which case we would write $D \in \mathscr{D}^+(x, y)$.

\item Given a domain $D$, we denote its coefficient in the square with the $O_i$ marking $O_i(D)$, and its coefficient in the square with the $X_i$ marking $X_i(D)$. We write $\mathbb{O}(D) := (O_1(D), \dots, O_n(D))$ and $\mathbb{X}(D) := (X_1(D), \dots, X_n(D))$.

\item Given $D \in \mathscr{D}(x, y), E \in \mathscr{D}(y, z)$, we get a domain $D * E \in \mathscr{D}(x, z)$ by adding $D$ and $E$ as $2$-chains.

\item The constant domain from a generator $x$ to itself is the domain $c_x \in \mathscr{D}(x, x)$ whose coefficients are zero in every square region.

\item For every domain $D$, there is an associated integer $\mu(D)$ called its Maslov index, which satisfies: \begin{itemize}\item $\mu(D * E) = \mu(D) + \mu(E)$ \item For a positive domain $D$, $\mu(D) \geq 0$, with equality if and only if $D$ is some constant domain. \item For $D \in \mathscr{D}^+(x, y)$, $\mu(D) = 1$ if and only if $D$ is a rectangle: that is, its bottom left and top right corners are coordinates of $x$, its bottom right and top left corners are coordinates of $y$, and the other $n - 2$ coordinates of $x$ and $y$ agree and do not lie in $D$. \item $\mu(D) = k$ if and only if $D$ can be decomposed (not necessarily uniquely) into $k$ rectangles $D = R_1 * \cdots * R_k$.\end{itemize}

\item Generators have two well-defined integer gradings called the Maslov grading $M(x)$ and the Alexander grading $A(x)$, such that for any domain $D \in \mathcal{D}(x, y)$,
\begin{align*}&M(x) - M(y) = \mu(D) - 2 \lvert\mathbb{O}(D) \rvert \\&A(x) - A(y) = \lvert \mathbb{X}(D) \rvert - \lvert\mathbb{O}(D) \rvert\end{align*}

\end{itemize}

Link Floer homology can be described with grid diagrams (this version is typically called \textit{grid homology})---the differential counts the zero-dimensional moduli space of rectangles from a generator $x$ to a generator $y$, either mod $2$ or with sign given a sign assignment for rectangles (see \cite{MOST}, \cite{GA}). As a result, the Maslov grading of a generator is its homological grading.

To compute the Steenrod square $Sq^2$, we will also need to understand the one- and two-dimensional moduli spaces, which arise from index $2$ and $3$ domains, respectively, as well as bubbling. In general, we will construct moduli spaces $\mathcal{M}(D, \vec{N}, \vec{\lambda})$, where $D$ is a positive domain, $\vec{N} \in \mathbb{N}^n$, and $\vec{\lambda} = (\lambda_1, \dots, \lambda_n)$ where $\lambda_j$ is an ordered partition of $N_j$. $\mathcal{M}(D, \vec{N}, \vec{\lambda})$ will be a model for the compactified moduli space of pseudo-holomorphic strips in $\text{Sym}^n(T^2)$ relative to $\mathbb{T}_{\alpha} = \alpha_1 \times \dots \times \alpha_n$ and $\mathbb{T}_{\beta} = \beta_1 \times \dots \times \beta_n$ so that
\begin{itemize}
\item The strips have domain $D$.
\item Each strip is equipped with $\lvert \vec{N} \rvert := \sum_{j = 1}^{n} N_j$ marked points on the boundaries in groups of $N_j$. Each group of $N_j$ consists of points where an $\alpha$ or $\beta$ disk degeneration with domain $H_j$ or $V_j$ respectively has bubbled off.
\item For each $j$, the $N_j$ marked points are partitioned according to $\lambda_j$. Points in the same part of $\lambda_j$ occur at the same height on the strip.
\end{itemize}

Provided either $D$ or $\vec{N}$ is nontrivial, the dimension of $\mathcal{M}(D, \vec{N}, \vec{\lambda})$ is given by $\mu(D) - 1 + \lvert \vec{l(\lambda)} \rvert$, where $\lvert l(\vec{\lambda}) \rvert := \sum_{j = 1}^{n} l(\lambda_j)$ is the total length of $\vec{\lambda}$. See \cite[Section 8]{MS} for details.

In general, $\mathcal{M}(D, \vec{N}, \vec{\lambda})$ is a Whitney stratified space. We construct $\mathcal{M}(D, \vec{N}, \vec{\lambda})$ to have the strata
\begin{align*}\mathcal{M}(D^1, \vec{N^1} + \mathbb{O}(E^1) + \mathbb{O}(F^1), \vec{\lambda^1}) \times \dots \times \mathcal{M}(D^r, \vec{N^r} + \mathbb{O}(E^r) + \mathbb{O}(F^r), \vec{\lambda^r})\end{align*}
where $r \geq 1$, each $E^i$ is a sum of rows, each $F^i$ is a sum of columns,
\begin{align*}&D = \sum\limits_{i = 1}^{r} (D^i + E^i + F^i), \\&\vec{N} = \sum\limits_{i = 1}^{r} \vec{N^i}, \text{ and for each i,} \\& \vec{\lambda^i} = (\lambda_1^i, \dots, \lambda_n^i) \text{ where } \lambda_i^j \text{ is an ordered partition of } (N_j^i + O_j(E^i) + O_j(F^i)), \text{ and } \\&\text{for each } j, \lambda_j \text{ is the concatenation of } \eta_j^1, \dots, \eta_j^r, \text{ where } \eta_j^i \text{ is a coarsening of } \lambda_j^i\end{align*}
The $D^i$ correspond to trajectory breaking, the $E^i$ to horizontal bubbling ($\alpha$-boundary degenerations), the $F^i$ to vertical bubbling ($\beta$-boundary degenerations), and the coarsenings of the partitions correspond to joining groups of bubbles of the same type. See \cite[Section 9]{MS} for details.

The local models for the stratification are detailed in \cite[Section 7]{MS}, and we recap the important parts below. First, let $\tilde{Z}_N = \text{Sym}^N(\C)$, where we identify $\tilde{Z}_N \cong \C^N$ via the elementary symmetric polynomials in $z_1 = x_1 + iy_1, \dots, z_N = x_N + iy_N$:
\begin{align*}
s_1 = \sum\limits_{j = 1}^{N} z_j, s_2 = \sum\limits_{j < k} z_j z_k, \dots, s_N = \prod\limits_{j = 1}^{N} z_j
\end{align*}
Specifically, we take the coordinates $\text{Re}(s_1), \text{Im}(s_1), \text{Re}(s_2), \dots, \text{Im}(s_N)$ on $\tilde{Z}_n$. Let $Z_N = \tilde{Z}_N/\R$, where the quotient is identified with the subspace $\{ \text{Re}(s_1) = \sum_{j = 1}^{N} x_j = 0 \}$. $Z_N$ is a stratified space with strata
\begin{align*}Z(p^-, p^0, p^+; \lambda) \text{ where } N = p^- + p^0 + p^+ \text{ and } \lambda \text{ is an ordered partition of } p^0.\end{align*}
$Z(p^-, p^0, p^+; \lambda)$ consists of multisets $\{ z_1, \dots, z_N \}$ where $p^-$ of the $y_j = \text{Im}(z_j)$ are negative, $p^0$ of them are zero, and $p^+$ of them are positive, and the real parts of the $p^0$ coordinates with zero imaginary part are partitioned according to $\lambda$. It will also often be convenient to work with $\tilde{Z}_N$ directly, which is itself stratified with strata
\begin{align*}\tilde{Z}(p^-, p^0, p^+; \lambda) := \R \times Z(p^-, p^0, p^+; \lambda).\end{align*}
Notably, in order to frame $Z(p^-, p^0, p^+; \lambda)$, it suffices to frame $\tilde{Z}(p^-, p^0, p^+; \lambda)$.

Consider any stratum $\tilde{Z}(p^-, p^0, p^+; \lambda)$, where $\lambda = (\lambda_1, \dots, \lambda_m)$, a point $z$ in the stratum, and fix $\epsilon > 0$. We relabel the $p^0$ coordinates with zero imaginary part $z_{1, 1}, \dots, z_{1, \lambda_1}, z_{2, 1}, \dots, z_{2, \lambda_2}, \dots, z_{m, 1}, \dots, z_{m, \lambda_m}$; that is, their real parts satisfy
\begin{align*}x_{1, 1} = \dots = x_{1, \lambda_1} < x_{2, 1} = \dots = x_{2, \lambda_2} < \dots < x_{m, 1} = \dots = x_{m, \lambda_m}\end{align*}
We then pick $z' \in \tilde{Z}(p^-, p^0, p^+; (1, 1, \dots, 1))$ which is $\epsilon$-close to $z$; specifically, we leave unchanged the $N - p^0$ coordinates with nonzero imaginary parts, and in the $p^0$ coordinates with zero imaginary parts we change only the real parts by spacing them by $\epsilon$:
\begin{align*}x_{j, l}' = x_{j, 1} + \frac{(2l - \lambda_j - 1)\epsilon}{2 \lambda_j}\end{align*}
so that $z' \rightarrow z$ as $\epsilon \rightarrow 0$. We pick coordinates \textit{tailored to} $z'$ near $z'$ as follows. Pick disjoint open neighborhoods $U_{j,l}$ of $z'_{j, l}$, which have coordinates $u_{j, l} + iv_{j, l}$. On $\prod_{l = 1}^{\lambda_j} U_{j, l}$, we put the coordinates
\begin{align*}v_{j, 1}, \Delta_{j, 1}, v_{j, 2}, \Delta_{j, 2}, \dots, v_{j, \lambda_j}, \sum\limits_{l = 1}^{\lambda_j} u_{j, l}\end{align*}
where $\Delta_{j, l} := u_{j, l+1} - u_{j, l}$. For the remaining $N - p^0$ coordinates, we use the typical symmetric polynomial coordinates.

The infinitesimals of the tailored coordinates form a basis for the tangent bundle of a neighborhood of $z'$, but their linear independence is not generally preserved by sending $\epsilon \rightarrow 0$. Instead, we choose an isotopy from $z'$ to $z$ that preserves this basis, and the standard frame of $\tilde{Z}(p^-, p^0, p^+; \lambda)$ comes from the normal part of this basis.
\begin{definition}
The standard frame for the normal bundle to $\tilde{Z}(p^-, p^0, p^+; \lambda)$ in $\tilde{Z}_N$ (and equivalently $Z(p^-, p^0, p^+; \lambda)$ in $Z_N$) is given by the images under the above isotopy to
\begin{align*}\delta v_{1, 1}, -\delta \Delta_{1, 1}, \delta v_{1, 2}, &-\delta \Delta_{1, 2}, \dots, \delta v_{1, \lambda_1} \\& \dots \\\delta v_{m, 1}, -\delta \Delta_{m, 1}, \delta v_{m, 2}, &-\delta \Delta_{m, 2}, \dots, \delta v_{m, \lambda_m}
\end{align*}
(While the choice of frame appears to depend on choices of $\epsilon, z'$, and the isotopy, the different choices are canonically isotopic. See \cite[Definition 7.5]{MS})\end{definition}

More generally, we consider the spaces $Z_{\vec{N}} = (\text{Sym}^{N_1}(\C) \times \dots \times \text{Sym}^{N_n}(\C))/\R$ where we quotient by the diagonal action of $\R$. This naturally decomposes into strata
\begin{align*}Z(\vec{p}^-, \vec{p}^0, \vec{p}^+; \vec{\lambda}) \text{ where } \vec{N} = \vec{p}^- + \vec{p}^0 + \vec{p}^+ \text{ and } \lambda_j \text{ is an ordered partition of } p^0_j \text{ for each } 1 \leq j \leq n.\end{align*}
where for each $Z(p^-, p^0, p^+; \lambda)$ we have $p_j^-, p_j^0, p_j^+$ negative, zero, and positive imaginary parts in each $\text{Sym}^{N_j}(\C)$ coordinates.

\begin{definition}\label{defstdframe}The standard frame for the normal bundle to a stratum $Z(\vec{p}^-, \vec{p}^0, \vec{p}^+; \vec{\lambda})$ in $Z_{\vec{N}}$ is the concatenation of the standard frames for each $Z(p_j^-, p_j^0, p_j^+; \lambda_j)$ in $Z_{N_j}$.\end{definition}

Now we describe the local models for each moduli space. Note that a moduli space $\mathcal{M}(D, \vec{N}, \vec{\lambda})$ is itself a lower dimensional stratum of some (not generally unique) moduli space $\mathcal{M}(\tilde{D}, \vec{0}, \vec{0})$, where we must have $\tilde{D} = D + \tilde{E} + \tilde{F}$, where $\tilde{E}$ is a sum of rows, $\tilde{F}$ a sum of columns, and $\mathbb{O}(\tilde{E}) + \mathbb{O}(\tilde{F}) = \vec{N}$. So it will suffice to consider moduli spaces of the form $\mathcal{M}(\tilde{D}, \vec{0}, \vec{0})$.

In general, to ensure that the moduli space $\mathcal{M}(D, \vec{N}, \vec{\lambda})$ is neatly embedded as a stratum of $\mathcal{M}(\tilde{D}, \vec{0}, \vec{0})$, both are embedded in
\begin{align*}\mathbb{E}_{l}^{d} := \R \times \R_+ \times \R \times \dots \times \R_+ \times \R \cong \R_+^l \times \R^{d(l+1)}\end{align*}
where $d$ is a sufficiently large even integer (so that the above isomorphism is orientation-preserving) and $l = \mu(\tilde{D}) - 1 + 2\lvert \vec{N} \rvert$ (called the thick dimension of $\mathcal{M}(D, \vec{N}, \vec{\lambda})$; see \cite[Section 10]{MS} for details). The local model for the stratum
\begin{align*}\mathcal{M}(D^1, \vec{N^1} + \mathbb{O}(E^1) + \mathbb{O}(F^1), \vec{\lambda^1}) \times \dots \times \mathcal{M}(D^r, \vec{N^r} + \mathbb{O}(E^r) + \mathbb{O}(F^r), \vec{\lambda^r})\end{align*}
is the same as the local model for
\begin{align*}Z(0, \mathbb{O}(E^1) + \mathbb{O}(F^1), 0; \vec{\lambda}^1) \times \dots \times Z(0, \mathbb{O}(E^r) + \mathbb{O}(F^r), 0; \vec{\lambda}^r) \times \{ 0 \} \times \R^a\end{align*}
in
\begin{align*}Z(\mathbb{O}(E^1) + \mathbb{O}(\tilde{E}^1), 0, \mathbb{O}(F^1) + \mathbb{O}(\tilde{F}^1)) \times \dots \times Z(\mathbb{O}(E^r) + \mathbb{O}(\tilde{E}^r), 0, \mathbb{O}(F^r) + \mathbb{O}(\tilde{F}^r)) \times \R_+^{r - 1} \times R^a \cong \mathbb{E}_{l}^{d}\end{align*}
whose standard frame is given by the concatenation of the standard frame of Definition $\ref{defstdframe}$ with the positive unit vectors in each extra $\R_+$ and $\R$ factor.

\begin{definition}\label{intframes}The first $l$ frames in a stratum as above are called the internal frames. The remaining $d(l+1)$ are called the external frames.\end{definition}

\section{The Complex of Domains with Partitions}

\cite[Section 9]{MS} shows that $\mathcal{M}(D, \vec{N}, \vec{\lambda})$ has a single codimension zero stratum, which we call its interior, and whose complement we call its boundary and denote $\partial\mathcal{M}(D, \vec{N}, \vec{\lambda})$. They also show that $\partial\mathcal{M}(D, \vec{N}, \vec{\lambda})$ is the union of the codimension $1$ strata, so we recap the classification of the codimension 1 strata.

\begin{definition} The following changes to an ordered partition describe parts of the codimension 1 strata---see \cite[Definitions 4.1, 4.2, 4.3]{MS} for more details.

\begin{itemize}

\item A unit enlargement (at position $k$) increases $N$ by $1$ and adds a $1$ to the tuple $\lambda$ (at position $k$). The set of unit enlargements of $\lambda$ is denoted $\text{UE}(\lambda)$.

\item An elementary coarsening (at position $k$) replaces both terms $\lambda_k$ and $\lambda_{k + 1}$ with one term $\lambda_k + \lambda_{k + 1}$. The set of elementary coarsenings of $\lambda$ is denoted $\text{EC}(\lambda)$.

\item An initial reduction removes $\lambda_1$ (and decreases $N$ by $\lambda_1$), and a final reduction removes $\lambda_m$ (and decreases $N$ by $\lambda_m$). The set of initial reductions (respectively, final reductions) of $\lambda$ is denoted $\text{IR}(\lambda)$ (respectively, $\text{FR}(\lambda)$), where we consider both sets empty if $N = 0$.

\end{itemize}

\label{defpart}\end{definition}

Type I codimension 1 strata correspond to trajectory breaking $D$ into two non-constant domains:
\begin{align}\label{eqn1}\mathcal{M}(D^1, \vec{N^1}, \vec{\lambda^1}) \times \mathcal{M}(D^2, \vec{N^2}, \vec{\lambda^2})\end{align}
where $r = 2, E^i = F^i = 0$, and there is no coarsening of the partition (that is, $\lambda_j$ is the concatenation of $\lambda_1$ and $\lambda_2$ for each $j$).

Type II codimension 1 strata correspond to no trajectory breaking (so $r = 1$) and a single boundary degeneration:
\begin{align}\label{eqn2}\mathcal{M}(D^1, \vec{N} + \vec{e_j}, \vec{\lambda'})\end{align}
where $D = D^1 + H_j$ or $D = D^1 + V_j$, and $\vec{\lambda'} = (\lambda_1, \dots, \lambda_{j - 1}, \lambda_j', \lambda_{j  + 1}, \dots, \lambda_n)$ where $\lambda_j' \in \text{UE}(\lambda_j)$ (and $\vec{e_j}$ denotes the $j^{th}$ unit vector in $\N^n$).

Type III codimension 1 strata correspond to no trajectory breaking ($r = 1$), no boundary degenerations, and a single elementary coarsening:
\begin{align}\label{eqn3}\mathcal{M}(D, \vec{N}, \vec{\lambda'})\end{align}
where $\vec{\lambda'} = (\lambda_1, \dots, \lambda_{j - 1}, \lambda_j', \lambda_{j  + 1}, \dots, \lambda_n)$ where $\lambda_j' \in \text{EC}(\lambda_j)$.

Type IV codimension 1 strata correspond to trajectory breaking, similarly to type I, but where one of the domains is constant:
\begin{align}\label{eqn4}\mathcal{M}(c_x, \vec{N^1}, \vec{\lambda^1}) \times \mathcal{M}(D, \vec{N^2}, \vec{\lambda^2}) \text{ or } \mathcal{M}(D, \vec{N^1}, \vec{\lambda^1}) \times \mathcal{M}(c_y, \vec{N^2}, \vec{\lambda^2})\end{align}
where $D$ is a domain from $x$ to $y$, $r = 2, E^i = F^i = 0$, and there is no coarsening of the partitions, like for the type I strata.

Of the codimension 1 strata, we are specifically interested in the codimension 1 strata of type II, type III, and of type I or IV where one of the factors is zero-dimensional. For type I strata, a factor is zero-dimensional if it consists of a rectangle with an empty partition:
\begin{align}\label{eqn5}\mathcal{M}(D^1, \vec{N}, \vec{\lambda}) \times \mathcal{M}(R, \vec{0}, \vec{0}) \text{ or } \mathcal{M}(R, \vec{0}, \vec{0}) \times \mathcal{M}(D^2, \vec{N}, \vec{\lambda})\end{align}
For type IV strata, the constant domain factor is zero-dimensional if it contains a single length $1$ partition:
\begin{align}\label{eqn6}\mathcal{M}(c_x, N^1 \vec{e_j}, (N_1)_j) \times \mathcal{M}(D, \vec{N^2}, \vec{\lambda^2}) \text{ or } \mathcal{M}(D, \vec{N^1}, \vec{\lambda^1}) \times \mathcal{M}(c_y, N^2\vec{e_j}, (N^2)_j)\end{align}
where $N_1, N_2$ are positive integers and we use the convention that
\begin{conv}\label{partconv}
$(\lambda)_i$ denotes the vector consisting of the partition $\lambda$ at position $i$ and empty partitions everywhere else.
\end{conv}

\cite{MS} introduces the complex $\CDP_*$ of domains with partitions whose differential corresponds to the boundary of the corresponding moduli space. We give a brief description of $\CDP_*$ below; see \cite[Section 4]{MS} and \cite[Sections 4-5]{YT} for more details. 

\begin{definition}The complex of positive domains with partitions $\CDP_* = \CDP_*(\G; \Z/2)$ is freely generated by triples of the form $D, \vec{N}, \vec{\lambda}$, where
\begin{itemize}

\item $D \in \mathscr{D}^+(x, y)$ is a positive domain.

\item $\vec{N} \in \N^{n}$ is an $n$-tuple of nonnegative integers, $\vec{N} = (N_1, \dots, N_n)$.

\item $\vec{\lambda} = (\lambda_1, \dots, \lambda_n)$ is an $n$-tuple of ordered partitions, where $\lambda_j = (\lambda_{j, 1}, \dots, \lambda_{j, m_j})$ is an ordered partition of $N_j$.

\end{itemize}
The grading of $(D, \vec{N}, \vec{\lambda})$ is given by the Maslov index of $D$ plus $\lvert l(\vec{\lambda}) \rvert$ (the dimension of the corresponding moduli space). The differential is given by the sum of the following four terms.
\begin{itemize}

\item Type I terms, given by taking out a rectangle from $D$, corresponding to the codimension 1 strata as in Equation $(\ref{eqn5})$.

\item Type II terms, given by taking out a vertical or horizontal annulus passing through $O_j$ from $D$ and performing a unit enlargement to $\lambda_j$, corresponding to the codimension 1 strata as in Equation $(\ref{eqn2})$.

\item Type III terms, given by an elementary coarsening of one of the partitions $\lambda_j$, corresponding to the codimension 1 strata as in Equation $(\ref{eqn3})$.

\item Type IV terms, given by taking the initial or final reduction of one of the partitions $\lambda_j$, corresponding to the codimension 1 strata as in Equation $(\ref{eqn6})$.

\end{itemize}

Precisely, we can write $\partial = \partial_1 + \partial_2 + \partial_3 + \partial_4$ where

\begin{align*}\partial_1(D, \vec{N}, \vec{\lambda}) &= \sum\limits_{R * E = D} (E, \vec{N}, \vec{\lambda}) + \sum\limits_{E * R = D} (E, \vec{N}, \vec{\lambda}) \\ \partial_2(D, \vec{N}, \vec{\lambda}) &= \sum\limits_{j = 1}^{n} \sum\limits_{D = E * H_j \text{ or } E * V_j} \sum\limits_{\lambda_j' \in \text{UE}(\lambda_j)} (E, \vec{N} + \vec{e}_j, \vec{\lambda}') \\ \partial_3(D, \vec{N}, \vec{\lambda}) &= \sum\limits_{j = 1}^{n} \sum\limits_{\lambda_j' \in \text{EC}(\lambda_j)} (D, \vec{N}, \vec{\lambda}') \\ \partial_4(D, \vec{N}, \vec{\lambda}) &= \sum\limits_{j = 1}^{n} \sum\limits_{\lambda_j' \in \text{IR}(\lambda_j)} (D, \vec{N} - \lambda_{j, 1}\vec{e}_j, \vec{\lambda}') + \sum\limits_{j = 1}^{n} \sum\limits_{\lambda_j' \in \text{FR}(\lambda_j)} (D, \vec{N} - \lambda_{j, m_j}\vec{e}_j, \vec{\lambda}')\end{align*}
(Here $R$ always denotes a rectangle.)

\label{defcdp}\end{definition}

$(\CDP_*, \partial)$ is a chain complex ($\partial^2 = 0$). Though we have only defined it in $\Z/2$ coefficients, a similar definition with some signs (see \cite[Section 4]{MS} and \cite[Section 5]{YT} for specifics) makes it a chain complex over $\Z$ as well. Refer to \cite[Lemma 4.6]{MS} for the proof of this more general fact. We also now define the related complexes:

\begin{definition}\begin{itemize}

\item $\CDP_*^0$ is freely generated by the same generators as $\CDP_*$ and has the same differential, except with $O_1$ blocked---that is, where we exclude Type I differentials with $R$ passing through $O_1$.

\item $\CD_*$ is freely generated by the positive domains with no bubbling ($\vec{N} = 0$). Its differential is the Type I differential of $\CDP_*$.

\end{itemize}\end{definition}

$\CDP_*^0$ is a subcomplex of $\CDP_*$, and is the version of $\CDP_*$ studied by \cite{MS}. On the other hand, $\CD_*$ is not a subcomplex of $\CDP_*$ (as some domains without bubbling have Type II differentials). However, $\CD_*$ is a chain complex in its own right (see \cite[Section 3]{MS}) for a proof of this fact) that can also be extended over $\Z$ coefficients.

The following classification of the generators of $\CDP_0, \CDP_1, \CDP_2, \CDP_3$ from \cite{YT} will be helpful. It will also be convenient to name the different types of domains.

\begin{enumerate}

\item[(0)] $\CDP_0$ is generated by the constant domains with no partitions $(c_{x}, 0, 0)$ for some generator $x$.

\item[(1)] $\CDP_1$ is generated by rectangles with no partitions $(R, 0, 0)$, which we will call \textbf{Type 1.1} triples, as well as triples of the form $(c_x, N\vec{e}_j, (N)_j)$ for a constaint domain $c_x$, which we will call \textbf{Type 1.0} triples.

\item[(2)] $\CDP_2$ is generated by Maslov index $2$ domains with no partition $(D, 0, 0)$, which we will call \textbf{Type 2.2} triples, triples of the form $(R, N\vec{e}_j, (N)_j)$ for a rectangle $R$, which we will call \textbf{Type 2.1} triples, or a constant domain with partitions of total length $2$, which we will call \textbf{Type 2.0} triples. As Type 2.2 and 2.0 triples can have different terms appear in their differentials, we divide them into subtypes accordingly:

\begin{itemize}

\item Type 2.2 triples where $D$ is not an annulus have only type I differentials, and will be called \textbf{Type 2.2a} triples.

\item Type 2.2 triples where $D$ is an annulus have a type I and a type II differential, and will be called \textbf{Type 2.2b} triples.

\item Type 2.0 triples of the form $(c_x, (N + M)\vec{e}_j, (N, M)_j)$ have a type III and IV differential, and will be called \textbf{Type 2.0a} triples.

\item Type 2.0 triples of the form $(c_x, N\vec{e}_j + M \vec{e}_k, ((N)_j, (M)_k))$ (where $j \neq k$) have only type IV differentials, and will be called \textbf{Type 2.0b} triples.

\end{itemize}

\item[(3)] Finally, $\CDP_3$ is generated by Maslov index $3$ domains with no partition (\textbf{Type 3.3}), Maslov index $2$ domains with a partition of the form $(D, N\vec{e}_j, (N)_j)$ (\textbf{Type 3.2}), rectangles with a partition of total length $2$ (\textbf{Type 3.1}), and constant domains with partitions of total length $3$ (\textbf{Type 3.0}). The subtypes are the following

\begin{itemize}

\item Type 3.3 triples where $D$ does not contain an annulus have only type I differentials, and will be called \textbf{Type 3.3a} triples.

\item Type 3.3 triples where $D$ contains an annulus $A = R * S$, and $D$ can be written $D = A * R$, have two type I differentials and one type II differential, and will be called \textbf{Type 3.3b} triples.

\item Type 3.3 triples where $D$ contains an annulus $A = R * S$, and $D \neq A * R$, have three type I differentials and one type II differential, and will be called \textbf{Type 3.3c} triples.

\item Type 3.2 triples $(D, N\vec{e}_j, (N)_j)$ where $D$ is not an annulus have only type I and IV differentials, and will be called \textbf{Type 3.2a} triples.

\item Type 3.2 triples $(D, N\vec{e}_j, (N)_j)$ where $D$ is an annulus but is neither $H_j$ nor $V_j$ have type I, II, and IV differentials, and will be called \textbf{Type 3.2b} triples.

\item Type 3.2 triples $(D, N\vec{e}_j, (N)_j)$ where $D$ is either $H_j$ nor $V_j$ have type I, II, and IV differentials as well as a type III term in $\partial^2$, and will be called \textbf{Type 3.2c} triples.

\item Type 3.1 triples of the form $(R, (N + M)\vec{e}_j, (N, M)_j)$ have type I, III, and IV differential, and will be called \textbf{Type 3.1a} triples.

\item Type 3.1 triples of the form $(R, N\vec{e}_j + M \vec{e}_k, ((N)_j, (M)_k))$ (where $j \neq k$) have only type I and IV differentials, and will be called \textbf{Type 3.1b} triples.

\item Type 3.0 triples of the form $(c_x, (N_j + M_j + P_j) \vec{e}_j, (N_j, M_j, P_j)_j)$ have type III and IV differentials (even in $\partial^2$), and will be called \textbf{Type 3.0a} triples.

\item Type 3.0 triples of the form $(c_x, (N_j + M_j) \vec{e}_j + N_k \vec{e}_k, ((N_j, M_j)_j, (N_k)_k)$ for $j, k$ distinct have type III and IV differentials, and will be called \textbf{Type 3.0b} triples.

\item Type 3.0 triples of the form $(c_x, N_j \vec{e}_j + N_k \vec{e}_k + N_l \vec{e}_l, ((N_j)_j, (N_k)_k, (N_l)_l)$ for $j, k, l$ distinct have only type IV differentials, and will be called \textbf{Type 3.0c} triples.

\end{itemize}\end{enumerate}

Since $\CDP_*$ captures the boundary of the corresponding moduli space, its homology computation is useful:

\begin{theorem}(Theorem 1.4 of \cite{YT}) We have that
\begin{enumerate}

\item[(0)] $H_0(\CDP_*; \Z/2)$ is isomorphic to $\Z/2$, and is generated by $(c_{\Id}, 0, 0)$.

\item[(1)] $H_1(\CDP_*; \Z/2)$ is isomorphic to $(\Z/2)^{n}$, and is generated by $(c_{\Id}, \vec{e_j}, (1))$ for each $1 \leq j \leq n$.

\item[(2)] $H_2(\CDP_*; \Z/2)$ is isomorphic to $(\Z/2)^{\binom{n}{2} + 1}$, and is generated by $(c_{\Id}, \vec{e}_j + \vec{e}_k, ((1), (1)))$ for $1 \leq j < k \leq n$ and a linear combination $U'$ of domains as given in \cite{YT}.

\item[(3)] $H_3(\CDP_*; \Z/2)$ is isomorphic to $(\Z/2)^{\binom{n}{3} + n}$, and is generated by $(c_{\Id}, \vec{e}_j + \vec{e}_k + \vec{e}_l, ((1), (1), (1)))$ for $1 \leq j < k < l \leq n$ and $n$ additional generators $U_j'$ obtained from $U'$ as given in \cite{YT}.

\end{enumerate}\label{thm2}\end{theorem}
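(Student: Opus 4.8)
\emph{Strategy and the first spectral sequence.} The plan is to disentangle the ``partition'' combinatorics from the ``domain'' combinatorics by a spectral sequence, compute each piece separately, and then account for the sporadic classes $U'$ and $U_j'$ by exhibiting explicit cycles. Filter $\CDP_\ast$ by $F_p$, the span of those generators $(D,\vec N,\vec\lambda)$ with $\mu(D)\le p$. Because $\partial$ is homogeneous of degree $-1$, because $\partial_1,\partial_2$ do not touch $\vec\lambda$ while $\partial_3,\partial_4$ do not touch $D$, and because $\partial_2$ raises $\lvert l(\vec\lambda)\rvert$ by one, it follows that $\partial_1$ lowers $\mu(D)$ by $1$, $\partial_2$ lowers it by $2$, and $\partial_3,\partial_4$ fix $D$; hence each $F_p$ is a subcomplex, the filtration is exhaustive and bounded below with $F_{-1}=0$, and $F_k=\CDP_k$ in homological degree $k$, so the associated spectral sequence converges to $H_\ast(\CDP_\ast;\Z/2)$. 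On $E^0$ the differential is $\partial_3+\partial_4$, which never involves $D$. Let $P_\ast$ be the complex freely generated over $\Z/2$ by all ordered partitions (of every $N\ge 0$, the empty partition of $0$ included), graded by length, whose differential is the sum of all elementary coarsenings together with the initial and final reductions. Then
\begin{align*}
E^1_{p,\ast}\;\cong\;\Big(\bigoplus_{\mu(D)=p}\Z/2\langle D\rangle\Big)\otimes H_\ast\!\big(P^{\otimes n}\big),
\end{align*}
with one tensor factor $P_\ast$ for each $1\le j\le n$. Since $\partial_1$ drops the filtration by exactly one while $\partial_2$ drops it by two, $d^1=\partial_1\otimes\mathrm{Id}$ with $\partial_1$ the differential of $\CD_\ast$, so Künneth over the field $\Z/2$ gives $E^2\cong H_\ast(\CD_\ast)\otimes H_\ast(P^{\otimes n})$.

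\emph{The partition factor.} To compute $H_\ast(P_\ast)$ I would filter once more, by the integer $N$ being partitioned: $\partial_3$ preserves $N$ while the reductions lower it, so the associated graded differential is $\partial_3$. For fixed $N\ge 1$, length-$m$ compositions of $N$ are exactly the $(m-1)$-element subsets of $\{1,\dots,N-1\}$, and an elementary coarsening deletes one of these elements, so the $\partial_3$-complex for that $N$ is a degree shift of the augmented simplicial chain complex of a simplex on $N-1$ vertices; it is acyclic for $N\ge 2$, is $\Z/2$ in degree $1$ for $N=1$, and is $\Z/2$ in degree $0$ for $N=0$. The induced reduction differential on this page is zero mod $2$ (the initial and final reductions of $(1)$ agree), so $H_\ast(P_\ast)\cong\Z/2$ concentrated in degrees $0$ and $1$. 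By Künneth, $H_k(P^{\otimes n})\cong(\Z/2)^{\binom nk}$, with a basis indexed by the $k$-element subsets $S\subseteq\{1,\dots,n\}$; the class for $S$ is represented by $\big(\sum_{j\in S}\vec e_j,\vec\lambda_S\big)$, where $\vec\lambda_S$ carries the partition $(1)$ in slot $j$ for $j\in S$ and the empty partition elsewhere.

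\emph{The domain factor.} The remaining input is $H_\ast(\CD_\ast)$ in degrees $\le 3$. That $H_0(\CD_\ast)\cong\Z/2$ is clear, since any two grid generators are joined by a positive domain and hence by a chain of rectangles, so the rectangle graph on generators is connected. The content is that $H_1(\CD_\ast)=0$, $H_2(\CD_\ast)\cong\Z/2$, and $H_3(\CD_\ast)=0$, with the generator of $H_2(\CD_\ast)$ represented by the domain combination $U'$ in the statement. This, and the explicit form of $U'$, is where I expect the real work to lie: it requires classifying positive domains of Maslov index $\le 4$ and tracking the (non-unique) decompositions of index-$2$ and index-$3$ domains into rectangles, so that $\ker\partial_1$ and $\operatorname{im}\partial_1$ can be compared in those degrees. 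I would do this either through the combinatorics of \cite[Section 3]{MS} directly, or by exhibiting an explicit partial contraction of the low skeleta of the CW complex (one cell per positive domain, attached along $\partial_1$) whose cellular $\Z/2$-chains are $\CD_\ast$, the surviving $H_2$ being detected by $U'$.

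\emph{Assembling the answer.} Granting the two inputs, $E^2$ in total degree $k$ has dimension $\sum_p\dim H_p(\CD_\ast)\binom{n}{k-p}$, namely $1,\ n,\ \binom n2+1,\ \binom n3+n$ for $k=0,1,2,3$; as $H_k(\CDP_\ast)$ is a subquotient of this, these are upper bounds equal to the asserted ranks. For matching lower bounds I would write down explicit cycles: $\big(c_{\Id},\sum_{j\in S}\vec e_j,\vec\lambda_S\big)$ is annihilated by each of $\partial_1,\partial_2,\partial_3,\partial_4$ modulo $2$ (no rectangle or annulus splits off a constant domain, a partition of length $\le 1$ has no coarsening, and the initial and final reductions cancel in pairs), so it is a cycle representing the class $[c_{\Id}]\otimes[S]$ in $E^\infty$, and $U'$ (degree $2$) together with the $n$ cycles $U_j'$ built from it (degree $3$) are the cycles produced in \cite{YT}. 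Checking that these cycles are linearly independent in homology---equivalently, that no further differentials survive in this range, so $E^\infty=E^2$ through total degree $3$---then pins the homology to the upper bounds and delivers both the groups and the listed generators.
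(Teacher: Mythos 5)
Your spectral-sequence setup is sound as far as it goes: filtering by $\mu(D)$ is legitimate (Type I lowers $\mu$ by $1$, Type II by $2$, Types III--IV preserve it), the identification of the $E^0$-differential with $\partial_3+\partial_4$ and of $d^1$ with $\partial_1\otimes\mathrm{Id}$ is correct, and your computation of the partition factor $H_*(P_*)\cong\Z/2\oplus\Z/2$ in degrees $0,1$ (via the $N$-filtration, the simplex observation, and the mod-$2$ cancellation of the two reductions of $(1)$) is correct and cleanly done. But as a proof of the statement it has two genuine gaps, and they are exactly where the content of the theorem lives. First, the input $H_1(\CD_*)=0$, $H_2(\CD_*)\cong\Z/2$ generated by an annulus class, $H_3(\CD_*)=0$ is asserted, not proved; you cannot quote \cite[Section 3]{MS} for it, since that reference treats the $O_1$-blocked complex, whose homology differs from the full-grid case precisely in the annulus contributions that produce $U'$. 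Reverse-engineering these values from the desired answer is circular: if, say, $H_1(\CD_*)$ or $H_3(\CD_*)$ were nonzero, your $E^2$ count would be wrong even though the theorem could still hold via cancelling differentials.

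Second, the degeneration step is not a routine ``check.'' In your $E^2$ there are potentially nonzero differentials both out of and into the range of interest: $d^2\colon H_2(\CD)\otimes H_0(P^{\otimes n})\to H_0(\CD)\otimes H_1(P^{\otimes n})$ (its vanishing is precisely the statement that the sum of annuli can be corrected by Type 2.1 terms to the honest cycle $U'$), $d^2\colon H_2(\CD)\otimes H_1\to H_0(\CD)\otimes H_2$ (the construction of the $U_j'$), and incoming differentials from total degree $4$, e.g.\ $d^2\colon H_2(\CD)\otimes H_2\to H_0(\CD)\otimes H_3$ and $d^3$ off the $p=3$ column, which could kill the classes $(c_{\Id},\vec e_j+\vec e_k+\vec e_l,\ldots)$ or $U_j'$. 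Showing all of these vanish, equivalently that your explicit cycles are independent in homology, is the substance of \cite{YT}; you name it as something to verify but give no mechanism for doing so. For comparison, the paper itself gives no internal proof (it cites \cite{YT}), and the method used there --- visible in this paper's Lemma \ref{lemsubcplx} --- controls the domain combinatorics by further filtrations on the coefficients of $D$ along the row and column through $O_1$, reducing to acyclicity results of \cite{MS}; some argument of that kind (or an explicit contraction of $\CD_*$ in low degrees together with cocycles detecting $U'$ and $U_j'$) is what your outline still needs before it is a proof.
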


\begin{proof}See \cite{YT} for details.\end{proof}

One computation that follows from the homology is the existence of a \textit{sign assignment}, which gives a way to extend $\CDP_*$ over $\Z$ coefficients.

\begin{definition}\label{defsa}Given $s_j \in \Z/2$ for each $1 \leq j \leq n$, a sign assignment $s$ is a $\Z/2$-valued $1$-cochain on $\CDP_*$ such that for rectangles $R, S, R', S'$,
\begin{itemize}
\item (Square Rule) If $R * S = R' * S'$ is not an annulus, $s(R, 0, 0) + s(S, 0, 0) + s(R', 0, 0) + s(S', 0, 0) = 1 \pmod{2}$
\item (Annuli) If $R * S$ is a vertical annulus, $s(R, 0, 0) + s(S, 0, 0) = 1 \pmod{2}$, and if $R * S$ is a horizontal annulus, $s(R, 0, 0) + s(S, 0, 0) = 0 \pmod{2}$
\item (Bubbles) For any generator $x$ and any positive integer $N$, $s(c_x, N\vec{e_j}, (N)_j) = N s_j$\end{itemize}\end{definition}

\begin{theorem}(Theorem 1.5 of \cite{YT}) A sign assignment $s$ on $\CDP_*$ exists, and is unique up to gauge transformations and the values of $s_j$.\label{thm1}\end{theorem}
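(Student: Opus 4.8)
The plan is to read Theorem~\ref{thm1} as a standard obstruction-theory statement, fed by the homology computation of Theorem~\ref{thm2}. The first step is to observe that the data of a sign assignment decouples into a ``rectangle part'' and a ``bubble part''. Indeed, $\CDP_1$ is generated by the rectangles $(R,0,0)$ (Type $1.1$) and the bubble cells $(c_x, N\vec{e}_j, (N)_j)$ (Type $1.0$), and the Bubbles condition pins down the value of $s$ on every Type $1.0$ cell, freely and without interaction, once the constants $s_j$ are chosen. Inspecting the Type $2$ generators of the classification above, the boundaries of Type $2.1$, $2.0$a and $2.0$b cells consist entirely of Type $1.0$ cells, so the relations they impose collapse to identities such as $2N s_j \equiv 0$ and $(N+M)s_j \equiv N s_j + M s_j$, automatically satisfied by $s(c_x, N\vec{e}_j, (N)_j) = N s_j$; meanwhile the Square Rule and the Annuli relations constrain only the values of $s$ on rectangles. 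Thus a sign assignment is exactly a pair consisting of an arbitrary tuple $(s_1,\dots,s_n)\in(\Z/2)^n$ together with a $\Z/2$-valued function on rectangles satisfying the Square Rule and the Annuli relations, the two pieces being independent. The second piece is governed by the bubble-free complex $\CD_*$, whose index-$2$ generators are only Types $2.2$a and $2.2$b, and whose differential is $\partial_1$; on $\CD_*$ the Square Rule and the Annuli relations are literally the single equation $\delta s = \epsilon$ for the $2$-cochain $\epsilon$ recording the right-hand sides of those relations (value $1$ on Type $2.2$a cells and on Type $2.2$b cells with a vertical annulus, value $0$ on Type $2.2$b cells with a horizontal annulus).

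For existence I would first check $\delta\epsilon = 0$ by running over the Type $3$ generators of $\CD_*$ (Types $3.3$a/b/c): for subtypes with no index-$2$, non-annular faces the identity $\langle\epsilon,\partial C\rangle=0$ is vacuous, and otherwise it reduces to parity statements about the rectangle decompositions of Maslov-index-$3$ domains, i.e.\ the coherence relations underlying the classical grid sign assignments of \cite{MOST} (a finite check, done subtype-by-subtype in \cite{YT}). Then $[\epsilon]\in H^2(\CD_*;\Z/2)\cong\operatorname{Hom}(H_2(\CD_*;\Z/2),\Z/2)$ by universal coefficients over $\Z/2$, so it is enough to pair $\epsilon$ trivially with a generating set of $H_2(\CD_*;\Z/2)$. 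By Theorem~\ref{thm2}, the only generator of $H_2(\CDP_*;\Z/2)$ not coming from a bubble cycle is the explicit class $U'$, which is represented by a combination of Maslov-index-$2$ (bubble-free) domains and hence lies in $\CD_*$; the $\binom{n}{2}$ bubble generators $(c_{\Id},\vec{e}_j+\vec{e}_k,((1),(1)))$ are Type $2.0$b cells, on which $\epsilon$ vanishes, so they impose no obstruction. Hence existence of the rectangle part comes down to the single identity $\langle\epsilon,U'\rangle=0$, which one computes directly from the explicit form of $U'$ in \cite{YT}; combining the resulting rectangle function with the bubble values $N s_j$ produces a sign assignment for every choice of $(s_1,\dots,s_n)$.

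For uniqueness, let $s$ and $\tilde s$ be sign assignments with the same constants $s_j$ and put $z=s+\tilde s\in C^1(\CDP_*;\Z/2)$. A term-by-term check on the Type $2$ generators shows $\delta z=0$: on Type $2.2$a cells both $s$ and $\tilde s$ evaluate the boundary to $1$; on Type $2.2$b cells both evaluate it to $s_j$ (horizontal) or $1+s_j$ (vertical) once Annuli and Bubbles are used; on the remaining Type $2$ cells both evaluations are determined by the $s_j$ alone. So $z$ is a cocycle, and since $s$ and $\tilde s$ agree on every Type $1.0$ cell, $z$ vanishes on $(c_{\Id},\vec{e}_j,(1))$ for each $j$. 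By Theorem~\ref{thm2} these classes generate $H_1(\CDP_*;\Z/2)$, so $[z]=0$ in $H^1(\CDP_*;\Z/2)\cong\operatorname{Hom}(H_1(\CDP_*;\Z/2),\Z/2)$, i.e.\ $z=\delta\phi$ for a $0$-cochain $\phi$ --- a gauge transformation, which preserves the Bubbles condition since $\delta\phi$ annihilates the cycles $(c_x,N\vec{e}_j,(N)_j)$. Conversely adding any coboundary to a sign assignment yields a sign assignment with the same $s_j$, and the $n$ constants $s_j$ range freely over $(\Z/2)^n$, matching the rank of $H^1(\CDP_*;\Z/2)$; this gives uniqueness up to gauge transformations and the values of the $s_j$.

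The main obstacle is the identity $\langle\epsilon,U'\rangle=0$: this is the genuine combinatorial content of the theorem --- in essence the classical fact that grid homology admits a sign assignment, viewed through the specific $2$-cycle $U'$ --- and it is the one place that requires unpacking the explicit generator from \cite{YT} rather than formal homological algebra. The only other non-formal work is the verification that $\delta\epsilon=0$ across all Type $3$ subtypes and the bookkeeping confirming that no Square Rule or Annuli relation secretly constrains a Type $1.0$ value, so that the decoupling used throughout is legitimate.
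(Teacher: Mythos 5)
Your skeleton---encode the Square/Annuli rules as a $2$-cochain, show it is a cocycle, kill its class by evaluating on generators of $H_2$, and deduce uniqueness from the fact that $H_1(\CDP_*)$ is generated by the cells $(c_x,\vec{e}_j,(1)_j)$---is exactly the argument the paper sketches (deferring details to \cite{YT}), and your uniqueness paragraph is correct as written. The genuine gap is in the existence half, where the homological bookkeeping conflates $\CD_*$ with $\CDP_*$. You define $\epsilon$ only on $\CD_2$ and verify $\delta\epsilon=0$ only against the bubble-free Type 3.3 generators, but you then pair against the generators of $H_2(\CDP_*;\Z/2)$ supplied by Theorem \ref{thm2}. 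These are not interchangeable: $H_2(\CD_*;\Z/2)$ is computed nowhere in this paper and Theorem \ref{thm2} says nothing about it; the classes $(c_{\Id},\vec{e}_j+\vec{e}_k,((1),(1)))$ are not cells of $\CD_*$ at all, so ``$\epsilon$ vanishes on them'' has no meaning inside your $\CD_*$ argument; and, contrary to your claim, $U'$ does not in general lie in $\CD_*$: as recalled in the proof of Theorem \ref{mainthm}, $U'$ is a sum of annuli together with Type 2.2a \emph{and Type 2.1} triples, and Type 2.1 triples have $\vec{N}\neq 0$. Hence the reduction of existence to the single identity $\langle\epsilon,U'\rangle=0$ is not justified as stated.

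To repair this you must pick one complex and stay in it. Either extend the obstruction cochain to all of $\CDP_2$ (choosing its values on Types 2.1, 2.0a, 2.0b compatibly with the Bubbles rule---note that on an annulus the $\CDP_*$ differential contains the Type II term $(c_x,\vec{e}_j,(1)_j)$, so the equation $\delta s=\epsilon$ on $\CDP_*$ already mixes the rectangle and bubble data unless $\epsilon$ absorbs the $s_j$), check the cocycle condition on \emph{all} of $\CDP_3$, including the bubbled Types 3.2, 3.1 and 3.0, and only then evaluate on the actual generators of $H_2(\CDP_*)$, including the Type 2.1 constituents of $U'$; or else work entirely in $\CD_*$, in which case you owe a computation (or citation) of $H_2(\CD_*;\Z/2)$ together with explicit representatives, which Theorem \ref{thm2} does not provide. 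The step you correctly identify as the non-formal content---vanishing of the obstruction on the class of $U'$---must then be computed on the correct representative, not a hypothetical bubble-free one.
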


See \cite[Section 5]{YT} for further details of the proof. The essence of the proof is that the Square and Annuli rules, viewed as a $2$-cochain $T$ on the index $2$ domains, is a $2$-cocycle, which evaluates to zero on $H_2(\CDP_*)$. So $T$ must be the coboundary of our sign assignment $s$, which is unique up to $H^1(\CDP_*)$, which is generated by $(c_x, \vec{e_j}, (1)_j)$.

\section{The Framed Cobordism Group}

To computationally describe which boundaries are framed null-cobordant, we use the embedded framed cobordism groups $\tilde{\Omega}_{fr}^{k}$ described by \cite{MS}. We give a brief description of $\tilde{\Omega}_{fr}^{k}$ below.

\begin{definition}
\begin{itemize}

\item For $A \geq 2k + 3$, $\tilde{\Omega}_{fr, A}^{k}$ is the set of equivalence classes of closed $k$-manifolds $M$ embedded in $\R^A$ together with a vector field $\vec{v}$ along $M$ transverse to $\mathit{TM}$ and a framing of the orthogonal complement of $\mathit{TM} \oplus \langle \vec{v} \rangle$.

\item The equivalence relation is given as follows. $(M_1, \vec{v}_1) \sim (M_2, \vec{v}_2)$ if there is an embedded framed cobordism in $\R^A$ which starts in the direction of $\vec{v_1}$ and ends in the direction of $\vec{v}_2$. (If $M_1 \cap M_2 \neq \emptyset$, we first replace $M_2$ with a generic translate $M_2'$ that does not intersect $M_1$.)

\item We define the map $\tilde{\Omega}_{fr, A}^{k} \rightarrow \tilde{\Omega}_{fr, A+1}^{k}$ by embedding $\R^A \rightarrow \R^A \times \{ 0 \} \subset \R^{A+1}$ and adding the unit vector in the $(A+1)^{st}$ $\R$ direction to the end of the frame.

\item Finally, we define $\tilde{\Omega}_{fr}^{k}$ as the colimit of $\tilde{\Omega}_{fr, A}^{k}$ under these maps.

\end{itemize}
\end{definition}

\begin{conv}\label{frameconv}In this paper, we refer to an ordered basis of the orthogonal complement a frame, while typically it refers to an orthonormal basis thereof. Since any such ordered basis can be made orthonormal using the Gram-Schmidt process, we will use this equivalent (for our purposes) definition for convenience.\end{conv}

The (abelian) group structure of $\tilde{\Omega}_{fr, A}^{k}$ is as follows. Addition is given by disjoint union (after a generic translation if necessary), the identity by the empty submanifold, and inverse by reversing $\vec{v}$. The maps $\tilde{\Omega}_{fr, A}^{k} \rightarrow \tilde{\Omega}_{fr, A+1}^{k}$ are group morphisms, so the colimit can also be taken in the category of groups. (See \cite[Section 11]{MS} for further details.)

The classical framed cobordism group is defined very similarly, as follows.

\begin{definition}
\begin{itemize}

\item $\Omega_{fr, A}^{k}$ is the set of equivalence classes of closed $k$-manifolds $M$ embedded in $\R^A$ together with a framing of its normal bundle.

\item The equivalence relation $M_1 \sim M_2$ is given by framed cobordisms connecting $M_1$ to $M_2$ (again, up to a translation if necessary).

\item Addition is similarly given by disjoint union.

\item The maps $\Omega_{fr, A}^{k} \rightarrow \Omega_{fr, A+1}^{k}$ are given by embedding $\R^A \rightarrow \R^A \times \{ 0 \} \subset \R^{A+1}$. The group $\Omega_{fr}^{k}$ is the colimit of $\Omega_{fr, A}^{k}$ under these maps.

\end{itemize}
\end{definition}

Since $\tilde{\Omega}_{fr, A}^{k}$ is just $\Omega_{fr, A}^{k}$ but where we distinguish a vector $\vec{v}$ in the frame, we can define a map 
\begin{align}\label{forgetmap}\tilde{\Omega}_{fr, A}^{k} \rightarrow \Omega_{fr, A}^{k}\end{align}
by forgetting the distinguished vector. Specifically, given $(M, \vec{v}) \in \tilde{\Omega}_{fr, A}^{k}$ with frame $[v_1, \dots, v_{A-k-1}]$, we map it to $M$ with the frame $[(-1)^{k} \vec{v}, v_1, \dots, v_{A-k-1}]$. By \cite[Proposition 11.5]{MS}, this map is a group isomorphism.

The boundary of a $1$-dimensional moduli space, an interval, consists of two points, each with a distinguished normal direction $\vec{v}$ pointing in the direction of the interval. Therefore the boundary consists of two elements of $\Tilde{\Omega}_{fr,A}^0$, which must be opposite elements if the interval is to be frameable. By the isomorphism $(\ref{forgetmap})$, to prove that a 1-dimensional moduli space is frameable, it suffices to show that its boundary consists of two oppositely framed points in $\Omega_{fr,A}^0$, which is more convenient for our purposes. As in \cite{LS} and \cite{MS}, we will use a sign assignment to frame all of the zero-dimensional moduli spaces---see Section $6$ for the construction.

Similarly, we will record the framings of the $1$-dimensional moduli spaces (intervals) by a new $\CDP_*$-cochain called the \textit{frame assignment}---see Section $7$ for the construction. The $2$-dimensional boundaries consist of intervals forming a loop, which also come with a distinguished normal direction $\vec{v}$ pointing in the direction of where the $2$-dimensional moduli space would be attached. While we could similarly make computations in $\Tilde{\Omega}_{fr,A}^1$, it will be more convenient to use the following correspondence:

\begin{definition}\label{corresppath}Given a simple framed loop $(M, \vec{v}) \in \tilde{\Omega}_{fr,A}^1$, at each point $p \in M$, let $\vec{v}_p$ be the distinguished vector and $[v_1, \dots, v_{A-2}]$ be the frame, which we view as an orthonormal basis of the orthogonal complement of $T_pM \oplus \langle v_p \rangle$ by Convention $\ref{frameconv}$. We add $\vec{v}_p$ to the front of this frame to get $[\vec{v}_p, v_1, \dots, v_{A - 2}]$, and complete it to a positive orthonormal basis $[\overline{e}, \vec{v}_p, v_1, \dots, v_{A - 2}]$ for $\R^A$. Viewing this orthonormal basis as the orthogonal matrix with columns $\overline{e}, \vec{v}_p, v_1, \dots, v_{A-2}$, we have an element of $\SO(A)$. Finally, varying $p$ along $M$ produces a simple loop in $\SO(A)$, which is the loop corresponding to $(M, \vec{v})$ in $\SO(A)$.\end{definition}

\begin{lemma}\label{lem4}Let $A \geq 3$. A simple framed loop in $\mathbb{R}^A$ is framed null-cobordant if and only if the corresponding loop in $\SO(A)$ is not null-homotopic.\end{lemma}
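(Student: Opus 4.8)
The plan is to analyze a simple framed loop $(M, \vec v) \subset \R^A$ by understanding exactly when it bounds a framed disk, and to translate this into a statement about the associated loop $\gamma$ in $\SO(A)$ from Definition \ref{corresppath}. First I would recall that a framed null-cobordism of a $1$-manifold in $\R^A$ (for $A \geq 3$, so there is no linking obstruction and all the relevant manifolds are connected) may be taken to be an embedded disk $\Sigma$ with $\partial \Sigma = M$, together with a normal framing extending the given one; the distinguished vector $\vec v$ must point, along $M$, into $\Sigma$. So the question becomes: given the embedded disk, when can the normal framing on $\partial \Sigma$ be extended over all of $\Sigma$? Since $\Sigma$ is contractible, its normal bundle is trivial, and the obstruction to extending a framing from the boundary circle is precisely the class in $\pi_1(\SO(A-2))$ (or, after stabilizing/absorbing the disk's own tangent data, $\pi_1$ of the relevant structure group) of the clutching-type loop measured by the boundary framing relative to a fixed trivialization of $N\Sigma$.

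The key step is to identify that loop with $\gamma$. Here is where the specific recipe of Definition \ref{corresppath} does the work: adding $\vec v_p$ to the front of the normal frame of $M$ and completing with $\overline e$ gives, at each $p$, a full orthonormal basis of $\R^A$, i.e. an element of $\SO(A)$; as $p$ traverses $M$ this is $\gamma$. Along the bounding disk $\Sigma$, the pair $(\overline e, \vec v_p)$ spans (a complement close to) the tangent plane $T_p\Sigma$ pushed to the boundary, while $[v_1,\dots,v_{A-2}]$ spans the genuine normal bundle $N\Sigma|_M$; extending the boundary framing over $\Sigma$ is possible iff the loop $p \mapsto (v_1(p),\dots,v_{A-2}(p))$, read against a fixed trivialization of $N\Sigma$, is null-homotopic in $\SO(A-2)$. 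I would then check that, under the inclusion $\SO(A-2) \hookrightarrow \SO(A)$ induced by fixing the first two coordinates $\overline e, \vec v_p$ along the (contractible, hence framed-trivializable) disk, this loop maps to $\gamma$, and that this inclusion is a $\pi_1$-isomorphism since $\pi_1(\SO(m)) = \Z/2$ for all $m \geq 3$ and the standard inclusions induce isomorphisms there. Hence $\gamma$ is null-homotopic in $\SO(A)$ iff the boundary framing extends iff $(M,\vec v)$ is framed null-cobordant — which is the reverse of the asserted equivalence, so I would double-check the sign/parity bookkeeping below.

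The orientation and parity bookkeeping is the step I expect to be the main obstacle, and it is genuinely where the ``not null-homotopic'' (rather than ``null-homotopic'') in the statement comes from. The subtlety is that $\gamma$ records not just the normal framing of $\Sigma$ but also the frame of $T\Sigma$ near $M$ via the vectors $\overline e, \vec v_p$: as $p$ goes once around the boundary circle of a disk, the tangent framing of $\Sigma$ restricted to $\partial \Sigma$ itself winds — the inward normal $\vec v_p$ together with the positively-oriented tangent to $M$ rotates by a full turn, contributing the nontrivial element of $\pi_1(\SO(2)) \to \pi_1(\SO(A))$. Thus for the \emph{framed null-cobordant} loop the total class of $\gamma$ is this tangential $+1 \in \Z/2$ plus the normal-extension obstruction; when the latter vanishes (extension exists) $\gamma$ carries exactly the nontrivial class, and when the latter is the nonzero element of $\Z/2$ the two cancel and $\gamma$ becomes null-homotopic but no framed extension exists. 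I would make this precise by the standard model computation on the round disk $D^2 \subset \R^2 \subset \R^A$ with its standard framing, verifying directly that the corresponding $\gamma$ is the generator of $\pi_1(\SO(A)) = \Z/2$, and then arguing that an arbitrary simple framed loop differs from a reference one (for which both sides are understood) by a based loop in $\SO(A-2)$, making the equivalence additive in $\Z/2$ and reducing the general case to the model computation. The isomorphism \eqref{forgetmap} between $\tilde\Omega_{fr,A}^k$ and $\Omega_{fr,A}^k$, already invoked in the paragraph preceding the lemma, plus $A \geq 3$ (ensuring $\pi_1(\SO(A)) = \Z/2$ and that embedded $1$-manifolds and their cobounding surfaces behave as expected), are the only inputs needed.
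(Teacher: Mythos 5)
Your argument is correct in substance and rests on the same two pillars as the paper's proof, namely $\pi_1(\SO(A)) \cong \Z/2$ and a model computation for the standard circle, but it is organized differently. The paper's proof (modeled on Lipshitz--Sarkar) is a terse classification check: since a simple loop in $\SO(A)$ can represent only two homotopy classes, it exhibits the two model framed circles (outward frame times the constant frame of $\R^{A-2}$, and the same circle with a full twist) and verifies that the first is framed null-cobordant while its corresponding loop in $\SO(A)$ is the nontrivial class; the parity flip responsible for the word ``not'' in the statement is left implicit in that picture. You instead decompose the class of the corresponding loop $\gamma$ as the tangential winding of $(\overline e, \vec v_p)$ around the bounding disk (which is the generator) plus the obstruction in $\pi_1(\SO(A-2))$ to extending the normal framing over the disk, then reduce an arbitrary simple framed loop to the model by additivity under changing the framing by a loop in $\SO(A-2)$. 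This buys an explicit explanation of why null-cobordant corresponds to the \emph{non}-trivial class, at the cost of bookkeeping the paper's two-model check avoids. Two small caveats, both at the same level of looseness as the paper itself: replacing a general framed null-cobordism by an embedded framed disk (and your parenthetical about the absence of linking obstructions) is only valid for $A \geq 4$, really in the stable range where these cobordism groups are $\Z/2$ --- which is the regime the paper actually works in, since $\tilde\Omega^{1}_{fr,A}$ is only defined for $A \geq 5$ --- and the ``only if'' direction (the fully twisted framing bounds no framed surface at all, not merely no disk) silently uses the classical fact $\Omega^{1}_{fr} \cong \Z/2$, the same input the paper leaves implicit.
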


\begin{proof}Note that this is similar to \cite{LS}. For $A \geq 3$, $\pi_1(\SO(A)) \cong \Z/2$, so every simple loop in $\SO(A)$ corresponds to one of two loops. One such loop is a circle in $\R^2$ with the product framing of the outward frame with the standard frame of $\R^{A-2}$, and the other loop is the same circle with a full twist of the other frame (see below diagrams).

\begin{center}\begin{tikzpicture}[x={(1,0)}, y={(0,1)}, z={(-0.5, -0.5)}, scale=3.5]

\draw (0, 0, 0) -- (1, 0, 0) -- (1, 0, 1) -- (0, 0, 1) -- (0, 0, 0);
\draw[->] (0, 0, 0) -- (0, 0.5, 0) node[anchor=south]{$\R^{A - 2}$};
\node[anchor=north west] at (1, 0, 1) {$\mathbb{R}^2$};

\draw (0.5, 0, 0.5) [y={(0,0,1)}] circle (0.3);

\draw[->] (0.8, 0, 0.5) -- (0.9, 0, 0.5);
\draw[->] (0.8, 0, 0.5) -- (0.8, 0.1, 0.5);
\draw[->] (0.5, 0, 0.8) -- (0.5, 0, 0.9);
\draw[->] (0.5, 0, 0.8) -- (0.5, 0.1, 0.8);
\draw[->] (0.2, 0, 0.5) -- (0.1, 0, 0.5);
\draw[->] (0.2, 0, 0.5) -- (0.2, 0.1, 0.5);
\draw[->] (0.5, 0, 0.2) -- (0.5, 0, 0.1);
\draw[->] (0.5, 0, 0.2) -- (0.5, 0.1, 0.2);

\end{tikzpicture}\begin{tikzpicture}[x={(1,0)}, y={(0,1)}, z={(-0.5, -0.5)}, scale=3.5]

\draw (0, 0, 0) -- (1, 0, 0) -- (1, 0, 1) -- (0, 0, 1) -- (0, 0, 0);
\draw[->] (0, 0, 0) -- (0, 0.5, 0) node[anchor=south]{$\R^{A - 2}$};
\node[anchor=north west] at (1, 0, 1) {$\mathbb{R}^2$};

\draw (0.5, 0, 0.5) [y={(0,0,1)}] circle (0.3);

\draw[->] (0.8, 0, 0.5) -- (0.9, 0, 0.5);
\draw[->] (0.8, 0, 0.5) -- (0.8, 0.1, 0.5);
\draw[->] (0.5, 0, 0.8) -- (0.5, 0, 0.9);
\draw[->] (0.5, 0, 0.8) -- (0.5, -0.1, 0.8);
\draw[->] (0.2, 0, 0.5) -- (0.3, 0, 0.5);
\draw[->] (0.2, 0, 0.5) -- (0.2, -0.1, 0.5);
\draw[->] (0.5, 0, 0.2) -- (0.5, 0, 0.3);
\draw[->] (0.5, 0, 0.2) -- (0.5, 0.1, 0.2);

\end{tikzpicture}\end{center}

We can see that the first loop is framed null-cobordant and not null-homotopic as an element of $\SO(A)$.\end{proof}

\section{Describing Framed Paths in SO}

Fix $A \geq 3$, so that $\pi_1(\SO(A)) \simeq \Z/2$. There are two homotopy classes of paths in $\SO(A)$ from a given point to another, which together form the nontrivial loop. We will distinguish them by calling one of them \textit{preferred}. We list the following preferred paths between points that will appear later.

We fix the following convention. Let $e_1, \dots, e_{A}$ be the unit vectors such that $[e_1, \dots, e_A]$ is the standard positive frame on $\R^A$.

\begin{definition}Let $v_1 \in \{e_1, -e_1\}$ and $v_2 \in \{e_2, -e_2\}$. The short preferred path between the points $[v_1, v_2]$ and $[-v_2, v_1]$ in $\SO(2)$ is the rotation by $\pi/2$.

The short preferred path between $[v_1, \dots, v_{k-1}, v_k, v_{k+1}, \dots, v_{l-1}, v_l, v_{l+1}, \dots, v_A]$ and \\$[v_1, \dots, v_{k-1}, -v_l, v_{k+1}, \dots, v_{l-1}, v_k, v_{l+1}, \dots, v_A]$ in $\SO(A)$ is the rotation by $\pi/2$ in the $(v_k, v_l)$-plane.\end{definition}

\begin{definition}Let $v_1 \in \{e_1, -e_1\}$ and $v_2 \in \{e_2, -e_2\}$. The short preferred path between the points $[v_1, v_2]$ and $[v_2, -v_1]$ in $\SO(2)$ is the rotation by $\pi/2$.

The short preferred path between $[v_1, \dots, v_{k-1}, v_k, v_{k+1}, \dots, v_{l-1}, v_l, v_{l+1}, \dots, v_A]$ and \\$[v_1, \dots, v_{k-1}, v_l, v_{k+1}, \dots, v_{l-1}, -v_k, v_{l+1}, \dots, v_A]$ in $\SO(A)$ is the rotation by $\pi/2$ in the $(v_k, v_l)$-plane.\end{definition}

A loop in $\SO(A)$ is nullhomotopic if it lifts to a loop in the universal cover $\mathit{Spin}(A)$. For convenience we will instead consider the more general lifts of loops in $O(A)$ to its universal cover $\Pin(A)$.

We use the representation of $\Pin(A)$ as units in the Clifford algebra, for which we omit the details; the reader is advised to see \cite{LM}, \cite{WOIT}, and \cite{CLIFFALG} for details. The Clifford algebra is the quotient of the tensor algebra generated by the unit vectors $e_1, \dots, e_A$ of $\R^A$ by the relations $e_i^2 = 1$ and $e_i e_j = -e_j e_i$ for $i \neq j$.

For any unit vector $v \in \R^A$, let $M_v \in O(A)$ be the reflection which maps $v \mapsto -v$ and fixes its orthogonal complement. Then $M_v$ lifts to the elements $\pm v \in \Pin(A)$ as elements of the Clifford algebra. The short preferred path in the $(e_i, e_j)-$ plane which sends $e_i \rightarrow e_j$ and $e_j \rightarrow -e_i$ can be parametrized as
\begin{align*}M_{\cos\theta e_i + \sin \theta e_j} M_{e_i} \text{ for } 0 \leq \theta \leq \frac{\pi}{4}\end{align*}
since the endpoint, a rotation by $\frac{\pi}{2}$, can be written as the composition of reflections $M_{1/\sqrt{2}(e_i + e_j)}M_{e_i}$ as in Figure $\ref{figrot}$. Lifting this path to $\Pin(A)$ starting at $\Id$, the endpoint lifts to $\frac{1}{\sqrt{2}}(1 - e_ie_j) \in \Pin(A)$ as an element of the Clifford algebra.

\begin{figure}
\begin{tikzpicture}[scale=3]

\begin{scope}[xshift = 0, yshift = 0]
\draw (0, 0) rectangle (1, 1);
\draw[color=red] (-0.25, 0) -- (1.25, 0);
\node at (0, 0) {
\begin{tikzpicture}

\draw[fill=white] (0,0) circle (0.125);

\node at (0,0) {\tiny $1$};

\end{tikzpicture}
};
\node at (1, 0) {
\begin{tikzpicture}

\draw[fill=white] (0,0) circle (0.125);

\node at (0,0) {\tiny $2$};

\end{tikzpicture}
};
\node at (1, 1) {
\begin{tikzpicture}

\draw[fill=white] (0,0) circle (0.125);

\node at (0,0) {\tiny $3$};

\end{tikzpicture}
};
\node at (0, 1) {
\begin{tikzpicture}

\draw[fill=white] (0,0) circle (0.125);

\node at (0,0) {\tiny $4$};

\end{tikzpicture}
};
\node[anchor=north] at (0.5, 0) {$e_1$};
\node[anchor=east] at (0, 0.5) {$e_2$};
\end{scope}

\begin{scope}[xshift = 1.5cm, yshift = -1cm]
\draw (0, 0) rectangle (1, 1);
\draw[color=red] (-0.25, 0.75) -- (0.25, 1.25);
\node at (0, 0) {
\begin{tikzpicture}

\draw[fill=white] (0,0) circle (0.125);

\node at (0,0) {\tiny $4$};

\end{tikzpicture}
};
\node at (1, 0) {
\begin{tikzpicture}

\draw[fill=white] (0,0) circle (0.125);

\node at (0,0) {\tiny $3$};

\end{tikzpicture}
};
\node at (1, 1) {
\begin{tikzpicture}

\draw[fill=white] (0,0) circle (0.125);

\node at (0,0) {\tiny $2$};

\end{tikzpicture}
};
\node at (0, 1) {
\begin{tikzpicture}

\draw[fill=white] (0,0) circle (0.125);

\node at (0,0) {\tiny $1$};

\end{tikzpicture}
};
\node[anchor=south] at (0.5, 1) {$e_1$};
\node[anchor=east] at (0, 0.5) {$-e_2$};
\end{scope}

\begin{scope}[xshift = 3cm, yshift = 0]
\draw (0, 0) rectangle (1, 1);
\node at (0, 0) {
\begin{tikzpicture}

\draw[fill=white] (0,0) circle (0.125);

\node at (0,0) {\tiny $4$};

\end{tikzpicture}
};
\node at (1, 0) {
\begin{tikzpicture}

\draw[fill=white] (0,0) circle (0.125);

\node at (0,0) {\tiny $1$};

\end{tikzpicture}
};
\node at (1, 1) {
\begin{tikzpicture}

\draw[fill=white] (0,0) circle (0.125);

\node at (0,0) {\tiny $2$};

\end{tikzpicture}
};
\node at (0, 1) {
\begin{tikzpicture}

\draw[fill=white] (0,0) circle (0.125);

\node at (0,0) {\tiny $3$};

\end{tikzpicture}
};
\node[anchor=north] at (0.5, 0) {$-e_1$};
\node[anchor=west] at (1, 0.5) {$e_2$};
\end{scope}

\end{tikzpicture}
\caption{The composition $M_{1/\sqrt{2}(e_1 + e_2)}M_{e_1}$ equals the rotation from $e_1$ to $e_2$. Here vertex 1 is the origin throughout.}\label{figrot}\end{figure}
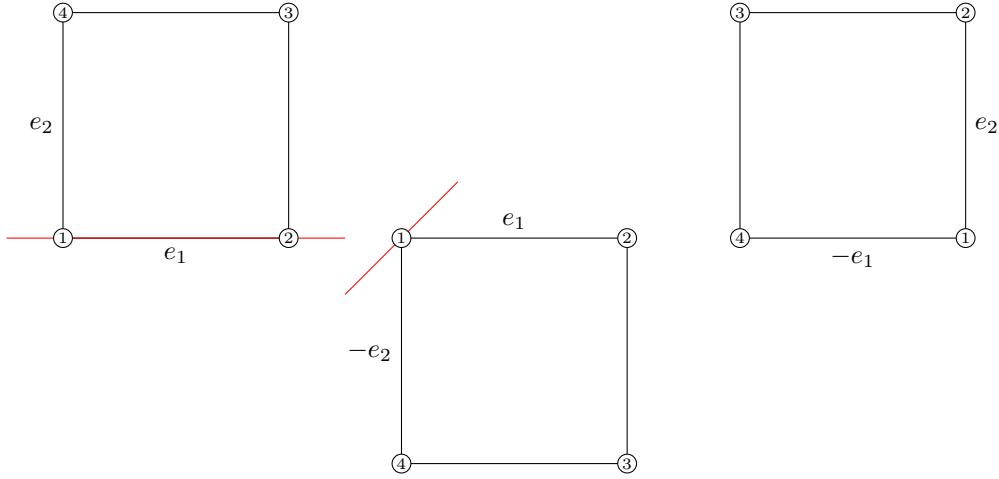

Finally, the endpoint of a loop of short preferred paths lifts to a product of the corresponding $\frac{1}{\sqrt{2}}(1 - e_ie_j)$, which is either $\pm 1$, and is $1$ if and only if the loop is nullhomotopic. We therefore have the following algorithm for computing the homotopy class of a given loop of short preferred paths:

\begin{algorithm}\label{multalgo} \begin{itemize}

\item Multiply, say, the first two terms of the lift of the loop.

\item Use the relations $e_i^2 = 1$ and $e_i e_j = -e_j e_i$ to write the resulting word in the Clifford algebra in terms of sorted words $e_{i_1}\dots e_{i_k}$ where $i_1 < \dots < i_k$.

\item Repeat with the next term until all terms are multiplied.

\item If the final result is positive, the loop is nullhomotopic, and if it is negative, the loop is not.

\end{itemize}

\end{algorithm}

In practice, we will often omit the $1/\sqrt{2}$ coefficients for convenience of computation, as they do not change the signs---a value of $-4$, for example, while not a valid element of $\mathit{Pin}(A)$, unambiguously represents the element $-1$. We keep the $1/\sqrt{2}$ coefficients in manual calculations for illustrative purposes, but not where implemented in a computer program (see https://github.com/ytao783/PinAlg for some examples).

It will be convenient to work out the homotopy classes of the following specific paths in $\SO(3), \SO(4)$. Since the inclusions $\SO(3), \SO(4) \hookrightarrow \SO(A)$ induce isomorphisms of the fundamental group, we can apply these results to loops in $\SO(A)$ constant in all but three or four coordinates.

\begin{lemma}\label{lem3}

A rotation by $2\pi$ in a plane is not nullhomotopic in $\SO(3)$.\end{lemma}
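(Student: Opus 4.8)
The plan is to use the Clifford algebra lift described in the paragraphs preceding Algorithm \ref{multalgo}. A rotation by $2\pi$ in the $(e_i,e_j)$-plane can be realized as a concatenation of four short preferred paths, each a rotation by $\pi/2$ in that same plane: the first sends $e_i\mapsto e_j,\ e_j\mapsto -e_i$; the second sends $e_j\mapsto -e_i,\ -e_i\mapsto -e_j$; the third sends $-e_i\mapsto -e_j,\ -e_j\mapsto e_i$; the fourth sends $-e_j\mapsto e_i,\ e_i\mapsto e_j$. Composing these four quarter-turns returns every vector to itself, so the concatenation is indeed the loop given by a full $2\pi$ rotation. Since each quarter-turn is a short preferred path, Lemma \ref{lem4}'s companion discussion applies: the endpoint of the lift to $\Pin(3)$ starting at $\Id$ is the product of the four corresponding Clifford elements $\tfrac{1}{\sqrt2}(1-e_ie_j)$.

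The computation then reduces to multiplying $\left(\tfrac{1}{\sqrt2}(1-e_ie_j)\right)^4$ in the Clifford algebra. Using $e_ie_j e_ie_j = -e_i^2 e_j^2 = -1$ one gets $(1-e_ie_j)^2 = 1 - 2e_ie_j + (e_ie_j)^2 = -2e_ie_j$, hence $(1-e_ie_j)^4 = 4(e_ie_j)^2 = -4$. After restoring the $\tfrac{1}{\sqrt2}$ factors, the lift ends at $\tfrac{1}{4}\cdot(-4) = -1 \in \Pin(3)$. By the criterion recalled just before Algorithm \ref{multalgo} (the lift of a loop of short preferred paths ends at $\pm 1$, and equals $1$ exactly when the loop is nullhomotopic), the $2\pi$ rotation is \emph{not} nullhomotopic. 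Since $\pi_1(\SO(3))\cong\Z/2$, this is the unique nontrivial class.

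One subtlety to handle carefully: I must confirm that writing the $2\pi$ rotation as these four specific quarter-turns really is a loop in $\SO(3)$ (not merely in $\SO(2)$) and that each segment genuinely matches the ``short preferred path'' definition, i.e. each is a rotation by $\pi/2$ in a coordinate plane between basis-vector configurations of the prescribed form. That is immediate here because the four configurations $[e_i,e_j]$, $[e_j,-e_i]$, $[-e_i,-e_j]$, $[-e_j,e_i]$ are exactly the successive images under the named short preferred paths, with the third coordinate held fixed throughout. I would also note explicitly that the answer is independent of the plane chosen, since any two coordinate $2$-planes in $\R^3$ are related by an element of $\SO(3)$, which acts trivially on $\pi_1$.

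I expect the only real ``obstacle'' is bookkeeping: making sure the four quarter-turns are oriented consistently (all rotating the same way) so that they compose to a genuine $2\pi$ loop rather than cancelling, and tracking signs in the Clifford multiplication. Both are routine once the decomposition is fixed, so the proof is short.
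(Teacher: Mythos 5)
Your proposal is correct and follows essentially the same route as the paper: decompose the $2\pi$ rotation into four short preferred quarter-turns, lift to $\Pin(3)$, and compute $\left(\tfrac{1}{\sqrt{2}}(1 - e_ie_j)\right)^4 = -1$ to conclude the loop is homotopically nontrivial. The added remarks on orientation consistency and independence of the chosen plane are fine but not needed beyond the paper's initial isotopy reduction.
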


\begin{proof}This is a well-known result, but we give the following proof to illustrate the utility of the $\Pin$ presentation.

By post-composition with some orientation-preserving isotopy of $\R^3$, we may assume that the plane of rotation is the $(e_1e_2)-$plane, that the base of the loop is $[e_1, e_2, e_3]$, and that the loop has the form
\begin{center}\begin{tikzcd}
{[e_1, e_2, e_3]} \arrow[rr, no head] \arrow[dd, no head] &  & {[-e_2, e_1, e_3]} \arrow[dd, no head] \\
                                                     &  &                                   \\
{[e_2, -e_1, e_3]} \arrow[rr, no head]                    &  & {[-e_1, -e_2, e_3]}                     
\end{tikzcd}\end{center}
where each edge is a short preferred path. The corresponding $\Pin(3)$ element is given by
\begin{align*}
\left(\frac{1}{\sqrt{2}}(1 - e_1e_2)\right)^4 = \left(\frac{1}{2} - e_1 e_2 + \frac{1}{2} e_1 e_2 e_1 e_2\right)^2 = (-e_1 e_2)^2 = e_1 e_2 e_1 e_2 = -1
\end{align*}
so the loop is homotopically nontrivial.\end{proof}

\begin{lemma}\label{lem5}Suppose that all edges of the following loop in $\SO(4)$ are short preferred paths.
\begin{center}\begin{tikzcd}
{[e_1, e_2, e_3, e_4]} \arrow[dd, no head] \arrow[rr, no head] &  & {[-e_2, e_1, e_3, e_4]}                      \\
                                                               &  &                                              \\
{[e_1, e_2, -e_4, e_3]} \arrow[rr, no head]                    &  & {[-e_2, e_1, -e_4, e_3]} \arrow[uu, no head]
\end{tikzcd}\end{center}
Then the loop corresponds to the element $0 \in \pi_1(\SO(A)) \cong \Z/2$; that is, it is nullhomotopic.\end{lemma}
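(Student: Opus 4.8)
The plan is to mimic the computation in Lemma \ref{lem3}, but now tracking the lift in $\Pin(4)$ rather than $\Pin(3)$. First I would fix the basepoint $[e_1,e_2,e_3,e_4]$ and observe that the loop is the concatenation of four short preferred paths, each a rotation by $\pi/2$ in a coordinate plane: the top edge and the bottom edge are rotations in the $(e_1,e_2)$-plane (sending $e_1 \mapsto e_2$, $e_2 \mapsto -e_1$), while the two vertical edges are rotations in the $(e_3,e_4)$-plane (sending $e_3 \mapsto e_4$, $e_4 \mapsto -e_3$). By the recipe preceding Algorithm \ref{multalgo}, each such short preferred path lifts, starting from the identity, to $\tfrac{1}{\sqrt 2}(1 - e_ie_j) \in \Pin(4)$ with the appropriate indices $(i,j) \in \{(1,2),(3,4)\}$.

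Next I would apply Algorithm \ref{multalgo}: the homotopy class of the loop is detected by the sign of the product of the four lift elements taken in order around the square, namely (up to the harmless $1/\sqrt 2$ factors) a product of two copies of $(1 - e_1e_2)$ and two copies of $(1 - e_3e_4)$. The key structural fact is that $e_1e_2$ and $e_3e_4$ involve disjoint index sets, hence commute in the Clifford algebra, so the four factors may be rearranged freely. Thus the product equals $\bigl(\tfrac{1}{\sqrt2}(1-e_1e_2)\bigr)^2 \bigl(\tfrac{1}{\sqrt2}(1-e_3e_4)\bigr)^2$. Now $\tfrac12(1-e_ie_j)^2 = \tfrac12\bigl(1 - 2e_ie_j + e_ie_je_ie_j\bigr) = \tfrac12(1 - 2e_ie_j - 1) = -e_ie_j$, using $e_i^2 = 1$ and anticommutativity. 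Hence the product is $(-e_1e_2)(-e_3e_4) = e_1e_2e_3e_4$, which is not a scalar at all — so a literal reading would be ambiguous. This signals that I must be careful about the order in which the edges are traversed: the vertical edges are traversed in opposite orientations (one goes $[e_1,e_2,e_3,e_4]\to[e_1,e_2,-e_4,e_3]$, the return edge goes $[-e_2,e_1,-e_4,e_3]\to[-e_2,e_1,e_3,e_4]$), so one of the two $(e_3,e_4)$-rotations must be replaced by its inverse lift $\tfrac{1}{\sqrt2}(1 + e_3e_4)$, and similarly possibly for the $(e_1,e_2)$ edges.

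Accounting for orientations correctly, the product becomes $\bigl(\tfrac{1}{\sqrt2}(1-e_1e_2)\bigr)\bigl(\tfrac{1}{\sqrt2}(1+e_1e_2)\bigr)\cdot\bigl(\tfrac{1}{\sqrt2}(1-e_3e_4)\bigr)\bigl(\tfrac{1}{\sqrt2}(1+e_3e_4)\bigr)$ (again freely reordered since the two blocks commute), and each block is $\tfrac12(1-e_ie_j)(1+e_ie_j) = \tfrac12(1 - e_ie_je_ie_j) = \tfrac12(1-(-1)) = 1$. So the total lift is $+1$, the loop lifts to a loop in $\mathrm{Spin}(4)$, and therefore it is nullhomotopic, as claimed.

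The main obstacle I anticipate is precisely the bookkeeping of which lift (the $(1-e_ie_j)$ one or the $(1+e_ie_j)$ one) to assign to each edge: the lift depends on the direction in which the path is traversed around the loop and on how the "complete to a positive orthonormal basis" step in Definition \ref{corresppath} interacts with the permutation of basis vectors at each corner. I would resolve this by carefully reading off, at each vertex of the square, the actual ordered frame and the plane of the adjacent rotation, then matching it against the parametrization $M_{\cos\theta e_i + \sin\theta e_j}M_{e_i}$; once the orientations are pinned down, the commuting-blocks argument above makes the final sign computation immediate. The contrast with Lemma \ref{lem3} — where all four edges rotate in the \emph{same} plane and the lifts multiply to $(-e_1e_2)^2 = -1$ — is the conceptual point: here the rotations split into two commuting, mutually-cancelling planar pieces.
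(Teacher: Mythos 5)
Your argument is correct, but it takes a different route from the paper. The paper's proof of Lemma \ref{lem5} is a direct geometric contraction: since the two families of edges rotate in the orthogonal planes $(e_1,e_2)$ and $(e_3,e_4)$, one can shrink both rotation angles simultaneously (parameters $s,t$ going from $1$ to $0$), exhibiting an explicit two-parameter family of loops interpolating between the given square and the constant loop; no Clifford-algebra computation is needed. You instead stay inside the computational framework of Algorithm \ref{multalgo} and evaluate the lift in $\Pin(4)$. Your self-correction is the right one: traversed as a loop, the two $(e_1,e_2)$-edges are inverse to each other, as are the two $(e_3,e_4)$-edges (this is forced either by inspecting the frames at the corners, or by noting that otherwise the underlying $\SO(4)$-path would not close up, as your non-scalar value $e_1e_2e_3e_4$ already signals), so the lift is $\bigl(\tfrac{1}{\sqrt2}(1-e_1e_2)\bigr)\bigl(\tfrac{1}{\sqrt2}(1+e_1e_2)\bigr)\bigl(\tfrac{1}{\sqrt2}(1-e_3e_4)\bigr)\bigl(\tfrac{1}{\sqrt2}(1+e_3e_4)\bigr)=+1$, and the loop is nullhomotopic. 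Both proofs ultimately rest on the same fact — rotations in orthogonal planes commute — which the paper uses to build the filling homotopy and you use to reorder and cancel the commuting Clifford factors; the paper's version is shorter and avoids the orientation bookkeeping, while yours has the merit of being uniform with the lift computations used for Lemmas \ref{lem3} and \ref{lem1} and of applying mechanically even when an explicit filling is less apparent.
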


\begin{proof}
Let $s, t \in [0, 1]$ and consider the homotopy of paths where the vertical segments are rotations by $s \pi/2$ in the $(e_1, e_2)$-plane and the horizontal segments are rotations by $t \pi/2$ in the $(e_3, e_4)$-plane. The given loop is the $s = t = 1$ side and the $s = t = 0$ side is the identity.\end{proof}

While it will be possible to describe all of our framed paths using short preferred paths, it will be convenient to combine two short preferred paths in the same coordinate plane into one rotation by $\pi$. There are two different ways to rotate by $\pi$ in a coordinate plane since their concatenation is a nontrivial loop by Lemma $\ref{lem3}$, so we must choose a preferred one.

\begin{definition}Let $v_1 \in \{e_1, -e_1\}$ and $v_2 \in \{e_2, -e_2\}$. The long preferred path with respect to $e_1$ between the points $[v_1, v_2]$ and $[-v_1, -v_2]$ in $\SO(2)$ is the concatenation of the short preferred paths from both points to $[-e_2, e_1]$; in other words, it is the rotation by $\pi$ where the second vector equals $e_1$ halfway through.

The long preferred path with respect to $e_n$ between the points $[v_1, \dots, v_k, \dots, v_l, \dots, v_A]$ and \\$[v_1, \dots, -v_k, \dots, -v_l, \dots, v_A]$ in $\SO(A)$, where $v_k = \pm e_n$, is the concatenation of the preferred paths from both points to $[v_1, \dots, \mp v_l, \dots, e_n, \dots, v_A]$.\end{definition}

In particular, the long preferred path from $[v_1, \dots, e_n, \dots, e_m, \dots, v_A]$ to $[v_1, \dots, -e_n, \dots, -e_m, \dots, v_A]$ with respect to $e_n$ lifts to 
\begin{align*}\left(\frac{1}{\sqrt{2}}(1 - e_me_n)\right)^2 = \frac{1}{2} + \frac{1}{2}e_me_ne_me_n - e_me_n = e_n e_m
\end{align*}and that the long preferred path from $[v_1, \dots, e_n, \dots, -e_m, \dots, v_A]$ to $[v_1, \dots, -e_n, \dots, e_m, \dots, v_A]$ with respect to $e_n$ lifts to 
\begin{align*}\left(\frac{1}{\sqrt{2}}(1 - e_ne_m)\right)^2 = \frac{1}{2} + \frac{1}{2}e_ne_me_ne_m - e_ne_m = e_m e_n.\end{align*}

\begin{lemma}\label{lem1}Let $x \in \Z/2$, $v_1 \in \{ e_1, -e_1 \}$, and $v_2 \in \{ e_2, -e_2 \}$. Suppose that all horizontal edges of the following loops in $\SO(3)$ are short preferred paths, and all vertical edges are long preferred paths with respect to the first direction---that is, if $v_1$ appears first, it is with respect to $e_1$, and if $v_2$ appears first, it is with respect to $e_2$.
\begin{center}\begin{tikzcd}
{[\pm v_1, v_2, (-1)^x e_3]} \arrow[rr, no head] \arrow[dd, no head] &  & {[\mp v_2, v_1, (-1)^x e_3]} \arrow[dd, no head] \\
                                                                 &  &                                              \\
{[\pm (-1)^{x} v_1, v_2, e_3]} \arrow[rr, no head]                            &  & {[\mp (-1)^{x} v_2, v_1, e_3]}                           
\end{tikzcd} and \begin{tikzcd}
{[\pm v_1, v_2, (-1)^x e_3]} \arrow[rr, no head] \arrow[dd, no head] &  & {[v_2, \mp v_1, (-1)^x e_3]} \arrow[dd, no head] \\
                                                                 &  &                                              \\
{[\pm (-1)^{x}v_1, v_2, e_3]} \arrow[rr, no head]                            &  & {[(-1)^{x} v_2, \mp v_1, e_3]}                           
\end{tikzcd}\end{center}
Then both loops correspond to the element $x \in \pi_1(\SO(3)) \cong \Z/2$.\end{lemma}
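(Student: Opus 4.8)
The plan is to determine each loop's class by lifting it to the universal cover $\Pin(3)$ of $O(3)$: a loop in $\SO(3)$ is nullhomotopic exactly when its lift closes up, and this can be read off from the sign of a product of Clifford-algebra elements via Algorithm \ref{multalgo}, just as in the proofs of Lemmas \ref{lem3} and \ref{lem5}.

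First I would dispose of the degenerate case $x = 0$: here $(-1)^x e_3 = e_3$, so the top and bottom corners of each square coincide pairwise, the two vertical (long preferred) edges are constant paths, and the two horizontal edges are literally the same short preferred path. Each loop is then a short preferred path followed by its own reverse, hence nullhomotopic --- the element $0 = x$, as required. So the content is the case $x = 1$, and I would cut down the number of subcases there using the evident symmetries (the two choices of sign in ``$\pm$'', the four sign choices for $(v_1, v_2)$, and the two diagrams all give computations of the same shape), normalizing when convenient to $v_1 = e_1$, $v_2 = e_2$.

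For $x = 1$, I would identify each of the four edges with a rotation in a coordinate plane and record its $\Pin(3)$-lift from the formulas established above: the two horizontal edges are short preferred paths --- rotations by $\pm\pi/2$ in the $(e_1, e_2)$-plane applied to frames differing only in the sign of one vector --- lifting to elements $\tfrac{1}{\sqrt2}(1 \pm e_1 e_2)$; the left vertical edge is a long preferred path with respect to $e_1$, a rotation by $\pi$ in the $(e_1, e_3)$-plane, lifting to $\pm e_1 e_3$; and the right vertical edge is a long preferred path with respect to $e_2$, a rotation by $\pi$ in the $(e_2, e_3)$-plane, lifting to $\pm e_2 e_3$ (the signs of these last two being pinned down by the clause ``with respect to the first direction''). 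Following Algorithm \ref{multalgo}, I would multiply these four elements of $\mathrm{Cliff}(\R^3)$ in the cyclic order the edges are traversed --- replacing the recorded lift of an edge by its inverse when the edge is traversed against its preferred orientation, so $\tfrac{1}{\sqrt2}(1 \pm e_1 e_2) \mapsto \tfrac{1}{\sqrt2}(1 \mp e_1 e_2)$ and $e_i e_j \mapsto e_j e_i$ --- and simplify via $e_i^2 = 1$ and $e_i e_j = -e_j e_i$. The product is a negative real scalar, i.e.\ $-1 \in \Pin(3)$, so the loop is the nontrivial element $1 = x$. The second diagram only interchanges the two short-preferred-path conventions on the horizontal edges (replacing $e_1 e_2$ by $-e_1 e_2$ in one factor), and the sign choices only alter intermediate signs; neither affects the overall sign of a product of four factors of this form.

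The one genuinely delicate point is this bookkeeping: matching each edge of each diagram to the correct one of the two short-preferred-path conventions, orienting the long preferred paths correctly, and verifying uniformity over the diagrams and the sign choices. It is not deep --- each loop yields a product of just four elements of $\mathrm{Cliff}(\R^3)$, easily checked by hand or with the code referenced above --- but it is where any sign error would lurk. As an alternative for $x = 1$, one could instead argue as in Lemma \ref{lem5}, producing an explicit homotopy from the loop to a $2\pi$ rotation in a single coordinate plane and invoking Lemma \ref{lem3}; the $\Pin$ computation is the more systematic route and is the one I would carry out.
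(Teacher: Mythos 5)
Your proposal matches the paper's proof essentially step for step: the $x=0$ case is dispatched by observing the vertical edges are constant and the horizontal edges coincide, and the $x=1$ case is handled by lifting the four edges to $\Pin(3)$ (short preferred paths to $\tfrac{1}{\sqrt{2}}(1 \pm e_ie_j)$, long preferred paths to $\pm e_ie_j$) and multiplying via Algorithm \ref{multalgo} to obtain $-1$, with the remaining sign choices and the second diagram treated as routine variants. This is exactly the computation the paper carries out, so the proposal is correct and takes the same route.
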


\begin{proof}In the case when $x = 0$, the vertical edges are the identity paths and the horizontal edges are the same path, so both loops are nullhomotopic by contracting the horizontal path.

In the case where $x = 1, v_1 = e_1, v_2 = e_2$, the corresponding $\Pin(3)$ element of the left loop is given by
\begin{align*}
&\left(\frac{1}{\sqrt{2}}(1 - e_2e_1)\right)(e_3 e_2)\left(\frac{1}{\sqrt{2}}(1 - e_1e_2)\right)(e_1e_3) \\&= \left(\frac{1}{\sqrt{2}}(e_3 e_2 + e_1e_3)\right)\left(\frac{1}{\sqrt{2}}(e_1 e_3 -e_2e_3)\right) \\&= \frac{1}{2}(e_3 e_2 e_1 e_3 - e_3 e_2 e_2 e_3 + e_1 e_3 e_1 e_3 - e_1 e_3 e_2 e_3) = -1
\end{align*}
so the loop is homotopically nontrivial. The other cases follow with a similar computation.\end{proof}

\begin{lemma}\label{lem2}Let $x, y \in \Z/2$. Suppose that all edges of the following loop in $\SO(3)$ are long preferred paths with respect to $e_1$.
\begin{center}\begin{tikzcd}
{[(-1)^{x + y} e_1, (-1)^x e_2, (-1)^y e_3)]} \arrow[rr, no head] \arrow[dd, no head] &  & {[(-1)^{y} e_1, e_2, (-1)^y e_3)]} \arrow[dd, no head] \\
                                                                             &  &                                                   \\
{[(-1)^x e_1, (-1)^x e_2, e_3)]} \arrow[rr, no head]                            &  & {[e_1, e_2, e_3)]}                           
\end{tikzcd}\end{center}
Then the loop corresponds to the element $xy \in \pi_1(\SO(3)) \cong \Z/2$.\end{lemma}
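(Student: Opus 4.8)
The plan is to follow the method of Lemmas \ref{lem3} and \ref{lem1}: lift the loop of preferred paths to a path in $\Pin(3)$ starting at $\Id$, so that the loop is nullhomotopic precisely when this lift ends at $+1$ rather than $-1$. Label the corners of the square as top-left, top-right, bottom-left, bottom-right, and traverse the loop in the order top-left $\to$ top-right $\to$ bottom-right $\to$ bottom-left $\to$ top-left. Each of the four edges is a long preferred path with respect to $e_1$: the top and bottom edges flip the first and second frame entries and so lie in the $(e_1,e_2)$-plane, while the left and right edges flip the first and third entries and lie in the $(e_1,e_3)$-plane (when $x=0$ the top and bottom edges are constant, and when $y=0$ the left and right edges are constant).

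The key input is the lift of a single such edge. From the two formulas recorded just before Lemma \ref{lem1} — together with the fact that reversing a long preferred path in the $(e_1,e_m)$-plane replaces its lift $g$ by $g^{-1}=-g$, since $(e_1e_m)^2=-1$ — one reads off that a long preferred path with respect to $e_1$ in the $(e_1,e_m)$-plane traversed starting from a frame whose $e_m$-entry is $+e_m$ lifts to $e_1e_m$, and traversed starting from one whose $e_m$-entry is $-e_m$ lifts to $e_me_1$, independently of the sign of the first entry; a constant edge lifts to $1$. The lift of the whole loop is then the product of the four edge-lifts taken in order of traversal.

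I would then split into cases on $x,y\in\Z/2$. When $xy=0$ the product collapses to $1$: if $x=0$ the top and bottom edges are constant and the right and left edges have lifts with product $(e_3e_1)(e_1e_3)=1$ (both being $1$ when $y=0$ as well), and symmetrically when $y=0$; equivalently, when $x=0$ the left and right edges are literally the same long preferred path between the same endpoints, so the loop is that path followed by its reverse and is nullhomotopic. When $x=y=1$ the four corners are $[e_1,-e_2,-e_3]$, $[-e_1,e_2,-e_3]$, $[-e_1,-e_2,e_3]$, $[e_1,e_2,e_3]$, the four edge-lifts are $e_2e_1$, $e_3e_1$, $e_1e_2$, $e_1e_3$, and reducing $(e_2e_1)(e_3e_1)(e_1e_2)(e_1e_3)$ with the Clifford relations $e_i^2=1$, $e_ie_j=-e_je_i$ gives $-1$. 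Hence the loop is nullhomotopic exactly when $xy=0$, i.e. it corresponds to $xy\in\pi_1(\SO(3))\cong\Z/2$, as claimed.

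The main obstacle is purely bookkeeping: for each of the four edges and each value of $x,y$ one must correctly decide which of $e_1e_m$ or $e_me_1$ is its lift — this depends both on the direction in which the loop traverses that edge and on the signs of the frame entries at the starting corner — and then carry out the Clifford-algebra reduction without sign errors. Once these lifts are pinned down the conclusion is immediate, and the computation can be checked against the program referenced in Section 5.
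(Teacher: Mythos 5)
Your proposal is correct and follows essentially the same route as the paper: the degenerate cases $x=0$ or $y=0$ are handled by observing that one pair of edges is constant and the other pair is the same path traversed both ways, and the case $x=y=1$ is settled by lifting to $\Pin(3)$ and computing $(e_2e_1)(e_3e_1)(e_1e_2)(e_1e_3)=-1$, which is exactly the paper's computation. Your extra bookkeeping (deriving the edge lifts for reversed traversals via $g^{-1}=-g$) is sound and just makes explicit what the paper uses implicitly; the paper additionally sketches an alternative argument comparing with the Lipshitz--Sarkar standard paths, but its primary proof coincides with yours.
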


\begin{proof}In the case when $x = 0$, the horizontal edges are the identity paths and the vertical edges are the same path, so the loop is nullhomotopic by contracting the vertical path. Likewise, when $y = 0$, the vertical edges are identity and the loop is nullhomotopic by contracting the horizontal path.

In the case where $x = y = 1$, the corresponding $\Pin(3)$ element is given by
\begin{align*}
(e_2 e_1)(e_3 e_1)(e_1 e_2)(e_1 e_3) = -1
\end{align*}
so the loop is homotopically nontrivial.

Alternatively, each preferred path in this diagram is exactly the standard path given by \cite{LS}, with the exception of the leftmost path in the case that $x, y = 1 \pmod{2}$. In that case, the leftmost path is the opposite of the \cite{LS}-standard path. Since the \cite{LS}-standard paths form a nullhomotopic loop in $\SO(3)$, this loop is nullhomotopic if and only if $x, y$ are not both $1$.\end{proof}

\section{0- and 1-dimensional Moduli Spaces}

We now explicitly construct the $0$-, $1$-, and $2$-dimensional moduli spaces, which are needed to calculate $Sq^2$. This section will be devoted solely to the $0$- and $1$-dimensional moduli spaces.

First, we recap the implications of the local model. As the differential of $\CDP_*$ captures the boundary of each moduli space by construction, we consider the generators of $\CDP_1$, which must correspond to the $0$-dimensional moduli spaces. The moduli spaces of type 1.1 triples $(R, \vec{0}, \vec{0})$ are embedded in $\R^d$, with $d$ external frames and no internal frames.

The moduli spaces of type 1.0 triples $(c_x, N\vec{e}_j, (N)_j)$ are embedded in $\R_+^{2N-1} \times \R^{2Nd}$ and have $2N - 1$ internal frames and $2Nd$ external frames. The internal frames correspond to horizontal or vertical bubbling from higher strata. The point itself is the stratum $Z(0, N, 0; (N))$, which has the standard framing $\delta v_1, -\delta \Delta_1, \delta v_2, \dots, -\delta \Delta_{N-1}, \delta v_n$ since $\lambda$ is a length $1$ partition. Each $\delta v_j$ represents the direction in which the $j^{th}$ bubble has merged as the vertical annulus $V_j$, while each $-\delta v_j$ represents the direction in which the $j^{th}$ bubble has merged as the horizontal annulus $H_j$.

We use our sign assignment on $\CDP_*$ to fix the framing of each $0$-dimensional moduli space.

\begin{conv}We denote the unit vectors in $\mathbb{E}_{l}^{d} = \R_+^l \times \R^{d(l+1)}$ by $f_1, \dots, f_l, e_1, \dots, e_{d(l+1)}$.\end{conv}

\begin{definition}Given a sign assignment $s$
\begin{itemize}
\item The moduli space of a Type 1.1 triple $\mathcal{M}(R, \vec{0}, \vec{0})$ is framed according to $s$ if its framing is \\$[(-1)^{s(R, 0, 0)} e_1, e_2, \dots, e_d]$.

\item The moduli space of a Type 1.0 triple $\mathcal{M}(c_x, N\vec{e}_j, (N)_j)$ is framed according to $s$ if its framing is $[f_1, \dots, f_{2N-1}, (-1)^{s(c_x, N\vec{e}_j, (N)_j)} e_1, e_2, \dots, e_{2Nd}]$.
\end{itemize}

(Note that in both cases, the sign is placed on the first external frame vector.)\end{definition}

Before the $1$-dimensional moduli spaces can be framed, the framings of their boundaries must be understood. Most boundary points are the products of $0$-dimensional moduli spaces in the same strata, but in the cases of two groups of bubbles of the same type merging or an annulus bubbling, one endpoint is a $0$-dimensional moduli space in a lower stratum. For ease of comparison, in these cases we consider a \textit{smoothening} $\partial'\mathcal{M}(D, \vec{N}, \vec{\lambda})$ of the $1$-dimensional boundary by pushing off from that endpoint in the $\vec{v}$ direction while preserving frames.

Given a framing of each point, the framing of their product is forced by the local model. Their internal frame is the product of the internal frames, but where the internal frames of type $1$ bubbles (the ones that arise from the annuli $V_1$ and $H_1$) come first, type $2$ bubbles next, and so on. Their external frames is the product of the external frames, and goes after the internal frame as always.

\begin{lemma}\label{lemsame}Given a sign assignment $s$ with all $s_j = 0$, 
the framing where we frame every $0$-dimensional moduli space according to it is coherent.\end{lemma}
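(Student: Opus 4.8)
The plan is to verify coherence directly against the four types of codimension-$1$ strata that make up $\partial\mathcal{M}(D,\vec N,\vec\lambda)$ for each $1$-dimensional moduli space, i.e.\ each generator of $\CDP_2$. By Lemma~\ref{lem4} and the isomorphism (\ref{forgetmap}), coherence of the $0$-dimensional framings amounts to checking, for every generator of $\CDP_2$, that the two (smoothened) boundary points, each carrying its $\vec v$-direction into the interval, represent opposite elements of $\Omega_{fr,A}^0$ — equivalently, that the signs attached to the two endpoint frames (after reordering internal frames by bubble type and inserting the $\vec v$-direction at the front with the appropriate $(-1)^k$) differ by exactly the parity predicted by $\delta s$ applied to that generator. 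So first I would run through the subtypes of $\CDP_2$ listed in the classification (Types 2.0a, 2.0b, 2.1, 2.2a, 2.2b): for each, write down the two boundary terms of $\partial$ from Definition~\ref{defcdp}, and for each boundary term write down the induced framing using the local-model recipe (product of internal frames with type-$1$ bubbles first, then external frames, then the $\vec v$ pushoff).

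The Type 2.2a case (index-$2$ domain, no bubbles, only the two Type I terms $R*E=D$ and $E'*R'=D$) is the classical Lipshitz--Sarkar square-type argument: the two rectangle-removals give a square of rectangles, the two endpoints are framed by $(-1)^{s(R,0,0)}$ and $(-1)^{s(R',0,0)}$ on the first external frame, and the relevant reordering/orientation bookkeeping turns "the two endpoints are oppositely framed" into precisely the Square Rule $s(R)+s(S)+s(R')+s(S') = 1$ from Definition~\ref{defsa}. Type 2.2b adds one Type II term (annulus bubbling off): here one endpoint is a genuine $0$-dimensional moduli space $\mathcal{M}(c_x, \vec e_j, (1)_j)$ in a lower stratum, smoothened via $\partial'$, and the comparison of its standard frame $\delta v_1$ (a single length-$1$ partition) against the external frame of the rectangle endpoint is governed by the Annuli rule — this is where the hypothesis $s_j = 0$ enters, since $s(c_x,\vec e_j,(1)_j) = s_j$, and with $s_j=0$ the annulus endpoint contributes the preferred (sign-$0$) frame. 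Types 2.0a/2.0b/2.1 are the "bubble-only" and "rectangle-plus-bubble" cases, handled analogously: Type III coarsenings and Type IV reductions act on a single length-$N$ or length-$(N,M)$ partition, and the sign assignment's Bubbles clause $s(c_x, N\vec e_j, (N)_j) = Ns_j = 0$ makes every bubble endpoint carry the preferred frame, so that consecutive $\partial_3,\partial_4$ terms pair off correctly; the key computation is that the standard frame of Definition~\ref{defstdframe}, which is a concatenation of the per-$j$ frames $\delta v_{1,1},-\delta\Delta_{1,1},\dots$, is compatible (up to the prescribed reordering by bubble type) with splitting $N = N_1 + N_2$ or coarsening $(N,M)\rightsquigarrow (N+M)$.

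I expect the main obstacle to be the orientation/sign bookkeeping in Type 2.2b and Type 2.1 — precisely, tracking how the internal-frame reordering (type-$1$ bubbles first, then type-$2$, etc.), the insertion of the pushoff direction $\vec v$ at the front, and the $(-1)^k$ in the map (\ref{forgetmap}) interact when one endpoint lives in a strictly lower stratum with a different number of internal frames than its partner. In those cases the two endpoints are not "the same kind of object," so one cannot simply compare signs on a matching frame vector; instead one must transport both to a common $\mathbb{E}_l^d$ via the smoothening $\partial'$ and the local models, and verify the total sign discrepancy is exactly $\delta s$ evaluated on the generator. Because $s$ is a genuine $1$-cochain with $\delta s = T$ the Square/Annuli/Bubbles $2$-cochain (Theorem~\ref{thm1}), once each generator of $\CDP_2$ is checked the coherence condition for the whole collection of $1$-dimensional moduli spaces follows formally, and the hypothesis "all $s_j = 0$" is used only to kill the Bubbles contributions so that the bubble endpoints are uniformly preferred.
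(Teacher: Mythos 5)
Your proposal follows essentially the same route as the paper: the lemma is proved by checking, for each generator type of $\CDP_2$ (2.0a, 2.0b, 2.1, 2.2a, 2.2b), that the two smoothened endpoints of the corresponding interval are oppositely framed in $\Omega_{fr,A}^0$ via the forgetful isomorphism, with the Square Rule governing Type 2.2a, the Annuli rule governing Type 2.2b, and the hypothesis $s_j = 0$ (the Bubbles clause) making every bubble endpoint positively framed in the remaining cases. Your identification of the delicate point—transporting the lower-stratum endpoint via $\partial'$ when the two endpoints have different numbers of internal frames—matches exactly what the paper's case-by-case computations do.
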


\begin{proof}It suffices to check the boundaries of the $1$-dimensional moduli spaces corresponding to the generators of $\CDP_2$---specifically, we check triples of types 2.0a, 2.0b, 2.1, 2.2a, and 2.2b. The remainder of this section is devoted to these cases.\end{proof}

\begin{conv}For sign assignments $s$ satisfying the conditions of Lemma $\ref{lemsame}$, we have that $s(c_x, N \vec{e}_j, (N)_j) = 0$ for all generators $x$ and positive integers $N$. So for such sign assignments, we abbreviate $s(R, \vec{0}, \vec{0})$ to $s(R)$.\end{conv}

Once a 1-dimensional moduli space is shown to be frameable, its frame corresponds to a path in $\SO(A)$ as in the previous section. One of these paths we will call preferred. For each case, we will also give the preferred path.

\textbf{Type 2.0a}: In this case, we consider
\begin{align*}\mathcal{M}(c_x, (N_1 + N_2) \vec{e}_j, (N_1, N_2)) \subseteq \mathbb{E}_{2N_1+2N_2-1}^{d}.\end{align*}
One endpoint of the moduli space is the point
\begin{align*}P = \mathcal{M}(c_x, N_1 \vec{e}_j, (N_1)) \times \mathcal{M}(c_x, N_1 \vec{e}_j, (N_1))\end{align*}
which is a point in $\Tilde{\Omega}_{fr,A}^0$ with
\begin{align*}&\vec{v} = f_{2N_1} \text{ (the direction where this moduli space is broken) and frame} \\&[f_1, \dots, f_{2N_1-1}, f_{2N_1+1}, \dots, f_{2N_1+2N_2-1}, e_1, \dots, e_{2N_1 d + 2N_2 d}]. \\& (2(N_1 + N_2) - 2 \text{ internal frames and } 2(N_1 + N_2)d \text{ external frames})\end{align*}
In $\Omega_{fr,A}^0$, $P$ has the frame
\begin{align*}[f_{2N_1}, f_1, \dots, f_{2N_1-1}, f_{2N_1+1}, \dots, f_{2N_1+2N_2-1}, e_1, \dots, e_{2N_1 d + 2N_2 d}]\end{align*}
and in $\SO((2N_1 + 2N_2 - 1)(d + 1) - 1)$, it corresponds to the point
\begin{align*}[-f_{2N_1}, f_1, \dots, f_{2N_1-1}, f_{2N_1+1}, \dots, f_{2N_1+2N_2-1}, e_1, \dots, e_{2N_1 d + 2N_2 d}].\end{align*}
The other endpoint is is the point 
\begin{align*}Q = \mathcal{M}(c_x, (N_1 + N_2) \vec{e}_j, (N_1 + N_2))\end{align*}
which is a point in $\Tilde{\Omega}_{fr,A}^0$ with
\begin{align*}&[f_1, \dots, f_{2N_1+2N_2-1}, e_1, \dots, e_{2N_1 d + 2N_2 d}]\\& (2(N_1 + N_2) -1 \text{ internal frames and } 2(N_1 + N_2)d \text{ external frames})\end{align*}
Note that $Q$ has one more internal frame than $P$, which is a result of it lying in the lower stratum $Z(0, N_1 + N_2, 0; (N_1 + N_2))$ than the stratum $Z(0, N_1 + N_2, 0; (N_1, N_2))$ where the rest of the moduli space lies. The local model for the moduli space is identified with $\R_+$ with strata $\{ 0 \}$ (lower) and $(0, \infty)$ (higher), so $Q' \in \partial' \mathcal{M}(c_x, (N_1 + N_2) \vec{e}_j, (N_1, N_2))$ is the translate of $Q$ towards $P$. Compared to $Q$, $Q'$ lacks the internal frame $f_{2N_1} = -\delta \Delta_{N_1}$ and instead has $\vec{v} = -f_{2N_1}$ (pointing towards $P$). So in $\Omega_{fr,A}^0$, $Q'$ has frame
\begin{align*}[-f_{2N_1}, f_1, \dots, f_{2N_1-1}, f_{2N_1+1}, \dots, f_{2N_1+2N_2-1}, e_1, \dots, e_{2N_1 d + 2N_2 d}]\end{align*}
and in $\SO((2N_1 + 2N_2 - 1)(d + 1) - 1)$, it corresponds to the point
\begin{align*}[-f_{2N_1}, f_1, \dots, f_{2N_1-1}, f_{2N_1+1}, \dots, f_{2N_1+2N_2-1}, e_1, \dots, e_{2N_1 d + 2N_2 d}].\end{align*}
We see immediately that $P$ and $Q'$ are oppositely framed in $\Omega_{fr,A}^0$, and that they correspond to the same point in $\SO((2N_1 + 2N_2 - 1)(d + 1) - 1)$. In this case, the preferred framing of the interval between $P$ to $Q'$ corresponds to the identity path at the frame of $P$. The preferred path from $Q'$ (or equivalently $P$) to $Q$ follows the short preferred paths $[v_1, v_2] \rightarrow [v_2, -v_1]$ with $v_1 = f_{2N_1}$ until $f_{2N_1}$ is in the correct coordinate.

\begin{figure}
\begin{tikzpicture}[scale=3]

\filldraw[color=gray] (0, 0) .. controls (-0.125, 0.25) .. (0, 0.5) .. controls (0.125, 0.25) .. (0, 0);
\filldraw[color=gray] (0, 0.5) .. controls (-0.125, 0.75) .. (0, 1) .. controls (0.125, 0.75) .. (0, 0.5);
\filldraw[color=gray] (2, 0) .. controls (1.75, 0.5) .. (2, 1) .. controls (2.25, 0.5) .. (2, 0);

\draw (0.1, 0.5) node {\textbullet};
\draw (0.1, 0.5) node[anchor=south west] {$P$};

\draw (0.1, 0.5) -- (1.75, 0.5);

\draw (1.75, 0.5) node {\textbullet};
\draw (1.5, 0.5) node {\textbullet};
\draw (1.5, 0.5) node[anchor=south west] {$Q'$};
\draw (1.75, 0.5) node[anchor=north east] {$Q$};

\draw (0.2, 0.75) circle (0.1);
\draw (0.2, 0.25) circle (0.1);
\draw (2.295, 0.5) circle (0.1);

\node at (0.2, 0.75) {\small $2_j$};
\node at (0.2, 0.25) {\small $1_j$};
\node at (2.295, 0.5) {\small $3_j$};

\draw[->] (1, 1) -- (1.45, 0.55);

\filldraw[color=gray] (1, 1) .. controls (0.875, 1.25) .. (1, 1.5) .. controls (1.125, 1.25) .. (1, 1);
\draw (1.14, 1.15) circle (0.065);
\draw (1.14, 1.35) circle (0.065);
\node at (1.14, 1.15) {\small $1_j$};
\node at (1.14, 1.35) {\small $2_j$};

\end{tikzpicture}
\begin{tikzpicture}[x={(1,0)}, y={(0,1)}, z={(-0.5, -0.5)}, scale=3.5]

\draw (0.5, 0, 0.5) node {\textbullet};
\draw (0.5, 0, 0.5) node[anchor=north east] {$P$};

\draw (0.5, 0, 0.5) -- (0.5, 0.75, 0.5);

\draw (0.5, 0.6, 0.5) node {\textbullet};
\draw (0.5, 0.75, 0.5) node {\textbullet};
\draw (0.5, 0.6, 0.5) node[anchor=north east] {$Q'$};
\draw (0.5, 0.75, 0.5) node[anchor=south east] {$Q$};

\draw[->] (0,0,0) -- (1,0,0) node[anchor=north]{$\R_{+(1)} \times \R_{+(2)} \times \R_{+(3)} \times \R_{+(5)}$};
\draw[->] (0,0,0) -- (0,1,0) node[anchor=south]{$\R_{+(4)}$};
\draw[->] (0,0,-1) -- (0,0,1) node[anchor=south east]{$\R^{6d}$};

\end{tikzpicture}
\caption{Type 2.0a, in the case where $N_1 = 2, N_2 = 1$. Here $N_j$ denotes a cluster of $N$ type $j$ bubbles, and $\R_{+(j)}$ denotes the $j^{th}$ $\R_+$ coordinate.}\label{fig1}\end{figure}
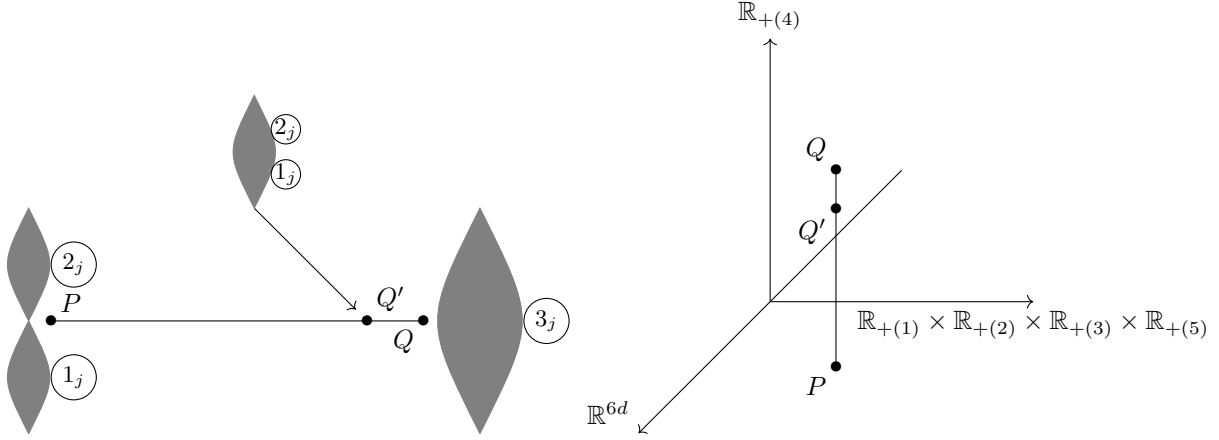

\textbf{Type 2.0b}: In this case, we consider
\begin{align*}\mathcal{M}(c_x, N_1 \vec{e}_j + N_2 \vec{e}_k, (N_1), (N_2)) \subseteq \mathbb{E}_{2N_1+2N_2-1}^{d}\end{align*}
where without loss of generality $j < k$. In this case, the endpoint
\begin{align*}P = \mathcal{M}(c_x, N_1 \vec{e}_j, (N_1)) \times \mathcal{M}(c_x, N_2 \vec{e}_k, (N_2))\end{align*}
is a point in $\Tilde{\Omega}_{fr,A}^0$ with
\begin{align*}&\vec{v} = f_{2N_1} \text{ and frame} \\&[f_1, \dots, f_{2N_1 - 1}, f_{2N_1 + 1}, \dots, f_{2N_1+2N_2-1}, e_1, \dots, e_{2N_1 d + 2N_2 d}]\\& (2(N_1 + N_2) - 2 \text{ internal frames and } 2(N_1 + N_2)d \text{ external frames})\end{align*}
In $\Omega_{fr,A}^0$, $P$ has the frame
\begin{align*}[f_{2N_1}, f_1, \dots, f_{2N_1 - 1}, f_{2N_1 + 1}, \dots, f_{2N_1+2N_2-1}, e_1, \dots, e_{2N_1 d + 2N_2 d}]\end{align*}
and in $\SO((2N_1 + 2N_2 - 1)(d + 1) - 1)$, $P$ corresponds to the point
\begin{align*}[-f_{2N_1}, f_1, \dots, f_{2N_1 - 1}, f_{2N_1 + 1}, \dots, f_{2N_1+2N_2-1}, e_1, \dots, e_{2N_1 d + 2N_2 d}]\end{align*}
The endpoint
\begin{align*}Q = \mathcal{M}(c_x, N_2 \vec{e}_k, (N_2)) \times \mathcal{M}(c_x, N_1 \vec{e}_j, (N_1))\end{align*}
is a point in $\Tilde{\Omega}_{fr,A}^0$ with
\begin{align*}&\vec{v} = f_{2N_2} \text{ and frame} \\&[f_{2N_2 + 1}, \dots, f_{2N_1 + 2N_2 - 1}, f_{1}, \dots, f_{2N_2-1}, e_1, \dots, e_{2N_1 d + 2N_2 d}]\\& (2(N_1 + N_2) - 2 \text{ internal frames and } 2(N_1 + N_2)d \text{ external frames})\end{align*}
In $\Omega_{fr,A}^0$, $Q$ has the frame
\begin{align*}[f_{2N_2}, f_{2N_2 + 1}, \dots, f_{2N_1 + 2N_2 - 1}, f_{1}, \dots, f_{2N_2-1}, e_1, \dots, e_{2N_1 d + 2N_2 d}].\end{align*}
and in $\SO((2N_1 + 2N_2 - 1)(d + 1) - 1)$, $Q$ corresponds to the point
\begin{align*}[f_{2N_2}, f_{2N_2 + 1}, \dots, f_{2N_1 + 2N_2 - 1}, f_{1}, \dots, f_{2N_2-1}, e_1, \dots, e_{2N_1 d + 2N_2 d}].\end{align*}
We can see that the $\Omega_{fr,A}^0$ frames are opposite since $P$ takes an odd number ($2N_1 - 1$) of switches to get to $[f_1, \dots, f_{2N_1+2N_1-1}, e_1, \dots, e_{2N_1 d + 2N_2 d}]$ and $Q$ takes an even number ($2N_2(2N_1 - 1)$) of switches to get to the same frame. The preferred framing is given as follows. Starting from $P$, we follow the short preferred paths $[v_1, v_2] \rightarrow [v_2, -v_1]$ with $v_1 = f_{2N_1}$ to reach the point
\begin{align*}[f_1, f_2 ,\dots, f_{2N_1+2N_2-1}, e_1, \dots, e_{2N_1 d + 2N_2 d}].\end{align*}
We then follow the short preferred paths $[v_1, v_2] \rightarrow [-v_2, v_1]$ with $v_2 = f_{2N_1+2N_-1}$ to obtain
\begin{align*}[f_{2N_1+2N_2-1}, f_1, f_2, \dots, f_{2N_1+2N_2-2}, e_1, \dots, e_{2N_1 d + 2N_2 d}]\end{align*}
followed by short preferred paths $[v_1, v_2] \rightarrow [-v_2, v_1]$ with $v_2 = f_{2N_1+2N_2-2}$ until that vector is in front, and then repeating the same with $v_2 = f_{2N_1+2N_2-3}$ and so on, until we do the same with $v_2 = f_{2N_2}$ and finally arrive at the frame of $Q$.

\begin{figure}
\begin{tikzpicture}[scale=4]

\filldraw[color=gray] (0, 0) .. controls (-0.125, 0.25) .. (0, 0.5) .. controls (0.125, 0.25) .. (0, 0);
\filldraw[color=gray] (0, 0.5) .. controls (-0.125, 0.75) .. (0, 1) .. controls (0.125, 0.75) .. (0, 0.5);
\filldraw[color=gray] (2, 0) .. controls (1.875, 0.25) .. (2, 0.5) .. controls (2.125, 0.25) .. (2, 0);
\filldraw[color=gray] (2, 0.5) .. controls (1.875, 0.75) .. (2, 1) .. controls (2.125, 0.75) .. (2, 0.5);

\draw (0.1, 0.5) node {\textbullet};
\draw (0.1, 0.5) node[anchor=south west] {$P$};

\draw (0.1, 0.5) -- (1.9, 0.5);

\draw (1.9, 0.5) node {\textbullet};
\draw (1.9, 0.5) node[anchor=north east] {$Q$};

\draw (0.2, 0.75) circle (0.1);
\draw (0.2, 0.25) circle (0.1);
\draw (2.2, 0.75) circle (0.1);
\draw (2.2, 0.25) circle (0.1);

\node at (0.2, 0.75) {\small $1_j$};
\node at (0.2, 0.25) {\small $2_k$};
\node at (2.2, 0.75) {\small $2_k$};
\node at (2.2, 0.25) {\small $1_j$};

\end{tikzpicture}
\begin{tikzpicture}[x={(1,0)}, y={(0,1)}, z={(-0.5, -0.5)}, scale=3.5]

\draw (0, 0, 0) node {\textbullet};
\draw (0, 0, 0) node[anchor=south east] {$\R^{6d}$};

\draw (0, 0.5, 0.5) node {\textbullet};
\draw (0, 0.5, 0.5) node[anchor=north east] {$P$};

\draw (0, 0.5, 0.5) .. controls (0.4, 0.4, 0.5) .. (0.5, 0, 0.5);

\draw (0.5, 0, 0.5) node {\textbullet};
\draw (0.5, 0, 0.5) node[anchor=north east] {$Q$};

\draw[->] (0,0,0) -- (1,0,0) node[anchor=west]{$\R_{+(4)}$};
\draw[->] (0,0,0) -- (0,1,0) node[anchor=south]{$\R_{+(2)}$};
\draw[->] (0,0,0) -- (0,0,1) node[anchor=north west]{$\R_{+(1)} \times \R_{+(3)} \times \R_{+(5)}$};

\end{tikzpicture}
\caption{Type 2.0b, in the case where $N_1 = 1, N_2 = 2$, with all conventions as before.}\label{fig2}\end{figure}
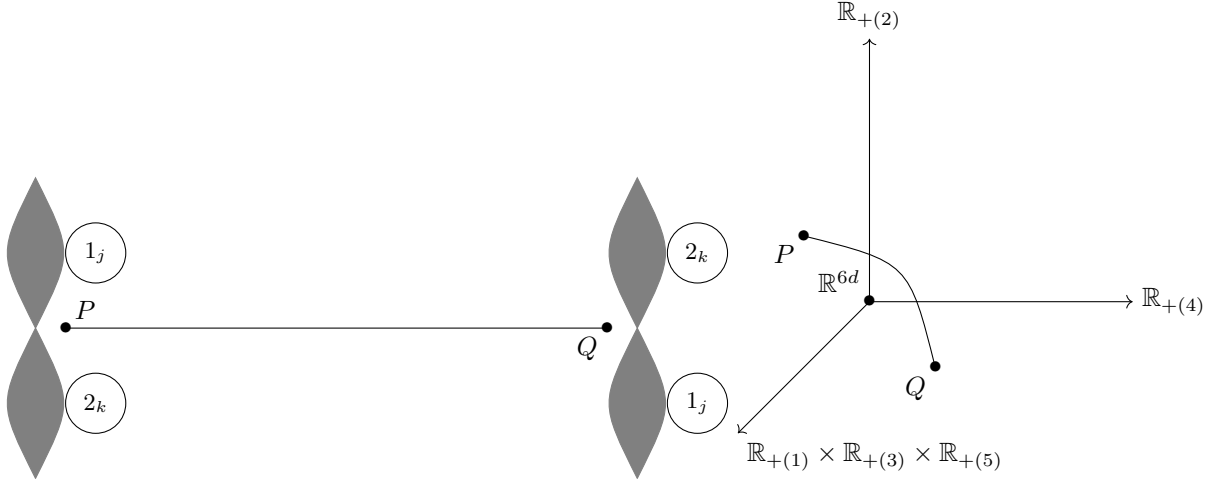

\textbf{Type 2.1}: $D$ is an index $1$ domain. In this case, $\vec{N} = N\vec{e}_j$ and $\lambda_j = (N)$, and the
\begin{align*}P = \mathcal{M}(D, 0, 0) \times \mathcal{M}(c_x, N \vec{e}_j, (N)) \subseteq \mathbb{E}_{2N}^{d}\end{align*}
endpoint is a point in $\Tilde{\Omega}_{fr,A}^0$ with
\begin{align*}&\vec{v} = f_{1}\text{ and frame} \\&[f_2, ..., f_{2N}, s(D) e_1, \dots, e_{2N d + d}]\\& (2N-1 \text{ internal frames and } 2(N+1)d \text{ external frames})\end{align*}
In $\Omega_{fr,A}^0$, $P$ has the frame
\begin{align*}[f_1, f_2, ..., f_{2N}, (-1)^{s(D)} e_1, \dots, e_{2N d + d}]\end{align*}
and in $\SO(2N(d + 1) - 1)$, $P$ corresponds to the point
\begin{align*}[(-1)^{s(D)}f_1, f_2, ..., f_{2N}, (-1)^{s(D)} e_1, \dots, e_{2N d + d}]\end{align*}
The endpoint
\begin{align*}Q = \mathcal{M}(c_x, N \vec{e}_j, (N)) \times \mathcal{M}(D, 0, 0)\end{align*}
is a point in $\Tilde{\Omega}_{fr,A}^0$ with
\begin{align*}&\vec{v} = f_{2N}\text{ and frame} \\&[f_1, ..., f_{2N - 1}, e_1, \dots, e_{2N d}, s(D) e_{2Nd + 1}, e_{2Nd + 2} \dots, e_{2Nd + d}].\\& (2N-1 \text{ internal frames and } 2(N+1)d \text{ external frames})\end{align*}
In $\Omega_{fr,A}^0$, $Q$ has the frame
\begin{align*}[f_{2N}, f_1, ..., f_{2N - 1}, e_1, \dots, e_{2N d}, (-1)^{s(D)} e_{2Nd + 1}, e_{2Nd + 2} \dots, e_{2Nd + d}].\end{align*}
and in $\SO(2N(d + 1) - 1)$, $Q$ corresponds to the point
\begin{align*}[(-1)^{s(D) + 1} f_{2N}, f_1, ..., f_{2N - 1}, e_1, \dots, e_{2N d}, (-1)^{s(D)} e_{2Nd + 1}, e_{2Nd + 2} \dots, e_{2Nd + d}].\end{align*}
We can see that the $\Omega_{fr,A}^0$ frames are opposite since they are related by odd number ($2N - 1$) of switches of the internal frames and an even number ($2$) of sign flips of external frames. The preferred framing is given as follows. Starting from $P$, we change the first external frame with the long preferred path with respect to the first direction $f_1$ to reach
\begin{align*}[f_1, f_2, ..., f_{2N}, e_1, \dots, e_{2N d + d}]\end{align*}
We then take the short preferred paths $[v_1, v_2] \rightarrow [-v_2, v_1]$ with $v_2 = f_{2N}$ to obtain
\begin{align*}[-f_{2N}, f_1 f_2, \dots, f_{2N - 1}, e_1, \dots, e_{2N d + d}].\end{align*}
Finally, we again use the long preferred path with respect to the first coordinate $f_{2N}$ to change the $(2Nd+1)^{st}$ external frame and reach
\begin{align*}[(-1)^{s(D) + 1} f_{2N}, f_1 f_2, \dots, f_{2N - 1}, e_1, \dots, e_{2N d}, (-1)^{s(D)} e_{2Nd + 1}, e_{2Nd + 2} \dots, e_{2Nd + d}]\end{align*}
which is the frame of $Q$.

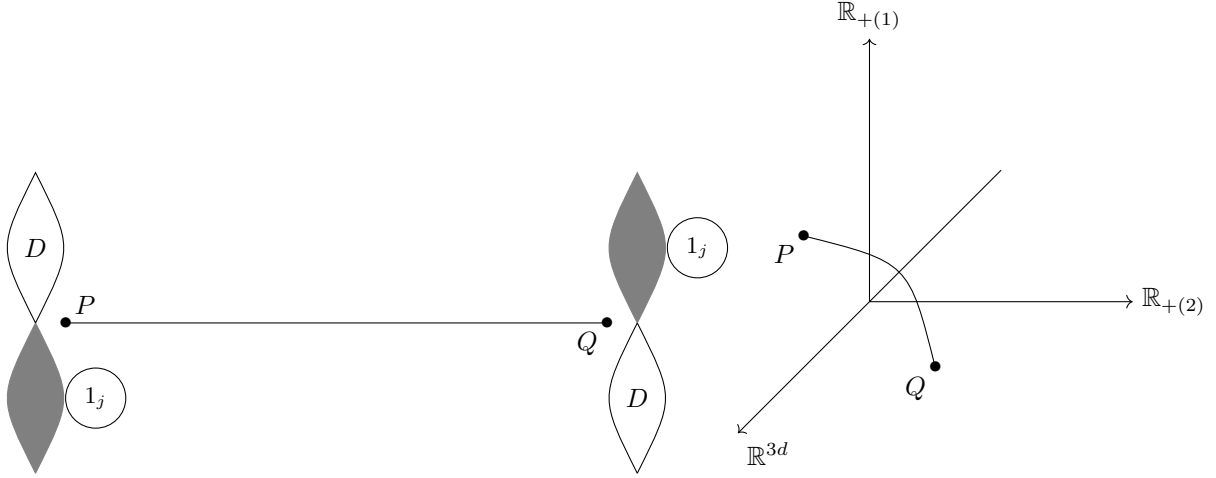
\begin{figure}
\begin{tikzpicture}[scale=4]

\filldraw[color=gray] (0, 0) .. controls (-0.125, 0.25) .. (0, 0.5) .. controls (0.125, 0.25) .. (0, 0);
\draw (0, 0.5) .. controls (-0.125, 0.75) .. (0, 1) .. controls (0.125, 0.75) .. (0, 0.5);
\draw (2, 0) .. controls (1.875, 0.25) .. (2, 0.5) .. controls (2.125, 0.25) .. (2, 0);
\filldraw[color=gray] (2, 0.5) .. controls (1.875, 0.75) .. (2, 1) .. controls (2.125, 0.75) .. (2, 0.5);

\draw (0.1, 0.5) node {\textbullet};
\draw (0.1, 0.5) node[anchor=south west] {$P$};

\draw (0.1, 0.5) -- (1.9, 0.5);

\draw (1.9, 0.5) node {\textbullet};
\draw (1.9, 0.5) node[anchor=north east] {$Q$};

\draw (0.2, 0.25) circle (0.1);
\draw (2.2, 0.75) circle (0.1);

\node at (0.2, 0.25) {\small $1_j$};
\node at (2.2, 0.75) {\small $1_j$};
\node at (0, 0.75) {$D$};
\node at (2, 0.25) {$D$};

\end{tikzpicture}
\begin{tikzpicture}[x={(1,0)}, y={(0,1)}, z={(-0.5, -0.5)}, scale=3.5]

\draw (0, 0.5, 0.5) node {\textbullet};
\draw (0, 0.5, 0.5) node[anchor=north east] {$P$};

\draw (0, 0.5, 0.5) .. controls (0.4, 0.4, 0.5) .. (0.5, 0, 0.5);

\draw (0.5, 0, 0.5) node {\textbullet};
\draw (0.5, 0, 0.5) node[anchor=north east] {$Q$};

\draw[->] (0,0,0) -- (1,0,0) node[anchor=west]{$\R_{+(2)}$};
\draw[->] (0,0,0) -- (0,1,0) node[anchor=south]{$\R_{+(1)}$};
\draw[->] (0,0,-1) -- (0,0,1) node[anchor=north west]{$\R^{3d}$};

\end{tikzpicture}
\caption{Type 2.1, in the case where $N = 1$, with all conventions as before.}\label{fig3}\end{figure}

In the last case, $D$ is an index $2$ domain. In this case $\vec{N} = 0$, so
\begin{align*}\mathcal{M}(D, 0, 0) \subseteq \mathbb{E}_1^d\end{align*}
and there are three further subcases:

\textbf{Type 2.2a}: $D$ is not an annulus. In this case, $D$ decomposes into rectangles in exactly two ways, $D = R * S = R' * S'$. The endpoint
\begin{align*}P = \mathcal{M}(R, 0, 0) \times \mathcal{M}(S, 0, 0)\end{align*}
is a point in $\Tilde{\Omega}_{fr,A}^0$ with
\begin{align*}&\vec{v} = f_{1}\text{ and frame} \\&[(-1)^{s(R)} e_1, e_2, \dots, e_d, (-1)^{s(S)} e_{d + 1}, e_{d + 2}, \dots, e_{2d}]
\\& (\text{No internal frames and } 2d \text{ external frames})\end{align*}
In $\Omega_{fr,A}^0$, $P$ has the frame
\begin{align*}[f_1, (-1)^{s(R)} e_1, e_2, \dots, e_d, (-1)^{s(S)} e_{d + 1}, e_{d + 2}, \dots, e_{2d}]\end{align*}
and in $\SO(2d + 1)$, $P$ corresponds to the point
\begin{align*}[(-1)^{s(R) + s(S)}f_1, (-1)^{s(R)} e_1, e_2, \dots, e_d, (-1)^{s(S)} e_{d + 1}, e_{d + 2}, \dots, e_{2d}]\end{align*}
The endpoint
\begin{align*}Q = \mathcal{M}(R', 0, 0) \times \mathcal{M}(S', 0, 0)\end{align*}
is a point in $\Tilde{\Omega}_{fr,A}^0$ with
\begin{align*}&\vec{v} = f_{1}\text{ and frame} \\&[(-1)^{s(R')} e_1, e_2, \dots, e_d, (-1)^{s(S')} e_{d + 1}, e_{d + 2}, \dots, e_{2d}].\\& (\text{No internal frames and } 2d \text{ external frames})\end{align*}
In $\Omega_{fr,A}^0$, $Q$ has the frame
\begin{align*}[f_1, (-1)^{s(R')} e_1, e_2, \dots, e_d, (-1)^{s(S')} e_{d + 1}, e_{d + 2}, \dots, e_{2d}].\end{align*}
and in $\SO(2d + 1)$, $Q$ corresponds to the point
\begin{align*}[(-1)^{s(R') + s(S')}f_1, (-1)^{s(R')} e_1, e_2, \dots, e_d, (-1)^{s(S')} e_{d + 1}, e_{d + 2}, \dots, e_{2d}].\end{align*}
We can see that the $\Omega_{fr,A}^0$ frames are opposite exactly when $s(R) + s(R') + s(S) + s(S') = 1 \pmod{2}$, in which case the frames are related by an odd number of sign switches. The preferred path is as follows. Near both endpoints, we change the $(d+1)^{st}$ frame to positive using the long preferred path with respect to $f_1$, then the first coordinate to positive using the long preferred path with respect to $f_1$ (so that in the middle, the frame is the positive $[f_1, e_1, \dots, e_{2d}]$).

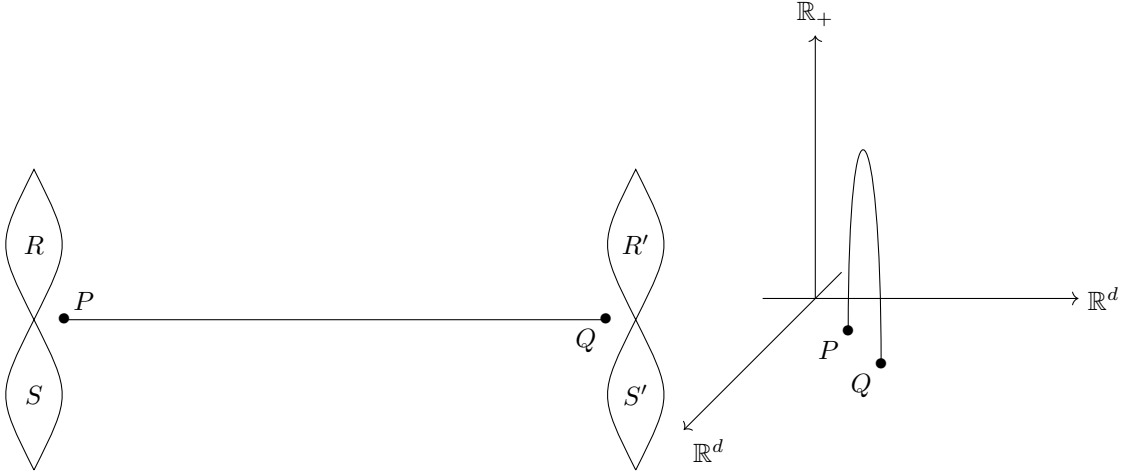
\begin{figure}
\begin{tikzpicture}[scale=4]

\draw (0, 0) .. controls (-0.125, 0.25) .. (0, 0.5) .. controls (0.125, 0.25) .. (0, 0);
\draw (0, 0.5) .. controls (-0.125, 0.75) .. (0, 1) .. controls (0.125, 0.75) .. (0, 0.5);
\draw (2, 0) .. controls (1.875, 0.25) .. (2, 0.5) .. controls (2.125, 0.25) .. (2, 0);
\draw (2, 0.5) .. controls (1.875, 0.75) .. (2, 1) .. controls (2.125, 0.75) .. (2, 0.5);

\draw (0.1, 0.5) node {\textbullet};
\draw (0.1, 0.5) node[anchor=south west] {$P$};

\draw (0.1, 0.5) -- (1.9, 0.5);

\draw (1.9, 0.5) node {\textbullet};
\draw (1.9, 0.5) node[anchor=north east] {$Q$};

\node at (0, 0.75) {$R$};
\node at (2, 0.25) {$S'$};
\node at (0, 0.25) {$S$};
\node at (2, 0.75) {$R'$};

\end{tikzpicture}
\begin{tikzpicture}[x={(1,0)}, y={(0,1)}, z={(-0.5, -0.5)}, scale=3.5]

\draw (0.25, 0, 0.25) node {\textbullet};
\draw (0.25, 0, 0.25) node[anchor=north east] {$P$};

\draw (0.25, 0, 0.25) .. controls (0.25, 1, 0.25) and (0.5, 1, 0.5) .. (0.5, 0, 0.5);

\draw (0.5, 0, 0.5) node {\textbullet};
\draw (0.5, 0, 0.5) node[anchor=north east] {$Q$};

\draw[->] (-0.2,0,0) -- (1,0,0) node[anchor=west]{$\R^d$};
\draw[->] (0,0,0) -- (0,1,0) node[anchor=south]{$\R_+$};
\draw[->] (0,0,-0.2) -- (0,0,1) node[anchor=north west]{$\R^d$};

\end{tikzpicture}
\caption{Type 2.2a.}\label{fig4}\end{figure}

\textbf{Type 2.2b}: $D = R * S$ is either the vertical annulus $V_j$ or the horizontal annulus $H_j$. We treat the cases separately, starting in the case when $D = V_j$. In this case, the endpoint
\begin{align*}P = \mathcal{M}(R, 0, 0) \times \mathcal{M}(S, 0, 0)\end{align*}
is a point in $\Tilde{\Omega}_{fr,A}^0$ with
\begin{align*}&\vec{v} = f_{1}\text{ and frame} \\&[(-1)^{s(R)} e_1, e_2, \dots, e_d, (-1)^{s(S)} e_{d + 1}, e_{d + 2}, \dots, e_{2d}]\\& (\text{No internal frames and } 2d \text{ external frames})\end{align*}
In $\Omega_{fr,A}^0$, $P$ has the frame
\begin{align*}[f_1, (-1)^{s(R)} e_1, e_2, \dots, e_d, (-1)^{s(S)} e_{d + 1}, e_{d + 2}, \dots, e_{2d}]\end{align*}
and in $\SO(2d + 1)$, $P$ corresponds to the point
\begin{align*}[(-1)^{s(R)+s(S)}f_1, (-1)^{s(R)} e_1, e_2, \dots, e_d, (-1)^{s(S)} e_{d + 1}, e_{d + 2}, \dots, e_{2d}]\end{align*}
The endpoint
\begin{align*}Q = \mathcal{M}(c_x, \vec{e}_j, (1))\end{align*}
is a point in $\Tilde{\Omega}_{fr,A}^0$ with
\begin{align*}\\&[f_1, e_1, \dots, e_{2d}].\\& (\text{One internal frame and } 2d \text{ external frames})\end{align*}
Note that $Q$ has one more internal frame than $P$, which is a result of it lying in the lower stratum $Z(0, 1, 0; (1))$ than the stratum $Z(0, 0, 1)$ where the rest of the moduli space lies. The local model for the moduli space is identified with $\R_+$ with strata $\{ 0 \}$ (lower) and $(0, \infty)$ (higher), so $Q' \in \partial' \mathcal{M}(V_j, 0, 0)$ is a translate of $Q$. Compared to $Q$, $Q'$ lacks the internal frame $f_1 = \delta v_1$ and instead has $\vec{v} = f_1$ (the direction of merging the bubble as a vertical annulus). In $\Omega_{fr,A}^0$, $Q'$ has the frame
\begin{align*}[f_1, e_1, \dots, e_{2d}].\end{align*}
and in $\SO(2d + 1)$, $Q'$ corresponds to the point
\begin{align*}[f_1, e_1, \dots, e_{2d}].\end{align*}
We can see that the $\Omega_{fr,A}^0$ frames are opposite exactly when $s(R) + s(S) = 1 \pmod{2}$, so these frames are related by exactly one sign switch and thus opposite. The preferred path is as follows. Starting from $P$, we change the $(d+1)^{st}$ frame to positive using the long preferred path with respect to $f_1$, then the first coordinate to positive using the long preferred path with respect to $f_1$. (Note that since $Q'$ is positively framed, this coincides with the preferred path of Type 2.2a.)

\begin{figure}
\begin{tikzpicture}[scale=3]

\draw (0, 0) .. controls (-0.125, 0.25) .. (0, 0.5) .. controls (0.125, 0.25) .. (0, 0);
\draw (0, 0.5) .. controls (-0.125, 0.75) .. (0, 1) .. controls (0.125, 0.75) .. (0, 0.5);
\filldraw[color=gray] (2, 0) .. controls (1.75, 0.5) .. (2, 1) .. controls (2.25, 0.5) .. (2, 0);

\draw (0.1, 0.5) node {\textbullet};
\draw (0.1, 0.5) node[anchor=south west] {$P$};

\draw (0.1, 0.5) -- (1.75, 0.5);

\draw (1.75, 0.5) node {\textbullet};
\draw (1.5, 0.5) node {\textbullet};
\draw (1.5, 0.5) node[anchor=south west] {$Q'$};
\draw (1.75, 0.5) node[anchor=north east] {$Q$};

\node at (0, 0.75) {$R$};
\node at (0, 0.25) {$S$};

\draw (2.295, 0.5) circle (0.1);
\node at (2.295, 0.5) {1};

\draw[->] (1, 1) -- (1.45, 0.55);

\draw (1, 1) .. controls (0.875, 1.25) .. (1, 1.5) .. controls (1.125, 1.25) .. (1, 1);
\node at (1, 1.25) {$\mathit{RS}$};

\end{tikzpicture}
\begin{tikzpicture}[x={(1,0)}, y={(0,1)}, z={(-0.5, -0.5)}, scale=3.5]

\draw (0.25, 0, 0.25) node {\textbullet};
\draw (0.25, 0, 0.25) node[anchor=north east] {$P$};

\draw (0.25, 0, 0.25) .. controls (0.25, 1, 0.25) and (0, 1, 0) .. (0, 0.8, 0);

\draw (0, 0.7, 0) node {\textbullet};
\draw (0, 0.8, 0) node {\textbullet};
\draw (0, 0.7, 0) node[anchor=north east] {$Q$};
\draw (0, 0.8, 0) node[anchor=south east] {$Q'$};

\draw[->] (-0.2,0,0) -- (1,0,0) node[anchor=west]{$\R^d$};
\draw[->] (0,0,0) -- (0,1,0) node[anchor=east]{$\R_+$};
\draw[->] (0,0,-0.2) -- (0,0,1) node[anchor=north west]{$\R^d$};

\end{tikzpicture}
\caption{Type 2.2b, in the case when $D$ is a vertical annulus.}\label{fig5}\end{figure}
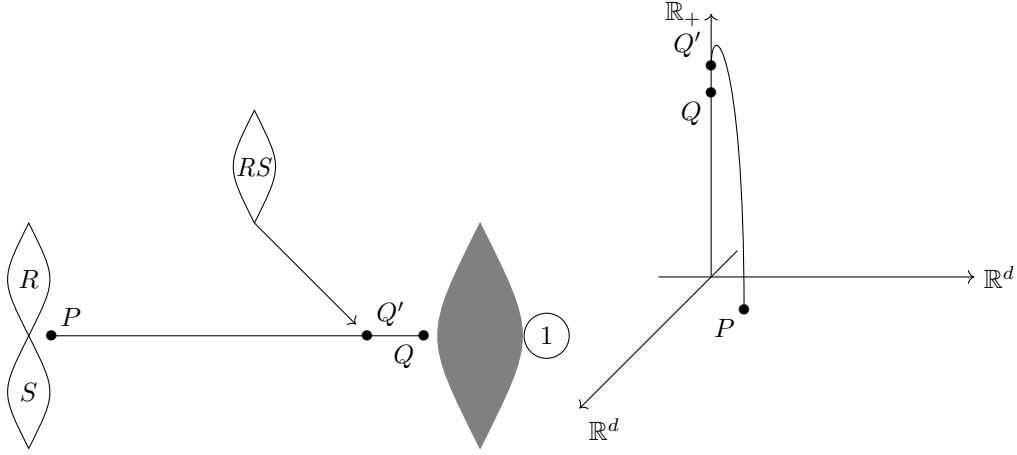

Finally, suppose $D = R * S$ is the horizontal annulus $H_j$. In this case, the endpoint
\begin{align*}P = \mathcal{M}(R, 0, 0) \times \mathcal{M}(S, 0, 0)\end{align*}
is a point in $\Tilde{\Omega}_{fr,A}^0$ with
\begin{align*}&\vec{v} = f_{1}\text{ and frame} \\&[(-1)^{s(R)} e_1, e_2, \dots, e_d, (-1)^{s(S)} e_{d + 1}, e_{d + 2}, \dots, e_{2d}]\\& (\text{No internal frames and } 2d \text{ external frames})\end{align*}
In $\Omega_{fr,A}^0$, $P$ has the frame
\begin{align*}[f_1, (-1)^{s(R)} e_1, e_2, \dots, e_d, (-1)^{s(S)} e_{d + 1}, e_{d + 2}, \dots, e_{2d}]\end{align*}
and in $\SO(2d + 1)$, $P$ corresponds to the point
\begin{align*}[(-1)^{s(R)+s(S)}f_1, (-1)^{s(R)} e_1, e_2, \dots, e_d, (-1)^{s(S)} e_{d + 1}, e_{d + 2}, \dots, e_{2d}]\end{align*}
The endpoint
\begin{align*}Q = \mathcal{M}(c_x, \vec{e}_j, (1))\end{align*}
has frame
\begin{align*}\\&[f_1, e_1, \dots, e_{2d}].\\& (\text{One internal frame and } 2d \text{ external frames})\end{align*}
Note that $Q$ has one more internal frame than $P$, which is a result of it lying in the lower stratum $Z(0, 1, 0; (1))$ than the stratum $Z(1, 0, 0)$ where the rest of the moduli space lies. The local model for the moduli space is identified with $\R_+$ with strata $\{ 0 \}$ (lower) and $(0, \infty)$ (higher), so $Q' \in \partial' \mathcal{M}(V_j, 0, 0)$ is a translate of $Q$. Compared to $Q$, $Q'$ lacks the internal frame $f_1 = \delta v_1$ and instead has $\vec{v} = -f_1$ (the direction of merging the bubble as a horizontal annulus). In $\Omega_{fr,A}^0$, $Q'$ has the frame
\begin{align*}[-f_1, e_1, \dots, e_{2d}].\end{align*}
and in $\SO(2d + 1)$, $Q'$ corresponds to the point
\begin{align*}[f_1, e_1, \dots, e_{2d}].\end{align*}
We can see that the $\Omega_{fr,A}^0$ frames are opposite exactly when $s(R) + s(S) = 0 \pmod{2}$, so that the frames are related by exactly one or three sign switches. The preferred path is as follows. Starting from $P$, we change the $(d+1)^{st}$ frame to positive using the long preferred path with respect to $f_1$, then the first coordinate to positive using the long preferred path with respect to $f_1$. (Note that since $Q'$ is positively externally framed, this coincides with the preferred path of Type 2.2a.)

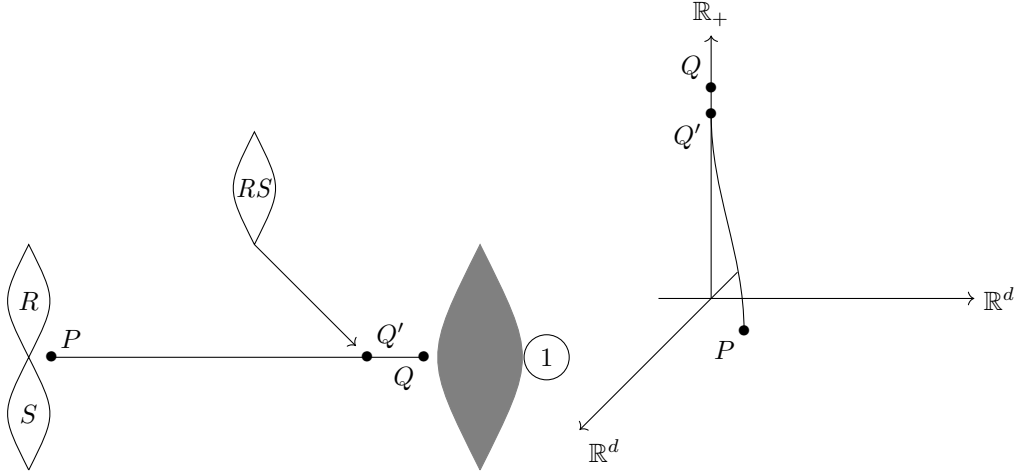
\begin{figure}
\begin{tikzpicture}[scale=3]

\draw (0, 0) .. controls (-0.125, 0.25) .. (0, 0.5) .. controls (0.125, 0.25) .. (0, 0);
\draw (0, 0.5) .. controls (-0.125, 0.75) .. (0, 1) .. controls (0.125, 0.75) .. (0, 0.5);
\filldraw[color=gray] (2, 0) .. controls (1.75, 0.5) .. (2, 1) .. controls (2.25, 0.5) .. (2, 0);

\draw (0.1, 0.5) node {\textbullet};
\draw (0.1, 0.5) node[anchor=south west] {$P$};

\draw (0.1, 0.5) -- (1.75, 0.5);

\draw (1.75, 0.5) node {\textbullet};
\draw (1.5, 0.5) node {\textbullet};
\draw (1.5, 0.5) node[anchor=south west] {$Q'$};
\draw (1.75, 0.5) node[anchor=north east] {$Q$};

\node at (0, 0.75) {$R$};
\node at (0, 0.25) {$S$};

\draw (2.295, 0.5) circle (0.1);
\node at (2.295, 0.5) {1};

\draw[->] (1, 1) -- (1.45, 0.55);

\draw (1, 1) .. controls (0.875, 1.25) .. (1, 1.5) .. controls (1.125, 1.25) .. (1, 1);
\node at (1, 1.25) {$\mathit{RS}$};

\end{tikzpicture}
\begin{tikzpicture}[x={(1,0)}, y={(0,1)}, z={(-0.5, -0.5)}, scale=3.5]

\draw (0.25, 0, 0.25) node {\textbullet};
\draw (0.25, 0, 0.25) node[anchor=north east] {$P$};

\draw (0.25, 0, 0.25) .. controls (0.25, 0.3, 0.25) and (0, 0.4, 0) .. (0, 0.7, 0);

\draw (0, 0.7, 0) node {\textbullet};
\draw (0, 0.8, 0) node {\textbullet};
\draw (0, 0.7, 0) node[anchor=north east] {$Q'$};
\draw (0, 0.8, 0) node[anchor=south east] {$Q$};

\draw[->] (-0.2,0,0) -- (1,0,0) node[anchor=west]{$\R^d$};
\draw[->] (0,0,0) -- (0,1,0) node[anchor=south]{$\R_+$};
\draw[->] (0,0,-0.2) -- (0,0,1) node[anchor=north west]{$\R^d$};

\end{tikzpicture}
\caption{Type 2.2b, in the case where $D$ is a horizontal annulus.}\label{fig6}\end{figure}

\section{The 2-dimensional Coherence Conditions}

To describe the $2$-dimensional boundaries, their frames must be understood. Again, we first consider a smoothening $\partial'\mathcal{M}(D, \vec{N}, \vec{\lambda})$ of each $2$-dimensional boundary by pushing off in the $\vec{v}$ direction to avoid the strata while preserving frames.

The local model guarantees that framing of a product of a point and an interval, or of an interval and a point is given by a similar product structure as in the framing of a point times a point. Their internal frame is the product of the internal frames, but where the internal frames of type $1$ bubbles come first, type $2$ bubbles next, and so on. Their external frames is the product of the external frames, and goes after the internal frame as always.

We record the framings of each part of $\partial'\mathcal{M}(D, \vec{N}, \vec{\lambda})$ with a frame assignment, which detects whether a path is preferred. More formally:
\begin{definition}
Given a framing of the $0$-dimensional moduli spaces, a frame assignment $f$ is a $2$-cochain on $(\CDP_*; \Z/2)$ such that $f(D, \vec{N}, \vec{\lambda}) = 0$ if and only if the moduli space $\mathcal{M}(D, \vec{N}, \vec{\lambda})$ is homotopic (relative endpoints) to the preferred framed path for its moduli space as in Section 6. 
\end{definition}

\begin{lemma}\label{framecond}(Frame Assignment Conditions) Given a sign assignment $s$ satisfying Lemma $\ref{lemsame}$ which all $0$-dimensional moduli spaces have been framed according to, a frame assignment $f$ must satisfy
\begin{enumerate}
\item[Type 3.3a] If $D$ is an index $3$ domain which contains no annulus,
\begin{align*}\delta f(D, 0, 0) = 1 + \sum\limits_{j = 1}^{k}(1 + s(C_j))\end{align*}
where $C_j$ are all possible rectangles that can show up at the end of a decomposition $D = A_k * B_l * C_j$.
\item[Type 3.3b-c] If $D$ is an index $3$ domain which contains an annulus, 
\begin{align*}\delta f(D, 0, 0) = 0.\end{align*}
\item[Type 3.2a-b] For all $1 \leq j \leq n$ and all index $2$ domains $D$ which are not either the horizontal annulus $H_j$ or the vertical annulus $V_j$,
\begin{align*}\delta f(D, N \vec{e_j}, (N)_j) = 0.\end{align*}
\item[Type 3.2c] For $D = H_j$ or $V_j$,
\begin{align*}\delta f(D, \vec{e_j}, (1)_j) = 0.\end{align*}
\item[Type 3.1a] For all index $1$ domains $D$ and all $1 \leq j \leq n$, 
\begin{align*}\delta f(D, 2\vec{e_j}, ((1, 1))) = s(D).\end{align*}
\item[Type 3.1b] For all index $1$ domains $D$ and all $1 \leq j < k \leq n$, 
\begin{align*}\delta f(D, \vec{e_j} + \vec{e_k}, ((1), (1))) = 1 + s(D).\end{align*}
\item[Type 3.0c] For all constant domains $c_x$ and all $1 \leq j < k < l \leq n$, 
\begin{align*}\delta f(c_x, N_1 \vec{e_j} + N_2 \vec{e_k} + N_3 \vec{e_l}, ((N_1), (N_2), (N_3))) = 0.\end{align*}
\end{enumerate}\end{lemma}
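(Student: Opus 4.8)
The plan is to verify the conditions type by type over the generators of $\CDP_3$ classified in Section 3, reducing each to a computation in $\Pin(A)$. Fix a generator $g=(D,\vec N,\vec\lambda)$ of $\CDP_3$. Then $\mathcal M(D,\vec N,\vec\lambda)$ is a compact $2$-manifold whose smoothened boundary $\partial'\mathcal M(D,\vec N,\vec\lambda)$ is a simple framed loop in the ambient $\mathbb{E}_l^d$ (by \cite[Section 9]{MS}), and its constituent intervals are the $1$-dimensional moduli spaces indexed by the terms of $\partial g$. Each such interval is a generator $g'$ of $\CDP_2$ whose framed path in $\SO(A)$ was pinned down in Section 6 (Types 2.0a, 2.0b, 2.1, 2.2a, 2.2b); relative to its endpoints this path agrees with the preferred path for $g'$ when $f(g')=0$ and differs from it by the nontrivial class of $\pi_1(\SO(A))\cong\Z/2$ when $f(g')=1$. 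Concatenating the \emph{actual} framed paths around $\partial'\mathcal M(D,\vec N,\vec\lambda)$ therefore yields a loop of class $c(g)+\delta f(g)$, where $c(g)\in\Z/2$ is the class of the loop built from the \emph{preferred} paths and $\delta f(g)=\langle f,\partial g\rangle$. Since $\mathcal M(D,\vec N,\vec\lambda)$ is frameable exactly when $\partial'\mathcal M(D,\vec N,\vec\lambda)$ is framed null-cobordant, which by Lemma \ref{lem4} means its $\SO(A)$ loop is not null-homotopic, the required identity is $\delta f(g)=1+c(g)$, and the lemma reduces to computing $c(g)$ for each type.

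To compute $c(g)$ for a given type I would first assemble from the Section 6 descriptions the explicit cyclic sequence of short and long preferred paths constituting $\partial'\mathcal M(D,\vec N,\vec\lambda)$, checking along the way that consecutive endpoints agree (forced by the coherence of the $0$-dimensional framings in Lemma \ref{lemsame}), then lift each path to $\Pin(A)$ and multiply via Algorithm \ref{multalgo}; $c(g)$ is the sign of the result. As each loop is supported in only three or four coordinates, these computations reduce to the model loops of Lemmas \ref{lem3}, \ref{lem5}, \ref{lem1}, and \ref{lem2}. For Type 3.3a the boundary intervals are the Type 2.2a moduli spaces obtained by deleting a rectangle from $D$, and the $\Pin(A)$ product records the $s$-values of the terminal rectangles $C_j$ of the length-$3$ decompositions $D=A*B*C_j$, giving $\delta f(D,0,0)=1+\sum_j(1+s(C_j))$. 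For Types 3.3b--c the annulus sub-bubbling in $D$ contributes a Type 2.2b interval whose Section 6 preferred path absorbs the extra sign, forcing $c=1$ and $\delta f=0$. For Types 3.2a--c the boundary mixes Type 2.1 intervals---whose preferred paths carry the external sign $s(D)$ through long preferred paths---with Type 2.2b or Type 2.0 intervals, and the $s(D)$-terms recur an even number of times around the loop, so that $\delta f=0$ in all three subtypes. For Types 3.1a and 3.1b the boundary consists of Type 2.1, 2.0a, and 2.0b intervals, and evaluating the loop gives $\delta f=s(D)$ and $\delta f=1+s(D)$ respectively. For Type 3.0c the boundary is a cyclic arrangement of Type 2.0b intervals (a single internal coordinate transported around a cycle of positions), the $\Pin(A)$ product is $-1$, and $\delta f=0$. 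The remaining types 3.0a, 3.0b involve only the $Z_{\vec N}$ local models of \cite[Section 7]{MS} and are handled identically.

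The main obstacle is the first step of the previous paragraph: recording each boundary loop as a sequence of preferred paths inside a single consistent copy of $\SO(A)$. This demands careful bookkeeping of data that the local model fixes but that is easy to mishandle---the order in which the internal frames of distinct bubble clusters are concatenated (type $1$ bubbles first, then type $2$, and so on), the placement of the external sign vectors $(-1)^{s(\cdot)}e_i$ among the external frames, and whether a boundary point lies in the interior stratum of $\mathcal M(D,\vec N,\vec\lambda)$ or in a lower stratum, in which case the smoothening replaces an internal frame with the push-off $\vec v$ and one must fix the sign of $\vec v$ (vertical versus horizontal annulus bubbling). A single sign slip in a Clifford word flips $c(g)$ and hence the conclusion, and this is most delicate in the cases with several codimension $1$ facets of mixed type (Type 3.3c, with three type I and one type II facet) and in the cases where a type III stratum appears in $\partial^2$ (Types 3.2c and 3.0a), where the facets of the boundary loop do not merely meet in pairs. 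Once the loops are recorded correctly, the $\Pin(A)$ arithmetic is mechanical and can be cross-checked against the program referenced in Section 5.
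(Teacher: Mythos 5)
Your overall reduction is exactly the paper's: with the $0$-dimensional framings fixed as in Lemma $\ref{lemsame}$, frameability of each $2$-dimensional moduli space forces its (smoothened) boundary to be framed null-cobordant, so by Lemma $\ref{lem4}$ the corresponding $\SO(A)$ loop must be homotopically nontrivial, giving $\delta f(g)=1+c(g)$ with $c(g)$ the class of the loop of preferred paths; the paper then computes $c(g)$ type by type by decomposing the boundary polygon into preferred paths and evaluating sub-loops with Lemmas $\ref{lem1}$, $\ref{lem2}$, $\ref{lem5}$ and Algorithm $\ref{multalgo}$. So the skeleton and the toolkit match.

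There is, however, a concrete gap in your first step for Type 3.2c (and for the analogous corners of Type 3.1a). You assume every boundary loop can be "assembled from the Section 6 descriptions" as a cyclic word in short and long preferred paths, but near the Type III (bubble-merging) strata the local model is the Whitney umbrella, and the framed corner path there---between the analogues of Vertices $9$ and $10$ in Figures $\ref{fig17}$--$\ref{fig18}$---is not recorded by any of the Section 6 catalogue paths (Types 2.0a--2.2b). The paper must compute the internal frames explicitly in tailored coordinates, comparing the standard frames of $Z(0,2,0;(2))$ and $Z(1,1,0)$ as in Equation $(\ref{eqnintframe})$, and then verify via an auxiliary $\Pin(3)$ loop that this corner path is homotopic to the corresponding Type 3.2b path; only after that does $\delta f(D,\vec e_j,(1)_j)=0$ follow, and the same local-model path is reused in the Type 3.1a computation. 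Your plan flags 3.2c as "delicate" but supplies no mechanism for even writing down its boundary loop. Relatedly, the values you state for the other types are asserted rather than derived: phrases like "the $s(D)$-terms recur an even number of times" or "the Type 2.2b path absorbs the extra sign" stand in for what the explicit decompositions plus the Square and Annuli rules must establish---for instance, in Types 3.3b--c the vertical and horizontal annuli behave differently (one corner triangle is nullhomotopic, the other is not), and $\delta f=0$ only emerges after invoking $s(R)+s(S)=1$ or $0$ accordingly. So while the approach is the right one, the substance of the proof---the loop bookkeeping, including the non-catalogued Whitney-umbrella corners---is not yet supplied.
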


\begin{remark}Lemma $\ref{framecond}$ does not describe $\delta f$ for every generator of $\CDP_3$, but the above generators are sufficient to prove Theorem $\ref{mainthm}$. In the following Section 8, we describe how to compute $\delta f$ on the remaining generators.\end{remark}

\begin{proof}We check triples of types 3.3a, 3.3b, 3.3c, 3.2a, 3.2b, as well as types 3.2c, 3.1a, 3.1b, and 3.0c in the cases where they have one bubble each ($N = N_1 = N_2 = N_3 = 1$). By Lemma $\ref{lem4}$, it suffices to check the corresponding loops in $\SO(A)$. Much of the remainder of this section is devoted to these cases.

\textbf{Type 3a}: In this case, $D$ is an index $3$ domain, $\vec{\lambda}$ is empty, and $D$ does not contain an annulus, then the proof of \cite[Lemma 3.2]{YT} shows that $\partial \mathcal{M}(D, 0, 0)$ is some $2k$-gon. In fact, $k = 2, 3, 4$, with $k = 2$ only for the following domains

\begin{tikzpicture}

\filldraw[gray] (0, 0) rectangle (2, 2);
\filldraw[black] (1, 1) circle (0.1);
\draw (1, 1) circle (0.25);
\filldraw[black] (0, 0) circle (0.1);
\draw (0, 2) circle (0.1);
\filldraw[black] (2, 2) circle (0.1);
\draw (2, 0) circle (0.1);

\end{tikzpicture}

and $k = 4$ only for the following domains

\begin{tikzpicture}

\filldraw[gray] (0, 0) -- (2, 0) -- (2, 1) -- (3, 1) -- (3, 3) -- (1, 3) -- (1, 2) -- (0, 2) -- (0, 0);
\filldraw[black] (0, 0) circle (0.1);
\draw (0, 2) circle (0.1);
\filldraw[black] (1, 2) circle (0.1);
\draw (1, 3) circle (0.1);
\filldraw[black] (2, 1) circle (0.1);
\draw (2, 0) circle (0.1);
\filldraw[black] (3, 3) circle (0.1);
\draw (3, 1) circle (0.1);

\end{tikzpicture}

In the most common case $k = 3$, Figure $\ref{fig7}$ shows the embedding of $\partial \mathcal{M}(D, 0, 0)$ and $\partial' \mathcal{M}(D, 0, 0)$.

\begin{figure}

\begin{tikzpicture}[x={(1,0)}, y={(0,1)}, z={(-0.5, -0.5)}, scale=7]

\draw[->] (0,0,0) -- (0.5,0,0) node[anchor=west]{$\R_{+(1)}$};
\draw[->] (0,0,0) -- (0,0.5,0) node[anchor=south]{$\R_{+(2)}$};
\draw[->] (0,0,-1) -- (0,0,1) node[anchor=north west]{$\R^{3d}$};

\coordinate (A) at (0, 0, -0.75);
\coordinate (B) at (0, 0, -0.45);
\coordinate (C) at (0, 0, -0.15);
\coordinate (D) at (0, 0, 0.15);
\coordinate (E) at (0, 0, 0.45);
\coordinate (F) at (0, 0, 0.75);

\draw (A) .. controls ($(A) + (0.2, 0, 0)$) and ($(B) + (0.2, 0, 0)$) .. (B);
\draw (B) .. controls ($(B) + (0, 0.2, 0)$) and ($(C) + (0, 0.2, 0)$) .. (C);
\draw (C) .. controls ($(C) + (0.2, 0, 0)$) and ($(D) + (0.2, 0, 0)$) .. (D);
\draw (D) .. controls ($(D) + (0, 0.2, 0)$) and ($(E) + (0, 0.2, 0)$) .. (E);
\draw (E) .. controls ($(E) + (0.2, 0, 0)$) and ($(F) + (0.2, 0, 0)$) .. (F);
\draw (A) .. controls ($(A) + (0, 0.5, 0)$) and ($(F) + (0, 0.5, 0)$) .. (F);

\coordinate (A1) at (0.1, 0.05, -0.75);
\coordinate (A2) at (0.05, 0.1, -0.75);
\coordinate (B1) at (0.1, 0.05, -0.45);
\coordinate (B2) at (0.05, 0.1, -0.45);
\coordinate (C1) at (0.1, 0.05, -0.15);
\coordinate (C2) at (0.05, 0.1, -0.15);
\coordinate (D1) at (0.1, 0.05, 0.15);
\coordinate (D2) at (0.05, 0.1, 0.15);
\coordinate (E1) at (0.1, 0.05, 0.45);
\coordinate (E2) at (0.05, 0.1, 0.45);
\coordinate (F1) at (0.1, 0.05, 0.75);
\coordinate (F2) at (0.05, 0.1, 0.75);

\node at (A1) {
\begin{tikzpicture}

\draw (0,0) circle (0.125);

\node at (0,0) {\tiny $1$};

\end{tikzpicture}
};
\node at (A2) {
\begin{tikzpicture}

\draw (0,0) circle (0.125);

\node at (0,0) {\tiny $2$};

\end{tikzpicture}
};
\node at (B1) {
\begin{tikzpicture}

\draw (0,0) circle (0.125);

\node at (0,0) {\tiny $3$};

\end{tikzpicture}
};
\node at (B2) {
\begin{tikzpicture}

\draw (0,0) circle (0.125);

\node at (0,0) {\tiny $4$};

\end{tikzpicture}
};
\node at (C1) {
\begin{tikzpicture}

\draw (0,0) circle (0.125);

\node at (0,0) {\tiny $5$};

\end{tikzpicture}
};
\node at (C2) {
\begin{tikzpicture}

\draw (0,0) circle (0.125);

\node at (0,0) {\tiny $6$};

\end{tikzpicture}
};
\node at (D1) {
\begin{tikzpicture}

\draw (0,0) circle (0.125);

\node at (0,0) {\tiny $7$};

\end{tikzpicture}
};
\node at (D2) {
\begin{tikzpicture}

\draw (0,0) circle (0.125);

\node at (0,0) {\tiny $8$};

\end{tikzpicture}
};
\node at (E1) {
\begin{tikzpicture}

\draw (0,0) circle (0.125);

\node at (0,0) {\tiny $9$};

\end{tikzpicture}
};
\node at (E2) {
\begin{tikzpicture}

\draw (0,0) circle (0.125);

\node at (0,0) {\tiny $10$};

\end{tikzpicture}
};
\node at (F1) {
\begin{tikzpicture}

\draw (0,0) circle (0.125);

\node at (0,0) {\tiny $11$};

\end{tikzpicture}
};
\node at (F2) {
\begin{tikzpicture}

\draw (0,0) circle (0.125);

\node at (0,0) {\tiny $12$};

\end{tikzpicture}
};

\draw[color=red] (A1) .. controls ($(A1) + (0.2, 0, 0)$) and ($(B1) + (0.2, 0, 0)$) .. (B1);
\draw[color=red] (B2) .. controls ($(B2) + (0, 0.2, 0)$) and ($(C2) + (0, 0.2, 0)$) .. (C2);
\draw[color=red] (C1) .. controls ($(C1) + (0.2, 0, 0)$) and ($(D1) + (0.2, 0, 0)$) .. (D1);
\draw[color=red] (D2) .. controls ($(D2) + (0, 0.2, 0)$) and ($(E2) + (0, 0.2, 0)$) .. (E2);
\draw[color=red] (E1) .. controls ($(E1) + (0.2, 0, 0)$) and ($(F1) + (0.2, 0, 0)$) .. (F1);
\draw[color=red] (A2) .. controls ($(A2) + (0, 0.5, 0)$) and ($(F2) + (0, 0.5, 0)$) .. (F2);

\draw[color=red] (A1) .. controls (0.05, 0.05, -0.75) .. (A2);
\draw[color=red] (B1) .. controls (0.05, 0.05, -0.45) .. (B2);
\draw[color=red] (C1) .. controls (0.05, 0.05, -0.15) .. (C2);
\draw[color=red] (D1) .. controls (0.05, 0.05, 0.15) .. (D2);
\draw[color=red] (E1) .. controls (0.05, 0.05, 0.45) .. (E2);
\draw[color=red] (F1) .. controls (0.05, 0.05, 0.75) .. (F2);

\node (broken) at (-0.5,0) {
\begin{tikzpicture}[scale=0.7]

\draw (0,0) to[out=20,in=-20] ++(0,1) to[out=-160,in=160] ++(0,-1);
\draw (0,1) to[out=20,in=-20] ++(0,1) to[out=-160,in=160] ++(0,-1);
\draw (0,2) to[out=20,in=-20] ++(0,1) to[out=-160,in=160] ++(0,-1);

\node at (0,0.5) {\tiny $C_k$};
\node at (0,1.5) {\tiny $B_j$};
\node at (0,2.5) {\tiny $A_i$};

\end{tikzpicture}
};

\draw[thin, densely dotted, ->] (broken) -- (A);
\draw[thin, densely dotted, ->] (broken) -- (B);
\draw[thin, densely dotted, ->] (broken) -- (C);
\draw[thin, densely dotted, ->] (broken) -- (D);
\draw[thin, densely dotted, ->] (broken) -- (E);
\draw[thin, densely dotted, ->] (broken) -- (F);

\end{tikzpicture}
\caption{Type 3.3a when $\partial \mathcal{M}(D, 0, 0)$ (black) is a hexagon. In this case, $\partial' \mathcal{M}(D, 0, 0)$ (red) is a $12$-gon embedded in the interior of $\mathbb{E}_2^d$ with the numbered vertices.}\label{fig7}\end{figure}
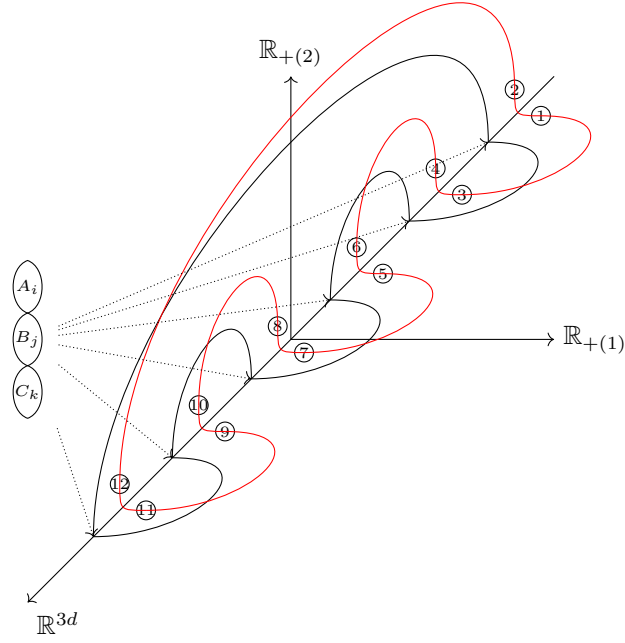

The vertices of $\partial' \mathcal{M}(D, 0, 0)$ (the red $12$-gon) shown in Figure $\ref{fig7}$ can be written as points in $\SO(3d + 2)$ that form a loop as above. Since most of the framing vectors are identical (the $(j + 2)^{nd}$ vector is always $e_j$ for $j \neq 1, 2, 3, d + 3, 2d + 3$, and always $\pm e_j$ for $j = 3, d+3, 2d+3$), it suffices to consider the coordinates that are possibly different. We fix the following conventions:

\begin{conv}Let $\sigma \in S_{m-1}$ be a permutation. If a point in $\SO(md+m-1)$ has a frame of the form
\begin{align*}&[(-1)^{\text{sgn}(\sigma) + r_1 + \dots + r_{m - 2} + s_1 + \dots + s_m} f_{\sigma(1)}, (-1)^{r_1}f_{\sigma(2)}, \dots, (-1)^{r_{m-2}} f_{\sigma(m - 1)}, \\&(-1)^{s_1} e_1, e_2, \dots, e_d, (-1)^{s_2} e_{d + 1}, e_{d + 2}, \dots, e_{md-d}, (-1)^{s_m} e_{md-d + 1}, e_{md-d + 2}, \dots, e_{md}]\end{align*}
we will abbreviate its frame as $(-1)^{r_1}f_{\sigma(2)}, \dots, (-1)^{r_{m-2}} f_{\sigma(m - 1)} s_1 \dots s_m$. That is, we report all but the first $\R_+$ frame and the signs of each $e_1, e_{d+1}, \dots, e_{md-m+1}$, which is enough to recover all the information of the original frame.\label{conv1}\end{conv}

Let $a_j$ be the sign of rectangle $A_j$, $b_j$ be the sign of rectangle $B_j$, and $c_j$ be the sign of rectangle $C_j$. We can now decompose all of the edges of $\partial' \mathcal{M}(D, 0, 0)$ into short and long preferred paths as shown in Figure $\ref{fig8}$. We also fix the following conventions for the figure.

\begin{conv}\label{conv2}
\begin{itemize}

\item All black paths are short preferred.

\item All red paths are either identity paths or long preferred paths changing the first $\R$ coordinate with respect to the first $\R_+$ coordinate.

\item All green paths are either identity paths or long preferred paths changing the $(d+1)^{st}$ $\R$ coordinate with respect to the first $\R_+$ coordinate.

\item All blue paths are either identity paths or long preferred paths changing the $(2d+1)^{st}$ $\R$ coordinate with respect to the first $\R_+$ coordinate.

\end{itemize}
\end{conv}

\begin{conv}\label{conv3}Long preferred paths in an $\R$ coordinate with respect to the first $\R_+$ coordinate have the following direction. If such a long preferred path in the $k^{th}$ $\R$ coordinate goes from vertex $A$ to vertex $B$, then vertex $B$'s $k^{th}$ vector must be positive $e_k$. (Note that if vertex $A$'s $k^{th}$ vector is also positive $e_k$, this path is the identity. We will still draw such an identity path with an arrow for consistency.)\end{conv}

\begin{figure}

\begin{tikzpicture}[scale=6]
\begin{scope}[x={(1cm, 0cm)}, y={(0cm, 1cm)}]
%Standard hexagon

\coordinate (O) at (0, 0);
\coordinate (A) at (0:1);
\coordinate (B) at ($(A) + (90:0.447)$);
\coordinate (C) at ($(B) + (120:0.2235)$);
\coordinate (D) at ($(C) + (150:0.447)$);
\coordinate (E) at ($(D) + (240:0.175)$);
\coordinate (F) at ($(E) + (330:0.2235)$);

\node[anchor=west] at (B) {\small $f_2 a_1 b_2 c_2$};
\node[anchor=south west] at (C) {\small $f_1 a_1 b_2 c_2$};
\node[anchor=west] at (A) {\small $f_2 0 0 c_2$};
\node[anchor=west] at ($(A) ! 0.5 ! (B)$) {\small $f_2 a_1 0 c_2$};
\node[anchor=south west] at ($(C) ! 0.5 ! (D)$) {\small $f_1 a_1 b_2 0$};
\node[anchor=south west] at (D) {\small $f_1 a_1 0 0$};
\node at (F) {\small $f_2 a_1 b_2 0$};
\node at (E) {\small $f_2 a_1 0 0$};
\node at (O) {\small $f_2 000$};

\draw[->-,thick,color=blue] (A) -- (O);
\draw[->-,thick,color=red] ($(A) ! 0.5 ! (B)$) -- (A);
\draw[->-,thick,color=green] (B) -- ($(A) ! 0.5 ! (B)$);
\draw[thick] (B) -- (C);
\draw[->-,thick,color=blue] (C) -- ($(C) ! 0.5 ! (D)$);
\draw[->-,thick,color=green] ($(C) ! 0.5 ! (D)$) -- (D);
\draw[->-,thick,color=red] (E) -- (O);

%Smoothening blocks

\draw[thick] (E) -- (D);
\draw[->-,thick,color=green] (F) -- (E);
\draw[->-,thick,color=blue] (B) -- (F);
\draw[thick] (F) -- ($(C) ! 0.5 ! (D)$);
\draw[->-,thick,color=blue] ($(A) ! 0.5 ! (B)$) -- (E);

\node at (B) {
\begin{tikzpicture}

\draw[fill=white] (0,0) circle (0.125);

\node at (0,0) {\tiny $2$};

\end{tikzpicture}
};
\node at (C) {
\begin{tikzpicture}

\draw[fill=white] (0,0) circle (0.125);

\node at (0,0) {\tiny $1$};

\end{tikzpicture}
};

\end{scope}

\begin{scope}[x={(-0.5cm, 0.866cm)}, y={(0.866cm, 0.5cm)}]
%Standard hexagon

\coordinate (O) at (0, 0);
\coordinate (A) at (0:1);
\coordinate (B) at ($(A) + (90:0.447)$);
\coordinate (C) at ($(B) + (120:0.2235)$);
\coordinate (D) at ($(C) + (150:0.447)$);
\coordinate (E) at ($(D) + (240:0.175)$);
\coordinate (F) at ($(E) + (330:0.2235)$);

\node[anchor=south east] at (B) {\small $f_2 a_1 b_1 c_1$};
\node[anchor=south west] at (C) {\small $f_1 a_1 b_1 c_1$};
\node[anchor=south east] at (A) {\small $f_2 0 0 c_1$};
\node[anchor=south east] at ($(A) ! 0.5 ! (B)$) {\small $f_2 a_1 0 c_1$};
\node[anchor=south west] at ($(C) ! 0.5 ! (D)$) {\small $f_1 a_1 b_1 0$};
\node at (F) {\small $f_2 a_1 b_1 0$};

\draw[->-,thick,color=blue] (A) -- (O);
\draw[->-,thick,color=red] ($(A) ! 0.5 ! (B)$) -- (A);
\draw[->-,thick,color=green] (B) -- ($(A) ! 0.5 ! (B)$);
\draw[thick] (B) -- (C);
\draw[->-,thick,color=blue] (C) -- ($(C) ! 0.5 ! (D)$);
\draw[->-,thick,color=green] ($(C) ! 0.5 ! (D)$) -- (D);
\draw[->-,thick,color=red] (E) -- (O);

%Smoothening blocks

\draw[thick] (E) -- (D);
\draw[->-,thick,color=green] (F) -- (E);
\draw[->-,thick,color=blue] (B) -- (F);
\draw[thick] (F) -- ($(C) ! 0.5 ! (D)$);
\draw[->-,thick,color=blue] ($(A) ! 0.5 ! (B)$) -- (E);

\node at (B) {
\begin{tikzpicture}

\draw[fill=white] (0,0) circle (0.125);

\node at (0,0) {\tiny $4$};

\end{tikzpicture}
};
\node at (C) {
\begin{tikzpicture}

\draw[fill=white] (0,0) circle (0.125);

\node at (0,0) {\tiny $3$};

\end{tikzpicture}
};

\end{scope}

\begin{scope}[x={(-0.5cm, 0.866cm)}, y={(-0.866cm, -0.5cm)}]
%Standard hexagon

\coordinate (O) at (0, 0);
\coordinate (A) at (0:1);
\coordinate (B) at ($(A) + (90:0.447)$);
\coordinate (C) at ($(B) + (120:0.2235)$);
\coordinate (D) at ($(C) + (150:0.447)$);
\coordinate (E) at ($(D) + (240:0.175)$);
\coordinate (F) at ($(E) + (330:0.2235)$);

\node[anchor=south east] at (B) {\small $f_2 a_2 b_3 c_1$};
\node[anchor=south east] at (C) {\small $f_1 a_2 b_3 c_1$};
\node[anchor=south east] at ($(A) ! 0.5 ! (B)$) {\small $f_2 a_2 0 c_1$};
\node[anchor=east] at ($(C) ! 0.5 ! (D)$) {\small $f_1 a_2 b_3 0$};
\node[anchor=east] at (D) {\small $f_1 a_2 0 0$};
\node at (F) {\small $f_2 a_2 b_3 0$};
\node at (E) {\small $f_2 a_2 0 0$};

\draw[->-,thick,color=blue] (A) -- (O);
\draw[->-,thick,color=red] ($(A) ! 0.5 ! (B)$) -- (A);
\draw[->-,thick,color=green] (B) -- ($(A) ! 0.5 ! (B)$);
\draw[thick] (B) -- (C);
\draw[->-,thick,color=blue] (C) -- ($(C) ! 0.5 ! (D)$);
\draw[->-,thick,color=green] ($(C) ! 0.5 ! (D)$) -- (D);
\draw[->-,thick,color=red] (E) -- (O);

%Smoothening blocks

\draw[thick] (E) -- (D);
\draw[->-,thick,color=green] (F) -- (E);
\draw[->-,thick,color=blue] (B) -- (F);
\draw[thick] (F) -- ($(C) ! 0.5 ! (D)$);
\draw[->-,thick,color=blue] ($(A) ! 0.5 ! (B)$) -- (E);

\node at (B) {
\begin{tikzpicture}

\draw[fill=white] (0,0) circle (0.125);

\node at (0,0) {\tiny $6$};

\end{tikzpicture}
};
\node at (C) {
\begin{tikzpicture}

\draw[fill=white] (0,0) circle (0.125);

\node at (0,0) {\tiny $5$};

\end{tikzpicture}
};

\end{scope}

\begin{scope}[x={(-0.5cm, -0.866cm)}, y={(-0.866cm, 0.5cm)}]
%Standard hexagon

\coordinate (O) at (0, 0);
\coordinate (A) at (0:1);
\coordinate (B) at ($(A) + (90:0.447)$);
\coordinate (C) at ($(B) + (120:0.2235)$);
\coordinate (D) at ($(C) + (150:0.447)$);
\coordinate (E) at ($(D) + (240:0.175)$);
\coordinate (F) at ($(E) + (330:0.2235)$);

\node[anchor=north east] at (B) {\small $f_2 a_2 b_4 c_3$};
\node[anchor=north east] at (C) {\small $f_1 a_2 b_4 c_3$};
\node[anchor=north east] at (A) {\small $f_2 0 0 c_3$};
\node[anchor=north east] at ($(A) ! 0.5 ! (B)$) {\small $f_2 a_2 0 c_3$};
\node[anchor=east] at ($(C) ! 0.5 ! (D)$) {\small $f_1 a_2 b_4 0$};
\node at (F) {\small $f_2 a_2 b_4 0$};

\draw[->-,thick,color=blue] (A) -- (O);
\draw[->-,thick,color=red] ($(A) ! 0.5 ! (B)$) -- (A);
\draw[->-,thick,color=green] (B) -- ($(A) ! 0.5 ! (B)$);
\draw[thick] (B) -- (C);
\draw[->-,thick,color=blue] (C) -- ($(C) ! 0.5 ! (D)$);
\draw[->-,thick,color=green] ($(C) ! 0.5 ! (D)$) -- (D);
\draw[->-,thick,color=red] (E) -- (O);

%Smoothening blocks

\draw[thick] (E) -- (D);
\draw[->-,thick,color=green] (F) -- (E);
\draw[->-,thick,color=blue] (B) -- (F);
\draw[thick] (F) -- ($(C) ! 0.5 ! (D)$);
\draw[->-,thick,color=blue] ($(A) ! 0.5 ! (B)$) -- (E);

\node at (B) {
\begin{tikzpicture}

\draw[fill=white] (0,0) circle (0.125);

\node at (0,0) {\tiny $8$};

\end{tikzpicture}
};
\node at (C) {
\begin{tikzpicture}

\draw[fill=white] (0,0) circle (0.125);

\node at (0,0) {\tiny $7$};

\end{tikzpicture}
};

\end{scope}

\begin{scope}[x={(-0.5cm, -0.866cm)}, y={(0.866cm, -0.5cm)}]
%Standard hexagon

\coordinate (O) at (0, 0);
\coordinate (A) at (0:1);
\coordinate (B) at ($(A) + (90:0.447)$);
\coordinate (C) at ($(B) + (120:0.2235)$);
\coordinate (D) at ($(C) + (150:0.447)$);
\coordinate (E) at ($(D) + (240:0.175)$);
\coordinate (F) at ($(E) + (330:0.2235)$);

\node[anchor=north east] at (B) {\small $f_2 a_3 b_6 c_3$};
\node[anchor=north west] at (C) {\small $f_1 a_3 b_6 c_3$};
\node[anchor=north east] at ($(A) ! 0.5 ! (B)$) {\small $f_2 a_3 0 c_3$};
\node[anchor=north west] at ($(C) ! 0.5 ! (D)$) {\small $f_1 a_3 b_6 0$};
\node[anchor=north west] at (D) {\small $f_1 a_3 0 0$};
\node at (F) {\small $f_2 a_3 b_6 0$};
\node at (E) {\small $f_2 a_3 0 0$};

\draw[->-,thick,color=blue] (A) -- (O);
\draw[->-,thick,color=red] ($(A) ! 0.5 ! (B)$) -- (A);
\draw[->-,thick,color=green] (B) -- ($(A) ! 0.5 ! (B)$);
\draw[thick] (B) -- (C);
\draw[->-,thick,color=blue] (C) -- ($(C) ! 0.5 ! (D)$);
\draw[->-,thick,color=green] ($(C) ! 0.5 ! (D)$) -- (D);
\draw[->-,thick,color=red] (E) -- (O);

%Smoothening blocks

\draw[thick] (E) -- (D);
\draw[->-,thick,color=green] (F) -- (E);
\draw[->-,thick,color=blue] (B) -- (F);
\draw[thick] (F) -- ($(C) ! 0.5 ! (D)$);
\draw[->-,thick,color=blue] ($(A) ! 0.5 ! (B)$) -- (E);

\node at (B) {
\begin{tikzpicture}

\draw[fill=white] (0,0) circle (0.125);

\node at (0,0) {\tiny $10$};

\end{tikzpicture}
};
\node at (C) {
\begin{tikzpicture}

\draw[fill=white] (0,0) circle (0.125);

\node at (0,0) {\tiny $9$};

\end{tikzpicture}
};

\end{scope}

\begin{scope}[x={(1cm, 0cm)}, y={(0cm, -1cm)}]
%Standard hexagon

\coordinate (O) at (0, 0);
\coordinate (A) at (0:1);
\coordinate (B) at ($(A) + (90:0.447)$);
\coordinate (C) at ($(B) + (120:0.2235)$);
\coordinate (D) at ($(C) + (150:0.447)$);
\coordinate (E) at ($(D) + (240:0.175)$);
\coordinate (F) at ($(E) + (330:0.2235)$);

\node[anchor=north west] at (B) {\small $f_2 a_3 b_5 c_2$};
\node[anchor=north west] at (C) {\small $f_1 a_3 b_5 c_2$};
\node[anchor=west] at ($(A) ! 0.5 ! (B)$) {\small $f_2 a_3 0 c_2$};
\node[anchor=south west] at ($(C) ! 0.5 ! (D)$) {\small $f_1 a_3 b_5 0$};
\node at (F) {\small $f_2 a_3 b_5 0$};

\draw[->-,thick,color=blue] (A) -- (O);
\draw[->-,thick,color=red] ($(A) ! 0.5 ! (B)$) -- (A);
\draw[->-,thick,color=green] (B) -- ($(A) ! 0.5 ! (B)$);
\draw[thick] (B) -- (C);
\draw[->-,thick,color=blue] (C) -- ($(C) ! 0.5 ! (D)$);
\draw[->-,thick,color=green] ($(C) ! 0.5 ! (D)$) -- (D);
\draw[->-,thick,color=red] (E) -- (O);

%Smoothening blocks

\draw[thick] (E) -- (D);
\draw[->-,thick,color=green] (F) -- (E);
\draw[->-,thick,color=blue] (B) -- (F);
\draw[thick] (F) -- ($(C) ! 0.5 ! (D)$);
\draw[->-,thick,color=blue] ($(A) ! 0.5 ! (B)$) -- (E);

\node at (B) {
\begin{tikzpicture}

\draw[fill=white] (0,0) circle (0.125);

\node at (0,0) {\tiny $12$};

\end{tikzpicture}
};
\node at (C) {
\begin{tikzpicture}

\draw[fill=white] (0,0) circle (0.125);

\node at (0,0) {\tiny $11$};

\end{tikzpicture}
};

\end{scope}

%Cycle values

\node at (-0.1, 0.95) {\small $b_1 c_1$};
\node at (0.2, 0.95) {\small $c_1$};
\node at (0.35, 0.85) {\small $b_1$};
\node at (0.55, 0.75) {\small $b_3$};
\node at (0.8, 0.6) {\small $c_2$};
\node at (0.9, 0.4) {\small $b_3 c_2$};
\node at (0.9, -0.4) {\small $b_5 c_2$};
\node at (0.8, -0.6) {\small $c_2$};
\node at (0.55, -0.75) {\small $b_5$};
\node at (0.35, -0.85) {\small $b_6$};
\node at (0.2, -0.95) {\small $c_3$};
\node at (-0.1, -0.95) {\small $b_6 c_3$};
\node at (-0.8, -0.6) {\small $b_4 c_3$};
\node at (-0.925, -0.35) {\small $c_3$};
\node at (-0.925, -0.1) {\small $b_4$};
\node at (-0.925, 0.1) {\small $b_3$};
\node at (-0.925, 0.35) {\small $c_1$};
\node at (-0.8, 0.6) {\small $b_3 c_1$};
\node at (210:0.5) {\small $a_2 c_3$};
\node at (150:0.5) {\small $a_2 c_1$};
\node at (270:0.5) {\small $a_3 c_3$};
\node at (90:0.5) {\small $a_1 c_1$};
\node at (330:0.5) {\small $a_3 c_2$};
\node at (30:0.5) {\small $a_1 c_2$};

\end{tikzpicture}
\caption{Type 3.3a when $\partial \mathcal{M}(D, 0, 0)$ is a hexagon. The numbered vertices correspond to the numbered vertices of $\partial' \mathcal{M}(D, 0, 0)$ of Figure $\ref{fig7}$.}\label{fig8}\end{figure}
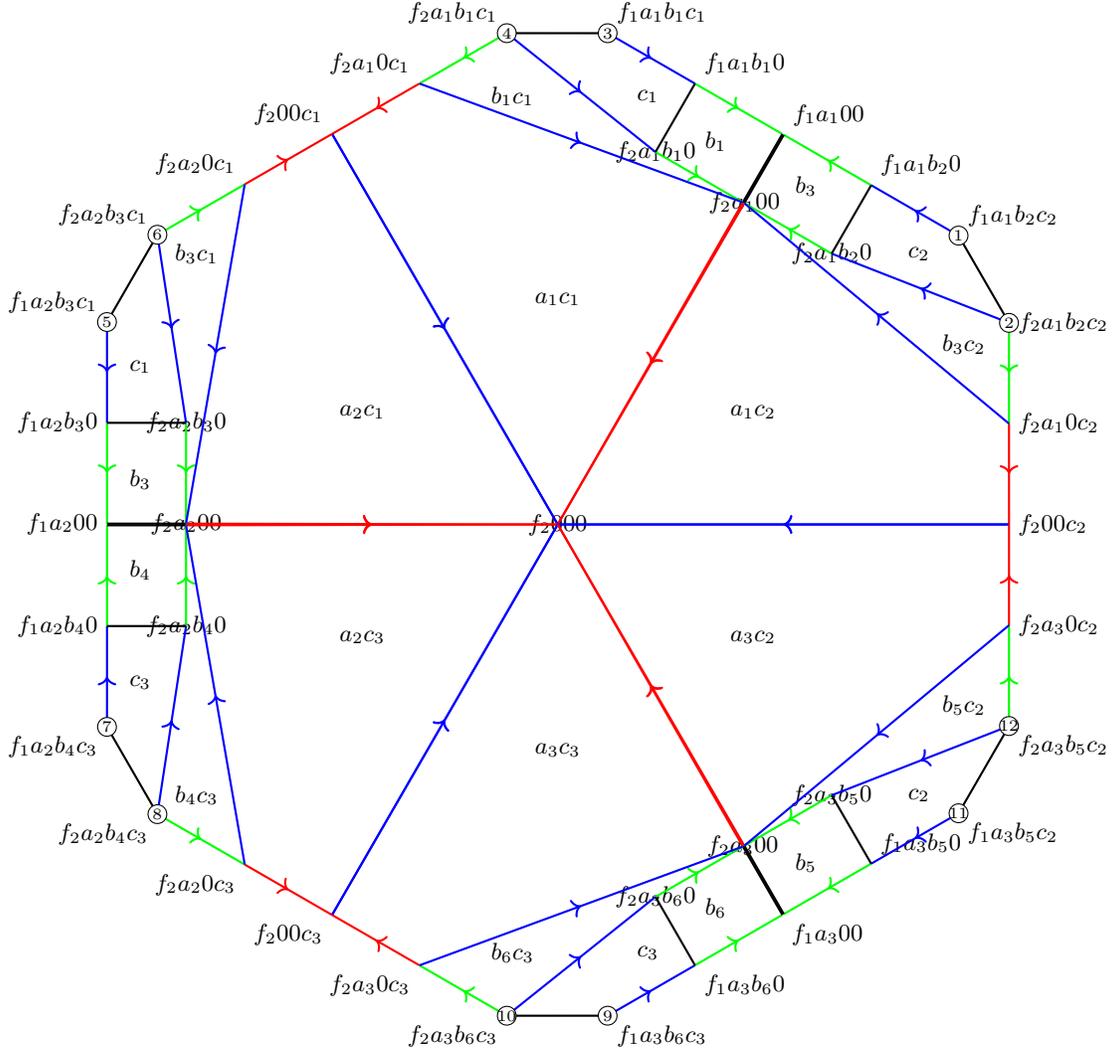

By Lemma $\ref{lem1}$, all black-green and black-blue quadrilaterals correspond to the element of $\pi_1(\SO(3d + 2))$ labelled in Figure $\ref{fig8}$, and by Lemma $\ref{lem2}$ all green-blue and red-green quadrilaterals also correspond to the labelled element of $\pi_1(\SO(3d + 2))$. So, when $\partial' \mathcal{M}(D, 0, 0)$ is a hexagon,
\begin{align*}\delta f(D, 0, 0) &= 1 + b_1 + b_2 + b_3 + b_4 + b_5 + b_6 + 2(c_1 + c_2 + c_3) \\&+c_1(a_1 + a_2 + b_1 + b_3) + c_2(a_1 + a_3 + b_2 + b_5) + c_3(a_2 + a_3 + b_4 + b_6) \\&=1 + b_1 + b_2 + b_3 + b_4 + b_5 + b_6 + 2(c_1 + c_2 + c_3) \\&+c_1 + c_2 + c_3 \text{ by the properties of sign assignments} \\&= 1 + c_1 + c_2 + c_3 \\&+ (b_1 + b_2 + c_1 + c_3) + (b_3 + b_4 + c_1 + c_3) + (b_5 + b_6 + c_2 + c_3) \\&= 1 + (1 + c_1) + (1 + c_2) + (1 + c_3) \pmod{2} \text{ again by the properties of sigh assignments}\end{align*}
In general when $\partial' \mathcal{M}(D, 0, 0)$ is a $2s$-gon, a similar decomposition yields
\begin{align*}\delta f(D, 0, 0) = 1 + \sum\limits_{j = 1}^{s} (1 + c_j)\end{align*}

\textbf{Type 3.3b}: In this case, the index $3$ domain $D$ contains an annulus $A = R * S$, and $D = A * R$. In this case, $\mathcal{M}(D, 0, 0)$ is the following triangle:

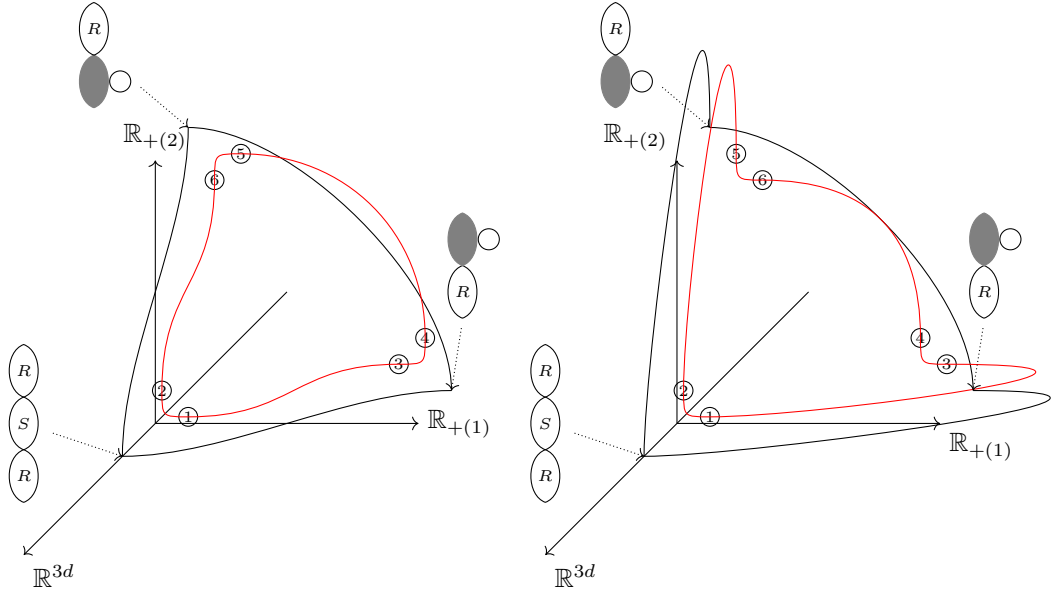
\begin{figure}

\begin{tikzpicture}[x={(1,0)}, y={(0,1)}, z={(-0.5, -0.5)}, scale=7]

\draw[->] (0,0,0) -- (0.5,0,0) node[anchor=west]{$\R_{+(1)}$};
\draw[->] (0,0,0) -- (0,0.5,0) node[anchor=south]{$\R_{+(2)}$};
\draw[->] (0,0,-0.5) -- (0,0,0.5) node[anchor=north west]{$\R^{3d}$};

\coordinate (A) at (0, 0, 0.125);
\coordinate (B) at (0.5, 0, -0.125);
\coordinate (C) at (0, 0.5, -0.125);

\draw (A) .. controls ($(A) + (0.2, 0, 0)$) and ($(B) + (-0.2, 0, 0)$) .. (B);
\draw (A) .. controls ($(A) + (0, 0.2, 0)$) and ($(C) + (0, -0.2, 0)$) .. (C);
\draw (B) .. controls ($(B) + (0, 0.2, 0)$) and ($(C) + (0.2, 0, 0)$) .. (C);

\coordinate (A1) at (0.1, 0.05, 0.075);
\coordinate (A2) at (0.05, 0.1, 0.075);
\coordinate (B1) at (0.4, 0.05, -0.125);
\coordinate (B2) at (0.45, 0.1, -0.125);
\coordinate (C1) at (0.1, 0.45, -0.125);
\coordinate (C2) at (0.05, 0.4, -0.125);

\node at (A1) {
\begin{tikzpicture}

\draw (0,0) circle (0.125);

\node at (0,0) {\tiny $1$};

\end{tikzpicture}
};
\node at (A2) {
\begin{tikzpicture}

\draw (0,0) circle (0.125);

\node at (0,0) {\tiny $2$};

\end{tikzpicture}
};
\node at (B1) {
\begin{tikzpicture}

\draw (0,0) circle (0.125);

\node at (0,0) {\tiny $3$};

\end{tikzpicture}
};
\node at (B2) {
\begin{tikzpicture}

\draw (0,0) circle (0.125);

\node at (0,0) {\tiny $4$};

\end{tikzpicture}
};
\node at (C1) {
\begin{tikzpicture}

\draw (0,0) circle (0.125);

\node at (0,0) {\tiny $5$};

\end{tikzpicture}
};
\node at (C2) {\begin{tikzpicture}

\draw (0,0) circle (0.125);

\node at (0,0) {\tiny $6$};

\end{tikzpicture}};

\draw[color=red] (A1) .. controls ($(A1) + (0.2, 0, 0)$) and ($(B1) + (-0.2, 0, 0)$) .. (B1);
\draw[color=red] (A2) .. controls ($(A2) + (0, 0.2, 0)$) and ($(C2) + (0, -0.2, 0)$) .. (C2);
\draw[color=red] (B2) .. controls ($(B2) + (0, 0.2, 0)$) and ($(C1) + (0.2, 0, 0)$) .. (C1);

\draw[color=red] (A1) .. controls (0.05, 0.05, 0.075) .. (A2);
\draw[color=red] (B1) .. controls (0.45, 0.05, -0.125) .. (B2);
\draw[color=red] (C1) .. controls (0.05, 0.45, -0.125) .. (C2);

\node (broken) at (-0.25,0) {
\begin{tikzpicture}[scale=0.7]

\draw (0,0) to[out=20,in=-20] ++(0,1) to[out=-160,in=160] ++(0,-1);
\draw (0,1) to[out=20,in=-20] ++(0,1) to[out=-160,in=160] ++(0,-1);
\draw (0,2) to[out=20,in=-20] ++(0,1) to[out=-160,in=160] ++(0,-1);

\node at (0,0.5) {\tiny $R$};
\node at (0,1.5) {\tiny $S$};
\node at (0,2.5) {\tiny $R$};

\end{tikzpicture}
};

\node (bubblefirst) at (0.6,0.3) {
\begin{tikzpicture}[scale=0.7]

\draw (0,0) to[out=20,in=-20] ++(0,1) to[out=-160,in=160] ++(0,-1);
\filldraw[gray] (0,1) to[out=20,in=-20] ++(0,1) to[out=-160,in=160] ++(0,-1);
\draw (0.5,1.5) circle (0.2);

\node at (0,0.5) {\tiny $R$};

\end{tikzpicture}
};

\node (bubblelast) at (-0.1,0.7) {
\begin{tikzpicture}[scale=0.7]

\filldraw[gray] (0,0) to[out=20,in=-20] ++(0,1) to[out=-160,in=160] ++(0,-1);
\draw (0,1) to[out=20,in=-20] ++(0,1) to[out=-160,in=160] ++(0,-1);
\draw (0.5,0.5) circle (0.2);

\node at (0,1.5) {\tiny $R$};

\end{tikzpicture}
};

\draw[thin, densely dotted, ->] (broken) -- (A);
\draw[thin, densely dotted, ->] (bubblefirst) -- (B);
\draw[thin, densely dotted, ->] (bubblelast) -- (C);

\end{tikzpicture} \begin{tikzpicture}[x={(1,0)}, y={(0,1)}, z={(-0.5, -0.5)}, scale=7]

\draw[->] (0,0,0) -- (0.5,0,0) node[anchor=north west]{$\R_{+(1)}$};
\draw[->] (0,0,0) -- (0,0.5,0) node[anchor=south east]{$\R_{+(2)}$};
\draw[->] (0,0,-0.5) -- (0,0,0.5) node[anchor=north west]{$\R^{3d}$};

\coordinate (A) at (0, 0, 0.125);
\coordinate (B) at (0.5, 0, -0.125);
\coordinate (C) at (0, 0.5, -0.125);

\draw (A) .. controls ($(A) + (0.2, 0, 0)$) and ($(B) + (0.5, 0, 0)$) .. (B);
\draw (A) .. controls ($(A) + (0, 0.2, 0)$) and ($(C) + (0, 0.5, 0)$) .. (C);
\draw (B) .. controls ($(B) + (0, 0.2, 0)$) and ($(C) + (0.2, 0, 0)$) .. (C);

\coordinate (A1) at (0.1, 0.05, 0.075);
\coordinate (A2) at (0.05, 0.1, 0.075);
\coordinate (B1) at (0.45, 0.05, -0.125);
\coordinate (B2) at (0.4, 0.1, -0.125);
\coordinate (C1) at (0.05, 0.45, -0.125);
\coordinate (C2) at (0.1, 0.4, -0.125);

\node at (A1) {
\begin{tikzpicture}

\draw (0,0) circle (0.125);

\node at (0,0) {\tiny $1$};

\end{tikzpicture}
};
\node at (A2) {
\begin{tikzpicture}

\draw (0,0) circle (0.125);

\node at (0,0) {\tiny $2$};

\end{tikzpicture}
};
\node at (B1) {
\begin{tikzpicture}

\draw (0,0) circle (0.125);

\node at (0,0) {\tiny $3$};

\end{tikzpicture}
};
\node at (B2) {
\begin{tikzpicture}

\draw (0,0) circle (0.125);

\node at (0,0) {\tiny $4$};

\end{tikzpicture}
};
\node at (C1) {
\begin{tikzpicture}

\draw (0,0) circle (0.125);

\node at (0,0) {\tiny $5$};

\end{tikzpicture}
};
\node at (C2) {
\begin{tikzpicture}

\draw (0,0) circle (0.125);

\node at (0,0) {\tiny $6$};

\end{tikzpicture}
};

\draw[color=red] (A1) .. controls ($(A1) + (0.2, 0, 0)$) and ($(B1) + (0.5, 0, 0)$) .. (B1);
\draw[color=red] (A2) .. controls ($(A2) + (0, 0.2, 0)$) and ($(C1) + (0, 0.5, 0)$) .. (C1);
\draw[color=red] (B2) .. controls ($(B2) + (0, 0.2, 0)$) and ($(C2) + (0.2, 0, 0)$) .. (C2);

\draw[color=red] (A1) .. controls (0.05, 0.05, 0.075) .. (A2);
\draw[color=red] (B1) .. controls (0.4, 0.05, -0.125) .. (B2);
\draw[color=red] (C1) .. controls (0.05, 0.4, -0.125) .. (C2);

\node (broken) at (-0.25,0) {
\begin{tikzpicture}[scale=0.7]

\draw (0,0) to[out=20,in=-20] ++(0,1) to[out=-160,in=160] ++(0,-1);
\draw (0,1) to[out=20,in=-20] ++(0,1) to[out=-160,in=160] ++(0,-1);
\draw (0,2) to[out=20,in=-20] ++(0,1) to[out=-160,in=160] ++(0,-1);

\node at (0,0.5) {\tiny $R$};
\node at (0,1.5) {\tiny $S$};
\node at (0,2.5) {\tiny $R$};

\end{tikzpicture}
};

\node (bubblefirst) at (0.6,0.3) {
\begin{tikzpicture}[scale=0.7]

\draw (0,0) to[out=20,in=-20] ++(0,1) to[out=-160,in=160] ++(0,-1);
\filldraw[gray] (0,1) to[out=20,in=-20] ++(0,1) to[out=-160,in=160] ++(0,-1);
\draw (0.5,1.5) circle (0.2);

\node at (0,0.5) {\tiny $R$};

\end{tikzpicture}
};

\node (bubblelast) at (-0.1,0.7) {
\begin{tikzpicture}[scale=0.7]

\filldraw[gray] (0,0) to[out=20,in=-20] ++(0,1) to[out=-160,in=160] ++(0,-1);
\draw (0,1) to[out=20,in=-20] ++(0,1) to[out=-160,in=160] ++(0,-1);
\draw (0.5,0.5) circle (0.2);

\node at (0,1.5) {\tiny $R$};

\end{tikzpicture}
};

\draw[thin, densely dotted, ->] (broken) -- (A);
\draw[thin, densely dotted, ->] (bubblefirst) -- (B);
\draw[thin, densely dotted, ->] (bubblelast) -- (C);

\end{tikzpicture}\caption{Type 3.3b, in the case when the annulus is horizontal (left) or vertical (right). In either case $\partial \mathcal{M}(D, 0, 0)$ is a triangle, and $\partial' \mathcal{M}(D, 0, 0)$ (red) is a hexagon embedded in the interior of $\mathbb{E}_2^d$ with the numbered vertices.}\label{fig9}\end{figure}

Let $r$ be the sign of rectangle $R$ and $s$ be the sign of rectangle $S$. Similarly to Type 3.3a, we can decompose $\partial' \mathcal{M}(D, 0, 0)$ into short and long preferred paths as shown in Figure $\ref{fig10}$, which follows Conventions $\ref{conv1}, \ref{conv2},$ and $\ref{conv3}$, and the following conventions:

\begin{conv}\label{conv4}\begin{itemize}

\item The points with $\pm f_1$ or $\pm f_2$ are plus if the annulus $D$ contains is vertical, and minus if the annulus $D$ contains is horizontal.

\item The dashed black paths are identity for the vertical annulus, and the long preferred path in the second $\R_+$ with respect to the first $\R_+$ for the horizontal annulus. They are directed according to Convention $\ref{conv3}$.
\end{itemize}\end{conv}

\begin{figure}

\begin{tikzpicture}[scale=6]

%Standard hexagon

\node[anchor=west] at (2, 1.4) {\small $\pm f_2 r00$};
\node[anchor=west] at (2, 1.6) {\small $f_1 r00$};

\node[anchor=south] at (0.8, 2) {\small $f_2 rsr$};
\node[anchor=south] at (1.2, 2) {\small $f_1 rsr$};

\node[anchor=east] at (0, 1.4) {\small $\pm f_1 00r$};
\node[anchor=east] at (0, 1.6) {\small $f_2 00r$};

\node at (1.2, 1.6) {\small $f_1 000$};

\draw (0.8, 2) -- (1.2, 2);
\draw (0, 1.4) -- (0, 1.6);
\draw (2, 1.4) -- (2, 1.6);

\draw[->-,thick,color=green] (0.8, 2) -- (0.4, 1.8);
\draw[->-,thick,color=red] (0.4, 1.8) -- (0, 1.6);

\draw[->-,thick,color=blue] (1.2, 2) -- (1.6, 1.8);
\draw[->-,thick,color=green] (1.6, 1.8) -- (2, 1.6);

\draw[->-,thick,color=blue] (0, 1.4) -- (0.5, 1.4);
\draw[->-,thick,color=red] (2, 1.4) -- (1.5, 1.4);
\draw[thick] (0.5, 1.4) -- (1.5, 1.4);

%Smoothening blocks

\draw[->-,thick,color=green] (1.2, 2) -- (0.8, 1.8);
\draw[->-,thick,color=red] (0.8, 1.8) -- (0.4, 1.6);
\draw[thick] (0.4, 1.8) -- (0.8, 1.8);
\draw[thick] (0, 1.6) -- (0.4, 1.6);
\draw[thick,dashed,->-] (0, 1.4) -- (0.4, 1.6);

%Special cases

\draw[->-,thick,color=blue] (0.8, 1.8) -- (2, 1.6);
\draw[->-,thick,color=red] (2, 1.6) -- (1.2, 1.6);
\draw[->-,thick,color=blue] (0.4, 1.6) -- (1.2, 1.6);
\draw[thick,dashed,->-] (0.5, 1.4) -- (1.2, 1.6);
\draw[thick] (1.2, 1.6) -- (1.5, 1.4);

\node at (0.4, 1.6) {\small $f_1 00r$};
\node at (0.8, 1.8) {\small $f_1 r0r$};
\node[anchor=south east] at (0.4, 1.8) {\small $f_2 r0r$};
\node[anchor=south west] at (1.6, 1.8) {\small $f_1 rs0$};
\node[anchor=north] at (0.5, 1.4) {\small $\pm f_1 000$};
\node[anchor=north] at (1.5, 1.4) {\small $\pm f_2 000$};

%Cycle values

\node at (1.3, 1.8) {\small $rs$};
\node at (0.8, 1.9) {\small $s$};
\node at (0.4, 1.7) {\small $r$};
\node at (0.8, 1.7) {\small $r^2 = r$};
\node at (0.1, 1.5) {\small $0$};
\node at (0.5, 1.5) {\small $0 \text{ or } r$};
\node at (1.1, 1.5) {\small $0 \text{ or } 1$};
\node at (1.7, 1.5) {\small $r$};

\node at (0.8, 2) {
\begin{tikzpicture}

\draw[fill=white] (0,0) circle (0.125);

\node at (0,0) {\tiny $1$};

\end{tikzpicture}
};
\node at (1.2, 2) {
\begin{tikzpicture}

\draw[fill=white] (0,0) circle (0.125);

\node at (0,0) {\tiny $2$};

\end{tikzpicture}
};
\node at (0, 1.4) {
\begin{tikzpicture}

\draw[fill=white] (0,0) circle (0.125);

\node at (0,0) {\tiny $4$};

\end{tikzpicture}
};
\node at (0, 1.6) {
\begin{tikzpicture}

\draw[fill=white] (0,0) circle (0.125);

\node at (0,0) {\tiny $3$};

\end{tikzpicture}
};
\node at (2, 1.4) {
\begin{tikzpicture}

\draw[fill=white] (0,0) circle (0.125);

\node at (0,0) {\tiny $5$};

\end{tikzpicture}
};
\node at (2, 1.6) {
\begin{tikzpicture}

\draw[fill=white] (0,0) circle (0.125);

\node at (0,0) {\tiny $6$};

\end{tikzpicture}
};

\end{tikzpicture}
\caption{Type 3.3b. The numbered vertices correspond to the numbered vertices of $\partial' \mathcal{M}(D, 0, 0)$ of Figure $\ref{fig9}$.}\label{fig10}\end{figure}

Similarly to Type 3.3a, we use Lemmas $\ref{lem1}, \ref{lem2}$ to compute the homotopy class of most quadrilaterals in Figure $\ref{fig10}$. If $A$ is a vertical annulus, the dashed lines are identity, so the dashed black-blue quadrilateral and both black triangles are clearly nullhomotopic. In this case, $r + s = 1 \pmod{2}$ by the definition of a sign assignment, so that
\begin{align*}\partial f = 1 + 3r + s(r + 1) = 1 + r + s^2 = 0 \pmod{2}\end{align*}
If $A$ is a horizontal annulus, by Lemma $\ref{lem2}$ the dashed black-blue quadrilateral's homotopy class is given by $r$, while the left and right dashed triangles are nullhomotopic and not nullhomotopic, respectively. (This can be checked with hand motions.) In this case, $r + s = 0 \pmod{2}$ by the definition of a sign assignment, so that
\begin{align*}\partial f = 1 + 1 + 4r + s(r + 1) = s(s + 1) = 0 \pmod{2}\end{align*}

\textbf{Type 3.3c}: In this case, $D$ contains an annulus $A = R * S$, and $D = A * T$ for some rectangle $T \neq R$. In this case, $\mathcal{M}(D, 0, 0)$ is the following pentagon:

\begin{figure}

\begin{tikzpicture}[x={(1,0)}, y={(0,1)}, z={(-0.5, -0.5)}, scale=5.5]

\draw[->] (0,0,0) -- (0.5,0,0) node[anchor=west]{$\R_{+(1)}$};
\draw[->] (0,0,0) -- (0,1,0) node[anchor=south]{$\R_{+(2)}$};
\draw[->] (0,0,-0.5) -- (0,0,1) node[anchor=north west]{$\R^{3d}$};

\coordinate (A) at (0, 0, 0.75);
\coordinate (B) at (0, 0, 0.45);
\coordinate (C) at (0, 0, 0.15);
\coordinate (D) at (0.5, 0, -0.1);
\coordinate (E) at (0, 0.8, -0.1);

\draw (A) .. controls ($(A) + (0.2, 0, 0)$) and ($(B) + (0.2, 0, 0)$) .. (B);
\draw (B) .. controls ($(B) + (0, 0.2, 0)$) and ($(C) + (0, 0.2, 0)$) .. (C);
\draw (C) .. controls ($(C) + (0.2, 0, 0)$) and ($(D) + (-0.2, 0, 0)$) .. (D);
\draw (A) .. controls ($(A) + (0, 0.2, 0)$) and ($(E) + (0, -0.2, 0)$) .. (E);
\draw (D) .. controls ($(D) + (0, 0.2, 0)$) and ($(E) + (0.2, 0, 0)$) .. (E);

\coordinate (A1) at (0.1, 0.05, 0.75);
\coordinate (A2) at (0.05, 0.1, 0.75);
\coordinate (B1) at (0.1, 0.05, 0.45);
\coordinate (B2) at (0.05, 0.1, 0.45);
\coordinate (C1) at (0.1, 0.05, 0.15);
\coordinate (C2) at (0.05, 0.1, 0.15);
\coordinate (D1) at (0.4, 0.05, -0.1);
\coordinate (D2) at (0.45, 0.1, -0.1);
\coordinate (E1) at (0.1, 0.75, -0.1);
\coordinate (E2) at (0.05, 0.7, -0.1);

\node at (A1) {
\begin{tikzpicture}

\draw (0,0) circle (0.125);

\node at (0,0) {\tiny $1$};

\end{tikzpicture}
};
\node at (A2) {
\begin{tikzpicture}

\draw (0,0) circle (0.125);

\node at (0,0) {\tiny $2$};

\end{tikzpicture}
};
\node at (B1) {
\begin{tikzpicture}

\draw (0,0) circle (0.125);

\node at (0,0) {\tiny $3$};

\end{tikzpicture}
};
\node at (B2) {
\begin{tikzpicture}

\draw (0,0) circle (0.125);

\node at (0,0) {\tiny $4$};

\end{tikzpicture}
};
\node at (C1) {
\begin{tikzpicture}

\draw (0,0) circle (0.125);

\node at (0,0) {\tiny $5$};

\end{tikzpicture}
};
\node at (C2) {
\begin{tikzpicture}

\draw (0,0) circle (0.125);

\node at (0,0) {\tiny $6$};

\end{tikzpicture}
};
\node at (D1) {
\begin{tikzpicture}

\draw (0,0) circle (0.125);

\node at (0,0) {\tiny $7$};

\end{tikzpicture}
};
\node at (D2) {
\begin{tikzpicture}

\draw (0,0) circle (0.125);

\node at (0,0) {\tiny $8$};

\end{tikzpicture}
};
\node at (E1) {
\begin{tikzpicture}

\draw (0,0) circle (0.125);

\node at (0,0) {\tiny $9$};

\end{tikzpicture}
};
\node at (E2) {
\begin{tikzpicture}

\draw (0,0) circle (0.125);

\node at (0,0) {\tiny $10$};

\end{tikzpicture}
};

\draw[color=red] (A1) .. controls ($(A1) + (0.2, 0, 0)$) and ($(B1) + (0.2, 0, 0)$) .. (B1);
\draw[color=red] (B2) .. controls ($(B2) + (0, 0.2, 0)$) and ($(C2) + (0, 0.2, 0)$) .. (C2);
\draw[color=red] (C1) .. controls ($(C1) + (0.2, 0, 0)$) and ($(D1) + (-0.2, 0, 0)$) .. (D1);
\draw[color=red] (A2) .. controls ($(A2) + (0, 0.2, 0)$) and ($(E2) + (0, -0.2, 0)$) .. (E2);
\draw[color=red] (D2) .. controls ($(D) + (0, 0.2, 0)$) and ($(E) + (0.2, 0, 0)$) .. (E1);

\draw[color=red] (A1) .. controls (0.05, 0.05, 0.75) .. (A2);
\draw[color=red] (B1) .. controls (0.05, 0.05, 0.45) .. (B2);
\draw[color=red] (C1) .. controls (0.05, 0.05, 0.15) .. (C2);
\draw[color=red] (D1) .. controls (0.45, 0.05, -0.1) .. (D2);
\draw[color=red] (E1) .. controls (0.05, 0.75, -0.1) .. (E2);

\node (broken) at (-0.4,0.5) {
\begin{tikzpicture}[scale=0.7]

\draw (0,0) to[out=20,in=-20] ++(0,1) to[out=-160,in=160] ++(0,-1);
\draw (0,1) to[out=20,in=-20] ++(0,1) to[out=-160,in=160] ++(0,-1);
\draw (0,2) to[out=20,in=-20] ++(0,1) to[out=-160,in=160] ++(0,-1);

\end{tikzpicture}
};

\node (bubblefirst) at (0.6,0.3) {
\begin{tikzpicture}[scale=0.7]

\draw (0,0) to[out=20,in=-20] ++(0,1) to[out=-160,in=160] ++(0,-1);
\filldraw[gray] (0,1) to[out=20,in=-20] ++(0,1) to[out=-160,in=160] ++(0,-1);
\draw (0.5,1.5) circle (0.2);

\node at (0,0.5) {\tiny $T$};

\end{tikzpicture}
};

\node (bubblelast) at (-0.2,0.9) {
\begin{tikzpicture}[scale=0.7]

\filldraw[gray] (0,0) to[out=20,in=-20] ++(0,1) to[out=-160,in=160] ++(0,-1);
\draw (0,1) to[out=20,in=-20] ++(0,1) to[out=-160,in=160] ++(0,-1);
\draw (0.5,0.5) circle (0.2);

\node at (0,1.5) {\tiny $T$};

\end{tikzpicture}
};

\draw[thin, densely dotted, ->] (broken) -- (A);
\draw[thin, densely dotted, ->] (broken) -- (B);
\draw[thin, densely dotted, ->] (broken) -- (C);
\draw[thin, densely dotted, ->] (bubblefirst) -- (D);
\draw[thin, densely dotted, ->] (bubblelast) -- (E);

\end{tikzpicture} \begin{tikzpicture}[x={(1,0)}, y={(0,1)}, z={(-0.5, -0.5)}, scale=5.5]

\draw[->] (0,0,0) -- (0.5,0,0) node[anchor=north west]{$\R_{+(1)}$};
\draw[->] (0,0,0) -- (0,1,0) node[anchor=south]{$\R_{+(2)}$};
\draw[->] (0,0,-0.5) -- (0,0,1) node[anchor=north west]{$\R^{3d}$};

\coordinate (A) at (0, 0, 0.75);
\coordinate (B) at (0, 0, 0.45);
\coordinate (C) at (0, 0, 0.15);
\coordinate (D) at (0.5, 0, -0.1);
\coordinate (E) at (0, 0.7, -0.1);

\draw (A) .. controls ($(A) + (0.2, 0, 0)$) and ($(B) + (0.2, 0, 0)$) .. (B);
\draw (B) .. controls ($(B) + (0, 0.2, 0)$) and ($(C) + (0, 0.2, 0)$) .. (C);
\draw (C) .. controls ($(C) + (0.2, 0, 0)$) and ($(D) + (0.5, 0, 0)$) .. (D);
\draw (A) .. controls ($(A) + (0, 0.2, 0)$) and ($(E) + (0, 0.5, 0)$) .. (E);
\draw (D) .. controls ($(D) + (0, 0.2, 0)$) and ($(E) + (0.2, 0, 0)$) .. (E);

\coordinate (A1) at (0.1, 0.05, 0.75);
\coordinate (A2) at (0.05, 0.1, 0.75);
\coordinate (B1) at (0.1, 0.05, 0.45);
\coordinate (B2) at (0.05, 0.1, 0.45);
\coordinate (C1) at (0.1, 0.05, 0.15);
\coordinate (C2) at (0.05, 0.1, 0.15);
\coordinate (D1) at (0.45, 0.05, -0.1);
\coordinate (D2) at (0.4, 0.1, -0.1);
\coordinate (E1) at (0.1, 0.6, -0.1);
\coordinate (E2) at (0.05, 0.65, -0.1);

\node at (A1) {
\begin{tikzpicture}

\draw (0,0) circle (0.125);

\node at (0,0) {\tiny $1$};

\end{tikzpicture}
};
\node at (A2) {
\begin{tikzpicture}

\draw (0,0) circle (0.125);

\node at (0,0) {\tiny $2$};

\end{tikzpicture}
};
\node at (B1) {
\begin{tikzpicture}

\draw (0,0) circle (0.125);

\node at (0,0) {\tiny $3$};

\end{tikzpicture}};
\node at (B2) {
\begin{tikzpicture}

\draw (0,0) circle (0.125);

\node at (0,0) {\tiny $4$};

\end{tikzpicture}
};
\node at (C1) {
\begin{tikzpicture}

\draw (0,0) circle (0.125);

\node at (0,0) {\tiny $5$};

\end{tikzpicture}
};
\node at (C2) {
\begin{tikzpicture}

\draw (0,0) circle (0.125);

\node at (0,0) {\tiny $6$};

\end{tikzpicture}
};
\node at (D1) {
\begin{tikzpicture}

\draw (0,0) circle (0.125);

\node at (0,0) {\tiny $7$};

\end{tikzpicture}
};
\node at (D2) {
\begin{tikzpicture}

\draw (0,0) circle (0.125);

\node at (0,0) {\tiny $8$};

\end{tikzpicture}
};
\node at (E1) {
\begin{tikzpicture}

\draw (0,0) circle (0.125);

\node at (0,0) {\tiny $9$};

\end{tikzpicture}
};
\node at (E2) {
\begin{tikzpicture}

\draw (0,0) circle (0.125);

\node at (0,0) {\tiny $10$};

\end{tikzpicture}
};

\draw[color=red] (A1) .. controls ($(A1) + (0.2, 0, 0)$) and ($(B1) + (0.2, 0, 0)$) .. (B1);
\draw[color=red] (B2) .. controls ($(B2) + (0, 0.2, 0)$) and ($(C2) + (0, 0.2, 0)$) .. (C2);
\draw[color=red] (C1) .. controls ($(C1) + (0.2, 0, 0)$) and ($(D1) + (0.5, 0, 0)$) .. (D1);
\draw[color=red] (A2) .. controls ($(A2) + (0, 0.2, 0)$) and ($(E2) + (0, 0.5, 0)$) .. (E2);
\draw[color=red] (D2) .. controls ($(D) + (0, 0.2, 0)$) and ($(E) + (0.2, 0, 0)$) .. (E1);

\draw[color=red] (A1) .. controls (0.05, 0.05, 0.75) .. (A2);
\draw[color=red] (B1) .. controls (0.05, 0.05, 0.45) .. (B2);
\draw[color=red] (C1) .. controls (0.05, 0.05, 0.15) .. (C2);
\draw[color=red] (D1) .. controls (0.4, 0.05, -0.1) .. (D2);
\draw[color=red] (E1) .. controls (0.05, 0.6, -0.1) .. (E2);

\node (broken) at (-0.4,0.5) {
\begin{tikzpicture}[scale=0.7]

\draw (0,0) to[out=20,in=-20] ++(0,1) to[out=-160,in=160] ++(0,-1);
\draw (0,1) to[out=20,in=-20] ++(0,1) to[out=-160,in=160] ++(0,-1);
\draw (0,2) to[out=20,in=-20] ++(0,1) to[out=-160,in=160] ++(0,-1);

\end{tikzpicture}
};

\node (bubblefirst) at (0.6,0.3) {
\begin{tikzpicture}[scale=0.7]

\draw (0,0) to[out=20,in=-20] ++(0,1) to[out=-160,in=160] ++(0,-1);
\filldraw[gray] (0,1) to[out=20,in=-20] ++(0,1) to[out=-160,in=160] ++(0,-1);
\draw (0.5,1.5) circle (0.2);

\node at (0,0.5) {\tiny $T$};

\end{tikzpicture}
};

\node (bubblelast) at (-0.2,0.9) {
\begin{tikzpicture}[scale=0.7]

\filldraw[gray] (0,0) to[out=20,in=-20] ++(0,1) to[out=-160,in=160] ++(0,-1);
\draw (0,1) to[out=20,in=-20] ++(0,1) to[out=-160,in=160] ++(0,-1);
\draw (0.5,0.5) circle (0.2);

\node at (0,1.5) {\tiny $T$};

\end{tikzpicture}
};

\draw[thin, densely dotted, ->] (broken) -- (A);
\draw[thin, densely dotted, ->] (broken) -- (B);
\draw[thin, densely dotted, ->] (broken) -- (C);
\draw[thin, densely dotted, ->] (bubblefirst) -- (D);
\draw[thin, densely dotted, ->] (bubblelast) -- (E);

\end{tikzpicture}
\caption{Type 3.3c, in the case when the annulus is horizontal (left) or vertical (right). In either case $\partial \mathcal{M}(D, 0, 0)$ is a pentagon, and $\partial' \mathcal{M}(D, 0, 0)$ (red) is a $10$-gon embedded in the interior of $\mathbb{E}_2^d$ with the numbered vertices.}\label{fig11}\end{figure}

Let $r, s, t$ be the signs of the rectangle $R, S, T$, respectively. Following Conventions $\ref{conv1}, \ref{conv2}, \ref{conv3}$ and $\ref{conv4}$, we can decompose $\partial' \mathcal{M}(D, 0, 0)$ into short and long preferred paths as shown in Figure $\ref{fig12}$. 

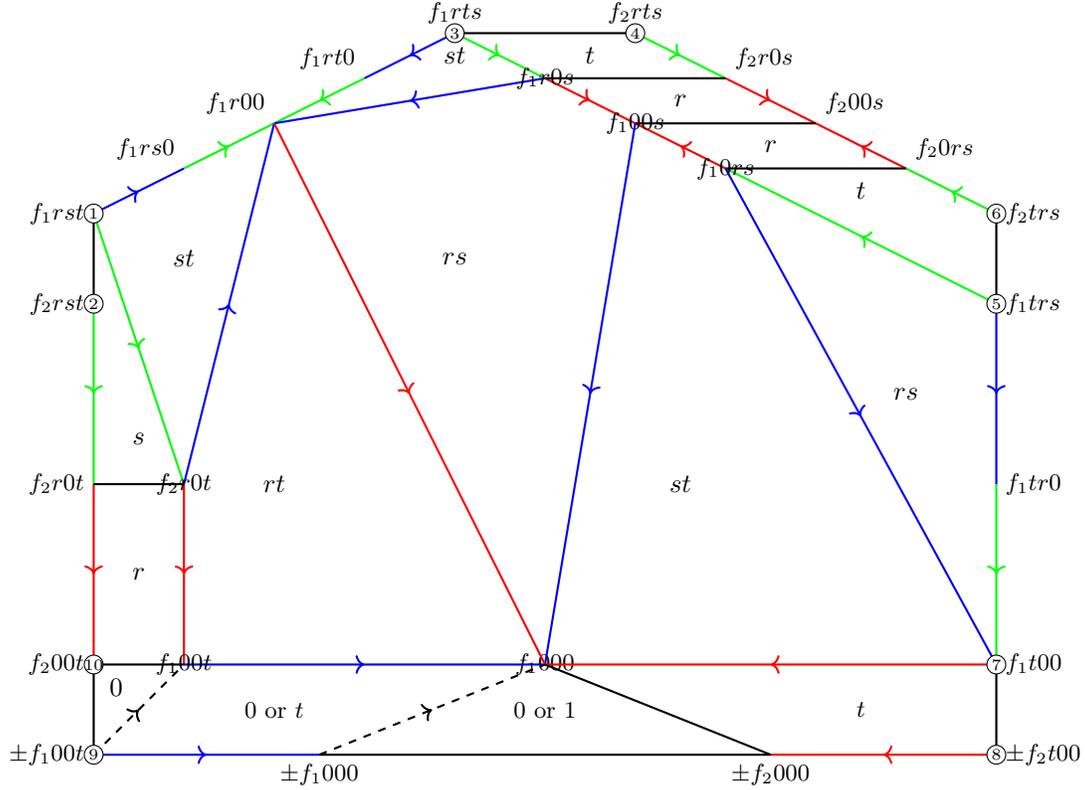
\begin{figure}
\begin{tikzpicture}[scale=6]

%Standard hexagon

\node[anchor=west] at (2, 0.4) {\small $\pm f_2 t00$};
\node[anchor=west] at (2, 0.6) {\small $f_1 t00$};

\node[anchor=west] at (2, 1.4) {\small $f_1 trs$};
\node[anchor=west] at (2, 1.6) {\small $f_2 trs$};

\node[anchor=south] at (0.8, 2) {\small $f_1 rts$};
\node[anchor=south] at (1.2, 2) {\small $f_2 rts$};

\node[anchor=east] at (0, 1.4) {\small $f_2 rst$};
\node[anchor=east] at (0, 1.6) {\small $f_1 rst$};

\node[anchor=east] at (0, 0.4) {\small $\pm f_1 00t$};
\node[anchor=east] at (0, 0.6) {\small $f_2 00t$};

\node at (1, 0.6) {\small $f_1 000$};

\draw[thick] (0.8, 2) -- (1.2, 2);
\draw[thick] (0, 0.4) -- (0, 0.6);
\draw[thick] (0, 1.4) -- (0, 1.6);
\draw[thick] (2, 0.4) -- (2, 0.6);
\draw[thick] (2, 1.4) -- (2, 1.6);

\draw[->-,thick,color=blue] (0, 1.6) -- (0.2, 1.7);
\draw[->-,thick,color=green] (0.2, 1.7) -- (0.4, 1.8);
\draw[->-,thick,color=green] (0.6, 1.9) -- (0.4, 1.8);
\draw[->-,thick,color=blue] (0.8, 2) -- (0.6, 1.9);

\draw[->-,thick,color=green] (1.2, 2) -- (1.4, 1.9);
\draw[->-,thick,color=red] (1.4, 1.9) -- (1.6, 1.8);
\draw[->-,thick,color=red] (1.8, 1.7) -- (1.6, 1.8);
\draw[->-,thick,color=green] (2, 1.6) -- (1.8, 1.7);

\draw[->-,thick,color=green] (0, 1.4) -- (0, 1);
\draw[->-,thick,color=red] (0, 1) -- (0, 0.6);

\draw[->-,thick,color=blue] (2, 1.4) -- (2, 1);
\draw[->-,thick,color=green] (2, 1) -- (2, 0.6);

\draw[->-,thick,color=blue] (0, 0.4) -- (0.5, 0.4);
\draw[->-,thick,color=red] (2, 0.4) -- (1.5, 0.4);
\draw[thick] (0.5, 0.4) -- (1.5, 0.4);

%Smoothening blocks

\draw[->-,thick,color=green] (0.8, 2) -- (1, 1.9);
\draw[->-,thick,color=red] (1.4, 1.7) -- (1.2, 1.8);
\draw[->-,thick,color=red] (1, 1.9) -- (1.2, 1.8);
\draw[->-,thick,color=green] (2, 1.4) -- (1.4, 1.7);
\draw[thick] (1, 1.9) -- (1.4, 1.9);
\draw[thick] (1.2, 1.8) -- (1.6, 1.8);
\draw[thick] (1.4, 1.7) -- (1.8, 1.7);

\draw[->-,thick,color=red] (0.2, 1) -- (0.2, 0.6);
\draw[->-,thick,color=green] (0, 1.6) -- (0.2, 1);
\draw[thick] (0, 0.6) -- (0.2, 0.6);
\draw[thick] (0, 1) -- (0.2, 1);
\draw[thick,dashed,->-] (0, 0.4) -- (0.2, 0.6);

%Special cases

\draw[->-,thick,color=blue] (0.2, 1) -- (0.4, 1.8);
\draw[->-,thick,color=blue] (1.2, 1.8) -- (1, 0.6);
\draw[->-,thick,color=blue] (1.4, 1.7) -- (2, 0.6);
\draw[->-,thick,color=blue] (1, 1.9) -- (0.4, 1.8);
\draw[->-,thick,color=blue] (0.2, 0.6) -- (1, 0.6);
\draw[->-,thick,color=red] (0.4, 1.8) -- (1, 0.6);
\draw[->-,thick,color=red] (2, 0.6) -- (1, 0.6);
\draw[thick,dashed,->-] (0.5, 0.4) -- (1, 0.6);
\draw[thick] (1, 0.6) -- (1.5, 0.4);

\node at (0.2, 0.6) {\small $f_1 00t$};
\node at (0.2, 1) {\small $f_2 r0t$};
\node at (1, 1.9) {\small $f_1 r0s$};
\node at (1.2, 1.8) {\small $f_1 00s$};
\node at (1.4, 1.7) {\small $f_1 0rs$};
\node[anchor=east] at (0, 1) {\small $f_2 r0t$};
\node[anchor=west] at (2, 1) {\small $f_1 tr0$};
\node[anchor=south east] at (0.2, 1.7) {\small $f_1 rs0$};
\node[anchor=south east] at (0.4, 1.8) {\small $f_1 r00$};
\node[anchor=south east] at (0.6, 1.9) {\small $f_1 rt0$};
\node[anchor=south west] at (1.4, 1.9) {\small $f_2 r0s$};
\node[anchor=south west] at (1.6, 1.8) {\small $f_2 00s$};
\node[anchor=south west] at (1.8, 1.7) {\small $f_2 0rs$};
\node[anchor=north] at (0.5, 0.4) {\small $\pm f_1 000$};
\node[anchor=north] at (1.5, 0.4) {\small $\pm f_2 000$};

%Cycle values

\node at (1.1, 1.95) {$t$};
\node at (1.3, 1.85) {$r$};
\node at (1.5, 1.75) {$r$};
\node at (1.7, 1.65) {$t$};
\node at (0.8, 1.95) {$st$};
\node at (0.2, 1.5) {$st$};
\node at (1.8, 1.2) {$rs$};
\node at (0.1, 1.1) {$s$};
\node at (0.1, 0.8) {$r$};
\node at (0.05, 0.55) {$0$};
\node at (0.4, 1) {$rt$};
\node at (0.8, 1.5) {$rs$};
\node at (1.3, 1) {$st$};
\node at (0.4, 0.5) {\small $0 \text{ or } t$};
\node at (1, 0.5) {\small $0 \text{ or } 1$};
\node at (1.7, 0.5) {\small $t$};

\node at (0.8, 2) {
\begin{tikzpicture}

\draw[fill=white] (0,0) circle (0.125);

\node at (0,0) {\tiny $3$};

\end{tikzpicture}
};
\node at (1.2, 2) {
\begin{tikzpicture}

\draw[fill=white] (0,0) circle (0.125);

\node at (0,0) {\tiny $4$};

\end{tikzpicture}
};
\node at (0, 1.4) {
\begin{tikzpicture}

\draw[fill=white] (0,0) circle (0.125);

\node at (0,0) {\tiny $2$};

\end{tikzpicture}
};
\node at (0, 1.6) {
\begin{tikzpicture}

\draw[fill=white] (0,0) circle (0.125);

\node at (0,0) {\tiny $1$};

\end{tikzpicture}
};
\node at (2, 1.4) {
\begin{tikzpicture}

\draw[fill=white] (0,0) circle (0.125);

\node at (0,0) {\tiny $5$};

\end{tikzpicture}
};
\node at (2, 1.6) {
\begin{tikzpicture}

\draw[fill=white] (0,0) circle (0.125);

\node at (0,0) {\tiny $6$};

\end{tikzpicture}
};
\node at (2, 0.4) {
\begin{tikzpicture}

\draw[fill=white] (0,0) circle (0.125);

\node at (0,0) {\tiny $8$};

\end{tikzpicture}
};
\node at (2, 0.6) {
\begin{tikzpicture}

\draw[fill=white] (0,0) circle (0.125);

\node at (0,0) {\tiny $7$};

\end{tikzpicture}
};
\node at (0, 0.4) {
\begin{tikzpicture}

\draw[fill=white] (0,0) circle (0.125);

\node at (0,0) {\tiny $9$};

\end{tikzpicture}
};
\node at (0, 0.6) {
\begin{tikzpicture}

\draw[fill=white] (0,0) circle (0.125);

\node at (0,0) {\tiny $10$};

\end{tikzpicture}
};

\end{tikzpicture}
\caption{Type 3.3c. The numbered vertices correspond to the numbered vertices of $\partial' \mathcal{M}(D, 0, 0)$ of Figure $\ref{fig9}$.}\label{fig12}\end{figure}

Similarly to Type 3.3a, we use Lemmas $\ref{lem1}, \ref{lem2}$ to compute the homotopy class of most quadrilaterals in Figure $\ref{fig12}$. If $A$ is a vertical annulus, the dashed lines are identity, so the dashed black-blue quadrilateral and both black triangles are clearly nullhomotopic. In this case, $r + s = 1 \pmod{2}$ by the definition of a sign assignment, so that
\begin{align*}\partial f &= 1 + r + s + t + rt + 2rs + 3st + 2(r + t) \\&= 1 + t + (r + s)(t + 1) = 0 \pmod{2}\end{align*}
If $A$ is a horizontal annulus, by Lemma $\ref{lem2}$ the dashed black-blue quadrilateral's homotopy class is given by $t$, while the left and right dashed triangles are nullhomotopic and not nullhomotopic, respectively. (This can be checked with hand motions.) In this case, $r + s = 0 \pmod{2}$ by the definition of a sign assignment, so that
\begin{align*}\partial f &= 1 + 1 + 2t + r + s + rt + 2rs + 3st + 2(r + t) \\&= (r + s)(t + 1) = 0 \pmod{2}\end{align*}

\textbf{Type 3.2a}: In this case, $D$ is an index $2$ domain, $\vec{\lambda}$ has total length $1$, and $D$ is not an annulus. In this case, $D = A_1 * B_1 = A_2 * B_2$. Let $N$ be the nonzero coordinate of $\vec{N}$, and consider the Figure $\ref{fig13}$.

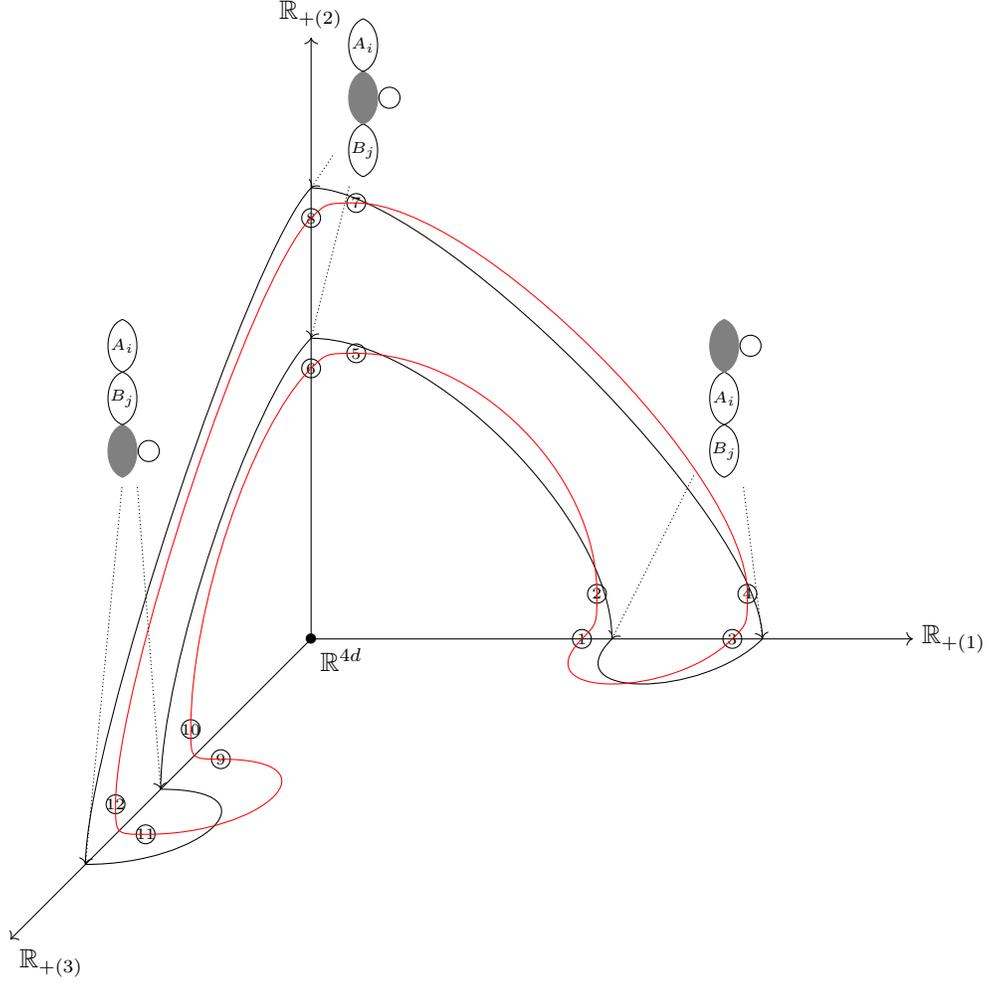
\begin{figure}

\begin{tikzpicture}[x={(1,0)}, y={(0,1)}, z={(-0.5, -0.5)}, scale=8]

\node at (0,0,0) {\textbullet};
\node[anchor=north west] at (0,0,0) {$\R^{4d}$};

\draw[->] (0,0,0) -- (1,0,0) node[anchor=west]{$\R_{+(1)}$};
\draw[->] (0,0,0) -- (0,1,0) node[anchor=south]{$\R_{+(2)}$};
\draw[->] (0,0,0) -- (0,0,1) node[anchor=north west]{$\R_{+(3)}$};

\coordinate (A) at (0.5, 0, 0);
\coordinate (B) at (0.75, 0, 0);
\coordinate (C) at (0, 0.5, 0);
\coordinate (D) at (0, 0.75, 0);
\coordinate (E) at (0, 0, 0.5);
\coordinate (F) at (0, 0, 0.75);

\draw (A) .. controls ($(A) + (0, 0, 0.2)$) and ($(B) + (0, 0, 0.2)$) .. (B);
\draw (E) .. controls ($(E) + (0.2, 0, 0)$) and ($(F) + (0.2, 0, 0)$) .. (F);
\draw (C) .. controls ($(C) + (0.2, 0, 0)$) and ($(A) + (0, 0.2, 0)$) .. (A);
\draw (C) .. controls ($(C) + (0, 0, 0.2)$) and ($(E) + (0, 0.2, 0)$) .. (E);
\draw (D) .. controls ($(D) + (0.2, 0, 0)$) and ($(B) + (0, 0.2, 0)$) .. (B);
\draw (D) .. controls ($(D) + (0, 0, 0.2)$) and ($(F) + (0, 0.2, 0)$) .. (F);

\coordinate (A1) at (0.5, 0.05, 0.1);
\coordinate (A2) at (0.5, 0.1, 0.05);
\coordinate (B1) at (0.75, 0.05, 0.1);
\coordinate (B2) at (0.75, 0.1, 0.05);
\coordinate (C1) at (0.1, 0.5, 0.05);
\coordinate (C2) at (0.05, 0.5, 0.1);
\coordinate (D1) at (0.1, 0.75, 0.05);
\coordinate (D2) at (0.05, 0.75, 0.1);
\coordinate (E1) at (0.1, 0.05, 0.5);
\coordinate (E2) at (0.05, 0.1, 0.5);
\coordinate (F1) at (0.1, 0.05, 0.75);
\coordinate (F2) at (0.05, 0.1, 0.75);

\node at (A1) {
\begin{tikzpicture}

\draw (0,0) circle (0.125);

\node at (0,0) {\tiny $1$};

\end{tikzpicture}
};
\node at (A2) {
\begin{tikzpicture}

\draw (0,0) circle (0.125);

\node at (0,0) {\tiny $2$};

\end{tikzpicture}
};
\node at (B1) {
\begin{tikzpicture}

\draw (0,0) circle (0.125);

\node at (0,0) {\tiny $3$};

\end{tikzpicture}
};
\node at (B2) {
\begin{tikzpicture}

\draw (0,0) circle (0.125);

\node at (0,0) {\tiny $4$};

\end{tikzpicture}
};
\node at (C1) {
\begin{tikzpicture}

\draw (0,0) circle (0.125);

\node at (0,0) {\tiny $5$};

\end{tikzpicture}
};
\node at (C2) {
\begin{tikzpicture}

\draw (0,0) circle (0.125);

\node at (0,0) {\tiny $6$};

\end{tikzpicture}
};
\node at (D1) {
\begin{tikzpicture}

\draw (0,0) circle (0.125);

\node at (0,0) {\tiny $7$};

\end{tikzpicture}
};
\node at (D2) {
\begin{tikzpicture}

\draw (0,0) circle (0.125);

\node at (0,0) {\tiny $8$};

\end{tikzpicture}
};
\node at (E1) {
\begin{tikzpicture}

\draw (0,0) circle (0.125);

\node at (0,0) {\tiny $9$};

\end{tikzpicture}
};
\node at (E2) {
\begin{tikzpicture}

\draw (0,0) circle (0.125);

\node at (0,0) {\tiny $10$};

\end{tikzpicture}
};
\node at (F1) {
\begin{tikzpicture}

\draw (0,0) circle (0.125);

\node at (0,0) {\tiny $11$};

\end{tikzpicture}
};
\node at (F2) {
\begin{tikzpicture}

\draw (0,0) circle (0.125);

\node at (0,0) {\tiny $12$};

\end{tikzpicture}
};

\draw[color=red] (A1) .. controls ($(A1) + (0, 0, 0.2)$) and ($(B1) + (0, 0, 0.2)$) .. (B1);
\draw[color=red] (E1) .. controls ($(E1) + (0.2, 0, 0)$) and ($(F1) + (0.2, 0, 0)$) .. (F1);
\draw[color=red] (C1) .. controls ($(C1) + (0.2, 0, 0)$) and ($(A2) + (0, 0.2, 0)$) .. (A2);
\draw[color=red] (C2) .. controls ($(C2) + (0, 0, 0.2)$) and ($(E2) + (0, 0.2, 0)$) .. (E2);
\draw[color=red] (D1) .. controls ($(D1) + (0.2, 0, 0)$) and ($(B2) + (0, 0.2, 0)$) .. (B2);
\draw[color=red] (D2) .. controls ($(D2) + (0, 0, 0.2)$) and ($(F2) + (0, 0.2, 0)$) .. (F2);

\draw[color=red] (A1) .. controls (0.5, 0.05, 0.05) .. (A2);
\draw[color=red] (B1) .. controls (0.75, 0.05, 0.05) .. (B2);
\draw[color=red] (C1) .. controls (0.05, 0.5, 0.05) .. (C2);
\draw[color=red] (D1) .. controls (0.05, 0.75, 0.05) .. (D2);
\draw[color=red] (E1) .. controls (0.05, 0.05, 0.5) .. (E2);
\draw[color=red] (F1) .. controls (0.05, 0.05, 0.75) .. (F2);

\node (bubblefirst) at (0.7, 0.4) {
\begin{tikzpicture}[scale=0.7]

\draw (0,0) to[out=20,in=-20] ++(0,1) to[out=-160,in=160] ++(0,-1);
\draw (0,1) to[out=20,in=-20] ++(0,1) to[out=-160,in=160] ++(0,-1);
\filldraw[gray] (0,2) to[out=20,in=-20] ++(0,1) to[out=-160,in=160] ++(0,-1);
\draw (0.5, 2.5) circle (0.2);

\node at (0,0.5) {\tiny $B_j$};
\node at (0,1.5) {\tiny $A_i$};

\end{tikzpicture}
};

\node (bubblemiddle) at (0.1, 0.9) {
\begin{tikzpicture}[scale=0.7]

\draw (0,0) to[out=20,in=-20] ++(0,1) to[out=-160,in=160] ++(0,-1);
\filldraw[gray] (0,1) to[out=20,in=-20] ++(0,1) to[out=-160,in=160] ++(0,-1);
\draw (0,2) to[out=20,in=-20] ++(0,1) to[out=-160,in=160] ++(0,-1);
\draw (0.5, 1.5) circle (0.2);

\node at (0,0.5) {\tiny $B_j$};
\node at (0,2.5) {\tiny $A_i$};

\end{tikzpicture}
};

\node (bubblelast) at (-0.3, 0.4) {
\begin{tikzpicture}[scale=0.7]

\filldraw[gray] (0,0) to[out=20,in=-20] ++(0,1) to[out=-160,in=160] ++(0,-1);
\draw (0,1) to[out=20,in=-20] ++(0,1) to[out=-160,in=160] ++(0,-1);
\draw (0,2) to[out=20,in=-20] ++(0,1) to[out=-160,in=160] ++(0,-1);
\draw (0.5, 0.5) circle (0.2);

\node at (0,1.5) {\tiny $B_j$};
\node at (0,2.5) {\tiny $A_i$};

\end{tikzpicture}
};

\draw[thin, densely dotted, ->] (bubblefirst) -- (A);
\draw[thin, densely dotted, ->] (bubblefirst) -- (B);
\draw[thin, densely dotted, ->] (bubblemiddle) -- (C);
\draw[thin, densely dotted, ->] (bubblemiddle) -- (D);
\draw[thin, densely dotted, ->] (bubblelast) -- (E);
\draw[thin, densely dotted, ->] (bubblelast) -- (F);

\end{tikzpicture}\caption{Type 3.2a in the case $l = 1$. $\partial \mathcal{M}(D, l\vec{e_j}, (l))$ is a hexagon. $\partial' \mathcal{M}(D, l\vec{e_j}, (l))$ (red) is a $12$-gon embedded in the interior of $\mathbb{E}_{2l+1}^d$ with the numbered vertices.}\label{fig13}\end{figure}

\begin{remark}Figure $\ref{fig13}$ is slightly misleading. Due to dimension constraints, we are only able to draw the case $N = 1$. In general, the two corners on the second $\R_+$ axis (near vertices $5, 6$ and $7, 8$, respectively) would lie on the second and $2l-1^{st}$ (second to last) $\R_+$ axes, respectively.\end{remark}

The vertices of $\partial' \mathcal{M}(D, N\vec{e_j}, (N)))$ (the red $12$-gon) shown in Figure $\ref{fig13}$ can be written as points in $\SO((2N + 1)(d + 1) - 1)$ that form a loop as above. Similarly to previous cases, most of the framing vectors are identical---the $\R_+$ coordinates and the first, $(d+1)^{st}$, $(2Nd-d+1)^{st}$, and $(2Nd+1)^{st}$ coordinates are the only ones that can differ. We can therefore follow Conventions $\ref{conv1}$ and $\ref{conv3}$ when labelling the decomposition shown in Figure $\ref{fig14}$. For Type 3.2 we use the following color convention:

\begin{conv}\label{conv6}\begin{itemize}

\item All black paths are short preferred.

\item All red paths are long preferred paths changing the first $\R$ coordinate with respect to the first $\R_+$ coordinate.

\item All brown paths are long preferred paths changing the $(d+1)^{st}$ $\R$ coordinate with respect to the first $\R_+$ coordinate.

\item All green paths are long preferred paths changing the $(2Nd-d+1)^{st}$ $\R$ coordinate with respect to the first $\R_+$ coordinate.

\item All blue paths are long preferred paths changing the $(2Nd+1)^{st}$ $\R$ coordinate with respect to the first $\R_+$ coordinate.

\end{itemize}\end{conv}

\begin{figure}
\begin{tikzpicture}[scale=6]

%Standard hexagon

\node[anchor=east] at (0, 0) {\small $f_{2N} f_1 \dots f_{2N-1} 0\dots 0 a_1 b_1$};
\node[anchor=west] at (1, 0) {\small $f_{2N} f_1 \dots f_{2N-1} 0\dots 0 a_2 b_2$};

\node[anchor=east] at (0, 0.1) {\small $f_{2N+1} f_1 \dots f_{2N-1} 0\dots 0 a_1 b_1$};
\node[anchor=west] at (1, 0.1) {\small $f_{2N+1} f_1 \dots f_{2N-1} 0\dots 0 a_2 b_2$};

\node[anchor=east] at (0, 0.8) {\small $f_{2N+1} f_2 \dots f_{2N} a_1 0\dots 0 b_1$};
\node[anchor=west] at (1, 0.8) {\small $f_{2N+1} f_2 \dots f_{2N} a_2 0\dots 0 b_2$};

\node[anchor=east] at (0, 0.9) {\small $f_1 f_2 \dots f_{2N} a_1 0\dots 0 b_1$};
\node[anchor=west] at (1, 0.9) {\small $f_1 f_2 \dots f_{2N} a_2 0\dots 0 b_2$};

\node[anchor=east] at (0, 1.6) {\small $f_1 f_3 \dots f_{2N+1} a_1 b_1 0\dots 0$};
\node[anchor=west] at (1, 1.6) {\small $f_1 f_3 \dots f_{2N+1} a_2 b_2 0\dots 0$};

\node[anchor=east] at (0, 1.7) {\small $f_2 f_3 \dots f_{2N+1} a_1 b_1 0\dots 0$};
\node[anchor=west] at (1, 1.7) {\small $f_2 f_3 \dots f_{2N+1} a_2 b_2 0\dots 0$};

\node at (0, 0.45) {$\dots$};
\node at (0, 1.25) {$\dots$};
\node at (1, 0.45) {$\dots$};
\node at (1, 1.25) {$\dots$};

\draw[thick] (0, 0) -- (0, 0.1);
\draw[thick] (0, 0.2) -- (0, 0.4);
\draw[thick] (0, 0.5) -- (0, 0.7);
\draw[thick] (0, 0.8) -- (0, 0.9);
\draw[thick] (0, 1) -- (0, 1.2);
\draw[thick] (0, 1.3) -- (0, 1.5);
\draw[thick] (0, 1.6) -- (0, 1.7);

\draw[->-,thick,color=green] (0, 0.1) -- (0, 0.2);
\draw[->-,thick,color=red] (0, 0.8) -- (0, 0.7);
\draw[->-,thick,color=blue] (0, 0.9) -- (0, 1);
\draw[->-,thick,color=brown] (0, 1.7) -- (0.25, 1.7);
\draw[->-,thick,color=brown] (0, 1.6) -- (0, 1.5);

\draw[thick] (1, 0) -- (1, 0.1);
\draw[thick] (1, 0.2) -- (1, 0.4);
\draw[thick] (1, 0.5) -- (1, 0.7);
\draw[thick] (1, 0.8) -- (1, 0.9);
\draw[thick] (1, 1) -- (1, 1.2);
\draw[thick] (1, 1.3) -- (1, 1.5);
\draw[thick] (1, 1.6) -- (1, 1.7);

\draw[->-,thick,color=green] (1, 0.1) -- (1, 0.2);
\draw[->-,thick,color=red] (1, 0.8) -- (1, 0.7);
\draw[->-,thick,color=blue] (1, 0.9) -- (1, 1);
\draw[->-,thick,color=brown] (1, 1.7) -- (0.75, 1.7);
\draw[->-,thick,color=brown] (1, 1.6) -- (1, 1.5);

\draw[->-,thick,color=blue] (0, 0) -- (0.25, 0);
\draw[->-,thick,color=green] (0.25, 0) -- (0.5, 0);
\draw[->-,thick,color=green] (0.75, 0) -- (0.5, 0);
\draw[->-,thick,color=blue] (1, 0) -- (0.75, 0);
\draw[->-,thick,color=red] (0.25, 1.7) -- (0.5, 1.7);
\draw[->-,thick,color=red] (0.75, 1.7) -- (0.5, 1.7);

%Smoothening blocks

\node at (0.5, 0.45) {$\dots$};
\node at (0.5, 1.25) {$\dots$};

\draw[thick] (0.5, 0) -- (0.5, 0.4);
\draw[thick] (0.5, 0.5) -- (0.5, 1.2);
\draw[thick] (0.5, 1.3) -- (0.5, 1.7);

\draw[thick] (0, 1.5) -- (0.25, 1.7);
\draw[thick] (1, 1.5) -- (0.75, 1.7);

\foreach \i in {1,2,3,4,5,6}{
\draw[->-,thick,color=blue] (0, 0.1 + 0.1 * \i) -- (0.5, 0.1 + 0.1 * \i);
\draw[->-,thick,color=blue] (1, 0.1 + 0.1 * \i) -- (0.5, 0.1 + 0.1 * \i);
\draw[->-,thick,color=red] (0, 0.9 + 0.1 * \i) -- (0.5, 0.9 + 0.1 * \i);
\draw[->-,thick,color=red] (1, 0.9 + 0.1 * \i) -- (0.5, 0.9 + 0.1 * \i);
}
\draw[->-,thick,color=blue] (0, 0.1) -- (0.25, 0.1);
\draw[->-,thick,color=blue] (1, 0.1) -- (0.75, 0.1);
\draw[->-,thick,color=green] (0.25, 0.1) -- (0.5, 0.2);
\draw[->-,thick,color=green] (0.75, 0.1) -- (0.5, 0.2);
\draw[thick] (0.25, 0) -- (0.25, 0.1);
\draw[thick] (0.75, 0) -- (0.75, 0.1);
\draw[->-,thick,color=blue] (0, 0.8) -- (0.25, 0.8);
\draw[->-,thick,color=blue] (1, 0.8) -- (0.75, 0.8);
\draw[->-,thick,color=red] (0.25, 0.8) -- (0.5, 0.7);
\draw[->-,thick,color=red] (0.75, 0.8) -- (0.5, 0.7);
\draw[thick] (0.25, 0.8) -- (0, 1);
\draw[thick] (0.75, 0.8) -- (1, 1);

%Special cases

\node[anchor=east] at (0, 0.2) {\small $f_{2N+1} f_1 \dots f_{2N-1} 0\dots 0 b_1$};
\node[anchor=west] at (1, 0.2) {\small $f_{2N+1} f_1 \dots f_{2N-1} 0\dots 0  b_2$};

\node[anchor=east] at (0, 0.3) {\small $f_{2N+1} (-f_{2N}) f_2 \dots f_{2N-1} 0\dots 0 b_1$};
\node[anchor=west] at (1, 0.3) {\small $f_{2N+1} (-f_{2N}) f_2 \dots f_{2N-1} 0\dots 0 b_2$};

\node[anchor=east] at (0, 0.4) {\small $f_{2N+1} f_2 f_{2N} f_3 \dots f_{2N-1} 0\dots 0 b_1$};
\node[anchor=west] at (1, 0.4) {\small $f_{2N+1} f_2 f_{2N} f_3 \dots f_{2N-1} 0\dots 0 b_2$};

\node[anchor=east] at (0, 0.5) {\small $f_{2N+1} f_2 \dots f_{2N-3} f_{2N} f_{2N-2} f_{2N-1} 0\dots 0 b_1$};
\node[anchor=west] at (1, 0.5) {\small $f_{2N+1} f_2 \dots f_{2N-3} f_{2N} f_{2N-2} f_{2N-1} 0\dots 0 b_2$};

\node[anchor=east] at (0, 0.6) {\small $f_{2N+1} f_2 \dots f_{2N-2} (-f_{2N}) f_{2N-1} 0\dots 0 b_1$};
\node[anchor=west] at (1, 0.6) {\small $f_{2N+1} f_2 \dots f_{2N-2} (-f_{2N}) f_{2N-1} 0\dots 0 b_2$};

\node[anchor=east] at (0, 0.7) {\small $f_{2N+1} f_2 \dots f_{2N} 0\dots 0 b_1$};
\node[anchor=west] at (1, 0.7) {\small $f_{2N+1} f_2 \dots f_{2N} 0\dots 0 b_2$};

\node[anchor=east] at (0, 1) {\small $f_1 f_2 \dots f_{2N} a_1 0\dots 0$};
\node[anchor=west] at (1, 1) {\small $f_1 f_2 \dots f_{2N} a_2 0\dots 0$};

\node[anchor=east] at (0, 1.1) {\small $f_1 f_{2N+1} f_3 \dots f_{2N} a_1 0\dots 0$};
\node[anchor=west] at (1, 1.1) {\small $f_1 f_{2N+1} f_3 \dots f_{2N} a_2 0\dots 0$};

\node[anchor=east] at (0, 1.2) {\small $f_1 f_3 (-f_{2N+1}) f_4 \dots f_{2N} a_1 0\dots 0$};
\node[anchor=west] at (1, 1.2) {\small $f_1 f_3 (-f_{2N+1}) f_4 \dots f_{2N} a_2 0\dots 0$};

\node[anchor=east] at (0, 1.3) {\small $f_1 f_3 \dots f_{2N-2} f_{2N+1} f_{2N-1} f_{2N} a_1 0\dots 0$};
\node[anchor=west] at (1, 1.3) {\small $f_1 f_3 \dots f_{2N-2} f_{2N+1} f_{2N-1} f_{2N} a_2 0\dots 0$};

\node[anchor=east] at (0, 1.4) {\small $f_1 f_3 \dots f_{2N-1} (-f_{2N+1}) f_{2N} a_1 0\dots 0$};
\node[anchor=west] at (1, 1.4) {\small $f_1 f_3 \dots f_{2N-1} (-f_{2N+1}) f_{2N} a_2 0\dots 0$};

\node[anchor=east] at (0, 1.5) {\small $f_1 f_3 \dots f_{2N+1} a_1 0\dots 0$};
\node[anchor=west] at (1, 1.5) {\small $f_1 f_3 \dots f_{2N+1} a_2 0\dots 0$};

%Cycle values

\node at (0.1, 1.65) {\small $b_1$};
\node at (0.9, 1.65) {\small $b_2$};
\node at (0.1, 0.85) {\small $b_1$};
\node at (0.9, 0.85) {\small $b_2$};
\node at (0.15, 0.75) {\small $a_1 b_1$};
\node at (0.85, 0.75) {\small $a_2 b_2$};
\node at (0.15, 0.15) {\small $a_1 b_1$};
\node at (0.85, 0.15) {\small $a_2 b_2$};
\node at (0.125, 0.05) {\small $b_1$};
\node at (0.375, 0.05) {\small $a_1$};
\node at (0.625, 0.05) {\small $a_2$};
\node at (0.875, 0.05) {\small $b_2$};
\node at (0.3, 1.6) {\small $a_1$};
\node at (0.7, 1.6) {\small $a_2$};
\node at (0.3, 0.9) {\small $a_1$};
\node at (0.7, 0.9) {\small $a_2$};
\foreach \i in {2,3,5,6}{
\node at (0.25, 0.85 + 0.1 * \i) {\small $a_1$};
\node at (0.75, 0.85 + 0.1 * \i) {\small $a_2$};
\node at (0.25, 0.05 + 0.1 * \i) {\small $b_1$};
\node at (0.75, 0.05 + 0.1 * \i) {\small $b_2$};
}

\node at (0, 0) {
\begin{tikzpicture}

\draw[fill=white] (0,0) circle (0.125);

\node at (0,0) {\tiny $1$};

\end{tikzpicture}
};
\node at (0, 0.1) {
\begin{tikzpicture}

\draw[fill=white] (0,0) circle (0.125);

\node at (0,0) {\tiny $2$};

\end{tikzpicture}
};
\node at (0, 0.8) {
\begin{tikzpicture}

\draw[fill=white] (0,0) circle (0.125);

\node at (0,0) {\tiny $5$};

\end{tikzpicture}
};
\node at (0, 0.9) {
\begin{tikzpicture}

\draw[fill=white] (0,0) circle (0.125);

\node at (0,0) {\tiny $6$};

\end{tikzpicture}
};
\node at (0, 1.6) {
\begin{tikzpicture}

\draw[fill=white] (0,0) circle (0.125);

\node at (0,0) {\tiny $10$};

\end{tikzpicture}
};
\node at (0, 1.7) {
\begin{tikzpicture}

\draw[fill=white] (0,0) circle (0.125);

\node at (0,0) {\tiny $9$};

\end{tikzpicture}
};
\node at (1, 0) {
\begin{tikzpicture}

\draw[fill=white] (0,0) circle (0.125);

\node at (0,0) {\tiny $3$};

\end{tikzpicture}
};
\node at (1, 0.1) {
\begin{tikzpicture}

\draw[fill=white] (0,0) circle (0.125);

\node at (0,0) {\tiny $4$};

\end{tikzpicture}
};
\node at (1, 0.8) {
\begin{tikzpicture}

\draw[fill=white] (0,0) circle (0.125);

\node at (0,0) {\tiny $7$};

\end{tikzpicture}
};
\node at (1, 0.9) {
\begin{tikzpicture}

\draw[fill=white] (0,0) circle (0.125);

\node at (0,0) {\tiny $8$};

\end{tikzpicture}
};
\node at (1, 1.6) {
\begin{tikzpicture}

\draw[fill=white] (0,0) circle (0.125);

\node at (0,0) {\tiny $12$};

\end{tikzpicture}
};
\node at (1, 1.7) {
\begin{tikzpicture}

\draw[fill=white] (0,0) circle (0.125);

\node at (0,0) {\tiny $11$};

\end{tikzpicture}
};

\end{tikzpicture}
\caption{Type 3.2a. Not all vertices are labelled for space constraints, but all labels can be determined from the labelled vertices and Convention $\ref{conv3}$.}\label{fig14}\end{figure}
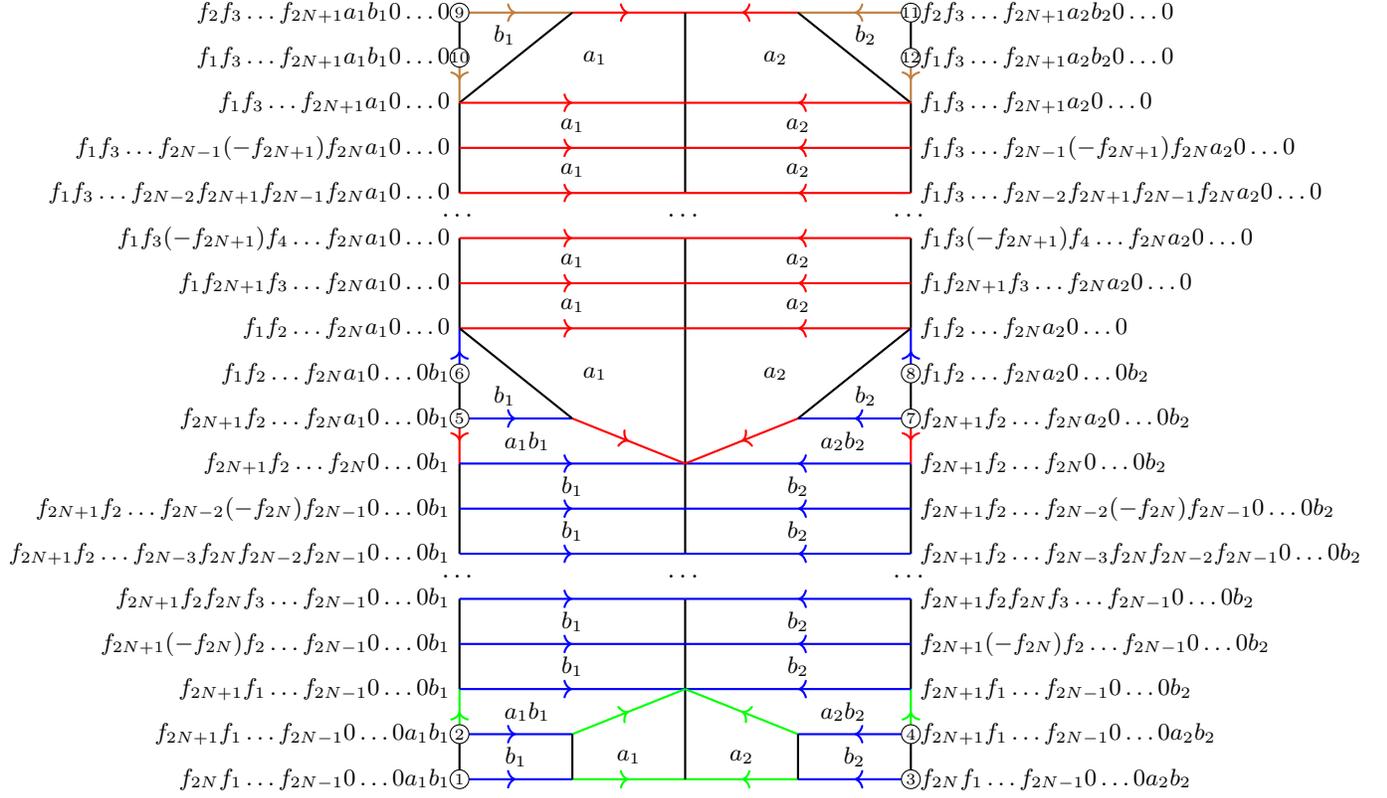

By Lemma $\ref{lem1}$, all black-green, black-blue, black-brown, and black-red quadrilaterals correspond to the element of $\pi_1(\SO((2l + 1)(d + 1) - 1))$ labelled in the above figure, and by Lemma $\ref{lem2}$ all green-blue and red-blue quadrilaterals also correspond to the labelled element of $\pi_1(\SO((2l + 1)(d + 1) - 1))$, so we have
\begin{align*}\delta f(D, l \vec{e}_j, 0) &= 1 + 3(a_1 + a_2 + b_1 + b_2) + 2(a_1 b_1 + a_2 b_2) + (2l-2)(b_1 + b_2) + (2l-2)(a_1 + a_2)\\&= 1 + (2l-1)(a_1 + b_1 + a_2 + b_2) = 0 \pmod{2}\end{align*}

\textbf{Type 3.2b}: $D = V_j$ or $H_j$ is an annulus, but $\lambda_j = \emptyset$. In this case, let $D = R * S$ and $N$ be the nonzero coordinate of $\vec{N}$ (which is also the $k^{th}$ coordinate), and consider the Figure $\ref{fig15}$.

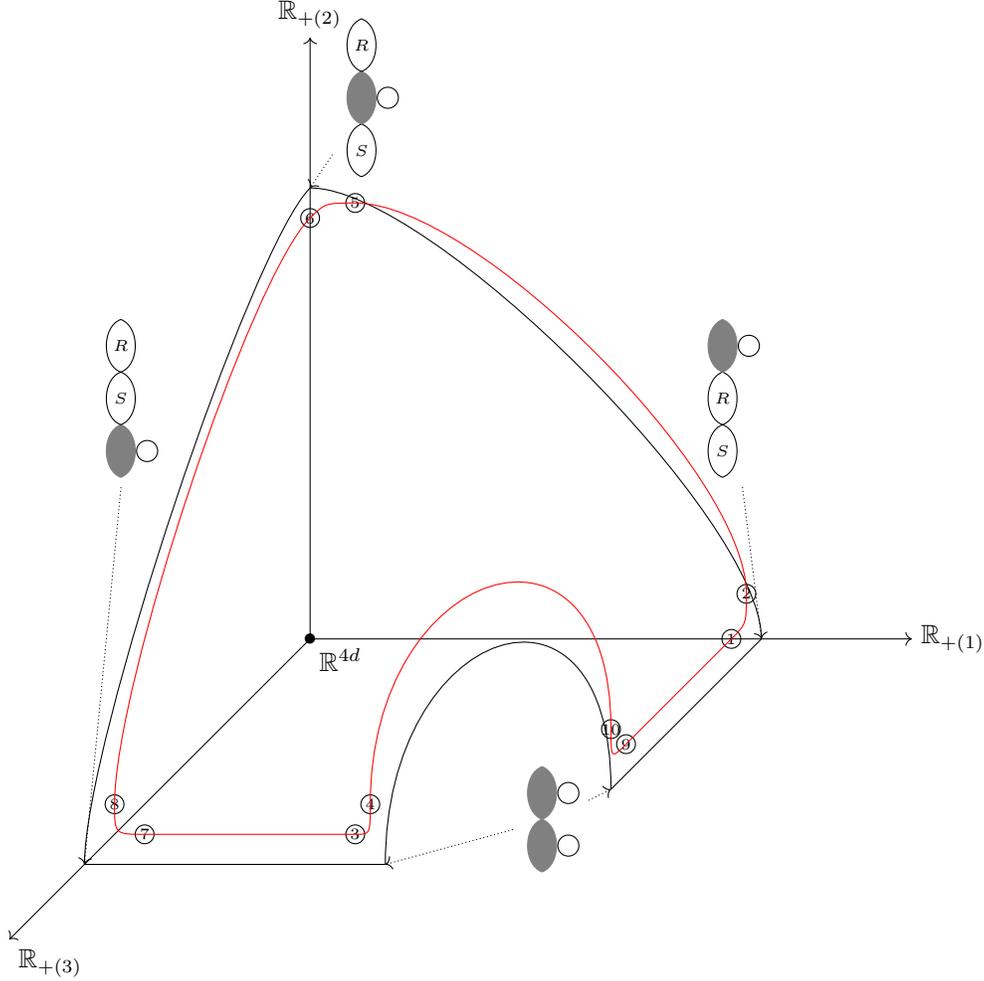
\begin{figure}

\begin{tikzpicture}[x={(1,0)}, y={(0,1)}, z={(-0.5, -0.5)}, scale=8]

\node at (0,0,0) {\textbullet};
\node[anchor=north west] at (0,0,0) {$\R^{4d}$};

\draw[->] (0,0,0) -- (1,0,0) node[anchor=west]{$\R_{+(1)}$};
\draw[->] (0,0,0) -- (0,1,0) node[anchor=south]{$\R_{+(2)}$};
\draw[->] (0,0,0) -- (0,0,1) node[anchor=north west]{$\R_{+(3)}$};

\coordinate (A) at (0.75, 0, 0);
\coordinate (B) at (0.5, 0, 0.75);
\coordinate (C) at (0, 0.75, 0);
\coordinate (D) at (0, 0, 0.75);
\coordinate (E) at (0.75, 0, 0.5);

\draw (A) -- (E);
\draw (D) -- (B);
\draw (A) .. controls ($(A) + (0, 0.2, 0)$) and ($(C) + (0.2, 0, 0)$) .. (C);
\draw (D) .. controls ($(D) + (0, 0.2, 0)$) and ($(C) + (0, 0, 0.2)$) .. (C);
\draw (B) .. controls ($(B) + (0, 0.4, 0)$) and ($(E) + (0, 0.4, 0)$) .. (E);

\coordinate (A1) at (0.75, 0.05, 0.1);
\coordinate (A2) at (0.75, 0.1, 0.05);
\coordinate (B1) at (0.45, 0.05, 0.75);
\coordinate (B2) at (0.475, 0.1, 0.75);
\coordinate (C1) at (0.1, 0.75, 0.05);
\coordinate (C2) at (0.05, 0.75, 0.1);
\coordinate (D1) at (0.1, 0.05, 0.75);
\coordinate (D2) at (0.05, 0.1, 0.75);
\coordinate (E1) at (0.75, 0.05, 0.45);
\coordinate (E2) at (0.75, 0.1, 0.5);

\node at (A1) {
\begin{tikzpicture}

\draw (0,0) circle (0.125);

\node at (0,0) {\tiny $1$};

\end{tikzpicture}
};
\node at (A2) {
\begin{tikzpicture}

\draw (0,0) circle (0.125);

\node at (0,0) {\tiny $2$};

\end{tikzpicture}
};
\node at (B1) {
\begin{tikzpicture}

\draw (0,0) circle (0.125);

\node at (0,0) {\tiny $3$};

\end{tikzpicture}
};
\node at (B2) {
\begin{tikzpicture}

\draw (0,0) circle (0.125);

\node at (0,0) {\tiny $4$};

\end{tikzpicture}
};
\node at (C1) {
\begin{tikzpicture}

\draw (0,0) circle (0.125);

\node at (0,0) {\tiny $5$};

\end{tikzpicture}
};
\node at (C2) {
\begin{tikzpicture}

\draw (0,0) circle (0.125);

\node at (0,0) {\tiny $6$};

\end{tikzpicture}
};
\node at (D1) {
\begin{tikzpicture}

\draw (0,0) circle (0.125);

\node at (0,0) {\tiny $7$};

\end{tikzpicture}
};
\node at (D2) {
\begin{tikzpicture}

\draw (0,0) circle (0.125);

\node at (0,0) {\tiny $8$};

\end{tikzpicture}
};
\node at (E1) {
\begin{tikzpicture}

\draw (0,0) circle (0.125);

\node at (0,0) {\tiny $9$};

\end{tikzpicture}
};
\node at (E2) {\begin{tikzpicture}

\draw (0,0) circle (0.125);

\node at (0,0) {\tiny $10$};

\end{tikzpicture}};

\draw[color=red] (A1) -- (E1);
\draw[color=red] (D1) -- (B1);
\draw[color=red] (C1) .. controls ($(C1) + (0.2, 0, 0)$) and ($(A2) + (0, 0.2, 0)$) .. (A2);
\draw[color=red] (C2) .. controls ($(C2) + (0, 0, 0.2)$) and ($(D2) + (0, 0.2, 0)$) .. (D2);
\draw[color=red] (B2) .. controls ($(B2) + (0, 0.4, 0)$) and ($(E2) + (0, 0.4, 0)$) .. (E2);

\draw[color=red] (A1) .. controls (0.75, 0.05, 0.05) .. (A2);
\draw[color=red] (B1) .. controls (0.475, 0.05, 0.75) .. (B2);
\draw[color=red] (C1) .. controls (0.05, 0.75, 0.05) .. (C2);
\draw[color=red] (D1) .. controls (0.05, 0.05, 0.75) .. (D2);
\draw[color=red] (E1) .. controls (0.75, 0.05, 0.5) .. (E2);

\node (bubblefirst) at (0.7, 0.4) {
\begin{tikzpicture}[scale=0.7]

\draw (0,0) to[out=20,in=-20] ++(0,1) to[out=-160,in=160] ++(0,-1);
\draw (0,1) to[out=20,in=-20] ++(0,1) to[out=-160,in=160] ++(0,-1);
\filldraw[gray] (0,2) to[out=20,in=-20] ++(0,1) to[out=-160,in=160] ++(0,-1);
\draw (0.5, 2.5) circle (0.2);

\node at (0,0.5) {\tiny $S$};
\node at (0,1.5) {\tiny $R$};

\end{tikzpicture}
};

\node (bubblemiddle) at (0.1, 0.9) {
\begin{tikzpicture}[scale=0.7]

\draw (0,0) to[out=20,in=-20] ++(0,1) to[out=-160,in=160] ++(0,-1);
\filldraw[gray] (0,1) to[out=20,in=-20] ++(0,1) to[out=-160,in=160] ++(0,-1);
\draw (0,2) to[out=20,in=-20] ++(0,1) to[out=-160,in=160] ++(0,-1);
\draw (0.5, 1.5) circle (0.2);

\node at (0,0.5) {\tiny $S$};
\node at (0,2.5) {\tiny $R$};

\end{tikzpicture}
};

\node (bubblelast) at (-0.3, 0.4) {
\begin{tikzpicture}[scale=0.7]

\filldraw[gray] (0,0) to[out=20,in=-20] ++(0,1) to[out=-160,in=160] ++(0,-1);
\draw (0,1) to[out=20,in=-20] ++(0,1) to[out=-160,in=160] ++(0,-1);
\draw (0,2) to[out=20,in=-20] ++(0,1) to[out=-160,in=160] ++(0,-1);
\draw (0.5, 0.5) circle (0.2);

\node at (0,1.5) {\tiny $S$};
\node at (0,2.5) {\tiny $R$};

\end{tikzpicture}
};

\node (doublebubble) at (0.4,-0.3) {
\begin{tikzpicture}[scale=0.7]

\filldraw[gray] (0,0) to[out=20,in=-20] ++(0,1) to[out=-160,in=160] ++(0,-1);
\filldraw[gray] (0,1) to[out=20,in=-20] ++(0,1) to[out=-160,in=160] ++(0,-1);
\draw (0.5, 0.5) circle (0.2);
\draw (0.5, 1.5) circle (0.2);

\end{tikzpicture}
};

\draw[thin, densely dotted, ->] (bubblefirst) -- (A);
\draw[thin, densely dotted, ->] (bubblemiddle) -- (C);
\draw[thin, densely dotted, ->] (bubblelast) -- (D);
\draw[thin, densely dotted, ->] (doublebubble) -- (B);
\draw[thin, densely dotted, ->] (doublebubble) -- (E);

\end{tikzpicture}\caption{Type 3.2b in the case where $N = 1$ and $D$ is a horizontal annulus. $\partial \mathcal{M}(D, N\vec{e_k}, (N)_k)$ is a quadrilateral. We will consider $\partial' \mathcal{M}(D, N\vec{e_k}, (N)_k)$ (red), which is a $10$-gon with the indicated vertices.}\label{fig15}\end{figure}

\begin{remark}Again, Figure $\ref{fig15}$ is slightly misleading due to dimension constraints. In general, for larger $l$, the two points in the $\R_{+(1)}-\R_{+(3)}$ plane do not lie on the same plane and the segment between them is the moduli space of a Type 2.0b triple as shown in Figure $\ref{fig2}$.\end{remark}

Let $r, s$ be the signs of the rectangle $R, S$, respectively. Using Conventions $\ref{conv1}, \ref{conv3}, \ref{conv4}$, and $\ref{conv6}$, and assuming $k > j$, we can decompose $\partial' \mathcal{M}(D, N\vec{e_j}, (N)_j)$ into short and long preferred paths as shown in Figure $\ref{fig16}$.

\begin{figure}
\begin{tikzpicture}[scale=6]

%Standard hexagon

\node[anchor=east] at (0, 0) {\small $f_{2N} f_1 \dots f_{2N-1} 0\dots 0 r s$};

\node[anchor=east] at (0, 0.1) {\small $f_{2N + 1} f_1 \dots f_{2N-1} 0\dots 0 r s$};

\node[anchor=east] at (0, 0.8) {\small $f_{2N + 1} f_2 \dots f_{2N} r 0\dots 0 s$};

\node[anchor=east] at (0, 0.9) {\small $f_{1} f_2 \dots f_{2N} r 0\dots 0 s$};

\node[anchor=east] at (0, 1.6) {\small $f_1 f_3 \dots f_{2N + 1} r s 0\dots 0$};

\node[anchor=east] at (0, 1.7) {\small $f_2 f_3 \dots f_{2N + 1} r s 0\dots 0$};

\node[anchor=west] at (1, 0) {\small $\pm f_{2N + 1} f_1 \dots f_{2N-1} 0\dots 0$};

\node[anchor=west] at (1, 1.7) {\small $\pm f_1 f_3 \dots f_{2N + 1} 0\dots 0$};

\node[anchor=south] at (0.5, 1.7) {\small $f_2 f_3 \dots f_{2N + 1} 0\dots 0$};

\node[anchor=north] at (0.5, 0) {\small $f_{2N} f_1\dots f_{2N-1} 0\dots 0$};

\node at (0, 0.45) {$\dots$};
\node at (0, 1.25) {$\dots$};
\node at (1, 0.45) {$\dots$};
\node at (1, 1.25) {$\dots$};

\draw[thick] (0, 0) -- (0, 0.1);
\draw[thick] (0, 0.2) -- (0, 0.4);
\draw[thick] (0, 0.5) -- (0, 0.7);
\draw[thick] (0, 0.8) -- (0, 0.9);
\draw[thick] (0, 1) -- (0, 1.2);
\draw[thick] (0, 1.3) -- (0, 1.5);
\draw[thick] (0, 1.6) -- (0, 1.7);

\draw[->-,thick,color=green] (0, 0.1) -- (0, 0.2);
\draw[->-,thick,color=red] (0, 0.8) -- (0, 0.7);
\draw[->-,thick,color=blue] (0, 0.9) -- (0, 1);
\draw[->-,thick,color=brown] (0, 1.7) -- (0.25, 1.7);
\draw[->-,thick,color=brown] (0, 1.6) -- (0, 1.5);

\draw[thick] (1, 0) -- (1, 0.4);
\draw[thick] (1, 0.5) -- (1, 1.2);
\draw[thick] (1, 1.3) -- (1, 1.7);

\draw[->-,thick,color=blue] (0, 0) -- (0.25, 0);
\draw[->-,thick,color=green] (0.25, 0) -- (0.5, 0);
\draw[->-,thick,color=red] (0.25, 1.7) -- (0.5, 1.7);

%Smoothening blocks

\node at (0.5, 0.45) {$\dots$};
\node at (0.5, 1.25) {$\dots$};

\draw[thick] (0.5, 0) -- (0.5, 0.4);
\draw[thick] (0.5, 0.5) -- (0.5, 1.2);
\draw[thick] (0.5, 1.3) -- (0.5, 1.7);

\foreach \i in {1,2,3,4,5,6}{
\draw[->-,thick,color=blue] (0, 0.1 + 0.1 * \i) -- (0.5, 0.1 + 0.1 * \i);
\draw[->-,thick,color=red] (0, 0.9 + 0.1 * \i) -- (0.5, 0.9 + 0.1 * \i);
}
\foreach \i in {0,2,3,4,5,6,7,10,11,12,13,14,15,17}{
\draw[thick] (1, 0.1 * \i) -- (0.5, 0.1 * \i);
}
\draw[->-,thick,color=blue] (0, 0.1) -- (0.25, 0.1);
\draw[->-,thick,color=green] (0.25, 0.1) -- (0.5, 0.2);
\draw[thick] (0.25, 0) -- (0.25, 0.1);
\draw[->-,thick,color=blue] (0, 0.8) -- (0.25, 0.8);
\draw[->-,thick,color=red] (0.25, 0.8) -- (0.5, 0.7);
\draw[thick] (0.25, 0.8) -- (0, 1);
\draw[thick] (0, 1.5) -- (0.25, 1.7);

%Special cases

\node[anchor=east] at (0, 0.2) {\small $f_{2N + 1} f_1 \dots f_{2N-1} 0\dots 0 s$};
\node[anchor=west] at (1, 0.2) {\small $-f_{2N} f_1 \dots f_{2N-1} 0\dots 0$};

\node[anchor=east] at (0, 0.3) {\small $f_{2N+1} f_{2N} f_2 \dots f_{2N-1} 0\dots 0 s$};
\node[anchor=west] at (1, 0.3) {\small $f_1 f_{2N} f_2 \dots f_{2N-1} 0\dots 0$};

\node[anchor=east] at (0, 0.4) {\small $f_{2N+1} f_2 (-f_{2N}) f_3 \dots f_{2N-1} 0\dots 0 s$};
\node[anchor=west] at (1, 0.4) {\small $f_1 f_2 (-f_{2N}) f_3 \dots f_{2N-1} 0\dots 0$};

\node[anchor=east] at (0, 0.5) {\small $f_{2N+1} f_2 \dots f_{2N-3} f_{2N} f_{2N-2} f_{2N-1} 0\dots 0 s$};
\node[anchor=west] at (1, 0.5) {\small $f_1 \dots f_{2N-3} f_{2N} f_{2N-2} f_{2N-1} 0\dots 0$};

\node[anchor=east] at (0, 0.6) {\small $f_{2N+1} f_2 \dots f_{2N-2} (-f_{2N}) f_{2N-1} 0\dots 0 s$};
\node[anchor=west] at (1, 0.6) {\small $f_1 \dots f_{2N-2} (-f_{2N}) f_{2N-1} 0\dots 0$};

\node[anchor=east] at (0, 0.7) {\small $f_{2N + 1} f_2 \dots f_{2N-1} 0\dots 0 s$};
\node[anchor=west] at (1, 0.7) {\small $f_1 \dots f_{2N} 0\dots 0$};

\node[anchor=east] at (0, 1) {\small $f_1 f_2 \dots f_{2N} r 0\dots 0$};
\node[anchor=west] at (1, 1) {\small $-f_{2N + 1} f_2 \dots f_{2N} 0\dots 0$};

\node[anchor=east] at (0, 1.1) {\small $f_1 f_{2N + 1} f_3 \dots f_{2N} r 0\dots 0$};
\node[anchor=west] at (1, 1.1) {\small $f_2 f_{2N + 1} f_3 \dots f_{2N} 0\dots 0$};

\node[anchor=east] at (0, 1.2) {\small $f_1 f_3 (-f_{2N + 1}) f_4 \dots f_{2N} r 0\dots 0$};
\node[anchor=west] at (1, 1.2) {\small $f_2 f_3 (-f_{2N + 1}) f_4 \dots f_{2N} 0\dots 0$};

\node[anchor=east] at (0, 1.3) {\small $f_1 f_3 \dots f_{2N-2} f_{2N + 1} f_{2N-1} f_{2N} r 0\dots 0$};
\node[anchor=west] at (1, 1.3) {\small $f_2 \dots f_{2N-2} f_{2N + 1} f_{2N-1} f_{2N} 0\dots 0$};

\node[anchor=east] at (0, 1.4) {\small $f_1 f_3 \dots f_{2N-1} (-f_{2N + 1}) f_{2N} r 0\dots 0$};
\node[anchor=west] at (1, 1.4) {\small $f_2 \dots f_{2N-1} (-f_{2N + 1}) f_{2N} 0\dots 0$};

\node[anchor=east] at (0, 1.5) {\small $f_1 f_3 \dots f_{2N + 1} r 0\dots 0$};
\node[anchor=west] at (1, 1.5) {\small $f_2 \dots f_{2N + 1} 0\dots 0$};

%Cycle values

\node at (0.1, 1.65) {\small $s$};
\node at (0.1, 0.85) {\small $s$};
\node at (0.15, 0.75) {\small $rs$};
\node at (0.15, 0.15) {\small $rs$};
\node at (0.125, 0.05) {\small $s$};
\node at (0.375, 0.05) {\small $r$};
\node at (0.3, 1.6) {\small $r$};
\node at (0.3, 0.9) {\small $r$};
\foreach \i in {2,3,5,6}{
\node at (0.25, 0.85 + 0.1 * \i) {\small $r$};
\node at (0.25, 0.05 + 0.1 * \i) {\small $s$};
}
\foreach \i in {2,3,5,6,10,11,13,14,7,15}{
\node at (0.75, 0.05 + 0.1 * \i) {\small $0$};
}
\foreach \i in {0}{
\node at (0.75, 0.05 + 0.1 * \i) {\small $0 \text{ or } 1$};
}

%vertices

\node at (0, 0) {
\begin{tikzpicture}

\draw[fill=white] (0,0) circle (0.125);

\node at (0,0) {\tiny $1$};

\end{tikzpicture}
};
\node at (0, 0.1) {
\begin{tikzpicture}

\draw[fill=white] (0,0) circle (0.125);

\node at (0,0) {\tiny $2$};

\end{tikzpicture}
};
\node at (0, 0.8) {
\begin{tikzpicture}

\draw[fill=white] (0,0) circle (0.125);

\node at (0,0) {\tiny $5$};

\end{tikzpicture}
};
\node at (0, 0.9) {
\begin{tikzpicture}

\draw[fill=white] (0,0) circle (0.125);

\node at (0,0) {\tiny $6$};

\end{tikzpicture}
};
\node at (0, 1.4) {
\begin{tikzpicture}

\draw[fill=white] (0,0) circle (0.125);

\node at (0,0) {\tiny $8$};

\end{tikzpicture}
};
\node at (0, 1.7) {
\begin{tikzpicture}

\draw[fill=white] (0,0) circle (0.125);

\node at (0,0) {\tiny $7$};

\end{tikzpicture}
};
\node at (1, 0) {
\begin{tikzpicture}

\draw[fill=white] (0,0) circle (0.125);

\node at (0,0) {\tiny $10$};

\end{tikzpicture}
};
\node at (1, 1.7) {
\begin{tikzpicture}

\draw[fill=white] (0,0) circle (0.125);

\node at (0,0) {\tiny $4$};

\end{tikzpicture}
};
\node at (0.5, 0) {
\begin{tikzpicture}

\draw[fill=white] (0,0) circle (0.125);

\node at (0,0) {\tiny $9$};

\end{tikzpicture}
};
\node at (0.5, 1.7) {
\begin{tikzpicture}

\draw[fill=white] (0,0) circle (0.125);

\node at (0,0) {\tiny $3$};

\end{tikzpicture}
};

\end{tikzpicture}
\caption{Type 3.2b, in the case where $\vec{N} = N \vec{e}_k$ and $k > j$. All $\pm$ on the right-hand side are plus if $D$ is a vertical annulus, and minus if $D$ is a horizontal annulus.}\label{fig16}\end{figure}
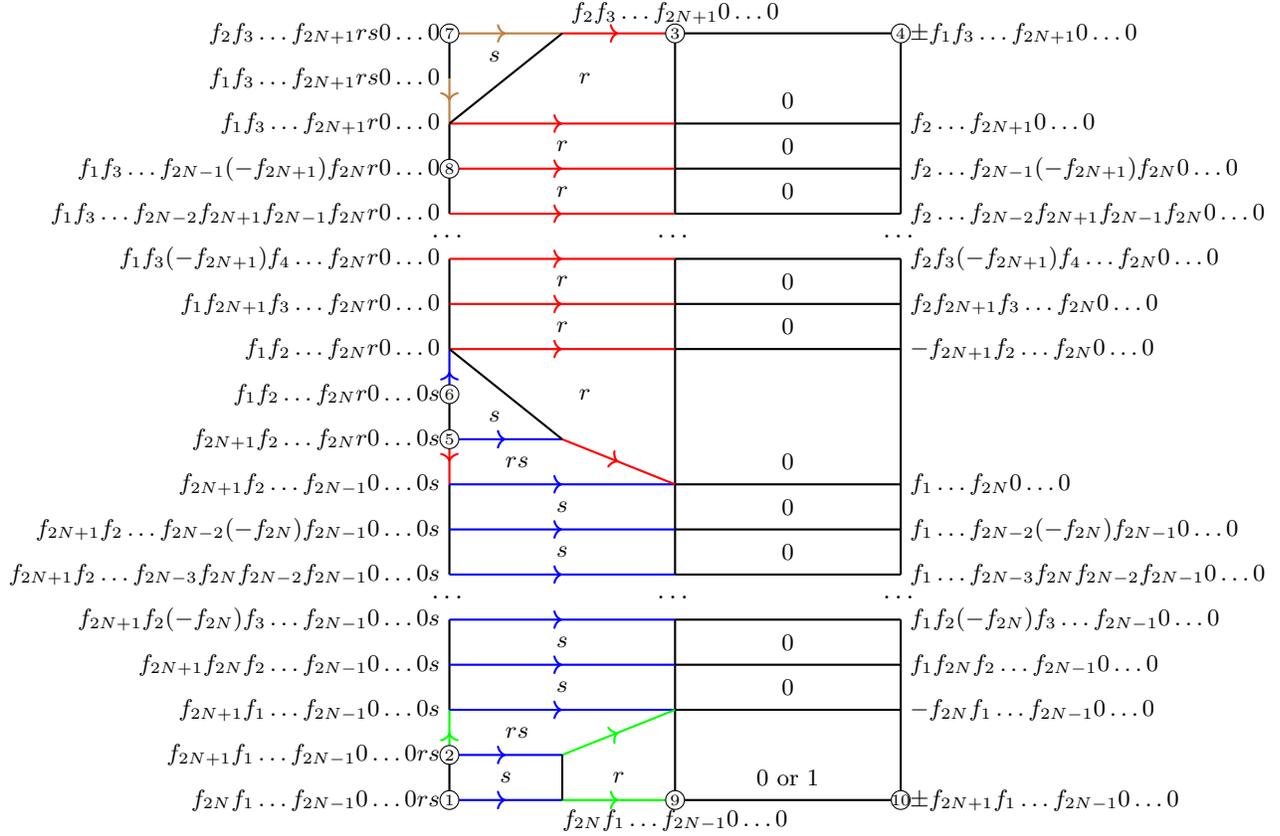

By Lemma $\ref{lem1}$, all black-green, black-blue, and black-red quadrilaterals correspond to the element of $\pi_1(\SO((2N + 1)(d + 1) - 1))$ labelled in Figure $\ref{fig16}$, and by Lemma $\ref{lem2}$ all green-blue and red-green quadrilaterals also correspond to the labelled element of $\pi_1(\SO((2N + 1)(d + 1) - 1))$. Finally, most the black quadrilaterals on the right are $0$ by Lemma $\ref{lem5}$, and the top and bottom black quadrilaterals are $0$ if $D$ is a vertical annulus and $1$ if $D$ is a horizontal annulus by Lemma $\ref{lem3}$. So for a vertical annulus $D$,
\begin{align*}\delta f(D, N\vec{e}_k, 0) &= 1 + (r + s) + rs + (2N-2)s + rs + (r + s) + (2N-2)r + (r + s) \\&= 1 + (2N + 1)(r + s) = 0 \pmod{2} \text{ by the properties of sign assignments}\end{align*}
and for a horizontal annulus $D$,
\begin{align*}\delta f(D, N\vec{e}_k, 0) &= 1 + (r + s) + rs + (2N-2)s + rs + (r + s) + (2N-2)r + (r + s) + 1 \\&= (2N + 1)(r + s) = 0 \pmod{2} \text{ by the properties of sign assignments}\end{align*}
in the case $k > j$. The symmetric computation for $k < j$ gives $\delta f = 0$ in that case as well.

\textbf{Type 3.2c}: $D = V_j$ or $H_j$ is an annulus and $\lambda_j = (1)$ is the nonempty coordinate of $\vec{\lambda}$. In this case, let $D = R * S$ and consider the Figure $\ref{fig17}$.

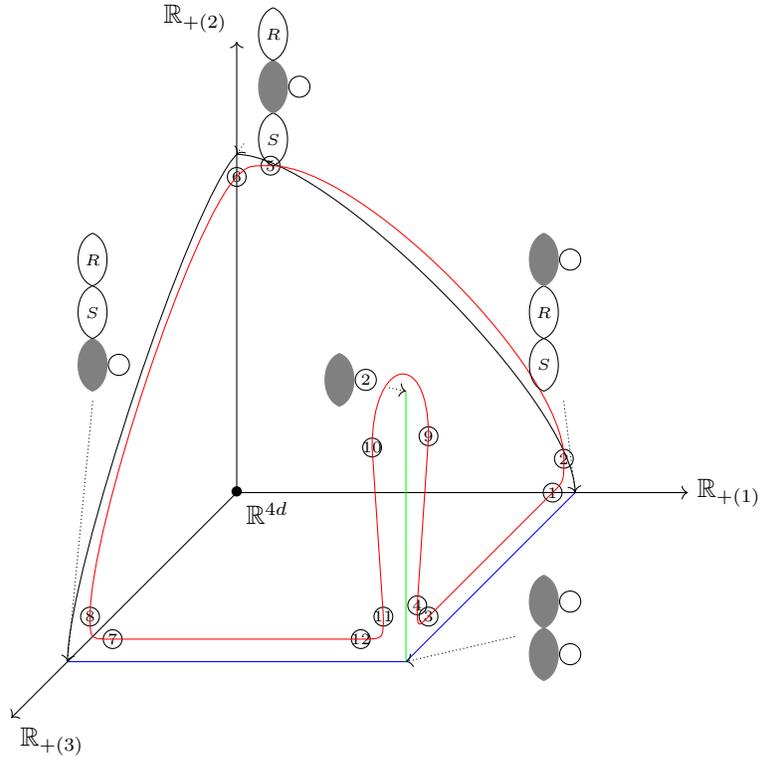
\begin{figure}

\begin{tikzpicture}[x={(1,0)}, y={(0,1)}, z={(-0.5, -0.5)}, scale=6]

\node at (0,0,0) {\textbullet};
\node[anchor=north west] at (0,0,0) {$\R^{4d}$};

\draw[->] (0,0,0) -- (1,0,0) node[anchor=west]{$\R_{+(1)}$};
\draw[->] (0,0,0) -- (0,1,0) node[anchor=south east]{$\R_{+(2)}$};
\draw[->] (0,0,0) -- (0,0,1) node[anchor=north west]{$\R_{+(3)}$};

\coordinate (A) at (0.75, 0, 0);
\coordinate (B) at (0.75, 0, 0.75);
\coordinate (C) at (0, 0.75, 0);
\coordinate (D) at (0, 0, 0.75);
\coordinate (E) at (0.75, 0.6, 0.75);

\draw[color=blue] (A) -- (B);
\draw[color=blue] (D) -- (B);
\draw[color=green] (B) -- (E);
\draw (A) .. controls ($(A) + (0, 0.2, 0)$) and ($(C) + (0.2, 0, 0)$) .. (C);
\draw (D) .. controls ($(D) + (0, 0.2, 0)$) and ($(C) + (0, 0, 0.2)$) .. (C);

\coordinate (A1) at (0.75, 0.05, 0.1);
\coordinate (A2) at (0.75, 0.1, 0.05);
\coordinate (B1) at (0.75, 0.05, 0.65);
\coordinate (B2) at (0.75, 0.1, 0.7);
\coordinate (C1) at (0.1, 0.75, 0.05);
\coordinate (C2) at (0.05, 0.75, 0.1);
\coordinate (D1) at (0.1, 0.05, 0.75);
\coordinate (D2) at (0.05, 0.1, 0.75);
\coordinate (E1) at (0.75, 0.45, 0.65);
\coordinate (E2) at (0.65, 0.45, 0.7);
\coordinate (F1) at (0.7, 0.1, 0.75);
\coordinate (F2) at (0.65, 0.05, 0.75);

\node at (A1) {
\begin{tikzpicture}

\draw (0,0) circle (0.125);

\node at (0,0) {\tiny $1$};

\end{tikzpicture}
};
\node at (A2) {
\begin{tikzpicture}

\draw (0,0) circle (0.125);

\node at (0,0) {\tiny $2$};

\end{tikzpicture}
};
\node at (B1) {
\begin{tikzpicture}

\draw (0,0) circle (0.125);

\node at (0,0) {\tiny $3$};

\end{tikzpicture}
};
\node at (B2) {
\begin{tikzpicture}

\draw (0,0) circle (0.125);

\node at (0,0) {\tiny $4$};

\end{tikzpicture}
};
\node at (C1) {
\begin{tikzpicture}

\draw (0,0) circle (0.125);

\node at (0,0) {\tiny $5$};

\end{tikzpicture}
};
\node at (C2) {
\begin{tikzpicture}

\draw (0,0) circle (0.125);

\node at (0,0) {\tiny $6$};

\end{tikzpicture}
};
\node at (D1) {
\begin{tikzpicture}

\draw (0,0) circle (0.125);

\node at (0,0) {\tiny $7$};

\end{tikzpicture}
};
\node at (D2) {
\begin{tikzpicture}

\draw (0,0) circle (0.125);

\node at (0,0) {\tiny $8$};

\end{tikzpicture}
};
\node at (E1) {
\begin{tikzpicture}

\draw (0,0) circle (0.125);

\node at (0,0) {\tiny $9$};

\end{tikzpicture}
};
\node at (E2) {
\begin{tikzpicture}

\draw (0,0) circle (0.125);

\node at (0,0) {\tiny $10$};

\end{tikzpicture}
};
\node at (F1) {
\begin{tikzpicture}

\draw (0,0) circle (0.125);

\node at (0,0) {\tiny $11$};

\end{tikzpicture}
};
\node at (F2) {
\begin{tikzpicture}

\draw (0,0) circle (0.125);

\node at (0,0) {\tiny $12$};

\end{tikzpicture}
};

\draw[color=red] (A1) -- (B1);
\draw[color=red] (D1) -- (F2);
\draw[color=red] (B2) -- (E1);
\draw[color=red] (F1) -- (E2);
\draw[color=red] (C1) .. controls ($(C1) + (0.2, 0, 0)$) and ($(A2) + (0, 0.2, 0)$) .. (A2);
\draw[color=red] (C2) .. controls ($(C2) + (0, 0, 0.2)$) and ($(D2) + (0, 0.2, 0)$) .. (D2);

\draw[color=red] (A1) .. controls (0.75, 0.05, 0.05) .. (A2);
\draw[color=red] (E1) .. controls ($(E1) + (0, 0.2, 0)$) and ($(E2) + (0, 0.2, 0)$) .. (E2);
\draw[color=red] (C1) .. controls (0.05, 0.75, 0.05) .. (C2);
\draw[color=red] (D1) .. controls (0.05, 0.05, 0.75) .. (D2);
\draw[color=red] (F1) .. controls (0.7, 0.05, 0.75) .. (F2);
\draw[color=red] (B1) .. controls (0.75, 0.05, 0.7) .. (B2);

\node (bubblefirst) at (0.7, 0.4) {
\begin{tikzpicture}[scale=0.7]

\draw (0,0) to[out=20,in=-20] ++(0,1) to[out=-160,in=160] ++(0,-1);
\draw (0,1) to[out=20,in=-20] ++(0,1) to[out=-160,in=160] ++(0,-1);
\filldraw[gray] (0,2) to[out=20,in=-20] ++(0,1) to[out=-160,in=160] ++(0,-1);
\draw (0.5, 2.5) circle (0.2);

\node at (0,0.5) {\tiny $S$};
\node at (0,1.5) {\tiny $R$};

\end{tikzpicture}
};

\node (bubblemiddle) at (0.1, 0.9) {
\begin{tikzpicture}[scale=0.7]

\draw (0,0) to[out=20,in=-20] ++(0,1) to[out=-160,in=160] ++(0,-1);
\filldraw[gray] (0,1) to[out=20,in=-20] ++(0,1) to[out=-160,in=160] ++(0,-1);
\draw (0,2) to[out=20,in=-20] ++(0,1) to[out=-160,in=160] ++(0,-1);
\draw (0.5, 1.5) circle (0.2);

\node at (0,0.5) {\tiny $S$};
\node at (0,2.5) {\tiny $R$};

\end{tikzpicture}
};

\node (bubblelast) at (-0.3, 0.4) {
\begin{tikzpicture}[scale=0.7]

\filldraw[gray] (0,0) to[out=20,in=-20] ++(0,1) to[out=-160,in=160] ++(0,-1);
\draw (0,1) to[out=20,in=-20] ++(0,1) to[out=-160,in=160] ++(0,-1);
\draw (0,2) to[out=20,in=-20] ++(0,1) to[out=-160,in=160] ++(0,-1);
\draw (0.5, 0.5) circle (0.2);

\node at (0,1.5) {\tiny $S$};
\node at (0,2.5) {\tiny $R$};

\end{tikzpicture}
};

\node (doublebubble) at (0.7,-0.3) {
\begin{tikzpicture}[scale=0.7]

\filldraw[gray] (0,0) to[out=20,in=-20] ++(0,1) to[out=-160,in=160] ++(0,-1);
\filldraw[gray] (0,1) to[out=20,in=-20] ++(0,1) to[out=-160,in=160] ++(0,-1);
\draw (0.5, 0.5) circle (0.2);
\draw (0.5, 1.5) circle (0.2);

\end{tikzpicture}
};
\node (mergedbubble) at (0.25,0.25) {
\begin{tikzpicture}[scale=0.7]

\filldraw[gray] (0,0) to[out=20,in=-20] ++(0,1) to[out=-160,in=160] ++(0,-1);
\draw (0.5, 0.5) circle (0.2);
\node at (0.5, 0.5) {\tiny $2$};

\end{tikzpicture}
};

\draw[thin, densely dotted, ->] (bubblefirst) -- (A);
\draw[thin, densely dotted, ->] (bubblemiddle) -- (C);
\draw[thin, densely dotted, ->] (bubblelast) -- (D);
\draw[thin, densely dotted, ->] (doublebubble) -- (B);
\draw[thin, densely dotted, ->] (mergedbubble) -- (E);

\end{tikzpicture}\caption{Type 3.2c in the case where $l = 1$ and $D$ is a horizontal annulus. We will consider $\partial' \mathcal{M}(D, l\vec{e_j}, (l))$ (red), which is a $12$-gon with the indicated vertices.}\label{fig17}\end{figure}

In Figure $\ref{fig17}$, we omit the local model around the green vertical line. Figure $\ref{fig18}$ shows the local model (which is the Whitney umbrella---see \cite{MS} for details). The boundary $\partial M(D, \vec{e}_j, (1)_j)$ lies only on one side of the Whitney umbrella, as the other side (black edges) contains $\partial M(D', \vec{e}_j, (1)_j)$, where $D'$ is the other annulus containing $O_j$.

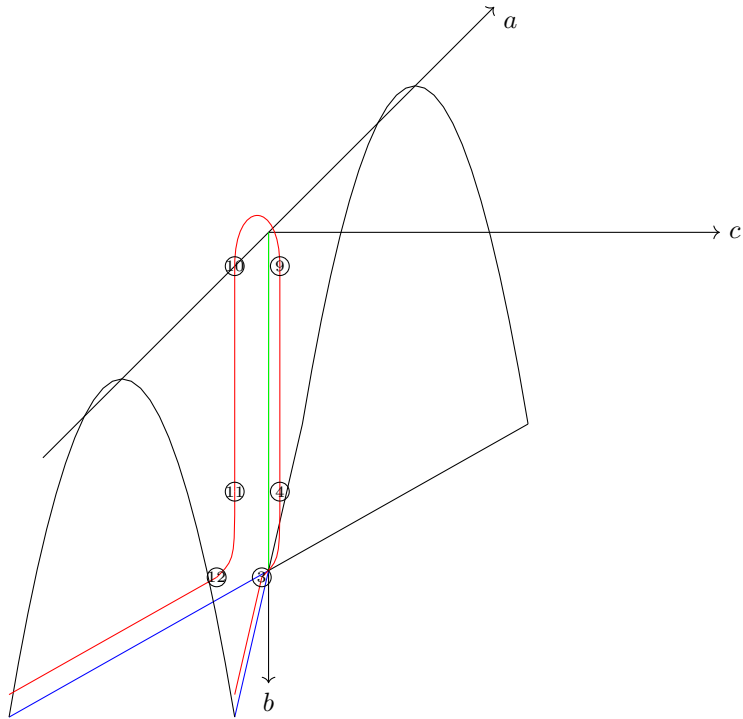
\begin{figure}
\begin{tikzpicture}[x={(1,0)}, y={(0,1)}, z={(-0.5, -0.5)}, scale=6]

\draw[->] (0,0,0) -- (1,0,0) node[anchor=west]{$c$};
\draw[->] (0,0,0) -- (0,-1,0) node[anchor=north]{$b$};
\draw[->] (0,0,1) -- (0,0,-1) node[anchor=north west]{$a$};

\draw[color=green] (0, 0, 0) -- (0, -0.75, 0);
\draw[color=blue] (0.25, -0.75, 0.65) -- (0, -0.75, 0) -- (-0.25, -0.75, 0.65);
\draw (0.25, -0.75, -0.65) -- (0, -0.75, 0) -- (-0.25, -0.75, -0.65);

\draw[scale = 1, domain = -1:1, variable = \x] plot({0.25 * \x}, {-0.75 * \x * \x}, 0.65);

\draw[scale = 1, domain = -1:1, variable = \x] plot({0.25 * \x}, {-0.75 * \x * \x}, -0.65);

\draw[color=red] (0.25, -0.7, 0.65) -- (0.05, -0.7, 0.13);
\draw[color=red] (-0.25, -0.7, 0.65) -- (-0.05, -0.7, 0.13);
\draw[color=red] (0.05, -0.7, 0.13) .. controls (0.05, -0.7, 0.05) .. (0.05, -0.55, 0.05);
\draw[color=red] (-0.05, -0.7, 0.13) .. controls (-0.05, -0.7, 0.05) .. (-0.05, -0.55, 0.05);
\draw[color=red] (0.05, -0.55, 0.05) -- (0.05, -0.05, 0.05);
\draw[color=red] (-0.05, -0.55, 0.05) -- (-0.05, -0.05, 0.05);
\draw[color=red] (0.05, -0.05, 0.05) .. controls (0.05, 0.1, 0.05) and (-0.05, 0.1, 0.05) .. (-0.05, -0.05, 0.05);

\node at (0.05, -0.7, 0.13) {
\begin{tikzpicture}

\draw (0,0) circle (0.125);

\node at (0,0) {\tiny $3$};

\end{tikzpicture}
};

\node at (0.05, -0.55, 0.05) {
\begin{tikzpicture}

\draw (0,0) circle (0.125);

\node at (0,0) {\tiny $4$};

\end{tikzpicture}
};

\node at (0.05, -0.05, 0.05) {
\begin{tikzpicture}

\draw (0,0) circle (0.125);

\node at (0,0) {\tiny $9$};

\end{tikzpicture}
};

\node at (-0.05, -0.05, 0.05) {
\begin{tikzpicture}

\draw (0,0) circle (0.125);

\node at (0,0) {\tiny $10$};

\end{tikzpicture}
};

\node at (-0.05, -0.55, 0.05) {
\begin{tikzpicture}

\draw (0,0) circle (0.125);

\node at (0,0) {\tiny $11$};

\end{tikzpicture}
};

\node at (-0.05, -0.7, 0.13) {
\begin{tikzpicture}

\draw (0,0) circle (0.125);

\node at (0,0) {\tiny $12$};

\end{tikzpicture}
};

\end{tikzpicture}
\caption{The local model for Type 3.2c in the case where $N = 1$ and $D$ is a horizontal annulus near the green line segment of Figure $\ref{fig17}$.}\label{fig18}\end{figure}

Note that vertices $9$ and $10$ have the same frame as vertices $4$ and $11$ respectively. The framed path corresponding to $\partial' \mathcal{M}(D, l\vec{e}_j, (l))$ from vertices $4$ to $3$ to $12$ to $11$ is identical to that for Type 3.2b. Therefore it remains to compare the framed path between vertices $9$ and $10$ to the corresponding path for Type 3.2b.

Consider the frames $f_1, f_2, f_3$, which are the standard frames of the stratum $Z(0, 2, 0; (2)) \subset Z_2$, where the nearby point $z'$ satisfying $y_1 = y_2 = x_1 + x_2 = 0$ and $x_2 - x_1 = 2\epsilon > 0$ has tailored coordinates $(y_1, \Delta_1, y_2, \text{Re}(s_1) = x_1 + x_2)$. In terms of the tailored coordinates, $y_1 = y_2 = \text{Re}(s_1) = 0$ and $\Delta_1 = 2 \epsilon$, so that
\begin{align*}
&\delta (\text{Im}(s_1)) = \delta y_1 + \delta y_2
\\&\delta (\text{Re}(s_2)) = \delta (x_1 x_2 - y_1 y_2) = x_1 \delta x_2 + x_2 \delta x_1 - y_1 \delta y_2 - y_2 \delta y_1 =\epsilon \delta x_1 - \epsilon \delta x_2 = -\epsilon \delta \Delta_1
\\&\delta(\text{Im}(s_2)) = \delta (x_1 y_2 + x_2 y_1) = x_1 \delta y_2 + y_2 \delta x_1 + x_2 \delta y_1 + y_2 \delta x_1 = \epsilon (\delta y_1 - \delta y_2) 
\end{align*}
so that in terms of the usual coordinates $(a, b, c)$ on $\R^3$,
\begin{align*}
&f_1 = \delta y_1 = (1, 0, \epsilon)\\
&f_2 = -\delta \Delta_1 = (0, \epsilon, 0)\\
&f_3 = \delta y_2 = (1, 0, -\epsilon)
\end{align*}

The path between vertices $9$ and $10$ lies in a neighborhood of the stratum $Z(1, 1, 0)$, where we use tailored coordinates $(y_1, x_1, x_2, y_2)$. On $Z(1, 1, 0)$, we have $y_1 = x_1 + x_2 = 0$ and $y_2 < 0$, so $y_2 = a$ and $x_1 = c/a$, and we have
\begin{align*}
&\delta a = \delta y_1 + \delta y_2
\\&\delta b = -y_2 \delta y_1 + \text{ other terms }
\\&\delta c = x_2 \delta y_1 + x_1 \delta y_2
\end{align*}
so since the standard frame here is $\delta y_1$, the standard frame here is $(1, -a, -c/a)$. On the other hand, the distinguished vector $\vec{v}$ is $-f_3 = -\delta y_2 = (-1, 0, -c/a)$ on the side of vertex 9. Changing to the $f_1, f_2, f_3$ coordinates near $Z(0, 2, 0; (2))$, we get
\begin{align}\label{eqnintframe}\begin{pmatrix}1 & 0 & \epsilon \\ 0 & \epsilon & 0 \\ 1 & 0 & -\epsilon\end{pmatrix}^{-1}\begin{pmatrix}1 & -1 \\ -a & 0 \\ -c/a & -c/a\end{pmatrix} = \begin{pmatrix}1/2 - c/2a & -1/2 - c/2a \\ -a/\epsilon & 0 \\ 1/2\epsilon- c/2\epsilon a & -1/2\epsilon -c/2\epsilon a\end{pmatrix}\end{align}

At Vertex $9$, we have $[\vec{v}, f] = [-f_3, f_1]$ where $f$ is the internal frame. At Vertex $10$, $[\vec{v}, f] = [f_1, f_3]$. We see that the right-hand side of $(\ref{eqnintframe})$, for $c/a = \epsilon$ (respectively, $-\epsilon$) and $a \rightarrow 0$, gives the frame at Vertex $9$ (respectively, Vertex $10$) after the Gram-Schmidt process. Hence, to describe the path from Vertex $9$ to Vertex $10$ in $a, b, c$, we move $a$ away from zero (and since $D$ is a horizontal annulus, $a$ is always negative), then change $c/a$ from positive to negative, and then send $a$ back to zero. In the intermediate steps, where $a < 0$, we obtain the frames $[f_2, f_1]$ and $[f_2, f_3]$ from the right-hand side of $(\ref{eqnintframe})$.

Now consider the following loop consisting of both paths 3.2b and 3.2c between Vertices $9$ and $10$ (which are the ends of the dashed line; we have also completed each framing to a positive frame for $\R^3$ with the term in parentheses): \\
\begin{tikzcd}
{[(f_3), -f_2, f_1]} \arrow[d, no head]                     & {[(-f_2), -f_3, f_1]} \arrow[rd, no head] \arrow[l, no head] \arrow[ddd, no head, dotted] &                                                              \\
{[(f_3), f_1, f_2]} \arrow[d, "\text{Type } 3.2b", no head] &                                                                                           & {[(-f_3), f_2, f_1]} \arrow[d, "\text{Type } 3.2c", no head] \\
{[(f_1), -f_3, f_2]} \arrow[d, no head]                     &                                                                                           & {[(f_1), f_2, f_3]} \arrow[ld, no head]                      \\
{[(f_1), f_2, f_3]} \arrow[r, no head]                      & {[(f_2), -f_1, f_3]}                                                                      &                                                             
\end{tikzcd} \\
The loop lifts to $\Pin(3)$ as
\begin{align*}\frac{1}{\sqrt{2}}(1 - e_2 e_3) \frac{1}{\sqrt{2}}(1 - e_1 e_3) \frac{1}{\sqrt{2}}(1 - e_1 e_2)\frac{1}{\sqrt{2}}(1 - e_2 e_1) \frac{1}{\sqrt{2}}(1 - e_3 e_2) \frac{1}{\sqrt{2}}(1 - e_1 e_3)\frac{1}{\sqrt{2}}(1 - e_2 e_1) \frac{1}{\sqrt{2}}(1 - e_2 e_3)\end{align*}
which we compute to be $1$ by Algorithm $\ref{multalgo}$, so the loop is nullhomotopic. As a result, the path is homotopic to that for Type 3.2b in the case $N = 1$, $D$ a horizontal annulus. A similar computation also gives that the path is homotopic to Type 3.2b in the case $N = 1$, $D$ a vertical annulus.

As a result, the boundary condition for $f$ is identical as for Type 3.2b:
\begin{align*}\delta f(D, \vec{e}_k, (1)) = 0\end{align*}

\textbf{Type 3.1a}: In this case, $D$ is an index $1$ domain, $\vec{\lambda}$ has total length $2$, and we have a triple of the form $(D, 2 \vec{e}_j, (1, 1))$. Let $d$ be the sign of rectangle $D$. The vertices of $\partial' \mathcal{M}(D, 2 \vec{e}_j, (1, 1))$ in the below Figure $\ref{fig24}$ can be written as points in $\SO(5d + 4)$ that form a loop.

\begin{figure}\begin{tikzpicture}[scale=2.5]

\begin{scope}[xshift=0cm, yshift=3cm]
\filldraw[color=gray] (0, 0) .. controls (-0.125, 0.25) .. (0, 0.5) .. controls (0.125, 0.25) .. (0, 0);
\draw (0, 0.5) .. controls (-0.125, 0.75) .. (0, 1) .. controls (0.125, 0.75) .. (0, 0.5);
\filldraw[color=gray] (0, 1) .. controls (-0.125, 1.25) .. (0, 1.5) .. controls (0.125, 1.25) .. (0, 1);

\node at (0, 0.75) {\small $D$};
\draw (0.2, 0.25) circle (0.1);
\node at (0.2, 0.25) {1};
\draw (0.2, 1.25) circle (0.1);
\node at (0.2, 1.25) {1};
\end{scope}

\begin{scope}[xshift=-1cm, yshift=2cm]
\draw (0, 0) .. controls (-0.125, 0.25) .. (0, 0.5) .. controls (0.125, 0.25) .. (0, 0);
\filldraw[color=gray] (0, 0.5) .. controls (-0.125, 0.75) .. (0, 1) .. controls (0.125, 0.75) .. (0, 0.5);
\filldraw[color=gray] (0, 1) .. controls (-0.125, 1.25) .. (0, 1.5) .. controls (0.125, 1.25) .. (0, 1);

\node at (0, 0.25) {\small $D$};
\draw (0.2, 0.75) circle (0.1);
\node at (0.2, 0.75) {1};
\draw (0.2, 1.25) circle (0.1);
\node at (0.2, 1.25) {1};
\end{scope}

\begin{scope}[xshift=1cm, yshift=2cm]
\filldraw[color=gray] (0, 0) .. controls (-0.125, 0.25) .. (0, 0.5) .. controls (0.125, 0.25) .. (0, 0);
\filldraw[color=gray] (0, 0.5) .. controls (-0.125, 0.75) .. (0, 1) .. controls (0.125, 0.75) .. (0, 0.5);
\draw (0, 1) .. controls (-0.125, 1.25) .. (0, 1.5) .. controls (0.125, 1.25) .. (0, 1);

\node at (0, 1.25) {\small $D$};
\draw (0.2, 0.25) circle (0.1);
\node at (0.2, 0.25) {1};
\draw (0.2, 0.75) circle (0.1);
\node at (0.2, 0.75) {1};
\end{scope}

\begin{scope}[xshift=-1.25cm, yshift=0.5cm]
\filldraw[color=gray] (0, 0) .. controls (-0.25, 0.5) .. (0, 1) .. controls (0.25, 0.5) .. (0, 0);
\draw (0, -0.5) .. controls (-0.125, -0.25) .. (0, 0) .. controls (0.125, -0.25) .. (0, -0.5);

\node at (0, -0.25) {\small $D$};
\draw (0.295, 0.5) circle (0.1);
\node at (0.295, 0.5) {2};
\end{scope}

\begin{scope}[xshift=1cm, yshift=0cm]
\filldraw[color=gray] (0, 0) .. controls (-0.25, 0.5) .. (0, 1) .. controls (0.25, 0.5) .. (0, 0);
\draw (0, 1) .. controls (-0.125, 1.25) .. (0, 1.5) .. controls (0.125, 1.25) .. (0, 1);

\node at (0, 1.25) {\small $D$};
\draw (0.295, 0.5) circle (0.1);
\node at (0.295, 0.5) {2};
\end{scope}

\node at (0, 2.9) {\textbullet};
\node at (0.75, 2) {\textbullet};
\node at (0.75, 0) {\textbullet};
\node at (-0.75, 2) {\textbullet};
\node at (-0.75, 0) {\textbullet};

\draw (0, 2.9) -- (0.75, 2) -- (0.75, 0) -- (-0.75, 0) -- (-0.75, 2) -- (0, 2.9);
\draw[color=red] (0.2, 2.5) .. controls (0, 2.7) .. (-0.2, 2.5);
\draw[color=red] (0.6, 2) .. controls (0.65, 2) .. (0.65, 1.9);
\draw[color=red] (-0.6, 2) .. controls (-0.65, 2) .. (-0.65, 1.9);
\draw[color=red] (0.2, 2.5) -- (0.6, 2);
\draw[color=red] (-0.2, 2.5) -- (-0.6, 2);
\draw[color=red] (0.65, 1.9) -- (0.65, 0.2);
\draw[color=red] (-0.65, 1.9) -- (-0.65, 0.2);
\draw[color=red] (-0.55, 0.1) -- (0.55, 0.1);
\draw[color=red] (0.65, 0.2) .. controls (0.65, 0.1) .. (0.55, 0.1);
\draw[color=red] (-0.65, 0.2) .. controls (-0.65, 0.1) .. (-0.55, 0.1);

\node at (0.2, 2.5) {
\begin{tikzpicture}

\draw[fill=white] (0,0) circle (0.125);

\node at (0,0) {\tiny $1$};

\end{tikzpicture}
};

\node at (-0.2, 2.5) {
\begin{tikzpicture}

\draw[fill=white] (0,0) circle (0.125);

\node at (0,0) {\tiny $2$};

\end{tikzpicture}
};

\node at (0.6, 2.05) {
\begin{tikzpicture}

\draw[fill=white] (0,0) circle (0.125);

\node at (0,0) {\tiny $3$};

\end{tikzpicture}
};

\node at (0.65, 1.9) {
\begin{tikzpicture}

\draw[fill=white] (0,0) circle (0.125);

\node at (0,0) {\tiny $4$};

\end{tikzpicture}
};

\node at (-0.6, 2.05) {
\begin{tikzpicture}

\draw[fill=white] (0,0) circle (0.125);

\node at (0,0) {\tiny $5$};

\end{tikzpicture}
};

\node at (-0.65, 1.9) {
\begin{tikzpicture}

\draw[fill=white] (0,0) circle (0.125);

\node at (0,0) {\tiny $6$};

\end{tikzpicture}
};

\node at (-0.65, 0.2) {
\begin{tikzpicture}

\draw[fill=white] (0,0) circle (0.125);

\node at (0,0) {\tiny $7$};

\end{tikzpicture}
};

\node at (0.65, 0.2) {
\begin{tikzpicture}

\draw[fill=white] (0,0) circle (0.125);

\node at (0,0) {\tiny $8$};

\end{tikzpicture}
};

\node at (0.55, 0.1) {
\begin{tikzpicture}

\draw[fill=white] (0,0) circle (0.125);

\node at (0,0) {\tiny $9$};

\end{tikzpicture}
};

\node at (-0.55, 0.1) {
\begin{tikzpicture}

\draw[fill=white] (0,0) circle (0.125);

\node at (0,0) {\tiny $10$};

\end{tikzpicture}
};

\end{tikzpicture}
\caption{Type 3.1a, in the case $N_1 = N_2 = 1$. The embedded picture cannot be drawn due to dimensional constraints.}\label{fig24}\end{figure}
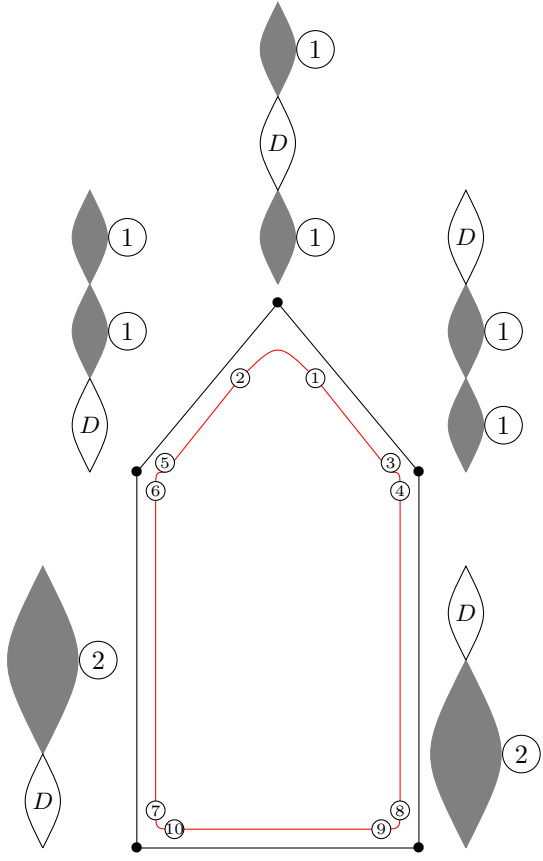

The paths between Vertices 6 and 7, and between Vertices 4 and 8 in $\ref{fig24}$, are each a Type 2.0a moduli space, and therefore are the identity path in the framing. In the following Figure $\ref{fig21}$ they are collapsed together. The path from Vertex 8 to Vertex 9 is similar to the path in the local model from Type 3.2c, except that $f_2, f_3, f_4$ play the role of $f_1, f_2, f_3$, respectively. We refer to the previous calculation to see that the path from Vertex 8 to Vertex 9 is $[f_1 f_2 f_4] \rightarrow [f_1 f_3 f_4] \rightarrow [f_2 f_3 f_4]$, and similarly that the path from Vertex 7 to Vertex 10 is $[f_4 f_1 f_3] \rightarrow [f_4 f_2 f_3] \rightarrow [f_1 f_2 f_3]$. The path between Vertices 9 and 10 is a Type 2.1 moduli space.

Similarly to previous cases, most of the framing vectors are identical---the $\R_+$ coordinates and the first, $(2d + 1))^{st}$, and $(4d + 1)^{st}$ coordinates are the only ones that can differ. We can therefore follow Conventions $\ref{conv1}$ and $\ref{conv3}$ when labelling the decomposition shown in Figure $\ref{fig21}$. For Type 3.1 we use the following color convention:

\begin{conv}\label{conv8}\begin{itemize}

\item All black paths are short preferred.

\item All red paths are long preferred paths changing the first $\R$ coordinate with respect to the first $\R_+$ coordinate.

\item All green paths are long preferred paths changing the $(2 d + 1)^{st}$ $\R$ coordinate with respect to the first $\R_+$ coordinate.

\item All blue paths are long preferred paths changing the $(4d+1)^{st}$ $\R$ coordinate with respect to the first $\R_+$ coordinate.

\end{itemize}\end{conv}

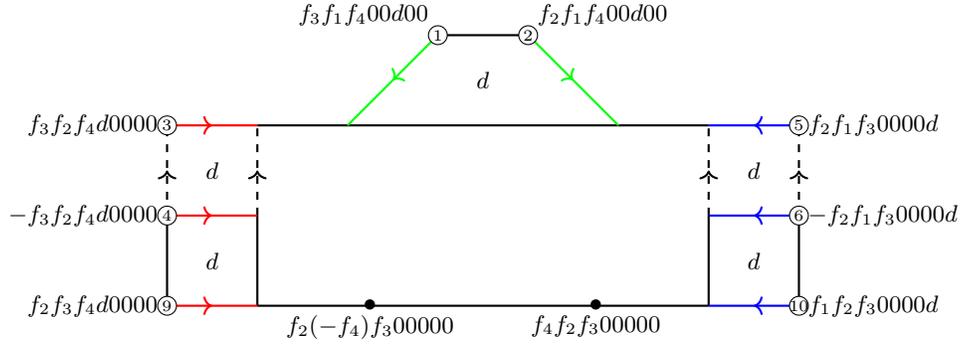
\begin{figure}
\begin{tikzpicture}[scale=6]

%Standard hexagon

\node[anchor=east,align=right] at (-0.2, 1) {\small $f_2 f_3 f_4 d 0 0 0 0$};
\node[anchor=west,align=left] at (1.2, 1) {\small $f_1 f_2 f_3 0 0 0 0 d$};

\node[anchor=east,align=right] at (-0.2, 1.2) {\small $-f_3 f_2 f_4 d 0 0 0 0$};
\node[anchor=west,align=left] at (1.2, 1.2) {\small $-f_2 f_1 f_3 0 0 0 0 d$};

\node[anchor=east,align=right] at (-0.2, 1.4) {\small $f_3 f_2 f_4 d 0 0 0 0$};
\node[anchor=west,align=left] at (1.2, 1.4) {\small $f_2 f_1 f_3 0 0 0 0 d$};

\node[anchor=south east,align=right] at (0.4, 1.6) {\small $f_3 f_1 f_4 00d00$};
\node[anchor=south west,align=left] at (0.6, 1.6) {\small $f_2 f_1 f_4 00d00$};

\draw[thick] (0, 1) -- (1, 1);
\draw[thick] (0, 1.4) -- (1, 1.4);
\draw[thick] (-0.2, 1) -- (-0.2, 1.2);
\draw[thick] (1.2, 1) -- (1.2, 1.2);

\draw[->-,thick,color=green] (0.4, 1.6) -- (0.2, 1.4);
\draw[->-,thick,color=green] (0.6, 1.6) -- (0.8, 1.4);

%Smoothening blocks

\draw[thick] (0.4, 1.6) -- (0.6, 1.6);
\draw[thick] (0, 1) -- (0, 1.2);
\draw[thick] (1, 1) -- (1, 1.2);
\draw[thick, dashed, ->-] (-0.2, 1.2) -- (-0.2, 1.4);
\draw[thick, dashed, ->-] (0, 1.2) -- (0, 1.4);
\draw[thick, dashed, ->-] (1, 1.2) -- (1, 1.4);
\draw[thick, dashed, ->-] (1.2, 1.2) -- (1.2, 1.4);

\foreach \i in {5,6,7}{
\draw[->-,thick,color=red] (-0.2, 0.2 * \i) -- (0, 0.2 * \i);
\draw[->-,thick,color=blue] (1.2, 0.2 * \i) -- (1, 0.2 * \i);
}

%Special cases

\node at (0.5, 1.5) {\small $d$};
\foreach \i in {5,6} {
\node at (-0.1, 0.1 + 0.2 * \i) {\small $d$};
\node at (1.1, 0.1 + 0.2 * \i) {\small $d$};
}

\node at (-0.2, 1) {
\begin{tikzpicture}

\draw[fill=white] (0,0) circle (0.125);

\node at (0,0) {\tiny $9$};

\end{tikzpicture}
};
\node at (-0.2, 1.2) {
\begin{tikzpicture}

\draw[fill=white] (0,0) circle (0.125);

\node at (0,0) {\tiny $4$};

\end{tikzpicture}
};
\node at (-0.2, 1.4) {
\begin{tikzpicture}

\draw[fill=white] (0,0) circle (0.125);

\node at (0,0) {\tiny $3$};

\end{tikzpicture}
};
\node at (1.2, 1) {
\begin{tikzpicture}

\draw[fill=white] (0,0) circle (0.125);

\node at (0,0) {\tiny $10$};

\end{tikzpicture}
};
\node at (1.2, 1.2) {
\begin{tikzpicture}

\draw[fill=white] (0,0) circle (0.125);

\node at (0,0) {\tiny $6$};

\end{tikzpicture}
};
\node at (1.2, 1.4) {
\begin{tikzpicture}

\draw[fill=white] (0,0) circle (0.125);

\node at (0,0) {\tiny $5$};

\end{tikzpicture}
};
\node at (0.4, 1.6) {
\begin{tikzpicture}

\draw[fill=white] (0,0) circle (0.125);

\node at (0,0) {\tiny $1$};

\end{tikzpicture}
};
\node at (0.6, 1.6) {
\begin{tikzpicture}

\draw[fill=white] (0,0) circle (0.125);

\node at (0,0) {\tiny $2$};

\end{tikzpicture}
};

\node at (0.25, 1) {\textbullet};
\node[anchor=north] at (0.25, 1) {\small $f_2 (-f_4) f_3 00000$};
\node at (0.75, 1) {\textbullet};
\node[anchor=north] at (0.75, 1) {\small $f_4 f_2 f_3 00000$};

\end{tikzpicture}
\caption{Type 3.1a, in the case $N_1 = N_2 = 1$.}\label{fig21}
\end{figure}

which equals $-1$ by Algorithm $\ref{multalgo}$, so that the central black loop is not nullhomotopic and so
\begin{align*}\delta f(D, 2\vec{e}_j, (1, 1)) &= d\pmod{2}\end{align*}

\textbf{Type 3.1b}: In this case we have triples $(D, \vec{e}_j + \vec{e}_k, (1), (1))$ where $j \neq k$. Let $d$ be the sign of rectangle $D$, and consider the following Figure $\ref{fig25}$.

\begin{figure}\begin{tikzpicture}[scale=2.5]

\begin{scope}[xshift=0cm, yshift=3cm]
\filldraw[color=gray] (0, 0) .. controls (-0.125, 0.25) .. (0, 0.5) .. controls (0.125, 0.25) .. (0, 0);
\draw (0, 0.5) .. controls (-0.125, 0.75) .. (0, 1) .. controls (0.125, 0.75) .. (0, 0.5);
\filldraw[color=gray] (0, 1) .. controls (-0.125, 1.25) .. (0, 1.5) .. controls (0.125, 1.25) .. (0, 1);

\node at (0, 0.75) {\small $D$};
\draw (0.2, 0.25) circle (0.1);
\node at (0.2, 0.25) {\small $1_j$};
\draw (0.2, 1.25) circle (0.1);
\node at (0.2, 1.25) {\small $1_k$};
\end{scope}

\begin{scope}[xshift=-1cm, yshift=2cm]
\draw (0, 0) .. controls (-0.125, 0.25) .. (0, 0.5) .. controls (0.125, 0.25) .. (0, 0);
\filldraw[color=gray] (0, 0.5) .. controls (-0.125, 0.75) .. (0, 1) .. controls (0.125, 0.75) .. (0, 0.5);
\filldraw[color=gray] (0, 1) .. controls (-0.125, 1.25) .. (0, 1.5) .. controls (0.125, 1.25) .. (0, 1);

\node at (0, 0.25) {\small $D$};
\draw (0.2, 0.75) circle (0.1);
\node at (0.2, 0.75) {\small $1_j$};
\draw (0.2, 1.25) circle (0.1);
\node at (0.2, 1.25) {\small $1_k$};
\end{scope}

\begin{scope}[xshift=1cm, yshift=2cm]
\filldraw[color=gray] (0, 0) .. controls (-0.125, 0.25) .. (0, 0.5) .. controls (0.125, 0.25) .. (0, 0);
\filldraw[color=gray] (0, 0.5) .. controls (-0.125, 0.75) .. (0, 1) .. controls (0.125, 0.75) .. (0, 0.5);
\draw (0, 1) .. controls (-0.125, 1.25) .. (0, 1.5) .. controls (0.125, 1.25) .. (0, 1);

\node at (0, 1.25) {\small $D$};
\draw (0.2, 0.25) circle (0.1);
\node at (0.2, 0.25) {\small $1_j$};
\draw (0.2, 0.75) circle (0.1);
\node at (0.2, 0.75) {\small $1_k$};
\end{scope}

\begin{scope}[xshift=0cm, yshift=-1.5cm]
\filldraw[color=gray] (0, 0) .. controls (-0.125, 0.25) .. (0, 0.5) .. controls (0.125, 0.25) .. (0, 0);
\draw (0, 0.5) .. controls (-0.125, 0.75) .. (0, 1) .. controls (0.125, 0.75) .. (0, 0.5);
\filldraw[color=gray] (0, 1) .. controls (-0.125, 1.25) .. (0, 1.5) .. controls (0.125, 1.25) .. (0, 1);

\node at (0, 0.75) {\small $D$};
\draw (0.2, 0.25) circle (0.1);
\node at (0.2, 0.25) {\small $1_k$};
\draw (0.2, 1.25) circle (0.1);
\node at (0.2, 1.25) {\small $1_j$};
\end{scope}

\begin{scope}[xshift=-1.3cm, yshift=0cm]
\draw (0, 0) .. controls (-0.125, 0.25) .. (0, 0.5) .. controls (0.125, 0.25) .. (0, 0);
\filldraw[color=gray] (0, 0.5) .. controls (-0.125, 0.75) .. (0, 1) .. controls (0.125, 0.75) .. (0, 0.5);
\filldraw[color=gray] (0, 1) .. controls (-0.125, 1.25) .. (0, 1.5) .. controls (0.125, 1.25) .. (0, 1);

\node at (0, 0.25) {\small $D$};
\draw (0.2, 0.75) circle (0.1);
\node at (0.2, 0.75) {\small $1_k$};
\draw (0.2, 1.25) circle (0.1);
\node at (0.2, 1.25) {\small $1_j$};
\end{scope}

\begin{scope}[xshift=1cm, yshift=0cm]
\filldraw[color=gray] (0, 0) .. controls (-0.125, 0.25) .. (0, 0.5) .. controls (0.125, 0.25) .. (0, 0);
\filldraw[color=gray] (0, 0.5) .. controls (-0.125, 0.75) .. (0, 1) .. controls (0.125, 0.75) .. (0, 0.5);
\draw (0, 1) .. controls (-0.125, 1.25) .. (0, 1.5) .. controls (0.125, 1.25) .. (0, 1);

\node at (0, 1.25) {\small $D$};
\draw (0.2, 0.25) circle (0.1);
\node at (0.2, 0.25) {\small $1_k$};
\draw (0.2, 0.75) circle (0.1);
\node at (0.2, 0.75) {\small $1_j$};
\end{scope}

\node at (0, 2.9) {\textbullet};
\node at (0.75, 2) {\textbullet};
\node at (0.75, 1) {\textbullet};
\node at (-0.75, 2) {\textbullet};
\node at (-0.75, 1) {\textbullet};
\node at (0, 0.1) {\textbullet};

\draw (0, 2.9) -- (0.75, 2) -- (0.75, 1) -- (0, 0.1) -- (-0.75, 1) -- (-0.75, 2) -- (0, 2.9);
\draw[color=red] (0.2, 2.5) .. controls (0, 2.7) .. (-0.2, 2.5);
\draw[color=red] (0.6, 2) .. controls (0.65, 2) .. (0.65, 1.9);
\draw[color=red] (-0.6, 2) .. controls (-0.65, 2) .. (-0.65, 1.9);
\draw[color=red] (0.2, 2.5) -- (0.6, 2);
\draw[color=red] (-0.2, 2.5) -- (-0.6, 2);
\draw[color=red] (0.65, 1.9) -- (0.65, 1.2);
\draw[color=red] (-0.65, 1.9) -- (-0.65, 1.2);
\draw[color=red] (0.65, 1.2) .. controls (0.65, 1) .. (0.6, 1);
\draw[color=red] (-0.65, 1.2) .. controls (-0.65, 1) .. (-0.6, 1);
\draw[color=red] (0.2, 0.5) -- (0.6, 1);
\draw[color=red] (-0.2, 0.5) -- (-0.6, 1);
\draw[color=red] (0.2, 0.5) .. controls (0, 0.2) .. (-0.2, 0.5);

\node at (0.2, 2.5) {
\begin{tikzpicture}

\draw[fill=white] (0,0) circle (0.125);

\node at (0,0) {\tiny $1$};

\end{tikzpicture}
};

\node at (-0.2, 2.5) {
\begin{tikzpicture}

\draw[fill=white] (0,0) circle (0.125);

\node at (0,0) {\tiny $2$};

\end{tikzpicture}
};

\node at (0.6, 2.05) {
\begin{tikzpicture}

\draw[fill=white] (0,0) circle (0.125);

\node at (0,0) {\tiny $3$};

\end{tikzpicture}
};

\node at (0.65, 1.9) {
\begin{tikzpicture}

\draw[fill=white] (0,0) circle (0.125);

\node at (0,0) {\tiny $4$};

\end{tikzpicture}
};

\node at (-0.6, 2.05) {
\begin{tikzpicture}

\draw[fill=white] (0,0) circle (0.125);

\node at (0,0) {\tiny $5$};

\end{tikzpicture}
};

\node at (-0.65, 1.9) {
\begin{tikzpicture}

\draw[fill=white] (0,0) circle (0.125);

\node at (0,0) {\tiny $6$};

\end{tikzpicture}
};

\node at (-0.65, 1.2) {
\begin{tikzpicture}

\draw[fill=white] (0,0) circle (0.125);

\node at (0,0) {\tiny $7$};

\end{tikzpicture}
};

\node at (0.65, 1.2) {
\begin{tikzpicture}

\draw[fill=white] (0,0) circle (0.125);

\node at (0,0) {\tiny $8$};

\end{tikzpicture}
};

\node at (0.6, 1) {
\begin{tikzpicture}

\draw[fill=white] (0,0) circle (0.125);

\node at (0,0) {\tiny $9$};

\end{tikzpicture}
};

\node at (-0.6, 1) {
\begin{tikzpicture}

\draw[fill=white] (0,0) circle (0.125);

\node at (0,0) {\tiny $10$};

\end{tikzpicture}
};

\node at (0.2, 0.5) {
\begin{tikzpicture}

\draw[fill=white] (0,0) circle (0.125);

\node at (0,0) {\tiny $12$};

\end{tikzpicture}
};

\node at (-0.2, 0.5) {
\begin{tikzpicture}

\draw[fill=white] (0,0) circle (0.125);

\node at (0,0) {\tiny $11$};

\end{tikzpicture}
};

\end{tikzpicture}
\caption{Type 3.1b, in the case $N_1 = N_2 = 1$ and $j > k$. The embedded picture cannot be drawn due to dimensional constraints.}\label{fig25}\end{figure}
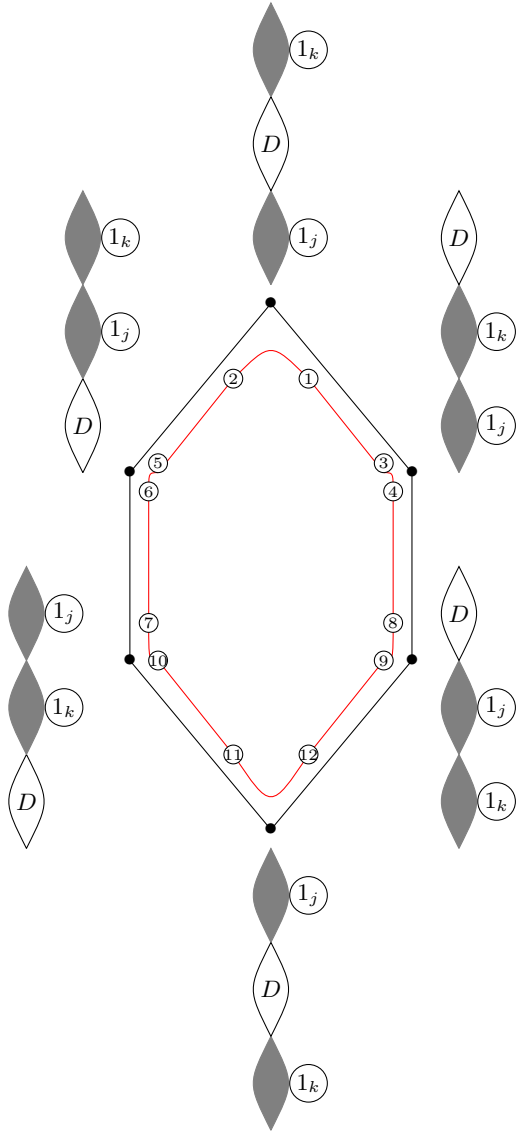

\begin{figure}
\begin{tikzpicture}[scale=6]

%Standard hexagon

\node[anchor=east,align=right] at (-0.2, 0) {\small $f_3 f_4 f_2 d0000$};
\node[anchor=west,align=left] at (1.2, 0) {\small $f_2 f_3 f_1 0000d$};

\node[anchor=east,align=right] at (-0.2, 0.2) {\small $f_1 f_4 f_2 d0000$};
\node[anchor=west,align=left] at (1.2, 0.2) {\small $f_4 f_3 f_1 0000d$};

\node[anchor=east,align=right] at (-0.2, 0.4) {\small $f_1 (-f_3) f_2 d0000$};
\node[anchor=west,align=left] at (1.2, 0.4) {\small $f_4 (-f_2) f_1 0000d$};

\node[anchor=east,align=right] at (-0.2, 0.6) {\small $f_1 f_2 f_3 d0000$};
\node[anchor=west,align=left] at (1.2, 0.6) {\small $f_4 f_1 f_2 0000d$};

\node[anchor=east,align=right] at (-0.2, 0.8) {\small $f_1 (-f_4) f_3 d0000$};
\node[anchor=west,align=left] at (1.2, 0.8) {\small $f_4 (-f_3) f_2 0000d$};

\node[anchor=east,align=right] at (-0.2, 1) {\small $f_1 f_3 f_4 d0000$};
\node[anchor=west,align=left] at (1.2, 1) {\small $f_4 f_2 f_3 0000d$};

\node[anchor=east,align=right] at (-0.2, 1.2) {\small $f_1 f_2 f_4 d0000$};
\node[anchor=west,align=left] at (1.2, 1.2) {\small $f_4 f_1 f_3 0000d$};

\node[anchor=east,align=right] at (-0.2, 1.4) {\small $f_3 f_2 f_4 d0000$};
\node[anchor=west,align=left] at (1.2, 1.4) {\small $f_2 f_1 f_3 0000d$};

\node[anchor=south east,align=right] at (0.4, 1.8) {\small $f_3 f_1 f_4 00d00$};
\node[anchor=south west,align=left] at (0.6, 1.8) {\small $f_2 f_1 f_4 00d00$};

\node[anchor=north east,align=right] at (0.4, -0.4) {\small $f_3 f_4 f_1 00d00$};
\node[anchor=north west,align=left] at (0.6, -0.4) {\small $f_2 f_4 f_1 00d00$};

\draw[thick] (-0.2, 0) -- (-0.2, 1.4);
\draw[thick] (0.4, 1.8) -- (0.6, 1.8);
\draw[thick] (0.4, -0.4) -- (0.6, -0.4);
\draw[thick] (0, 1.6) -- (0.2, 1.6);
\draw[thick] (0, -0.2) -- (0.2, -0.2);

\draw[->-,thick,color=green] (0.4, 1.8) -- (0.2, 1.6);
\draw[->-,thick,color=red] (-0.2, 1.4) -- (0, 1.6);
\draw[->-,thick,color=green] (0.6, 1.8) -- (0.8, 1.6);
\draw[->-,thick,color=blue] (1.2, 1.4) -- (1, 1.6);
\draw[->-,thick,color=green] (0.4, -0.4) -- (0.2, -0.2);
\draw[->-,thick,color=red] (-0.2, 0) -- (0, -0.2);
\draw[->-,thick,color=green] (0.6, -0.4) -- (0.8, -0.2);
\draw[->-,thick,color=blue] (1.2, 0) -- (1, -0.2);

\draw[thick] (1.2, 0) -- (1.2, 1.4);
\draw[thick] (1, 1.6) -- (0.8, 1.6);
\draw[thick] (1, -0.2) -- (0.8, -0.2);

%Smoothening blocks

\draw[thick] (0, -0.2) -- (0, 1.6);
\draw[thick] (1, -0.2) -- (1, 1.6);

\foreach \i in {1,2,3,4,5,6}{
\draw[->-,thick,color=red] (-0.2, 0.2 * \i) -- (0, 0.2 * \i);
\draw[->-,thick,color=blue] (1.2, 0.2 * \i) -- (1, 0.2 * \i);
}
\draw[thick] (0.2, 1.6) -- (0.8, 1.6);
\draw[thick] (0.2, -0.2) -- (0.8, -0.2);

%Special cases

\node at (0.5, 1.7) {\small $d$};
\node at (0.5, -0.3) {\small $d$};
\foreach \i in {0,1,2,3,4,5,6}{
\node at (-0.1, 0.1 + 0.2 * \i) {\small $d$};
\node at (1.1, 0.1 + 0.2 * \i) {\small $d$};
}

\node at (0.4, 1.8) {
\begin{tikzpicture}

\draw[fill=white] (0,0) circle (0.125);

\node at (0,0) {\tiny $1$};

\end{tikzpicture}
};

\node at (0.6, 1.8) {
\begin{tikzpicture}

\draw[fill=white] (0,0) circle (0.125);

\node at (0,0) {\tiny $2$};

\end{tikzpicture}
};

\node at (-0.2, 1.4) {
\begin{tikzpicture}

\draw[fill=white] (0,0) circle (0.125);

\node at (0,0) {\tiny $3$};

\end{tikzpicture}
};

\node at (-0.2, 1.2) {
\begin{tikzpicture}

\draw[fill=white] (0,0) circle (0.125);

\node at (0,0) {\tiny $4$};

\end{tikzpicture}
};

\node at (1.2, 1.4) {
\begin{tikzpicture}

\draw[fill=white] (0,0) circle (0.125);

\node at (0,0) {\tiny $5$};

\end{tikzpicture}
};

\node at (1.2, 1.2) {
\begin{tikzpicture}

\draw[fill=white] (0,0) circle (0.125);

\node at (0,0) {\tiny $6$};

\end{tikzpicture}
};

\node at (1.2, 0.2) {
\begin{tikzpicture}

\draw[fill=white] (0,0) circle (0.125);

\node at (0,0) {\tiny $7$};

\end{tikzpicture}
};

\node at (-0.2, 0.2) {
\begin{tikzpicture}

\draw[fill=white] (0,0) circle (0.125);

\node at (0,0) {\tiny $8$};

\end{tikzpicture}
};

\node at (-0.2, 0) {
\begin{tikzpicture}

\draw[fill=white] (0,0) circle (0.125);

\node at (0,0) {\tiny $9$};

\end{tikzpicture}
};

\node at (1.2, 0) {
\begin{tikzpicture}

\draw[fill=white] (0,0) circle (0.125);

\node at (0,0) {\tiny $10$};

\end{tikzpicture}
};

\node at (0.6, -0.4) {
\begin{tikzpicture}

\draw[fill=white] (0,0) circle (0.125);

\node at (0,0) {\tiny $12$};

\end{tikzpicture}
};

\node at (0.4, -0.4) {
\begin{tikzpicture}

\draw[fill=white] (0,0) circle (0.125);

\node at (0,0) {\tiny $11$};

\end{tikzpicture}
};

\end{tikzpicture}
\caption{Type 3.1b, in the case $N_1 = N_2 = 1$ and $j > k$.}\label{fig19}\end{figure}
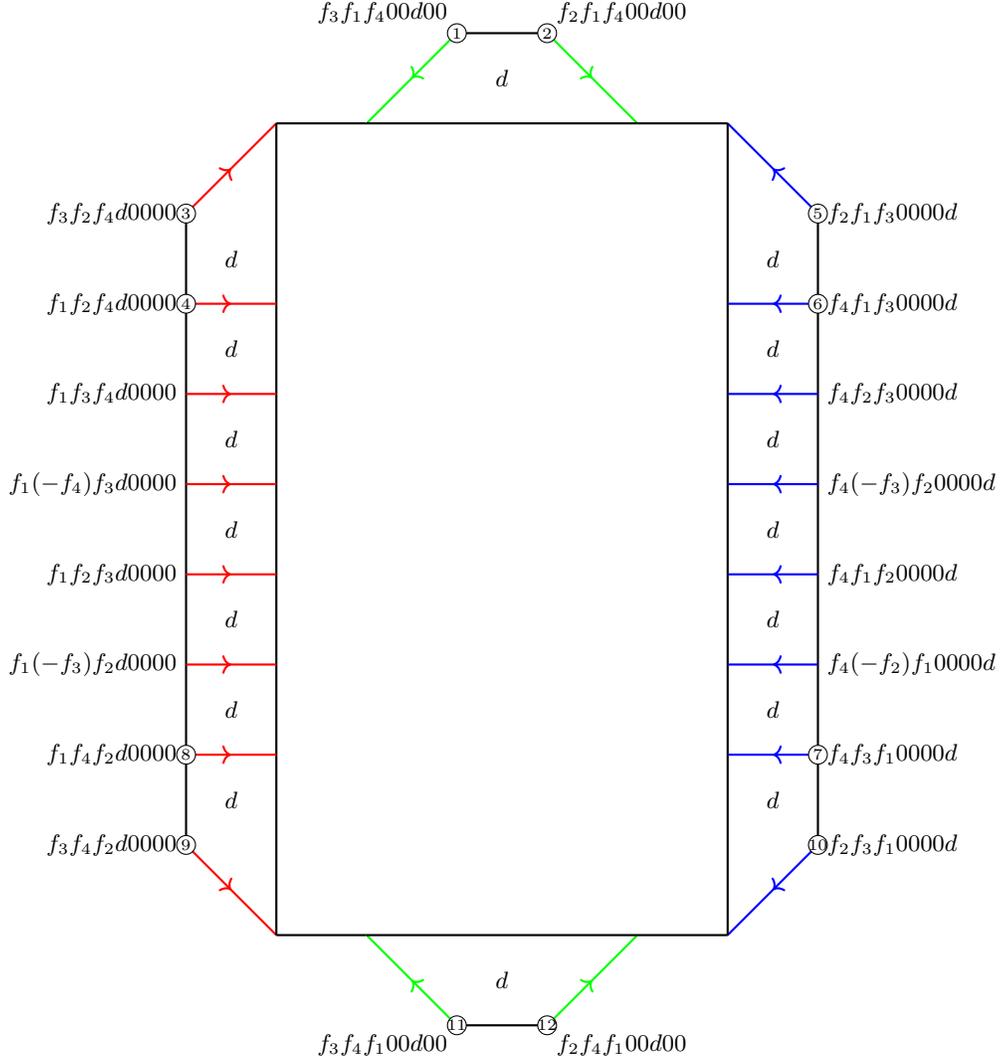

By Lemma $\ref{lem1}$, all black-green, black-blue, and black-red quadrilaterals correspond to the element of $\pi_1(\SO(5d + 4))$ labelled in Figure $\ref{fig19}$. We then compute the central black loop, which lifts to $Pin(4)$ as
\begin{align*}&\frac{1}{\sqrt{2}}(1 - e_3 e_2) \frac{1}{\sqrt{2}}(1 - e_4 e_3) \frac{1}{\sqrt{2}}(1 - e_2 e_4) E_{213} \frac{1}{\sqrt{2}}(1 - e_4 e_2) \frac{1}{\sqrt{2}}(1 - e_3 e_4) \\& \frac{1}{\sqrt{2}}(1 - e_2 e_3) \frac{1}{\sqrt{2}}(1 - e_1 e_2) \frac{1}{\sqrt{2}}(1 - e_3 e_1) E_{342} \frac{1}{\sqrt{2}}(1 - e_1 e_3)\frac{1}{\sqrt{2}}(1 - e_2 e_1)\end{align*}
where 
\begin{align}\label{E-def}
E_{ijk} := \frac{1}{\sqrt{2}}(1 - e_j e_i)\frac{1}{\sqrt{2}}(1 - e_k e_i)\frac{1}{\sqrt{2}}(1 - e_j e_k)\frac{1}{\sqrt{2}}(1 - e_i e_j) \frac{1}{\sqrt{2}}(1 - e_k e_i)\end{align}
represents the preferred Type 2.0b moduli space switching two individual bubbles with internal frames $f_j$ and $f_k$ and $\vec{v} = f_i$.

By Algorithm $\ref{multalgo}$, this equals $-1$, so that the central black loop is not nullhomotopic and so

\begin{align*}\delta f(D, \vec{e}_j + \vec{e}_k, ((1), (1))) &= 0 \pmod{2}\end{align*}

\textbf{Type 3.0c}: In this case we have triples $(c_x, \vec{e}_j + \vec{e}_k + \vec{e}_l, ((1), (1), (1))$ for $j, k, l$ distinct. The vertices of $\partial' \mathcal{M}(c_x, \vec{e}_j + \vec{e}_k + \vec{e}_l, ((1), (1), (1))$ in the below Figure $\ref{fig26}$ can be written as points in $\SO(6d + 5)$ that form a loop.

\begin{figure}\begin{tikzpicture}[scale=2.5]

\begin{scope}[xshift=0cm, yshift=3cm]
\filldraw[color=gray] (0, 0) .. controls (-0.125, 0.25) .. (0, 0.5) .. controls (0.125, 0.25) .. (0, 0);
\filldraw[color=gray] (0, 0.5) .. controls (-0.125, 0.75) .. (0, 1) .. controls (0.125, 0.75) .. (0, 0.5);
\filldraw[color=gray] (0, 1) .. controls (-0.125, 1.25) .. (0, 1.5) .. controls (0.125, 1.25) .. (0, 1);

\draw (0.2, 0.25) circle (0.1);
\node at (0.2, 0.25) {\small $1_j$};
\draw (0.2, 0.75) circle (0.1);
\node at (0.2, 0.75) {\small $1_k$};
\draw (0.2, 1.25) circle (0.1);
\node at (0.2, 1.25) {\small $1_l$};
\end{scope}

\begin{scope}[xshift=1cm, yshift=2cm]
\filldraw[color=gray] (0, 0) .. controls (-0.125, 0.25) .. (0, 0.5) .. controls (0.125, 0.25) .. (0, 0);
\filldraw[color=gray] (0, 0.5) .. controls (-0.125, 0.75) .. (0, 1) .. controls (0.125, 0.75) .. (0, 0.5);
\filldraw[color=gray] (0, 1) .. controls (-0.125, 1.25) .. (0, 1.5) .. controls (0.125, 1.25) .. (0, 1);

\draw (0.2, 0.25) circle (0.1);
\node at (0.2, 0.25) {\small $1_j$};
\draw (0.2, 0.75) circle (0.1);
\node at (0.2, 0.75) {\small $1_l$};
\draw (0.2, 1.25) circle (0.1);
\node at (0.2, 1.25) {\small $1_k$};
\end{scope}

\begin{scope}[xshift=-1cm, yshift=2cm]
\filldraw[color=gray] (0, 0) .. controls (-0.125, 0.25) .. (0, 0.5) .. controls (0.125, 0.25) .. (0, 0);
\filldraw[color=gray] (0, 0.5) .. controls (-0.125, 0.75) .. (0, 1) .. controls (0.125, 0.75) .. (0, 0.5);
\filldraw[color=gray] (0, 1) .. controls (-0.125, 1.25) .. (0, 1.5) .. controls (0.125, 1.25) .. (0, 1);

\draw (0.2, 0.25) circle (0.1);
\node at (0.2, 0.25) {\small $1_k$};
\draw (0.2, 0.75) circle (0.1);
\node at (0.2, 0.75) {\small $1_j$};
\draw (0.2, 1.25) circle (0.1);
\node at (0.2, 1.25) {\small $1_l$};
\end{scope}

\begin{scope}[xshift=0cm, yshift=-1.5cm]
\filldraw[color=gray] (0, 0) .. controls (-0.125, 0.25) .. (0, 0.5) .. controls (0.125, 0.25) .. (0, 0);
\filldraw[color=gray] (0, 0.5) .. controls (-0.125, 0.75) .. (0, 1) .. controls (0.125, 0.75) .. (0, 0.5);
\filldraw[color=gray] (0, 1) .. controls (-0.125, 1.25) .. (0, 1.5) .. controls (0.125, 1.25) .. (0, 1);

\draw (0.2, 0.25) circle (0.1);
\node at (0.2, 0.25) {\small $1_l$};
\draw (0.2, 0.75) circle (0.1);
\node at (0.2, 0.75) {\small $1_k$};
\draw (0.2, 1.25) circle (0.1);
\node at (0.2, 1.25) {\small $1_j$};
\end{scope}

\begin{scope}[xshift=1cm, yshift=0cm]
\filldraw[color=gray] (0, 0) .. controls (-0.125, 0.25) .. (0, 0.5) .. controls (0.125, 0.25) .. (0, 0);
\filldraw[color=gray] (0, 0.5) .. controls (-0.125, 0.75) .. (0, 1) .. controls (0.125, 0.75) .. (0, 0.5);
\filldraw[color=gray] (0, 1) .. controls (-0.125, 1.25) .. (0, 1.5) .. controls (0.125, 1.25) .. (0, 1);

\draw (0.2, 0.25) circle (0.1);
\node at (0.2, 0.25) {\small $1_l$};
\draw (0.2, 0.75) circle (0.1);
\node at (0.2, 0.75) {\small $1_j$};
\draw (0.2, 1.25) circle (0.1);
\node at (0.2, 1.25) {\small $1_k$};
\end{scope}

\begin{scope}[xshift=-1.3cm, yshift=0cm]
\filldraw[color=gray] (0, 0) .. controls (-0.125, 0.25) .. (0, 0.5) .. controls (0.125, 0.25) .. (0, 0);
\filldraw[color=gray] (0, 0.5) .. controls (-0.125, 0.75) .. (0, 1) .. controls (0.125, 0.75) .. (0, 0.5);
\filldraw[color=gray] (0, 1) .. controls (-0.125, 1.25) .. (0, 1.5) .. controls (0.125, 1.25) .. (0, 1);

\draw (0.2, 0.25) circle (0.1);
\node at (0.2, 0.25) {\small $1_k$};
\draw (0.2, 0.75) circle (0.1);
\node at (0.2, 0.75) {\small $1_l$};
\draw (0.2, 1.25) circle (0.1);
\node at (0.2, 1.25) {\small $1_j$};
\end{scope}

\node at (0, 2.9) {\textbullet};
\node at (0.75, 2) {\textbullet};
\node at (0.75, 1) {\textbullet};
\node at (-0.75, 2) {\textbullet};
\node at (-0.75, 1) {\textbullet};
\node at (0, 0.1) {\textbullet};

\draw (0, 2.9) -- (0.75, 2) -- (0.75, 1) -- (0, 0.1) -- (-0.75, 1) -- (-0.75, 2) -- (0, 2.9);
\draw[color=red] (0.2, 2.5) .. controls (0, 2.7) .. (-0.2, 2.5);
\draw[color=red] (0.6, 2) .. controls (0.65, 2) .. (0.65, 1.9);
\draw[color=red] (-0.6, 2) .. controls (-0.65, 2) .. (-0.65, 1.9);
\draw[color=red] (0.2, 2.5) -- (0.6, 2);
\draw[color=red] (-0.2, 2.5) -- (-0.6, 2);
\draw[color=red] (0.65, 1.9) -- (0.65, 1.2);
\draw[color=red] (-0.65, 1.9) -- (-0.65, 1.2);
\draw[color=red] (0.65, 1.2) .. controls (0.65, 1) .. (0.6, 1);
\draw[color=red] (-0.65, 1.2) .. controls (-0.65, 1) .. (-0.6, 1);
\draw[color=red] (0.2, 0.5) -- (0.6, 1);
\draw[color=red] (-0.2, 0.5) -- (-0.6, 1);
\draw[color=red] (0.2, 0.5) .. controls (0, 0.2) .. (-0.2, 0.5);

\node at (0.2, 2.5) {
\begin{tikzpicture}

\draw[fill=white] (0,0) circle (0.125);

\node at (0,0) {\tiny $1$};

\end{tikzpicture}
};

\node at (-0.2, 2.5) {
\begin{tikzpicture}

\draw[fill=white] (0,0) circle (0.125);

\node at (0,0) {\tiny $2$};

\end{tikzpicture}
};

\node at (0.6, 2.05) {
\begin{tikzpicture}

\draw[fill=white] (0,0) circle (0.125);

\node at (0,0) {\tiny $3$};

\end{tikzpicture}
};

\node at (0.65, 1.9) {
\begin{tikzpicture}

\draw[fill=white] (0,0) circle (0.125);

\node at (0,0) {\tiny $4$};

\end{tikzpicture}
};

\node at (-0.6, 2.05) {
\begin{tikzpicture}

\draw[fill=white] (0,0) circle (0.125);

\node at (0,0) {\tiny $5$};

\end{tikzpicture}
};

\node at (-0.65, 1.9) {
\begin{tikzpicture}

\draw[fill=white] (0,0) circle (0.125);

\node at (0,0) {\tiny $6$};

\end{tikzpicture}
};

\node at (-0.65, 1.2) {
\begin{tikzpicture}

\draw[fill=white] (0,0) circle (0.125);

\node at (0,0) {\tiny $7$};

\end{tikzpicture}
};

\node at (0.65, 1.2) {
\begin{tikzpicture}

\draw[fill=white] (0,0) circle (0.125);

\node at (0,0) {\tiny $8$};

\end{tikzpicture}
};

\node at (0.6, 1) {
\begin{tikzpicture}

\draw[fill=white] (0,0) circle (0.125);

\node at (0,0) {\tiny $9$};

\end{tikzpicture}
};

\node at (-0.6, 1) {
\begin{tikzpicture}

\draw[fill=white] (0,0) circle (0.125);

\node at (0,0) {\tiny $10$};

\end{tikzpicture}
};

\node at (0.2, 0.5) {
\begin{tikzpicture}

\draw[fill=white] (0,0) circle (0.125);

\node at (0,0) {\tiny $12$};

\end{tikzpicture}
};

\node at (-0.2, 0.5) {
\begin{tikzpicture}

\draw[fill=white] (0,0) circle (0.125);

\node at (0,0) {\tiny $11$};

\end{tikzpicture}
};

\end{tikzpicture}
\caption{Type 3.0c, in the case $N_1 = N_2 = N_3 = 1$ and $j > k > l$. The embedded picture cannot be drawn due to dimensional constraints.}\label{fig26}\end{figure}

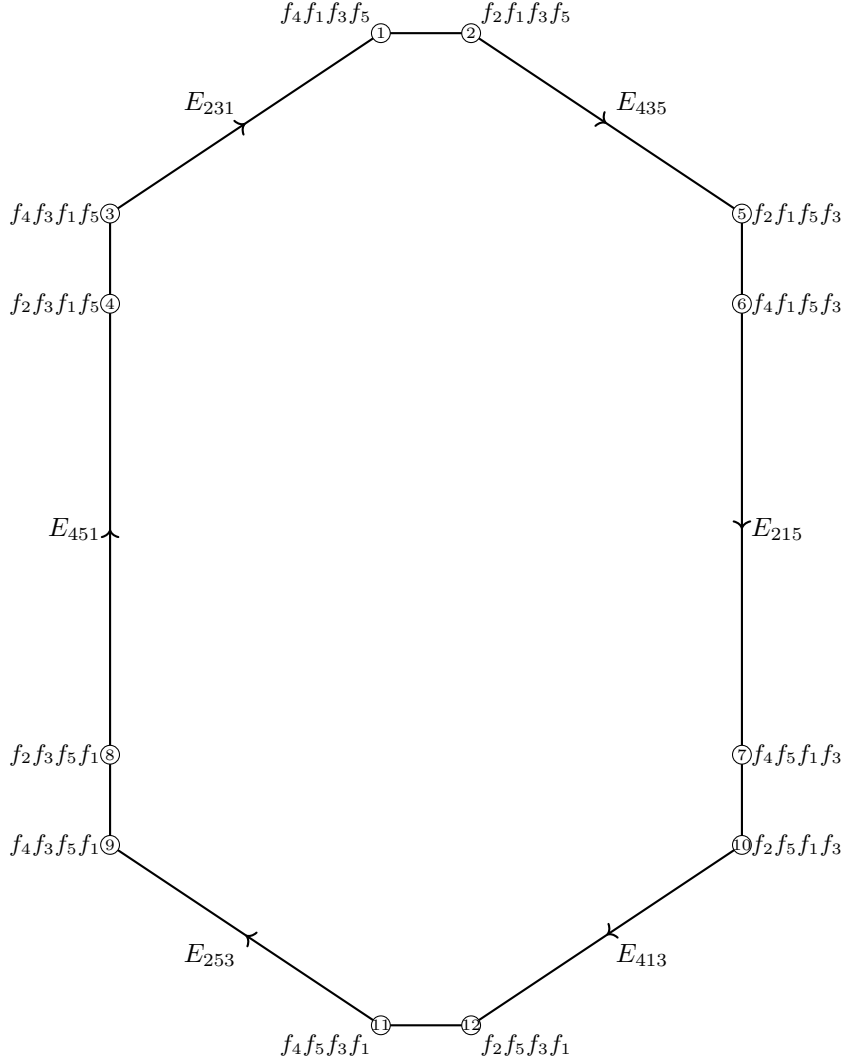
\begin{figure}
\begin{tikzpicture}[scale=6]

%Standard hexagon

\node[anchor=east,align=right] at (-0.2, 0) {\small $f_4 f_3 f_5 f_1$};
\node[anchor=west,align=left] at (1.2, 0) {\small $f_2 f_5 f_1 f_3$};

\node[anchor=east,align=right] at (-0.2, 0.2) {\small $f_2 f_3 f_5 f_1$};
\node[anchor=west,align=left] at (1.2, 0.2) {\small $f_4 f_5 f_1 f_3$};

\node[anchor=east,align=right] at (-0.2, 1.2) {\small $f_2 f_3 f_1 f_5$};
\node[anchor=west,align=left] at (1.2, 1.2) {\small $f_4 f_1 f_5 f_3$};

\node[anchor=east,align=right] at (-0.2, 1.4) {\small $f_4 f_3 f_1 f_5$};
\node[anchor=west,align=left] at (1.2, 1.4) {\small $f_2 f_1 f_5 f_3$};

\node[anchor=south east,align=right] at (0.4, 1.8) {\small $f_4 f_1 f_3 f_5$};
\node[anchor=south west,align=left] at (0.6, 1.8) {\small $f_2 f_1 f_3 f_5$};

\node[anchor=north east,align=right] at (0.4, -0.4) {\small $f_4 f_5 f_3 f_1$};
\node[anchor=north west,align=left] at (0.6, -0.4) {\small $f_2 f_5 f_3 f_1$};

\draw[thick, ->-] (-0.2, 0) -- (-0.2, 1.4);
\draw[thick, ->-] (-0.2, 1.4) -- (0.4, 1.8);
\draw[thick] (0.4, 1.8) -- (0.6, 1.8);
\draw[thick, ->-] (0.6, 1.8) -- (1.2, 1.4);
\draw[thick, ->-] (1.2, 1.4) -- (1.2, 0);
\draw[thick, ->-] (1.2, 0) -- (0.6, -0.4);
\draw[thick] (0.6, -0.4) -- (0.4, -0.4);
\draw[thick, ->-] (0.4, -0.4) -- (-0.2, 0);

\node[anchor=south west] at (0.9, 1.6) {$E_{435}$};
\node[anchor=west] at (1.2, 0.7) {$E_{215}$};
\node[anchor=north west] at (0.9, -0.2) {$E_{413}$};
\node[anchor=north east] at (0.1, -0.2) {$E_{253}$};
\node[anchor=east] at (-0.2, 0.7) {$E_{451}$};
\node[anchor=south east] at (0.1, 1.6) {$E_{231}$};

%Smoothening blocks

%Special cases

\node at (0.4, 1.8) {
\begin{tikzpicture}

\draw[fill=white] (0,0) circle (0.125);

\node at (0,0) {\tiny $1$};

\end{tikzpicture}
};

\node at (0.6, 1.8) {
\begin{tikzpicture}

\draw[fill=white] (0,0) circle (0.125);

\node at (0,0) {\tiny $2$};

\end{tikzpicture}
};

\node at (-0.2, 1.4) {
\begin{tikzpicture}

\draw[fill=white] (0,0) circle (0.125);

\node at (0,0) {\tiny $3$};

\end{tikzpicture}
};

\node at (-0.2, 1.2) {
\begin{tikzpicture}

\draw[fill=white] (0,0) circle (0.125);

\node at (0,0) {\tiny $4$};

\end{tikzpicture}
};

\node at (1.2, 1.4) {
\begin{tikzpicture}

\draw[fill=white] (0,0) circle (0.125);

\node at (0,0) {\tiny $5$};

\end{tikzpicture}
};

\node at (1.2, 1.2) {
\begin{tikzpicture}

\draw[fill=white] (0,0) circle (0.125);

\node at (0,0) {\tiny $6$};

\end{tikzpicture}
};

\node at (1.2, 0.2) {
\begin{tikzpicture}

\draw[fill=white] (0,0) circle (0.125);

\node at (0,0) {\tiny $7$};

\end{tikzpicture}
};

\node at (-0.2, 0.2) {
\begin{tikzpicture}

\draw[fill=white] (0,0) circle (0.125);

\node at (0,0) {\tiny $8$};

\end{tikzpicture}
};

\node at (-0.2, 0) {
\begin{tikzpicture}

\draw[fill=white] (0,0) circle (0.125);

\node at (0,0) {\tiny $9$};

\end{tikzpicture}
};

\node at (1.2, 0) {
\begin{tikzpicture}

\draw[fill=white] (0,0) circle (0.125);

\node at (0,0) {\tiny $10$};

\end{tikzpicture}
};

\node at (0.6, -0.4) {
\begin{tikzpicture}

\draw[fill=white] (0,0) circle (0.125);

\node at (0,0) {\tiny $12$};

\end{tikzpicture}
};

\node at (0.4, -0.4) {
\begin{tikzpicture}

\draw[fill=white] (0,0) circle (0.125);

\node at (0,0) {\tiny $11$};

\end{tikzpicture}
};

\end{tikzpicture}
\caption{Type 3.0c, in the case $N_1 = N_2 = N_3 = 1$ and $j > k > l$. We omit external frames since they are throughout the standard positive frame. Long edges are labeled by their corresponding lifts written in terms of $(\ref{E-def})$, with the path oriented clockwise in the figure.}\label{fig27}\end{figure}

The black loop in Figure $\ref{fig27}$ lifts to $Pin(5)$ as
\begin{align*}&\frac{1}{\sqrt{2}}(1 - e_4 e_2)E_{435} \frac{1}{\sqrt{2}}(1 - e_2 e_4)E_{215} \frac{1}{\sqrt{2}}(1 - e_4 e_2)E_{413} \\&\frac{1}{\sqrt{2}}(1 - e_2 e_4)E_{253} \frac{1}{\sqrt{2}}(1 - e_4 e_2)E_{451} \frac{1}{\sqrt{2}}(1 - e_2 e_4)E_{231}\end{align*}

By Algorithm $\ref{multalgo}$, this equals $-1$, so that the black loop is not nullhomotopic and so

\begin{align*}\delta f(c_x, \vec{e}_j + \vec{e}_k + \vec{e}_l, ((1), (1), (1))) &= 0 \pmod{2}\end{align*}\end{proof}

\begin{proof}[Proof of Theorem $\ref{mainthm}$] It will help to recall the generators of $H_*(\CDP_*)$ from \cite[Section 4]{YT}. $H_2(\CDP_*)$ is generated by the domains
\begin{align*}(c_{\text{Id}}, \vec{e}_j + \vec{e}_k, ((1)_j, (1)_k) \text{ for } 1 \leq j < k \leq n\end{align*}
as well as a generator $U'$ which is a linear combination of every annulus in the grid with an even number of Type 2.2a and Type 2.1 triples. $H_3(\CDP_*)$ is generated by the domains
\begin{align*}(c_{\text{Id}}, \vec{e}_j + \vec{e}_k + \vec{e}_l, ((1)_j, (1)_k, (1)_l) \text{ for } 1 \leq j < k < l \leq n\end{align*}
as well as generators $U_j'$ for each $1 \leq j \leq n$, which are unit enlargements of $U'$ (each one adding $\vec{e}_j$ to $\vec{N}$) plus some Type 3.2a triples. $H_2(\CDP_*^0)$ and $H_3(\CDP_*^0)$ are generated by the same triples, but not $U'$ or $U_j'$.

We define a cochain $\mathfrak{o}_2: \CDP_3 \rightarrow \tilde{\Omega}_{fr}^{1} \cong \Z/2$ by $\mathfrak{o}_2(D, \vec{N}, \vec{\lambda}) := [\partial' \overline{\mathcal{M}}(D, \vec{N}, \vec{\lambda})]$; that is, $\mathfrak{o}_2$ measures the obstruction to framing the moduli space given the framing of its boundary. By definition, $f \in \CDP^2(\G; \Z/2)$ is a frame assignment if and only if $\delta f = \mathfrak{o}_2$, so it will suffice to show that $\mathfrak{o}_2$ is a coboundary in $\CDP^3$.

\cite[Section 12]{MS} showed that $\mathfrak{o}_2$ is a coboundary on $\CDP^0_*$. However, their proof only requires blocking an $O$ marking for the homological computation---an identical proof allowing the full grid shows that $\mathfrak{o}_2$ is a cocycle on $\CDP_*$. To show that $\mathfrak{o}_2$ is a coboundary on $\CDP_*$, it then suffices to show that it evaluates to zero on all the generators of $H_3(\CDP_*)$. By Lemma $\ref{framecond}$, we must have that
\begin{align*}\mathfrak{o}_2(c_{\text{Id}}, \vec{e}_j + \vec{e}_k + \vec{e}_l, ((1)_j, (1)_k, (1)_l) = 0\end{align*}
for all $1 \leq j < k < l \leq n$. Finally, since each $U_j'$ comes from a unit enlargement, they consist only of domains of Type 3.2 and 3.1 of the types in Lemma $\ref{framecond}$. Of these, only the Type 3.1a triples $(R_{jk}, 2\vec{e}_j, (1, 1))$ contribute to $\mathfrak{o}_2(U_j')$. The domains $R_{jk}$ come in pairs, and the concatenation of all the pairs $-R_{jk_1} * R_{jk_2}$ form a domain from the generators for the annuli $H_j$ and $V_j$ that comprise $U'$. Decomposing this domain into index 2 domains, we see that the signs of all $R_{jk}$ must cancel with each other. As a result, we have that
\begin{align*}
\mathfrak{o}_2(U_j') = 0
\end{align*}
for all $j$ as well, so that $\mathfrak{o}_2$ is zero in homology, hence a coboundary.\end{proof}

\section{Finding a Frame Assignment}

There are two steps to algorithmically compute a frame assignment $f$. The first is to show some form of uniqueness of $f$, and the second is to find a finite system of linear equations in generators of $\CDP_2$ which $f$ solves. We begin with its uniqueness property.

\begin{prop}\label{unique}A frame assignment $f$ on $\CDP_*^0$ is unique up to the values of
\begin{align*}f_{jk} := f(c_{\text{Id}}, \vec{e}_j + \vec{e}_k, ((1), (1))) \text{ for } 1 \leq j < k \leq n\end{align*}
$f(U')$, and $1$-coboundaries.\end{prop}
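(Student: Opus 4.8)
The plan is to realize the set of frame assignments on $\CDP_*^0$ as an affine space over the group of $2$-cocycles, and then to coordinatize the relevant quotient using the homology computation of \cite{YT}. Concretely, I would first recall, exactly as in the proof of Theorem \ref{mainthm}, that a $2$-cochain $f$ on $\CDP_*^0$ is a frame assignment precisely when $\delta f = \mathfrak{o}_2$, where $\mathfrak{o}_2(D,\vec N,\vec\lambda) = [\partial'\overline{\mathcal M}(D,\vec N,\vec\lambda)]$ is the framing obstruction, now computed with the $O_1$-blocked boundary so that it is the $3$-cochain relevant to $\CDP_*^0$. The $\CDP_*^0$ case of Theorem \ref{mainthm} (essentially \cite[Section 12]{MS}, together with the vanishing of $\mathfrak o_2$ on the Type 3.0c generators of $H_3(\CDP_*^0)$ recorded in Lemma \ref{framecond}) supplies at least one frame assignment $f_0$. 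Since $\delta(f_0+g)=\mathfrak o_2$ for every $2$-cocycle $g$ and $\delta(f-f_0)=0$ for every frame assignment $f$, the set of frame assignments is exactly the coset $f_0+Z^2(\CDP_*^0;\Z/2)$; regarding two of them that differ by a $1$-coboundary $\delta h$ as the same, the difference of two frame assignments becomes a well-defined class in $H^2(\CDP_*^0;\Z/2)$.

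The next step is the universal coefficient theorem: over the field $\Z/2$ it gives $H^2(\CDP_*^0;\Z/2)\cong\operatorname{Hom}_{\Z/2}(H_2(\CDP_*^0;\Z/2),\Z/2)$, so a class in $H^2$ — hence a frame assignment up to coboundary — is pinned down by its values on a basis of $H_2(\CDP_*^0;\Z/2)$. By the homology computation of \cite{YT} recalled in the proof of Theorem \ref{mainthm} (the analogue of Theorem \ref{thm2} for the $O_1$-blocked complex), such a basis is given by the triples $(c_{\mathrm{Id}},\vec e_j+\vec e_k,((1),(1)))$ for $1\le j<k\le n$ together with $U'$. Evaluating $f$ on these basis elements returns exactly the quantities $f_{jk}$ and $f(U')$, so specifying them determines $f$ up to a coboundary; and since the listed classes are linearly independent in $H_2(\CDP_*^0;\Z/2)$, these parameters may be prescribed arbitrarily. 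That is precisely the assertion.

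The only genuinely non-formal ingredient is the homology computation $H_2(\CDP_*^0;\Z/2)$ together with the identification of its generators, which I am importing from \cite{YT}; everything else is the routine observation that the solution set of the inhomogeneous linear equation $\delta f=\mathfrak o_2$ is a torsor over $\ker\delta$, refined by universal coefficients. The point that needs care — and is really the only obstacle — is bookkeeping: $\mathfrak o_2$, the differential $\delta$, the preferred boundary framings of Section 6, and the sign assignment of Definition \ref{defsa} must all be interpreted in the $O_1$-blocked complex, so I would first verify (as in \cite{MS}) that the constructions of Sections 6--7 restrict to $\CDP_*^0$ without change, and that blocking $O_1$ does not disturb the cocycle property of $\mathfrak o_2$ used to conclude existence of $f_0$.
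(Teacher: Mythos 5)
Your argument is essentially the paper's proof: the difference of two frame assignments satisfies $\delta(f-f')=\mathfrak{o}_2-\mathfrak{o}_2=0$, hence is a $2$-cocycle, and (by the nondegeneracy of the evaluation pairing over $\Z/2$, i.e.\ universal coefficients) a cocycle vanishing on the generators of second homology is a coboundary, so agreement on the $f_{jk}$ and on $f(U')$ forces $f-f'$ to be a $1$-coboundary, exactly as in the paper. The one caveat is your description of the blocked homology: the paper records that $H_2(\CDP_*^0)$ is generated by the classes $(c_{\Id},\vec{e}_j+\vec{e}_k,((1),(1)))$ \emph{without} $U'$ (the class $U'$ generates the extra summand only in $H_2(\CDP_*)$), so in the $O_1$-blocked setting $f(U')$ is at worst a redundant parameter -- this does not affect the validity of the stated uniqueness.
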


\begin{proof}Suppose $f, f'$ are frame assignments. Then $\mathfrak{o}_2 = \delta f = \delta f'$, so that $\delta(f - f') = 0$ and so $f - f'$ is a cocycle. If we also have that $f(U') = f'(U')$ and $f_{jk} = f_{jk}'$ for all $j, k$, $f - f'$ is zero on the generators of $H_2(\CDP_*^0)$, and therefore a coboundary.\end{proof}

As a result of Proposition $\ref{unique}$, we have a $\binom{n}{2}+1$ parameter family of frame assignments. Changing $f_{jk}$ has the same effect as changing the preferred Type 2.0b path from Section 6. We may also fix $f_{jk}$ by comparing our preferred Type 2.0b paths to an existing frame assignment. For example, \cite[Section 13]{MS} constructs an explicit framing of the moduli spaces of triples of the form $(c_{\text{Id}}, \vec{e}_j + \vec{e}_k, ((1), (1)))$, so we may fix $f_{jk}$ by comparing our preferred paths to this framing.

\begin{prop}\label{unique2}Using the framings of \cite[Section 13]{MS}, $f_{jk} = 0$ for all $1 \leq j < k \leq n$.\end{prop}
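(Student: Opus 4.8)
\textbf{Proof strategy for Proposition \ref{unique2}.} The plan is, for each pair $1\le j<k\le n$, to write down the path in $\SO(A)$ determined by the framing that \cite[Section 13]{MS} puts on the one-dimensional moduli space $\mathcal{M}(c_{\mathrm{Id}},\vec e_j+\vec e_k,((1),(1)))$, compare it with the preferred Type 2.0b path of Section 6, and verify that the two are homotopic relative to their endpoints. By the definition of a frame assignment this is exactly the assertion $f_{jk}=0$.

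First I would normalise the conventions at the two ends. Since $(c_{\mathrm{Id}},\vec e_j+\vec e_k,((1),(1)))$ has $\vec N=\vec e_j+\vec e_k$, its moduli space sits in $\mathbb{E}_3^d$ with broken endpoints $P=\mathcal{M}(c_{\mathrm{Id}},\vec e_j,(1))\times\mathcal{M}(c_{\mathrm{Id}},\vec e_k,(1))$ and $Q=\mathcal{M}(c_{\mathrm{Id}},\vec e_k,(1))\times\mathcal{M}(c_{\mathrm{Id}},\vec e_j,(1))$ (taking $j<k$) --- precisely the $N_1=N_2=1$ case of the Type 2.0b discussion. Both the framings of \cite[Section 13]{MS} and our framings of these $0$-dimensional endpoints are built from the bubble–merge directions $\pm\delta v$ of Definition \ref{defstdframe} together with the external frames, and both use a sign assignment with all $s_j=0$; so by \cite[Proposition 11.5]{MS} (the isomorphism $\tilde{\Omega}_{fr,A}^0\cong\Omega_{fr,A}^0$) and Convention \ref{frameconv} the two conventions attach to $P$ and $Q$ the same pair of points of $\SO(A)$ for $A$ large. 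Running the \cite{MS}-framing of the interior of $\mathcal{M}(c_{\mathrm{Id}},\vec e_j+\vec e_k,((1),(1)))$ through this dictionary, and (for the path) through the interval analogue of Definition \ref{corresppath}, then produces a path in $\SO(A)$ with those endpoints, to be compared with the Section 6 path.

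Next I would decompose both paths into short and long preferred paths, exactly as in every computation of Section 6 and following Conventions \ref{conv1}--\ref{conv3}. Only the internal coordinates $f_1,f_2,f_3$ attached to the two bubbles and the first external coordinate are ever moved, so after discarding the coordinates that stay fixed the comparison takes place in a single copy of $\SO(3)$ (or $\SO(4)$), where $\pi_1$ is already detected. The concatenation of the \cite{MS}-path with the reverse of the Section 6 preferred path is then a loop of short and long preferred paths; I would lift it to $\Pin(3)$ and evaluate by Algorithm \ref{multalgo} (equivalently, cut it into quadrilaterals of the kinds covered by Lemmas \ref{lem1} and \ref{lem2}). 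The expected value is $+1$, so the loop is nullhomotopic, the two paths agree up to homotopy relative endpoints by Lemma \ref{lem4}, and hence $f_{jk}=0$ for all $j<k$.

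The main obstacle is the translation step: \cite[Section 13]{MS} presents its framing in its own local coordinates, so one has to (i) identify which of its basis vectors plays the role of the distinguished vector $\vec v$ and in what order the others appear relative to our internal-then-external convention, and (ii) track the signs on the merge directions, since a type-$i$ bubble contributes $+\delta v$ when it merges as the vertical annulus $V_i$ and $-\delta v$ when it merges as $H_i$ (compare the Type 2.2b analysis). Once the conventions are aligned, the $\Pin$-calculation itself is short and of the same nature as those in Section 5; there is moreover likely a shortcut, observing that both the Section 6 Type 2.0b paths and the \cite{MS} framing reduce to the standard paths of \cite{LS}, in which case the comparison loop is nullhomotopic for the reason given in the alternative proof of Lemma \ref{lem2}.
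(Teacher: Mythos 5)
Your setup is the same as the paper's: restrict to the moduli space $\mathcal{M}(c_{\mathrm{Id}},\vec e_j+\vec e_k,((1),(1)))$ (the $N_1=N_2=1$ Type 2.0b case), check that the \cite{MS} framing and the Section 6 conventions agree at the two broken endpoints $[f_1,f_3]$ and $[f_3,f_1]$, and then argue that the \cite{MS} path is homotopic rel endpoints to the preferred Type 2.0b path, which is exactly $f_{jk}=0$. Up to that point your proposal matches the paper.

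The gap is in how the homotopy class of the \cite{MS} path is actually determined. You propose to decompose it into short and long preferred paths and evaluate the comparison loop in $\Pin(3)$ via Algorithm \ref{multalgo}, but the framing of \cite[Section 13, Example 13.4]{MS} is given as a continuously varying analytic frame $[v_j(x),v_k(x)]$, not as a concatenation of coordinate rotations, so no such decomposition exists a priori; producing one would require precisely the explicit evaluation you have deferred to the ``translation step,'' and your statement that ``the expected value is $+1$'' asserts the conclusion rather than deriving it. The paper closes this by a direct computation: it evaluates the \cite{MS} internal frame at $x=\tfrac12+\epsilon$, applies Gram--Schmidt, and finds that it equals $[f_2,f_3]$, the frame the preferred path passes through at its midpoint; together with the matching endpoints (and the fact that each half of either path is a minimal quarter-rotation) this pins down the homotopy class without any $\Pin$ calculation. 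Your proposed shortcut via the \cite{LS} standard paths is also not available as stated, since nothing in \cite[Section 13]{MS} is presented in terms of those paths; some version of the explicit frame evaluation is unavoidable, and that is the step your proposal leaves undone.
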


\begin{proof}Consider the moduli space $\mathcal{M}(c_x, \vec{e}_j + \vec{e}_k, ((1), (1))$, where $j < k$. Our preferred Type 2.0b moduli space has endpoints $[f_1, f_3]$ and $[f_3, f_1]$ and passes through the frame $[f_2, f_3]$. On the other hand, in the Manolescu-Sarkar framing \cite[Example 13.4]{MS}, the endpoints $x = 0, 1$ have the internal framings $[v_{j}(0), v_k(0)] = [f_1, f_3]$ and $[v_{j}(1), v_k(1)] = [f_3, f_1]$, respectively, which is consistent with the corresponding endpoints of our Type 2.0b moduli space. It suffices to show that the Manolescu-Sarkar framing near the midpoint $x = \frac{1}{2}$ also produces $[f_2, f_3]$. Fix a small $\epsilon$. The internal frame $[v_j(\frac{1}{2} + \epsilon), v_k(\frac{1}{2} + \epsilon)]$ at $x = \frac{1}{2} + \epsilon$ is approximately
\begin{align*}[&(k-j)\left(\frac{1}{2} - 2\epsilon^2\right)f_2 + \left(\frac{1}{2} + \epsilon\right)e^{\frac{j+k}{2}} f_1 + \left(\frac{1}{2} - \epsilon\right)e^{\frac{j+k}{2}} f_3, \\&(k-j)\left(-\frac{1}{2} + 2\epsilon^2\right)f_2 + \left(\frac{1}{2} - \epsilon\right)e^{\frac{j+k}{2}} f_1 + \left(\frac{1}{2} + \epsilon\right)e^{\frac{j+k}{2}} f_3]\end{align*}
which, after applying the Gram-Schmidt process, gives the same framing $[f_2, f_3]$. Therefore we must have that $f_{jk} = 0$ for all $1 \leq j < k \leq n$.\end{proof}

As a result, we are able to reduce the $\binom{n}{2}+1$ parameter family of frame assignments to a one-parameter family given by $f(U')$, which gives two (potentially) different frame assignments (and later framed $1$-flow categories). In any case, the following algorithm functions identically for each value of $f_{jk}, f(U')$.

The system of equations will come from the values of $\delta f = \mathfrak{o}_2$, as every equation $b = \mathfrak{o}_2(D, \vec{N}, \vec{\lambda})$ can be rewritten as the linear equation
\begin{align*}b = \sum\limits_{(E, \vec{M}, \vec{\nu}) \text{ is a term of } \partial (D, \vec{N}, \vec{\lambda})} f(E, \vec{M}, \vec{\nu})\end{align*}
In Section 7, we computed $\mathfrak{o}_2(D, \vec{N}, \vec{\lambda})$ for certain triples $(D, \vec{N}, \vec{\lambda})$. We now describe how to compute it for the remaining triples. A direct application of Algorithm $\ref{multalgo}$ suffices for the remaining Type 3.1b and Type 3.0c moduli spaces, as they lie in one stratum in the local model. For Type 3.1a domains, we can decompose their boundaries similarly to Figure $\ref{fig21}$---since every edge is either a Type 2.1 or 2.0a moduli space, we are able to explicitly write each edge as a product of short preferred paths and use Algorithm $\ref{multalgo}$.

The boundaries of Type 3.0a and 3.0b moduli spaces are already nullhomotopic by the definition of the local model, as the internal frames along their local models are constant (for the same type of bubble in Type 3.0b).

Finally, to calculate the remaining Type 3.2c terms, we use induction. Suppose that $D = H_j$ or $V_j$ and $\mathfrak{o}_2(D, N \vec{e}_j, (N)_j)$ is known for $N \leq N_0$. Then, since by \cite[Proposition 12.2]{MS} we have that $\mathfrak{o}_2$ is a cocycle,
\begin{align*}0 &= \delta \mathfrak{o}_2(D, (N_0+1)\vec{e}_j, (N_0, 1)_j) \\&= \mathfrak{o}_2(D, (N_0+1)\vec{e}_j, (N_0+1)_j) + \mathfrak{o}_2(D, \vec{e}_j, (1)_j) + \mathfrak{o}_2(D, N_0\vec{e}_j, (N_0)_j) \\&+ \text{Type 3.0a terms (from Type II differentials)} + \text{Type 3.1a terms (from Type I differentials).}\end{align*}
Since the Type 3.0a and Type 3.1a terms are now understood, we can calculate $\mathfrak{o}_2(D, (N_0+1)\vec{e}_j, (N_0+1)_j)$, which completes the induction since the base case was done in Section $7$.

We now specialize to $\CD_*$ in order to find a finite system of equations. $\CD_*$ is not a subcomplex of $\CDP_*$, so the result of Theorem $\ref{mainthm}$ is not immediately helpful, and we must find a finite subcomplex of $\CDP_*$ containing $\CD_*$.

For triples $(D, \vec{N}, \vec{\lambda}) \in \CDP_*$, let $\omega(D, \vec{N}, \vec{\lambda}) := \mu(D) + \lvert \vec{N} \rvert$. We claim that $\omega$ is a filtration. Indeed, Type I differential terms lower $\mu(D)$ by $1$ and leave $\vec{N}$ unchanged. Type II differential terms lower $\mu(D)$ by $2$ and increase $\lvert \vec{N} \rvert$ by $1$. Type III terms leave $\omega$ unchanged, and type IV differential terms leave $\mu(D)$ unchanged and lower $\lvert \vec{N} \rvert$.

\begin{lemma}\label{lemsubcplx}$\CDP_*^{\omega\leq 3}$ is a finite subcomplex of $\CDP_*$ containing 
\begin{align*}\bigoplus\limits_{j = 0}^{3}\CD_j.\end{align*}
whose second homology is generated by the generators of $H_2(\CDP_*)$.\end{lemma}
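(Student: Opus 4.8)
The plan is to treat the three assertions in turn: "subcomplex", "finite", and "contains $\bigoplus_{j=0}^3\CD_j$" I would dispatch by direct inspection, and I would reserve the real work for the homology statement, which I would establish by showing that the inclusion $\CDP_*^{\omega\le 3}\hookrightarrow\CDP_*$ induces an isomorphism on $H_2$.

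For the routine parts: the paragraph preceding the lemma already records that $\partial_1$ and $\partial_2$ lower $\omega$ by $1$, $\partial_3$ preserves $\omega$, and $\partial_4$ lowers $\omega$ by the size of the removed initial or final part (hence by at least $1$), so $\partial$ never raises $\omega$ and $\CDP_*^{\omega\le 3}$ is a subcomplex. Finiteness follows since $\omega\le 3$ forces $\mu(D)\le 3$ and $|\vec N|\le 3$: there are finitely many pairs $(\mu(D),|\vec N|)$, for each of them $D$ is either one of the $n!$ constant domains or, by the Maslov-index bullet of Section 2, a composite of at most three of the finitely many rectangles of $\G$, and there are finitely many admissible $\vec N$ and finitely many ordered partitions of each coordinate. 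Finally $\bigoplus_{j=0}^3\CD_j$ is generated by triples $(D,0,0)$ with $\mu(D)=j\le 3$, which satisfy $\omega(D,0,0)=j\le 3$, so they are among the generators of $\CDP_*^{\omega\le 3}$.

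For the homology statement I would use the $\omega$-filtration. Set $F_N:=\CDP_*^{\omega\le N}$; by the above this is an exhaustive increasing filtration of $\CDP_*$ by subcomplexes, so $H_2(\CDP_*)=\operatorname{colim}_N H_2(F_N)$, and it suffices to prove that $H_2(F_{N-1})\to H_2(F_N)$ is an isomorphism for every $N\ge 4$. To that end I would compute the homology of the subquotient $F_N/F_{N-1}=\CDP_*^{\omega=N}$: since $\partial_3$ is the only part of the differential preserving $\omega$, this subquotient carries only the elementary-coarsening differential, and it splits as a direct sum, over pairs $(D,\vec N)$ with $\mu(D)+|\vec N|=N$, of the complexes $\bigl(\bigotimes_{j=1}^{n}P(N_j)\bigr)$ shifted up in grading by $\mu(D)$, where $P(m)$ is the span of the ordered partitions of $m$, graded by length, with the elementary-coarsening differential. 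The crux is that $P(m)$ is acyclic for $m\ge 2$: identifying an ordered partition of length $\ell$ with the $(\ell-1)$-element set of its cut points in $\{1,\dots,m-1\}$, the complex $P(m)$ is precisely the augmented simplicial chain complex of the simplex on $m-1$ vertices, with the length-one partition $(m)$ serving as the augmentation, and this is contractible; also $P(1)$ and $P(0)$ have homology $\Z/2$ in lengths $1$ and $0$. Künneth over $\Z/2$ then shows each summand of $F_N/F_{N-1}$ is acyclic unless every $N_j\le 1$, in which case its homology is $\Z/2$ concentrated in grading $\mu(D)+|\vec N|=N$. Hence $H_i(F_N/F_{N-1})=0$ for $i\ne N$, and the long exact sequence of the pair $(F_N,F_{N-1})$ gives $H_2(F_{N-1})\cong H_2(F_N)$ for all $N\ge 4$. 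Passing to the colimit, the inclusion induces $H_2(\CDP_*^{\omega\le 3})\cong H_2(\CDP_*)$, which by Theorem~\ref{thm2} is $(\Z/2)^{\binom{n}{2}+1}$ on the classes of $(c_{\Id},\vec e_j+\vec e_k,((1),(1)))$ for $1\le j<k\le n$ and of $U'$. The first set already consists of cycles in $\CDP_*^{\omega\le 3}$ (they have $\omega=2$), and, the isomorphism being induced by the inclusion, the class of $U'$ is likewise represented by a cycle there — in fact $U'$ as assembled in \cite{YT} from index-$2$ annuli and certain Type 2.2a and 2.1 triples already lies in $\CDP_*^{\omega\le 3}$. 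So these classes form a basis of $H_2(\CDP_*^{\omega\le 3})$.

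The main obstacle is the associated-graded computation: the acyclicity of the elementary-coarsening complex $P(m)$ on compositions of $m$, together with the Künneth bookkeeping showing that the surviving classes are exactly the generators in Theorem~\ref{thm2}. This is in effect the low-degree portion of the homology calculation of \cite{YT}, redone so as to see the filtration. The one place to be careful is the two gradings in play — the homological degree $\mu(D)+|l(\vec\lambda)|$ versus the filtration degree $\omega=\mu(D)+|\vec N|$ — which must be tracked consistently for the vanishing $H_i(F_N/F_{N-1})=0$ for $i<N$, and hence the isomorphisms $H_2(F_{N-1})\cong H_2(F_N)$ for $N\ge 4$, to come out correctly.
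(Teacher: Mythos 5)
Your proof is correct, and for the homology statement it takes a genuinely different route from the paper. The paper stays inside $\CDP_*^{\omega\le 3}$ and filters it further---first by the coefficients $A$ of the column through $O_1$, then by the coefficients $B$ of a distinguished row, after which Type II differentials vanish and $\lvert \vec N\rvert$ filters---and then quotes the associated-graded computations of \cite[Proposition 4.4]{YT} and \cite[Proposition 4.8]{MS} to see that second homology can only survive on the pieces corresponding to $(c_{\Id},\vec e_j+\vec e_k,((1),(1)))$ and $U'$. You instead compare $\CDP_*^{\omega\le 3}$ with the ambient complex using the $\omega$-filtration itself: on the associated graded only the coarsening differential $\partial_3$ survives, the quotient splits over pairs $(D,\vec N)$ into tensor products of composition complexes $P(N_j)$, and your identification of $P(m)$ for $m\ge 2$ with the augmented chain complex of a simplex gives acyclicity, so all homology of $F_N/F_{N-1}$ is concentrated in degree $N$ and the inclusions induce isomorphisms $H_2(F_3)\cong H_2(F_4)\cong\cdots\cong H_2(\CDP_*)$; Theorem~\ref{thm2} plus the observation that the generating cycles (the $\omega=2$ triples and $U'$, which the paper likewise takes to lie in $\omega\le 3$ by construction) then finishes the argument. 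Your route buys a slightly stronger conclusion---the inclusion $\CDP_*^{\omega\le 3}\hookrightarrow\CDP_*$ is an isomorphism on $H_2$---and is self-contained apart from the quoted value of $H_2(\CDP_*)$, at the cost of redoing in low degrees the partition-complex acyclicity that the blocked-grid filtrations of \cite{YT} and \cite{MS} already encode; the paper's argument is shorter on the page because it reuses that machinery wholesale. The one delicate point in your version is the interplay of the two gradings (homological degree $\mu(D)+\lvert l(\vec\lambda)\rvert$ versus filtration degree $\mu(D)+\lvert\vec N\rvert$), and you track it correctly: they agree exactly when every part of every $\lambda_j$ equals $1$, which is why the surviving classes of $F_N/F_{N-1}$ sit in degree $N$ and the long exact sequence gives the needed isomorphisms for $N\ge 4$.
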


\begin{proof}That $\CDP_*^{\omega\leq 3}$ is a subcomplex follows from the fact that $\omega$ is a filtration. That it is finite and contains $\CD_0, \dots, \CD_3$ as well as the generators $U'$ and $(c_{\text{Id}}, \vec{e}_j + \vec{e}_k, ((1), (1)))$ of $H_2(\CDP_*)$ follows from its definition. To compute its second homology, we will construct more filtrations on $\CDP_*^{\omega\leq 3}$.

For $(D, \vec{N}, \vec{\lambda})$, let $A(D, \vec{N}, \vec{\lambda}) \in \N^n$ denote the coefficients of $D$ in the column containing $O_1$, recorded from the $O_1$ marking down, which is a filtration as Type I and II differentials can only decrease $A$ and Type III and IV differentials leave it unchanged. Now consider $(D, \vec{N}, \vec{\lambda})$ in the associated graded $\CDP_*^{\omega\leq 3, a}$ where $A(D, \vec{N}, \vec{\lambda}) = a$. Let $B(D) \in \N^{n-1}$ denote the coefficients of $D$ in the row where $a$ has a minimum. In the associated graded $\CDP_*^{\omega\leq 3, a}$, $B(D)$ is a filtration similarly. Finally, consider the associated graded $\CDP_*^{\omega\leq 3, a, b}$. Since differentials here cannot pass through a certain row and column, there are no Type II differentials, so $\lvert \vec{N} \rvert$ is now a filtration. (See \cite{YT} for the full proofs that these are filtrations.)

Finally, the associated graded complexes $\CDP_*^{\omega\leq 3, a, b, \vec{N}}$ generally have no second homology. Specifically, the proofs of \cite[Proposition 4.4]{YT} and \cite[Proposition 4.8]{MS} show that $H_2(\CDP_*^{\omega\leq 3, a, b, \vec{N}}) = 0$ except when $a = \vec{0}$ and exactly two of the $N_j$ are $1$ (the rest zero), or when $a = (1, 1, \dots, 1)$ and $\vec{N} = \vec{0}$, which correspond to exactly the generators of $H_2(\CDP_*)$.\end{proof}

\begin{proof}[Proof of Theorem $\ref{mainthm2}$.] As a result of Theorem $\ref{mainthm}$ and Proposition $\ref{unique}$, there exists a frame assignment $f$ on the subcomplex $\CDP_*^{\omega \leq 3}$ of $\CDP_*$ which is unique up to the values $f_{jk}$ and coboundaries. Since $\CDP_*^{\omega\leq 3}$ is finite and we can algorithmically determine the value $b$ of every $\mathfrak{o}_2(D, \vec{N}, \vec{\lambda})$ for each $(D, \vec{N}, \vec{\lambda}) \in \CDP_3^{\omega\leq 3}$, we set up the finite system of linear equations
\begin{align*}\{ b = \mathfrak{o}_2(D, \vec{N}, \vec{\lambda}) \}_{(D, \vec{N}, \vec{\lambda}) \in \CDP_3^{\omega \leq 3}}\end{align*}
Solving this system is therefore an algorithmic computation of $f$ on $\CDP_2^{\omega\leq 3}$, and hence on $\CD_*$ by Lemma $\ref{lemsubcplx}$.\end{proof}

\section{Framed 1-Flow Categories}

We have now constructed the $0$- and $1$-dimensional moduli spaces and the boundary of the $2$-dimensional moduli spaces, but it still remains to put them together into a framed $1$-flow category. In the later part of this section, we modify our moduli spaces to be suitable for the Lobb-Orson-Schütz construction, but we now repeat this construction; see \cite{LOS} for details.

\begin{definition}A $1$-flow category $\mathscr{C}$ consists of a finite set of objects $\text{Ob}(\mathscr{C})$, a function $\lvert \cdot \rvert: \text{Ob}(\mathscr{C}) \rightarrow \Z$ called the grading, a moduli space for each pair $a, b \in \text{Ob}(\mathscr{C})$ with $\lvert a \rvert - \lvert b \rvert = 1$ or $2$, which is a compact $(\lvert a \rvert - \lvert b \rvert - 1)$-dimensional manifold with boundary denoted $\mathcal{M}(a, b)$, and a $1$-dimensional ``boundary" $\partial \mathcal{M}(a, b)$ for each pair $\lvert a \rvert - \lvert b \rvert = 3$ satisfying:
\begin{enumerate}

\item For any pair $a, b \in \text{Ob}(\mathscr{C})$ with $\lvert a \rvert - \lvert b \rvert = 2$, the boundary of the moduli space is given by
\begin{align*}\partial \mathcal{M}(a, b) = \coprod\limits_{c \in \text{Ob}(\mathscr{C}), \lvert c \rvert - \lvert b \rvert = 1} \mathcal{M}(c, b) \times \mathcal{M}(a, c)\end{align*}

\item For any pair $a, b \in \text{Ob}(\mathscr{C})$ with $\lvert a \rvert - \lvert b \rvert = 3$, the ``boundary" is given by
\begin{align*}\partial \mathcal{M}(a, b) = \coprod\limits_{c \in \text{Ob}(\mathscr{C}), \lvert c \rvert - \lvert b \rvert = 1} \mathcal{M}(c, b) \times \mathcal{M}(a, c) \: \bigcup \: \coprod\limits_{c \in \text{Ob}(\mathscr{C}), \lvert c \rvert - \lvert b \rvert = 2} \mathcal{M}(c, b) \times \mathcal{M}(a, c)\end{align*}

\end{enumerate}
A framed $1$-flow category is a $1$-flow category together with:
\begin{itemize}

\item A sign assignment $s$, which assigns each point in a $0$-dimensional moduli space a $0$ or $1$ such that whenever $(P_1, Q_1)$ and $(P_2, Q_2)$ are the endpoints of an interval in a $1$-dimensional moduli space,
\begin{align*}s(P_1) + s(P_2) + s(Q_1) + s(Q_2) = 1 \pmod 2\end{align*}

\item A frame assignment $f$, which assigns each interval in a $1$-dimensional moduli space\footnote{In general, one-dimensional moduli spaces may also contain circles. The Manolescu-Sarkar moduli spaces, and similarly our moduli spaces, do not, so we omit the convention for the frame assignment of a circle component. The reader is advised to see \cite{Sch} for this convention.} a $0$ or $1$ such that for any pair $a, b \in \mathit{Ob}(\mathscr{C})$ with $\lvert a \rvert - \lvert b \rvert = 3$,
\begin{align*}\sum\limits_{C}\left( 1 + \sum\limits_{I \times \{ Q \}} f(I) + \sum\limits_{\{ P \} \times I} (1 + s(P) + f(I)) \right) = 0 \pmod 2\end{align*}
where the outer sum is taken over all components of $\partial \mathcal{M}(a, b)$, the first inner sum is taken over all intervals in $C$ of the form $I \times \{Q\}$ for an interval $I$ and point $Q$, and the second inner sum is taken over all intervals in $C$ of the form $\{P\} \times I$ for an interval $I$ and point $P$.

\end{itemize}\label{def8}\end{definition}

The objects of the link Floer $1$-flow category are the generators of the grid chain complex $\mathit{GC}^+$, and the moduli spaces will largely be exactly the moduli spaces of the domains from one generator to another. While most of these moduli spaces simply fit into the definition of a framed $1$-flow category, the moduli spaces corresponding to annuli do not, as the endpoints where the annulus has bubbled is not a products of points as required by Definition $\ref{def8}$. However, these endpoints have an internal frames which represents the direction that the corresponding horizontal annulus bubbles in. Since every horizontal annulus $H_j$ has a corresponding vertical annulus $V_j$ which creates the same type of bubble, the other side of these special boundaries contains a different moduli space corresponding to replacing $H_j$ by $V_j$, and gluing these two moduli spaces eliminates these special boundaries.

Specifically, when the annuli $H_j$ and $V_j$ share the marking $O_j$, as well as their starting generator $x$ (which is also their ending generator), both of their moduli spaces enter a lower stratum $Q = \mathcal{M}(c_x, \vec{e}_j, (1)_j)$, which has an internal frame $f_1$ corresponding to the direction of horizontal bubbling. $\mathcal{M}(H_j, 0, 0)$ enters the point $Q$ in the $f_1$ direction, while $\mathcal{M}(V_j, 0, 0)$ enters the same point $Q$ in the $-f_1$ direction, so the two moduli spaces naturally glue together at $Q$, as shown in Figure $\ref{fig22}$.

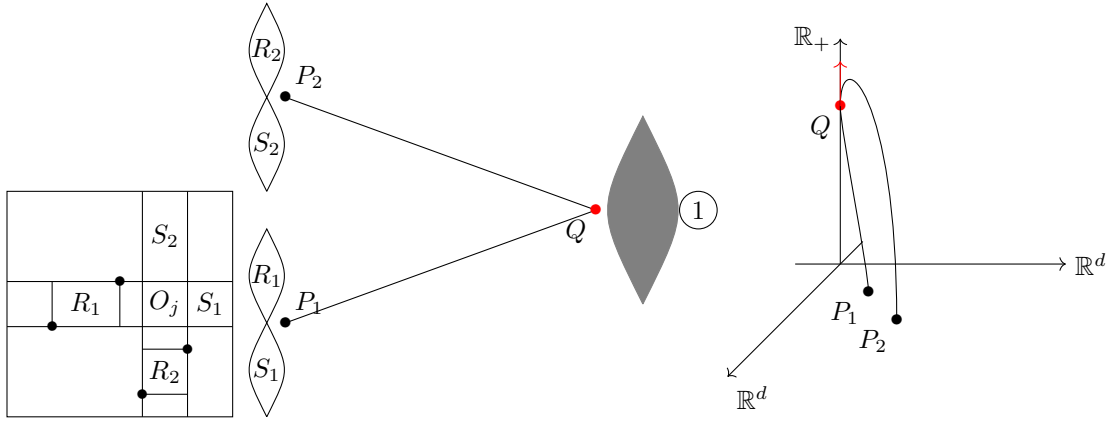
\begin{figure}
\begin{tikzpicture}[scale=3]

\draw (0, 0) rectangle (1, 1);

\node at (0.7, 0.5) {$O_j$};
\draw (0.6, 0) -- (0.6, 1);
\draw (0.8, 0) -- (0.8, 1);
\draw (0, 0.4) -- (1, 0.4);
\draw (0, 0.6) -- (1, 0.6);

\draw (0.6, 0.1) -- (0.8, 0.1);
\draw (0.6, 0.3) -- (0.8, 0.3);
\draw (0.2, 0.4) -- (0.2, 0.6);
\draw (0.5, 0.4) -- (0.5, 0.6);

\node at (0.7, 0.2) {$R_2$};
\node at (0.7, 0.8) {$S_2$};
\node at (0.35, 0.5) {$R_1$};
\node at (0.9, 0.5) {$S_1$};

\node at (0.2, 0.4) {\small \textbullet};
\node at (0.5, 0.6) {\small \textbullet};
\node at (0.6, 0.1) {\small \textbullet};
\node at (0.8, 0.3) {\small \textbullet};

\end{tikzpicture} \begin{tikzpicture}[scale=2.5]

\draw (0, -0.6) .. controls (-0.125, -0.35) .. (0, -0.1) .. controls (0.125, -0.35) .. (0, -0.6);
\draw (0, -0.1) .. controls (-0.125, 0.15) .. (0, 0.4) .. controls (0.125, 0.15) .. (0, -0.1);
\draw (0, 0.6) .. controls (-0.125, 0.85) .. (0, 1.1) .. controls (0.125, 0.85) .. (0, 0.6);
\draw (0, 1.1) .. controls (-0.125, 1.35) .. (0, 1.6) .. controls (0.125, 1.35) .. (0, 1.1);
\filldraw[color=gray] (2, 0) .. controls (1.75, 0.5) .. (2, 1) .. controls (2.25, 0.5) .. (2, 0);

\draw (0.1, -0.1) node {\textbullet};
\draw (0.1, -0.1) node[anchor=south west] {$P_1$};
\draw (0.1, 1.1) node {\textbullet};
\draw (0.1, 1.1) node[anchor=south west] {$P_2$};

\draw (0.1, -0.1) -- (1.75, 0.5);
\draw (0.1, 1.1) -- (1.75, 0.5);

\draw[color=red] (1.75, 0.5) node {\textbullet};
\draw (1.75, 0.5) node[anchor=north east] {$Q$};

\node at (0, 0.15) {$R_1$};
\node at (0, -0.35) {$S_1$};
\node at (0, 1.35) {$R_2$};
\node at (0, 0.85) {$S_2$};

\draw (2.295, 0.5) circle (0.1);
\node at (2.295, 0.5) {1};

\end{tikzpicture}
\begin{tikzpicture}[x={(1,0)}, y={(0,1)}, z={(-0.5, -0.5)}, scale=3]

\draw (0.25, 0, 0.25) node {\textbullet};
\draw (0.5, 0, 0.5) node {\textbullet};
\draw (0.25, 0, 0.25) node[anchor=north east] {$P_1$};
\draw (0.5, 0, 0.5) node[anchor=north east] {$P_2$};

\draw (0.25, 0, 0.25) .. controls (0.25, 0.1, 0.25) and (0, 0.6, 0) .. (0, 0.7, 0);
\draw (0.5, 0, 0.5) .. controls (0.5, 1, 0.5) and (0, 1, 0) .. (0, 0.7, 0);

\draw[color=red] (0, 0.7, 0) node {\textbullet};
\draw (0, 0.7, 0) node[anchor=north east] {$Q$};

\draw[->] (-0.2,0,0) -- (1,0,0) node[anchor=west]{$\R^d$};
\draw[->] (0,0,0) -- (0,1,0) node[anchor=east]{$\R_+$};
\draw[->] (0,0,-0.2) -- (0,0,1) node[anchor=north west]{$\R^d$};
\draw[color=red,->] (0, 0.7, 0) -- (0, 0.9, 0);

\end{tikzpicture}
\caption{(Left) The Type 2.2b domains $H_j$ and $V_j$ from a generator \textbullet \: to itself. (Middle) Moduli spaces corresponding to the domains $H_j$ and $V_j$, glued along the stratum $Z(0, 1, 0; (1))$ (red). (Right) The embedded picture, with the internal frame (red arrow).}\label{fig22}\end{figure}

The glued moduli space has endpoints which are products of points, which each correspond to the non-bubble endpoints of $\mathcal{M}(H_j, \vec{0}, \vec{0})$ and $\mathcal{M}(H_j, \vec{0}, \vec{0})$ ($P_1$ and $P_2$, respectively, in Figure $\ref{fig22}$). It remains to frame these moduli spaces.

\begin{lemma}\label{gluedframe} Given a frame assignment $f$ for $\CDP_*$, its extension (which we interchangeably call $f$) to the moduli spaces of domains in $\CD_*$ with the above gluing which gives the glued moduli space the sum of the frame assignments of the glued pieces is a coherent framing (and thus a frame assignment in the Lobb-Orson-Schütz sense).
\end{lemma}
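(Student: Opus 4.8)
The plan is to verify Definition \ref{def8} directly for the link Floer $1$-flow category $\mathscr{C}$ whose objects are the generators of $\mathit{GC}^+$ (graded by Maslov index), whose $0$- and $1$-dimensional moduli spaces are the moduli spaces $\mathcal{M}(D,\vec 0,\vec 0)$ of rectangles and index $2$ domains between generators — with the annulus moduli spaces glued along their bubble strata as in Figure \ref{fig22} — and whose $|a|-|b|=3$ formal boundary $\partial\mathcal{M}(a,b)$ is the disjoint union of the polygons $\partial'(\mathcal{M}(D,\vec 0,\vec 0)_{\mathrm{glued}})$ over index $3$ domains $D$ from $a$ to $b$. I would begin with the structural part: conditions (1) and (2) of Definition \ref{def8} follow from the classification of codimension $1$ strata in Section 3, because when $\vec N=\vec 0$ there are no Type III or Type IV strata, the Type II (bubble) strata are exactly the ones the gluing of Figure \ref{fig22} removes, and the surviving Type I strata are the rectangle breakings $\mathcal{M}(R,\vec 0,\vec 0)\times\mathcal{M}(E,\vec 0,\vec 0)$, which reorganize into $\coprod_{c}\mathcal{M}(c,b)\times\mathcal{M}(a,c)$. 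That the gluing is compatible with framings is the content of the Type 2.2b computations of Section 6: near the shared stratum $Q=\mathcal{M}(c_x,\vec e_j,(1)_j)$ the $H_j$-side carries distinguished vector $\vec v=-f_1$ and the $V_j$-side carries $\vec v=+f_1$, while both carry the residual frame $[e_1,\dots,e_{2d}]$, so the two intervals glue smoothly with a consistent frame; the frame-assignment value on a glued interval is the sum of the two pieces' values precisely because the preferred path of the glued interval is the concatenation of the two preferred pieces. The same discussion applies verbatim with an arbitrary rectangle $T$ appended, gluing $\mathcal{M}(H_j+T,\vec 0,\vec 0)$ to $\mathcal{M}(V_j+T,\vec 0,\vec 0)$ along $\mathcal{M}(T,\vec e_j,(1)_j)$.

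The substantive part is the frame-assignment equation. Fix $a,b$ with $|a|-|b|=3$. First I would identify the components: by reconstructing the breaking domain from any edge, each circle $C$ in $\partial\mathcal{M}(a,b)$ is the boundary polygon of exactly one $\mathcal{M}(D,\vec 0,\vec 0)_{\mathrm{glued}}$ for an index $3$ domain $D$ from $a$ to $b$ (a glued pair $\{H_j+T,\,V_j+T\}$ yielding a single circle). Next, by Definition \ref{corresppath} and Lemma \ref{lem4}, the per-component expression $1+\sum_{I\times\{Q\}}f(I)+\sum_{\{P\}\times I}(1+s(P)+f(I))$ is precisely the class in $\pi_1(\SO(A))\cong\Z/2$ of that framed circle with the framing induced from the sign assignment $s$: the $f(I)$ terms record the deviation of each edge from its preferred path (the bookkeeping of Section 7), and the $1+s(P)$ summands on the $\{P\}\times I$ edges encode the $(-1)^{k}$ twist of the forget isomorphism $(\ref{forgetmap})$ on the $0$-dimensional factor, exactly as the $0$-dimensional moduli spaces were framed according to $s$ in Section 6. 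Thus the left-hand side of the frame condition equals $\sum_{D:\,a\to b,\ \mu(D)=3}[\,\partial'(\mathcal{M}(D,\vec 0,\vec 0)_{\mathrm{glued}})\,]$, the sum of the obstructions to framing the (glued) two-dimensional moduli spaces.

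Finally I would evaluate this sum. For $D$ containing no annulus the gluing is vacuous, so this obstruction is $\mathfrak{o}_2(D,\vec 0,\vec 0)=\delta f(D,\vec 0,\vec 0)$ by Theorem \ref{mainthm}. For a glued pair $\{H_j+T,\,V_j+T\}$ the bubble boundary has been removed, and because the two sides meet $Q$ with opposite $\vec v$ and equal residual frame — and, crucially, the preferred re-routing of the framed path past $Q$ introduces no correction, which is exactly the Type 3.2c computation of Section 7 (carried out with Lemma \ref{lem3} and Algorithm \ref{multalgo}) — the glued obstruction equals $\mathfrak{o}_2(H_j+T,\vec 0,\vec 0)+\mathfrak{o}_2(V_j+T,\vec 0,\vec 0)$, which vanishes by the Type 3.3b--c part of Lemma \ref{framecond}. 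Hence the left-hand side reduces to $\sum_{D:\,a\to b,\ \mu(D)=3,\ D\text{ without annulus}}\mathfrak{o}_2(D,\vec 0,\vec 0)$; writing $\mathfrak{o}_2(D,\vec 0,\vec 0)=\delta f(D,\vec 0,\vec 0)=f(\partial_1(D,\vec 0,\vec 0))$ (there is no Type II term for such $D$) and using the Type 3.3a expression $\mathfrak{o}_2(D,\vec 0,\vec 0)=1+\sum_j(1+s(C_j))$ from Section 7, the claim becomes a parity identity in the signs $s(C_j)$ and the incidence data of the boundary polygons, which I expect to follow from the coherence of the $0$-dimensional framings (Lemma \ref{lemsame}), i.e.\ from the Square and Annuli rules for $s$. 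The main obstacle I anticipate is exactly this last step: pinning down the combinatorial dictionary between the Lobb--Orson--Sch\"utz sum over components and the $\SO(A)$ obstruction sum — keeping precise track of the forget-map twist $(\ref{forgetmap})$, the component count, and the distinction between $I\times\{Q\}$ and $\{P\}\times I$ edges — and confirming that no stray twist is introduced when the preferred path crosses the gluing stratum $Q$, so that the glued obstruction really is the sum of the two annulus obstructions.
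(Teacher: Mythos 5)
Your proposal follows essentially the same route as the paper's (much terser) proof: the paper's entire explicit content is your middle step, namely that the glued interval between the non-bubble endpoints $P_1,P_2$ is frameable because $s(R_1)+s(S_1)+s(R_2)+s(S_2)=1$, and that its preferred framing coincides with the concatenation of the two Type 2.2b preferred framings (both reach the positive frame at $Q$ via long preferred paths with respect to $f_1$), which is exactly what justifies setting $f(\text{glued})$ equal to the sum of the pieces; coherence is then quoted from Lemma \ref{framecond} and Theorem \ref{mainthm}, just as you do. Two remarks on the part you flag as the ``main obstacle.'' For components coming from annulus-free index $3$ domains the dictionary is immediate rather than a further parity identity: the $\{P\}\times I$ factors in the Lobb--Orson--Sch\"utz sum are precisely the moduli spaces of the final rectangles $C_j$ in decompositions $D=A*B*C_j$, so the per-component expression is $1+\sum_{\text{edges}}f(I)+\sum_j\bigl(1+s(C_j)\bigr)=1+\delta f(D,\vec 0,\vec 0)+\sum_j\bigl(1+s(C_j)\bigr)$, and its vanishing is verbatim the Type 3.3a equation of Lemma \ref{framecond}; no separate appeal to Lemma \ref{lemsame} or to the Square and Annuli rules is needed beyond their use inside the proof of Lemma \ref{framecond}. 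Where genuine care is required is the glued components, since there the shared Type II edge (with its $s$-dependent Type 2.1-style framing) and the Type IV corners are removed from the boundary circle, so the class of the glued circle is not obtained by blindly adding the two Type 3.3b--c equations; the input that closes this is not the Type 3.2c Whitney-umbrella computation you cite (that concerns a different comparison), but the paper's simpler observation that both Type 2.2b preferred paths end at the positive frame at $Q$, so the glued interval behaves as a single Type 2.2a-style interval and the boundary bookkeeping reduces to Lemma \ref{framecond} together with Theorem \ref{mainthm}. So your architecture is the paper's, but the final translation you leave as ``expected'' is exactly the step that needs to be written out (and is also the step the paper compresses into one sentence).
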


\begin{proof}Consider a pair of annuli $H_j$ and $V_j$ that we have glued as above, and write $H_j = R_1 * S_1$ and $V_j = R_2 * S_2$. By the properties of sign assignments, $s(R_1) + s(R_2) + s(S_1) + s(S_2) = 1 \pmod 2$, so there is a frameable interval between the non-bubble endpoints $P_1$ and $P_2$. Framing this interval as if it were a Type 2.2a moduli space, the preferred framing is given by changing the external framing near both endpoints with long preferred paths with respect to $f_1$ to the positive framing in the middle.

On the other hand, the preferred framing of each Type 2.2b moduli space also changes the external frame near the endpoints $P_1$ and $P_2$ with respect to $f_1$ into the positive framing at the gluing point $Q$. So we see that the preferred framing of the glued moduli space is the same as the result of gluing the preferred framings of both moduli spaces. The coherence follows from Lemma $\ref{framecond}$ and Theorem $\ref{mainthm}$.\end{proof}

Given a framed $1$-flow category $\mathscr{C}$, \cite{LOS} give an algorithm for computing $Sq^2$, which we recap below. Let $\varphi \in C^{k}(\mathscr{C}; \Z/2)$ be represented by objects $c_1, \dots, c_l$ of $\mathscr{C}$. Let $b_1, \dots, b_m$ be the objects that have some nonempty $0$-dimensional moduli space $\mathcal{M}(b_i, c_j)$ for some $j$. Since $\varphi$ is a cocycle, there are an even number of moduli spaces $\mathcal{M}(b_i, c_j)$ for each fixed $i$.

\begin{definition}A combinatorial boundary matching for $\varphi$ is a partition $\mathcal{C}$ of
\begin{align*}\bigcup\limits_{j = 1}^{l} \mathcal{M}(b_i, c_j)\end{align*}
into ordered pairs for each $i$.\end{definition}

Given a cocycle $\varphi \in C^k(\mathscr{C}; \Z/2)$, a combinatorial boundary matching $\mathcal{C}$ for $\varphi$, and an object $a$ in $\mathscr{C}$ with $\lvert a \rvert = k + 2$, we define the graph $\Gamma_{\mathcal{C}}(a, \varphi)$ as follows:

\begin{itemize}

\item The vertices correspond to nonempty products of zero-dimensional moduli spaces $\mathcal{M}(a, b) \times \mathcal{M}(b, c_j)$ for some $j$.

\item There are edges between these vertices for each interval $I$ of $\mathcal{M}(a, c_j)$, labelled by $f(I)$, where $f$ is the frame assignment. When an edge is labelled, we refer to its label as $f(e)$.

\item For each pair $(b_1, b_2)$ in $\mathcal{C}$, we add an edge between $b_1$ and $b_2$. If $s(b_1) = s(b_2)$, where $s$ is the sign assignment, then the edge is directed towards $b_2$, and otherwise it is undirected.

\end{itemize}

The second Steenrod square is given by the cochain $sq^{\varphi}: C^{k+2}(\mathscr{C}; \Z/2)$ given by
\begin{align*}sq^{\varphi}(a) = \sum\limits_{C} \left(1 + \#(\text{directed edges in } C \text{ pointing a certain direction}) + \sum\limits_{e} f(e)\right)
\end{align*}
where the outer sum is taken over the components $C$ of $\Gamma_C(a, \varphi)$ and the inner sum is taken over the edges $e$ in $C$.

\cite[Section 3]{LOS} show that this sum does not depend on the choice of direction in the sum, nor the choice of boundary matching $\mathcal{C}$. Furthermore, they show that $Sq^2(z) = [sq^{\varphi}]$ for any cocycle $\varphi$ representing $z$.

\begin{proof}[Proof of Theorem $\ref{mainthm3}$.] By the Lobb-Orson-Schütz construction, an algorithm to compute $Sq^2$ for the domains of $\CD_*$ follows from an algorithmic construction of the framed $1$-flow category for the same domains. First, the moduli spaces of each domain in $\CD_1$ are points, and Lemma $\ref{lemsame}$ ensures that the notion of a sign assignment for $\CDP_*$ matches the sign assignment for a framed $1$-flow category, so we may use a sign assignment constructed algorithmically from, say, Definition $\ref{defsa}$, to frame all the $0$-dimensional moduli spaces.

By the construction of the strata, the moduli spaces of each domain in $\CD_2$ have no Type II strata except for the moduli spaces of annuli. In any case, since annuli produce a single bubble in the boundary of their moduli spaces, the moduli spaces have no Type III or IV strata. As a result, the boundaries of all moduli spaces $\mathcal{M}(D, 0, 0)$ for $D \in \CD_2$ consist of products of points, except when $D$ is an annulus. But when $D$ is an annulus, their bubble boundary has been glued with the moduli space of another annulus to produce an interval whose boundary is the product of points as in Figure $\ref{fig22}$. And after gluing, the boundaries of all moduli spaces $\mathcal{M}(D, 0, 0)$ for $D \in \CD_3$ consist of intervals of the form $\{ P \} \times I$ or $I \times \{Q \}$, as in Definition $\ref{def8}$. These intervals can be framed according to the glued version of the frame assignment $f$, constructed previously by $\ref{mainthm2}$, and by Lemma $\ref{gluedframe}$ this is enough to form a framed $1$-flow category.\end{proof}

\section{An Example}

In this section, we compute both $Sq^2$ for the following $2 \times 2$ grid diagram for the unknot $\mathcal{U}$:

\begin{center}\begin{tikzpicture}

\draw (0, 0) grid (2, 2);

\draw (1.4, 0.6) -- (1.6, 0.4);
\draw (1.4, 0.4) -- (1.6, 0.6);

\draw (0.4, 1.6) -- (0.6, 1.4);
\draw (0.4, 1.4) -- (0.6, 1.6);

\node at (0.5, 0.5) {$O_1$};
\node at (1.5, 1.5) {$O_2$};

\end{tikzpicture}\end{center}

The $2 \times 2$ grid complex has two generators:

\begin{center}\begin{tikzpicture}

\draw (0, 0) grid (2, 2);

\node at (0, 0) {\textbullet};
\node at (1, 1) {\textbullet};

\node[anchor=east] at (0, 1) {$x_{\Id} = $};

\draw (4, 0) grid (6, 2);

\node at (4, 1) {\textbullet};
\node at (5, 0) {\textbullet};

\node[anchor=east] at (4, 1) {$x_{\tau} = $};

\end{tikzpicture}\end{center}

and four rectangles $R_1, R_3 \in \mathscr{D}^+(x_{\Id}, x_{\tau})$ and $R_2, R_4 \in \mathscr{D}^+(x_{\tau}, x_{\Id})$:

\begin{center}\begin{tikzpicture}

\draw (0, 0) grid (2, 2);

\node at (0.5, 0.5) {$R_1$};
\node at (1.5, 0.5) {$R_2$};
\node at (1.5, 1.5) {$R_3$};
\node at (0.5, 1.5) {$R_4$};

\end{tikzpicture}\end{center}

The index $2$ domains are $R_1 * R_2, R_1 * R_4, R_3 * R_2, R_3 * R_4 \in \mathscr{D}^+(x_{\Id}, x_{\Id})$ and $R_2 * R_1, R_2 * R_3, R_4 * R_1, R_4 * R_3 \in \mathscr{D}^+(x_{\tau}, x_{\tau})$---note that all of them are annuli.

We first find the sign assignment $s$ of each rectangle. From Definition $\ref{defsa}$,
\begin{align*}s(R_1) + s(R_2) &= 0 \\ s(R_2) + s(R_3) &= 1 \\ s(R_3) + s(R_4) &= 0 \\ s(R_1) + s(R_4) &= 1\end{align*}
This system of equations does not have full rank, but we may add an additional equation. By Theorem $\ref{thm1}$, $s$ is only unique up to coboundaries, and since $c_{x_{\tau}} + c_{x_{\Id}} = \partial R_1$, adding a coboundary has the effect of changing the sign of $R_1$ (and every other rectangle). As a result, we may fix
\begin{align*}s(R_1) = 0\end{align*}
so that we obtain $s(R_1) = s(R_2) = 0, s(R_3) = s(R_4) = 1$.

We now find the frame assignment $f$ of each index $2$ domain, as these are the ones needed for $Sq^2$. To write down the equations for $f$, we classify the generators of $\CDP_3^{\omega \leq 3}$:

\textbf{Type 3.3.} As the only index 2 domains are annuli, there are no Type 3.3a domains. From the following Type 3.3b domains, we obtain:
\begin{align*}&R_1 * R_2 * R_1: f(R_1 * R_2, \vec{0}, \vec{0}) + f(R_2 * R_1, \vec{0}, \vec{0}) + f(R_1, \vec{e}_1, (1)) + f(R_1, \vec{e}_1, (1)) = 0
\\&R_2 * R_1 * R_2: f(R_2 * R_1, \vec{0}, \vec{0}) + f(R_1 * R_2, \vec{0}, \vec{0}) + f(R_2, \vec{e}_1, (1)) + f(R_2, \vec{e}_1, (1)) = 0
\\&R_1 * R_4 * R_1: f(R_1 * R_4, \vec{0}, \vec{0}) + f(R_4 * R_1, \vec{0}, \vec{0}) + f(R_1, \vec{e}_1, (1)) + f(R_1, \vec{e}_1, (1)) = 0
\\&R_4 * R_1 * R_4: f(R_4 * R_1, \vec{0}, \vec{0}) + f(R_1 * R_4, \vec{0}, \vec{0}) + f(R_4, \vec{e}_1, (1)) + f(R_4, \vec{e}_1, (1)) = 0
\\&R_2 * R_3 * R_2: f(R_2 * R_3, \vec{0}, \vec{0}) + f(R_3 * R_2, \vec{0}, \vec{0}) + f(R_2, \vec{e}_2, (1)) + f(R_2, \vec{e}_2, (1)) = 0
\\&R_3 * R_2 * R_3: f(R_3 * R_2, \vec{0}, \vec{0}) + f(R_2 * R_3, \vec{0}, \vec{0}) + f(R_3, \vec{e}_2, (1)) + f(R_3, \vec{e}_2, (1)) = 0
\\&R_3 * R_4 * R_3: f(R_3 * R_4, \vec{0}, \vec{0}) + f(R_4 * R_3, \vec{0}, \vec{0}) + f(R_3, \vec{e}_2, (1)) + f(R_3, \vec{e}_2, (1)) = 0
\\&R_4 * R_3 * R_4: f(R_4 * R_3, \vec{0}, \vec{0}) + f(R_3 * R_4, \vec{0}, \vec{0}) + f(R_4, \vec{e}_2, (1)) + f(R_4, \vec{e}_2, (1)) = 0\end{align*}
Many of these different domains give redundant equations. The remaining equations give that
\begin{align*}f(R_i * R_j) = f(R_j * R_i) \text{ for } \lvert i - j \rvert = 1 \pmod 4\end{align*}
For Type 3.3c domains, first note that several different concatenations yield identical domains. For instance, $R_2 * R_1 * R_4 = R_4 * R_1 * R_2$, and their equation is
\begin{align*}&f(R_1 * R_4, \vec{0}, \vec{0}) + f(R_2 * R_1, \vec{0}, \vec{0}) + f(R_1 * R_2, \vec{0}, \vec{0}) + f(R_4 * R_1, \vec{0}, \vec{0}) \\&+ f(R_2, \vec{e}_1, (1)) + f(R_4, \vec{e}_1, (1)) + f(R_4, \vec{e}_1, (1)) + f(R_2, \vec{e}_1, (1)) = 0\end{align*}
which, by the previous equations, is simply $0 = 0$ and gives no new information. It is easily checked that this is the case for all Type 3.3c domains.

\textbf{Other Triples.} Type 3.2 triples only produce domains of the form $(D, 0, 0)$ via Type IV differentials, which cancel each other as the initial and final reductions have the same result. Type 3.1 and 3.0 triples will never produce $(D, 0, 0)$ terms, so their equations are not relevant to solving for the values $f(R_1 * R_2), f(R_2 * R_3), f(R_3 * R_4)$, and $f(R_1 * R_4)$.

The uniqueness of Proposition $\ref{unique2}$ means we need only specify the values of $f$ up to coboundary and $f(U')$. For $f(U')$, note that on the $2 \times 2$ grid we have that $U' = R_3 * R_2 + R_3 * R_4 + R_2 * R_1 + R_4 * R_1$. As there is no canonical choice of $f(U')$, we will obtain two different frame assignments $f_1$ and $f_2$ satisfying $f_1(U') = 0$ and $f_2(U') = 1$. We will handle them separately, but note that $f_1$ and $f_2$ must both satisfy every other equation. Expanding out $f_1(U')$ and $f_2(U')$:
\begin{align*}&f_1(R_1 * R_2) + f_1(R_2 * R_3) + f_1(R_3 * R_4) + f_1(R_1 * R_4) = 0 \\&f_2(R_1 * R_2) + f_2(R_2 * R_3) + f_2(R_3 * R_4) + f_2(R_1 * R_4) = 1\end{align*}

Finally, for the coboundaries, let $h$ be a $1$-cochain and consider $f' = f + \delta h$. If $h(R_1, 0, 0) = 1$, $f'(R_1 * R_2, 0, 0) = 1 + f(R_1 * R_2, 0, 0)$ and $f'(R_1 * R_4, 0, 0) = 1 + f(R_1 * R_4, 0, 0)$, with the other annuli unchanged. Likewise, $h(R_2, 0, 0) = 1$ changes the frame assignments of $R_1 * R_2$ and $R_2 * R_3$, $h(R_3, 0, 0) = 1$ changes the frame assignments of $R_2 * R_3$ and $R_3 * R_4$, and $h(R_4, 0, 0) = 1$ changes the frame assignments of $R_1 * R_4$ and $R_3 * R_4$. Finally, if $h(c_{x_{\Id}}, \vec{e_1}, (1)) = 1$, the frame assignments of $R_1 * R_2$ and $R_1 * R_4$ are changed, and the same holds for $c_{x_{\tau}}$ by symmetry, and $h(c_{x_{\Id}}, \vec{e_1}, (1)) = 1$ (or $x_{\tau}$) changes the frame assignments of $R_2 * R_3$ and $R_3 * R_4$. So the effect of adding a coboundary is to change the frame assignment of zero, two, or four of the annuli.

We now have the unique solutions (up to coboundary) for the two different frame assignments $f_1$ and $f_2$:
\begin{align*}&f_1(R_1 * R_2, \vec{0}, \vec{0}) = f_1(R_2 * R_3, \vec{0}, \vec{0}) = f_1(R_3 * R_4, \vec{0}, \vec{0}) = f_1(R_1 * R_4, \vec{0}, \vec{0}) = 0 \\&f_2(R_1 * R_2, \vec{0}, \vec{0}) = f_2(R_2 * R_3, \vec{0}, \vec{0}) = f_2(R_3 * R_4, \vec{0}, \vec{0}) = 0, f_2(R_1 * R_4, \vec{0}, \vec{0}) = 1\end{align*}
The link Floer homology $\mathit{HFK}^+$ of the unknot is known to be $\mathbb{F}[U]$, since the $U_1$ and $U_2$ actions are chain homotopic as the $O_1$ and $O_2$ markings lie on the same link component. Since $U$ has homological grading $2$, the homology is supported in even gradings, and the cocycle representatives of the generator $\varphi$ of $\mathit{HFK}^+_{2j}$ is $U_1^k U_2^{j-k} x_{\Id}$ for each $0 \leq k \leq j$.

For each $k$, there are two nonempty zero-dimensional moduli spaces $\mathcal{M}(b_i, U_1^{k}U_2^{j-k} x_{\Id})$, which are given by the points $\mathcal{M}(U_1^{k}U_2^{j-k} x_{\tau}, U_1^{k}U_2^{j-k} x_{\Id}) = \mathcal{M}(R_2, 0, 0)$ and $\mathcal{M}(U_1^{k}U_2^{j-k} x_{\tau}, U_1^{k}U_2^{j-k} x_{\Id}) = \mathcal{M}(R_4, 0, 0)$. The only boundary matching for each $b_i$ pairs these two points.

Now for $a$ with $\lvert a \rvert = 2j + 2$, $a$ is represented by some $U_1^{l}U_2^{j+1-l} x_{\Id}$. The moduli space from $a$ to $U_1^{k}U_2^{j-k} x_{\Id}$ is empty unless $k = l$ or $l - 1$. When $k = l$, the moduli space is the glued intervals $\mathcal{M}(R_3 * R_2, 0, 0)$ and $\mathcal{M}(R_3 * R_4, 0, 0)$, and when $k = l-1$ the moduli space is the glued intervals $\mathcal{M}(R_1 * R_2, 0, 0)$ and $\mathcal{M}(R_1 * R_4, 0, 0)$.

For a fixed $l$, there are four products of moduli spaces from $a$ to some representative of $\varphi$:
\begin{align*}&\mathcal{M}(R_1) \times \mathcal{M}(R_2) = \mathcal{M}(U_1^{l}U_2^{j+1-l} x_{\Id}, U_1^{l-1}U_2^{j+1-l} x_{\tau}) \times \mathcal{M}(U_1^{l-1}U_2^{j+1-l} x_{\tau}, U_1^{l-1}U_2^{j+1-l} x_{\Id}) \\&\mathcal{M}(R_1) \times \mathcal{M}(R_4) = \mathcal{M}(U_1^{l}U_2^{j+1-l} x_{\Id}, U_1^{l-1}U_2^{j+1-l} x_{\tau}) \times \mathcal{M}(U_1^{l-1}U_2^{j+1-l} x_{\tau}, U_1^{l-1}U_2^{j+1-l} x_{\Id}) \\& \mathcal{M}(R_3) \times \mathcal{M}(R_2) = \mathcal{M}(U_1^{l}U_2^{j+1-l} x_{\Id}, U_1^{l}U_2^{j-l} x_{\tau}) \times \mathcal{M}(U_1^{l}U_2^{j-l} x_{\tau}, U_1^{l}U_2^{j-l} x_{\Id}) \\&\mathcal{M}(R_3) \times \mathcal{M}(R_4) = \mathcal{M}(U_1^{l}U_2^{j+1-l} x_{\Id}, U_1^{l}U_2^{j-l} x_{\tau}) \times \mathcal{M}(U_1^{l}U_2^{j-l} x_{\tau}, U_1^{l}U_2^{j-l} x_{\Id})\end{align*}
$\mathcal{M}(R_1) \times \mathcal{M}(R_2)$ and $\mathcal{M}(R_1) \times \mathcal{M}(R_4)$ are connected by the interval which is the glued moduli spaces of the annuli $R_1 * R_2$ and $R_1 * R_4$, so it has the frame assignment $f(R_1 * R_2) + f(R_1 * R_4)$. Similarly, $\mathcal{M}(R_3) \times \mathcal{M}(R_2)$ and $\mathcal{M}(R_3) \times \mathcal{M}(R_4)$ are connected by the glued moduli spaces of the annuli $R_3 * R_2$ and $R_3 * R_4$, which has the frame assignment $f(R_3 * R_2) + f(R_3 * R_4)$. The boundary matching connects each $\mathcal{M}(R_1)$ with a corresponding $\mathcal{M}(R_3)$, and since these rectangles have opposite signs, the corresponding edges in the graph $\Gamma_\mathcal{C}(a, \varphi)$ are undirected. So the graph $\Gamma_\mathcal{C}(a, \varphi)$ is the following:
\begin{center}\begin{tikzcd}
\mathcal{M}(R_1) \times \mathcal{M}(R_2) \arrow[dd, "f(R_1 * R_2) + f(R_1 * R_4))"', no head] &  & \mathcal{M}(R_3) \times \mathcal{M}(R_2) \arrow[dd, "f(R_3 * R_2) + f(R_3 * R_4)", no head] \arrow[ll, no head] \\
                                                                                              &  &                                                                                                                 \\
\mathcal{M}(R_1) \times \mathcal{M}(R_4)                                                      &  & \mathcal{M}(R_3) \times \mathcal{M}(R_4) \arrow[ll, no head]                                                   
\end{tikzcd}\end{center}
As a result, we obtain two different Steenrod squares $Sq^2_1$ and $Sq^2_2$ coming from our two framed $1$-flow categories with frame assignments coming from $f_1$ and $f_2$:
\begin{align*}&sq^{\varphi}_1(a) = 1 + f_1(R_1 * R_2, \vec{0}, \vec{0}) + f_1(R_2 * R_3, \vec{0}, \vec{0}) + f_1(R_3 * R_4, \vec{0}, \vec{0}) + f_1(R_1 * R_4, \vec{0}, \vec{0}) = 1 \\&sq^{\varphi}_2(a) = 1 + f_2(R_1 * R_2, \vec{0}, \vec{0}) + f_2(R_2 * R_3, \vec{0}, \vec{0}) + f_2(R_3 * R_4, \vec{0}, \vec{0}) + f_2(R_1 * R_4, \vec{0}, \vec{0}) = 0\end{align*}
so that, since $j$ was arbitrary,
\begin{align*}&Sq_1^2: H^{2j}(\mathit{HFK}^+(\mathcal{U}; \Z/2) \rightarrow H^{2j + 2}(\mathit{HFK}^+(\mathcal{U}); \Z/2) \text{ is the identity map} \\&Sq_2^2: H^{2j}(\mathit{HFK}^+(\mathcal{U}); \Z/2) \rightarrow H^{2j + 2}(\mathit{HFK}^+(\mathcal{U}); \Z/2) \text{ is zero.}\end{align*}

\section{$Sq_2$ for $\widehat{\mathit{CFK}}$}

In this section, we focus on the case where $L$ is a knot. In this case, the Manolescu-Sarkar spectrum is a stable homotopy type of $\widehat{\mathit{CFK}}$.

Given a framed flow category, we can of course ignore all the moduli spaces higher than $1$-dimensional to form a framed $1$-flow category. As \cite{LOS} note, not every framed $1$-flow category comes from a framed flow category in this way. In fact, suppose we have a framed flow category of the link Floer homology of the unknot from a $2 \times 2$ grid diagram, and we compute its $Sq^2$. Since we would get a spectrum via the Cohen-Jones-Segal construction, we must have the Adem relation
\begin{align*}Sq^2 Sq^2 = Sq^1 Sq^2 Sq^1 = 0\end{align*}
since $\mathit{HFK}^+(\mathcal{U})$ is supported in even gradings. In the previous example, $Sq_1^2$ does not satisfy this relation, so our framed $1$-flow category using $f_1$ must not come from a framed flow category, and consequently our framed $1$-flow category using $f_2$ does come from the Manolescu-Sarkar framed flow category. For a general grid diagram, it is not known which choice of $f(U')$ gives the framed $1$-flow category coming from a framed flow category, but it is possible to construct $Sq^2$ for $\widehat{\mathit{CFK}}$ separately and check this value.

$Sq^1$ is found as torsion in $\widehat{\mathit{HFK}}$, which is not known to exist. In the remainder of the section, we give an explanation of why the Steenrod squares for $\widehat{\mathit{HFK}}$ vanish for small knots. We will instead consider the more convenient dual maps on homology, which are well-defined since the framed $1$-flow category of $\widehat{\mathit{CFK}}$ comes from a space (the Manolescu-Sarkar spectrum).
\begin{align*}&Sq_1: \widehat{\mathit{HFK}}_{j}(K) \rightarrow \widehat{\mathit{HFK}}_{j - 1}(K) \\&Sq_2: \widehat{\mathit{HFK}}_{j}(K) \rightarrow \widehat{\mathit{HFK}}_{j - 2}(K)\end{align*}
So far, we have been considering $\mathit{CFK}$ as a filtered chain complex over $\mathbb{F}[U]$, but we now also consider $\mathit{CFK}$ as a chain complex over $\mathbb{F}[U, V]$, where the $U$ variable corresponds to the $O$ markings as before and the $V$ variable corresponds to the $X$ markings. We can write the differential of $\mathit{CFK}$ over $\mathbb{F}[U, V]$ as
\begin{align*}\partial = \sum\limits_{i, j = 0}^{n} U^i V^j\partial_{i, j}\end{align*}
where $\partial_{i, j}$ is given by domains passing through $i$ $O$ markings and $j$ $X$ markings. Note that $\partial_{0, 0}$ is the $\widehat{\mathit{CFK}}$ differential.

\begin{definition}$\partial_U$ and $\partial_V$ are the differentials on $\widehat{\mathit{HFK}}$ given by
\begin{align*}\partial_U = \partial_{1, 0*} \text{ and } \partial_V = \partial_{0, 1*}\end{align*}
\end{definition}

\begin{prop}\label{commute}$Sq_{k}: \widehat{\mathit{HFK}}_j(K) \rightarrow \widehat{\mathit{HFK}}_{j-k}(K)$ commute with both $\partial_U$ and $\partial_V$.\end{prop}

\begin{proof}Let $\widehat{\mathit{HFK}}_{i, j}(K)$ denote the portion of $\widehat{\mathit{HFK}}(K)$ in Alexander grading $i$ and homological (Maslov) grading $j$. Fix some $i_0, j_0$, and let $X$ be the subcomplex of elements with Alexander grading at most $i_0+1$ and $Y$ be the subcomplex of elements with Alexander grading at most $i_0-1$. Then we have the exact triangle
\begin{align*}Y \xrightarrow{\partial_V} X \rightarrow X/Y \rightarrow \Sigma X\end{align*}
since $\partial_V$ decreases both Maslov and Alexander grading by $1$. 

Fix a homological grading $i_0$, and let $M_{i_0, j_0}$ be the portion of $Y/X$ with homological gradings at least $i_0$ and at most $i_0+3$. Then for any two generators of $M_{i_0, j_0}$, the moduli space between them is constructed, embedded, and framed (up to choosing a framing of the two-dimensional moduli spaces). The Cohen-Jones-Segal construction can be used to lift this part of the complex to a spectrum, so by $Sq^2$ for spectra:
\begin{align*}Sq_2 \circ \partial_V = \partial_V \circ Sq_2: \mathit{HFK}_{i_0+1, j_0} \rightarrow \mathit{HFK}_{i_0, j_0-k-1}\end{align*}

For $\partial_U$, let $X$ be the subcomplex of elements with Alexander grading at least $i_0-1$ and $Y$ be the subcomplex of elements with Alexander grading at least $i_0+1$. The same argument holds with the exact triangle
\begin{align*}Y \xrightarrow{\partial_U} X \rightarrow X/Y \rightarrow \Sigma Y\end{align*}
which shows that
\begin{align*}Sq_k \circ \partial_U = \partial_U \circ Sq_k: \mathit{HFK}_{i_0, j_0} \rightarrow \mathit{HFK}_{i_0 + 1, j_0-k+1}\end{align*}\end{proof}

\begin{proof}[Proof of Theorem $\ref{14cross}$]\cite[Figure 36]{MS} shows the knot Floer homology of the knot $14n26580$, along with its $\partial_U$ differentials. We see that there are two candidates for $Sq_2$, starting at grading $(-2, -3)$ and $(2, 1)$, which fail to commute with $\partial_U$ and $\partial_V$, respectively.\end{proof}

We conclude with another example. The knot $15n12300$, as shown in Figure $\ref{15n12300}$ (in grid form), has 41 generators for $\widehat{\mathit{CFK}}$, which are shown in Figure $\ref{15n12300hfk}$ according to their Maslov and Alexander gradings. We see that a potential nontrivial $Sq^2$ exists between the generators of $\widehat{\mathit{CFK}}_{i, i-1}$ and $\widehat{\mathit{CFK}}_{i, i+1}$ for each $i = -3, -2, -1, 1, 2, 3$, and furthermore, if one of these $Sq^2$ is nontrivial, all of them are, by symmetry and Proposition $\ref{commute}$.
%For the sake of completeness, Figure $\ref{15n12300hfk}$ also shows (as dashed arrows) the additional differentials $\partial_{U^2}$ and $\partial_{V^2}$, which arise in the $E_2$ page of the spectral sequence whose $E_0$ page is $\widehat{\mathit{CFK}}$ and whose $E_1$ differentials are $\partial_U$ and $\partial_V$. However, $Sq^2$ need not necessarily commute with these additional differentials.

\begin{figure}\begin{tikzpicture}
\draw (0, 0) grid (15, 15);

\draw (0.5, 1.5) -- (0.5, 13.5);
\draw (1.5, 0.5) -- (1.5, 2.5);
\draw (2.5, 1.5) -- (2.5, 4.5);
\draw (3.5, 2.5) -- (3.5, 5.5);
\draw (4.5, 6.5) -- (4.5, 14.5);
\draw (5.5, 3.5) -- (5.5, 11.5);
\draw (6.5, 10.5) -- (6.5, 12.5);
\draw (7.5, 5.5) -- (7.5, 7.5);
\draw (8.5, 6.5) -- (8.5, 11.5);
\draw (9.5, 10.5) -- (9.5, 13.5);
\draw (10.5, 8.5) -- (10.5, 12.5);
\draw (11.5, 7.5) -- (11.5, 9.5);
\draw (12.5, 8.5) -- (12.5, 14.5);
\draw (13.5, 0.5) -- (13.5, 4.5);
\draw (14.5, 3.5) -- (14.5, 9.5);

\draw (1.5, 0.5) -- (13.5, 0.5);
\draw (0.5, 1.5) -- (1.4, 1.5);
\draw (1.6, 1.5) -- (2.5, 1.5);
\draw (1.5, 2.5) -- (2.4, 2.5);
\draw (2.6, 2.5) -- (3.5, 2.5);
\draw (5.5, 3.5) -- (13.4, 3.5);
\draw (13.6, 3.5) -- (14.5, 3.5);
\draw (2.5, 4.5) -- (3.4, 4.5);
\draw (3.6, 4.5) -- (5.4, 4.5);
\draw (5.6, 4.5) -- (13.5, 4.5);
\draw (3.5, 5.5) -- (5.4, 5.5);
\draw (5.6, 5.5) -- (7.5, 5.5);
\draw (4.5, 6.5) -- (5.4, 6.5);
\draw (5.6, 6.5) -- (7.4, 6.5);
\draw (7.6, 6.5) -- (8.5, 6.5);
\draw (7.5, 7.5) -- (8.4, 7.5);
\draw (8.6, 7.5) -- (11.5, 7.5);
\draw (10.5, 8.5) -- (11.4, 8.5);
\draw (11.6, 8.5) -- (12.5, 8.5);
\draw (11.5, 9.5) -- (12.4, 9.5);
\draw (12.6, 9.5) -- (14.5, 9.5);
\draw (6.5, 10.5) -- (8.4, 10.5);
\draw (8.6, 10.5) -- (9.5, 10.5);
\draw (5.5, 11.5) -- (6.4, 11.5);
\draw (6.6, 11.5) -- (8.5, 11.5);
\draw (6.5, 12.5) -- (9.4, 12.5);
\draw (9.6, 12.5) -- (10.5, 12.5);
\draw (0.5, 13.5) -- (4.4, 13.5);
\draw (4.6, 13.5) -- (9.5, 13.5);
\draw (4.5, 14.5) -- (12.5, 14.5);

\foreach \i\j in {1.5/0.5, 2.5/1.5, 3.5/2.5, 5.5/3.5, 13.5/4.5, 7.5/5.5, 4.5/6.5, 11.5/7.5, 10.5/8.5, 14.5/9.5, 9.5/10.5, 8.5/11.5, 6.5/12.5, 0.5/13.5, 12.5/14.5}{

\draw (\i - 0.1, \j + 0.1) -- (\i + 0.1, \j - 0.1);
\draw (\i - 0.1, \j - 0.1) -- (\i + 0.1, \j + 0.1);

}

\foreach \i\j in {13.5/0.5, 0.5/1.5, 1.5/2.5, 14.5/3.5, 2.5/4.5, 3.5/5.5, 8.5/6.5, 7.5/7.5, 12.5/8.5, 11.5/9.5, 6.5/10.5, 5.5/11.5, 10.5/12.5, 9.5/13.5, 4.5/14.5}{

\draw (\i, \j) circle (0.1);

}

\end{tikzpicture}\caption{A $15 \times 15$ grid diagram for $15n12300$.}\label{15n12300}\end{figure}

\begin{figure}\begin{tikzpicture}[scale=1.8]

\draw[->] (0, -4.5) -- (0, 4.5) node[anchor=south]{$M$};
\draw[->] (-3.5, 0) -- (3.5, 0) node[anchor=west]{$A$};

\foreach \x in {-3, ..., -1, 1, 2, ..., 3} {
        \draw (\x, 0) -- ++(0, -.2) ++(0, -.15) node [below, outer sep=0pt, inner sep=0pt] {\small\(\x\)};
        \draw (0, \x) -- ++(-.2, 0) ++(-.15, 0) node [left, outer sep=0pt, inner sep=0pt] {\small\(\x\)};
    }
\draw (0, 4) -- ++(-.2, 0) ++(-.15, 0) node [left, outer sep=0pt, inner sep=0pt] {\small\(4\)};
\draw (0, -4) -- ++(-.2, 0) ++(-.15, 0) node [left, outer sep=0pt, inner sep=0pt] {\small\(-4\)};

\node[fill=white] at (0, 0) {$\mathbb{F}^9$};
\node at (1, 1) {$\mathbb{F}^6$};
\node at (-1, -1) {$\mathbb{F}^6$};
\node at (2, 2) {$\mathbb{F}^2$};
\node at (-2, -2) {$\mathbb{F}^2$};
\node at (1, 0) {\textbullet};
\node at (1.9, 1) {\textbullet};
\node at (2.1, 1) {\textbullet};
\node at (3, 2) {\textbullet};
\node at (1, 2) {\textbullet};
\node at (1.9, 3) {\textbullet};
\node at (2.1, 3) {\textbullet};
\node at (3, 4) {\textbullet};
\node at (-1, 0) {\textbullet};
\node at (-2.1, -1) {\textbullet};
\node at (-1.9, -1) {\textbullet};
\node at (-3, -2) {\textbullet};
\node at (-1, -2) {\textbullet};
\node at (-2.1, -3) {\textbullet};
\node at (-1.9, -3) {\textbullet};
\node at (-3, -4) {\textbullet};

\draw[->] (-1.9, -1.9) -- (-1.1, -1.1);
\draw[->] (-0.9, -0.9) -- (-0.1, -0.1);
\draw[->] (0.9, 0.9) -- (0.1, 0.1);
\draw[->] (1.9, 1.9) -- (1.1, 1.1);

\draw[->] (2.95, 3.95) -- (2.15, 3.05);
\draw[->] (1.85, 2.95) -- (1.05, 2.05);
\draw[->] (1.05, 2.05) -- (2.05, 2.95);
\draw[->] (1.95, 3.05) -- (2.95, 3.95);
\draw[->] (2.95, 1.95) -- (2.15, 1.05);
\draw[->] (1.85, 0.95) -- (1.05, 0.05);
\draw[->] (1.05, 0.05) -- (2.05, 0.95);
\draw[->] (1.95, 1.05) -- (2.95, 1.95);
\draw[->] (-2.95, -3.95) -- (-2.15, -3.05);
\draw[->] (-1.85, -2.95) -- (-1.05, -2.05);
\draw[->] (-1.05, -2.05) -- (-2.05, -2.95);
\draw[->] (-1.95, -3.05) -- (-2.95, -3.95);
\draw[->] (-2.95, -1.95) -- (-2.15, -1.05);
\draw[->] (-1.85, -0.95) -- (-1.05, -0.05);
\draw[->] (-1.05, -0.05) -- (-2.05, -0.95);
\draw[->] (-1.95, -1.05) -- (-2.95, -1.95);

\end{tikzpicture}\caption{The knot Floer homology of $15n12300$. Each dot represents a generator, and the dots along the diagonal are condensed for simplicity. Solid arrows going up and down represent the $\partial_U$ and $\partial_V$ differentials, respectively.}\label{15n12300hfk}\end{figure}

Finally, we caution that $\widehat{\mathit{CFK}}(15n12300)$ may have a nontrivial $Sq^2$, we cannot determine this with $\partial_U$ and $\partial_V$ alone. For example, the knot $14n16$, whose knot Floer homology contains a direct summand shown in Figure $\ref{14nhfk}$, appears to have a potential a nontrivial $Sq^1$. But a nontrivial $Sq^1$ for knot Floer homology would require torsion in knot Floer homology, which is not known to exist for any knot (so in particular $14n16$ cannot have a nontrivial $Sq^1$). So $\partial_U$ and $\partial_V$ alone are still not enough to obstruct the existence of every square.

\begin{figure}\begin{tikzpicture}[scale=1.8]

\draw[->] (0, -0.5) -- (0, 4.5) node[anchor=south]{$M$};
\draw[->] (-0.5, 0) -- (3.5, 0) node[anchor=west]{$A$};

\foreach \x in {1, 2, ..., 3} {
        \draw (\x, 0) -- ++(0, -.2) ++(0, -.15) node [below, outer sep=0pt, inner sep=0pt] {\small\(\x\)};
        \draw (0, \x) -- ++(-.2, 0) ++(-.15, 0) node [left, outer sep=0pt, inner sep=0pt] {\small\(\x\)};
    }
\draw (0, 4) -- ++(-.2, 0) ++(-.15, 0) node [left, outer sep=0pt, inner sep=0pt] {\small\(4\)};

\node at (1, 1) {\textbullet};
\node at (1.9, 2) {\textbullet};
\node at (2.1, 2) {\textbullet};
\node at (3, 3) {\textbullet};
\node at (1, 2) {\textbullet};
\node at (1.9, 3) {\textbullet};
\node at (2.1, 3) {\textbullet};
\node at (3, 4) {\textbullet};

\draw[->] (2.95, 3.95) -- (2.15, 3.05);
\draw[->] (1.85, 2.95) -- (1.05, 2.05);
\draw[->] (1.05, 2.05) -- (2.05, 2.95);
\draw[->] (1.95, 3.05) -- (2.95, 3.95);
\draw[->] (2.95, 2.95) -- (2.15, 2.05);
\draw[->] (1.85, 1.95) -- (1.05, 1.05);
\draw[->] (1.05, 1.05) -- (2.05, 1.95);
\draw[->] (1.95, 2.05) -- (2.95, 2.95);

\end{tikzpicture}\caption{A direct summand of the knot Floer homology of $14n16$, which shows a possibility for $Sq^1$.}\label{14nhfk}\end{figure}

\bibliography{1flow}
\bibliographystyle{amsalpha}

\end{document}